\newtheorem{prop}{Proposition}[section]
\newtheorem{defi}[prop]{Definition}
\newtheorem{lem}[prop]{Lemma}
\newtheorem{thm}[prop]{Theorem}
\newtheorem{remar}[prop]{Remark}
\newtheorem{cor}[prop]{Corollary}
\DeclareMathAlphabet{\mathpzc}{OT1}{pzc}{m}{it}
\DeclareMathOperator{\End}{End}
\DeclareMathOperator{\Hom}{Hom}
\DeclareMathOperator{\Ind}{Ind}
\DeclareMathOperator{\cInd}{c-Ind}
\DeclareMathOperator{\Res}{Res}
\DeclareMathOperator{\GL}{GL}
\DeclareMathOperator{\SL}{SL}
\DeclareMathOperator{\Ker}{Ker}
\DeclareMathOperator{\Image}{Im}
\DeclareMathOperator{\Sym}{Sym}
\DeclareMathOperator{\Rep}{Rep}
\DeclareMathOperator{\supp}{Supp}
\DeclareMathOperator{\id}{id}
\DeclareMathOperator{\Mod}{Mod}
\DeclareMathOperator{\val}{val}
\DeclareMathOperator{\Inj}{Inj}
\DeclareMathOperator{\inj}{inj}
\DeclareMathOperator{\soc}{soc}
\DeclareMathOperator{\Ext}{Ext}
\DeclareMathOperator{\RR}{\mathbb{R}}
\DeclareMathOperator{\ind}{ind}
\DeclareMathOperator{\detr}{det}
\DeclareMathOperator{\Spec}{Spec}
\newcommand{\cIndu}[3]{\cInd_{#1}^{#2}{#3}}
\newcommand{\Indu}[3]{\Ind_{#1}^{#2}{#3}}
\newcommand{\FF}{\mathbb F}
\newcommand{\Fp}{\mathbb{F}_{p}}
\newcommand{\Fbar}{\overline{\mathbb{F}}_{p}}
\newcommand{\pF}{\mathfrak{p}}
\newcommand{\NN}{\mathbb{N}}
\newcommand{\VV}{\mathbf{V}}
\newcommand{\VVV}{\mathcal V}
\newcommand{\Qp}{\mathbb {Q}_p}
\newcommand{\Zp}{\mathbb{Z}_p}
\newcommand{\HH}{\mathcal H}
\newcommand{\Eins}{\mathbf 1}
\newcommand{\St}{St}
\newcommand{\Sp}{\mathrm{Sp}}
\newcommand{\II}{\mathcal I}
\newcommand{\TT}{\mathcal T}
\newcommand{\gal}{\mathcal G_{\Qp}}
\newcommand{\tsigma}{\tilde{\sigma}}
\newcommand{\OO}{\mathcal O}
\newcommand{\MM}{\mathfrak m}
\newcommand{\oE}{\OO_E}
\newcommand{\GG}{\mathcal G}
\title{Extensions for supersingular representations of $\GL_2(\Qp)$}
\begin{document} 

\author{Vytautas Pa\v{s}k\={u}nas}

\maketitle

\abstract{\noindent Let $p>2$ be a prime number. Let $G:=\GL_2(\Qp)$ and $\pi$, $\tau$ smooth irreducible 
representations of $G$ on $\Fbar$-vector spaces with a central character. We show if $\pi$ is supersingular then 
 $\Ext^1_G(\tau,\pi)\neq 0$ implies $\tau\cong \pi$ and compute the dimension of $\Ext^1_G(\pi, \pi)$. This answers affirmatively for $p>2$ 
a question of Colmez.
We also determine $\Ext^1_G(\tau,\pi)$, when $\pi$ is the Steinberg representation. As a consequence of 
our results combined with those already in the literature one knows extensions between  all the irreducible representations 
of $G$.}

\section{Introduction}
\parindent0mm
\parskip5pt
In this paper we study the category $\Rep_G$ of smooth representations of $G:=\GL_2(\Qp)$ on $\Fbar$-vector spaces.
Smooth irreducible $\Fbar$-representations of $G$ with a central character have been classified by Barthel-Livne \cite{bl}
and Breuil \cite{b1}. A smooth irreducible representation $\pi$ of $G$ is supersingular, if 
it is not a subquotient of any principal series representation. Roughly speaking a supersingular 
representation is an $\Fbar$-analog of a supercuspidal representation. 
 
\begin{thm}\label{A} Assume that $p>2$ and let $\tau$ and $\pi$ be irreducible smooth representations of $G$ admitting a central character. 
If  $\pi$ is supersingular and $\Ext^1_G(\tau, \pi)\neq 0$ then $\tau\cong \pi$. Moreover, if $p\ge 5$ then $\dim \Ext^1_G(\pi, \pi)=5$.
\end{thm}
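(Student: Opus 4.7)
I would split the theorem into two independent assertions: vanishing of $\Ext^1_G(\tau,\pi)$ for irreducible $\tau\not\cong\pi$, and the equality $\dim\Ext^1_G(\pi,\pi)=5$. By Barthel--Livne and Breuil the vanishing must be checked in four sub-cases: when $\tau$ is a character, a principal series, a twist of the Steinberg, or a supersingular representation not isomorphic to $\pi$.

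The backbone of the approach is to embed $\pi$ into an injective envelope $\Omega$ in a suitable category of admissible locally finite smooth representations, so that $\Ext^1_G(\tau,\pi)=\Hom_G(\tau,\Omega/\pi)$, and the desired vanishing becomes the statement that no irreducible constituent of the socle of $\Omega/\pi$ is isomorphic to $\tau$. To prove this I would exploit the Barthel--Livne presentation $\pi=\cInd_{KZ}^{G}\sigma/T\cInd_{KZ}^{G}\sigma$ with $T$ a spherical Hecke operator annihilating the supersingular $\pi$. Applied to the analogous resolution of $\tau$ and combined with Frobenius reciprocity, this translates $\Ext^1_G(\tau,\pi)$ into statements about $K$-socles and Hecke eigenvalues; supersingularity then forces vanishing unless the Serre weight and Hecke datum of $\tau$ match those of $\pi$ itself.

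For the dimension count I would transport the computation to the Galois side via Colmez's Montreal functor $\VV$, under which a supersingular $\pi$ corresponds to an absolutely irreducible two-dimensional $\Fbar$-representation $\rho$ of $\gal$. Local Tate duality together with the local Euler--Poincar\'e formula give
\[
\dim \Ext^1_{\gal}(\rho,\rho)=\dim \End_{\gal}(\rho)+\dim \Ext^2_{\gal}(\rho,\rho)+\dim(\rho\otimes\rho^\vee),
\]
and for $p\ge 5$ one has $\End_{\gal}(\rho)=\Fbar$ and $\Ext^2_{\gal}(\rho,\rho)\cong \Hom_{\gal}(\rho,\rho(1))^\vee=0$, the hypothesis $p\ge 5$ being precisely what rules out $\rho\cong\rho(1)$; the count $1+0+4=5$ follows. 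Matching $\Ext^1_G(\pi,\pi)$ with $\Ext^1_{\gal}(\rho,\rho)$ then reduces to showing that $\VV$ is an isomorphism on first Ext groups in the relevant block, equivalently that the endomorphism ring of $\Omega$ matches the universal deformation ring of $\rho$.

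The main obstacle, I expect, is the precise control of this injective envelope $\Omega$: one must not only construct it in the right category but pin down its socle filtration and endomorphism ring sharply enough to read off its first cohomology layer. The identification of $\Ext^1_G(\pi,\pi)$ with the Galois-side Ext demands that the Colmez functor be understood at the level of universal deformations, and it is at exactly this step that the sharp hypothesis $p\ge 5$ enters.
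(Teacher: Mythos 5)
There is a genuine gap, in fact two. First, your vanishing argument asserts the conclusion rather than proving it. Writing $\Ext^1_G(\tau,\pi)\cong\Hom_G(\tau,\Omega/\pi)$ for a suitable injective envelope is fine in principle (after the reduction to a fixed central character, which you also need and which the paper supplies via the sequence $0\to\Ext^1_{G,\zeta}(\pi,\pi)\to\Ext^1_G(\pi,\pi)\to\Hom(Z,\Fbar)\to 0$), but the entire difficulty of the theorem is to determine the socle of $\Omega/\pi$, and this does not follow from the Barthel--Livn\'e presentation plus ``Frobenius reciprocity and Hecke eigenvalues'': nothing in that formalism rules out, say, an extension of a character or of a non-isomorphic supersingular representation by $\pi$. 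The paper's route is to compute $H^1(I_1/Z_1,\pi)$ (equivalently $\RR^1\II(\pi)$, the first layer of the envelope seen through the pro-$p$ Iwahori--Hecke functor $\II$ and Ollivier's equivalence, via a five-term exact sequence). That computation rests on a new structure theorem for supersingular representations, the exact sequence $0\to\pi^{I_1}\to M\oplus\Pi\centerdot M\to\pi\to 0$ with $M|_{I\cap U}$ injective, together with delicate $\Ext$ calculations over $I/Z_1$; and even then, in the Iwahori case $r\in\{0,p-1\}$ the Hecke-module structure of $\RR^1\II(\pi)$ cannot be pinned down by such soft arguments alone --- the paper imports a lower bound $\dim\Ext^1_{G,\zeta}(\pi,\pi)\ge 3$ from the Galois side (Colmez--Kisin) precisely to force $\RR^1\II(\pi)\cong\II(\pi)\oplus\II(\pi)$.

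Second, your dimension count hinges on $\VV$ inducing an \emph{isomorphism} $\Ext^1_G(\pi,\pi)\cong\Ext^1_{\gal}(\rho,\rho)$, ``equivalently that the endomorphism ring of $\Omega$ matches the universal deformation ring.'' Only the injectivity of $\VV$ on $\Ext^1$ (Colmez, VII.5.3) was available, and the full comparison with the universal deformation ring is a far deeper statement that the paper deliberately avoids: it proves the upper bound $\dim\Ext^1_{G,\zeta}(\pi,\pi)\le 3$ on the automorphic side (via bounds on $\Ext^1_{K/Z_1}(\sigma,\pi)$ coming from the same computation of $\RR^1\II(\pi)$, or via vanishing of $\Ext^2_{\HH}$ in the Iwahori case), uses the Galois side only for the lower bound $\ge 3$ (density of crystalline points gives a surjection $R^{\pi,\eta}\twoheadrightarrow R^{\omega\eta}$, not an isomorphism of deformation problems), and then obtains $5=3+2$ from the central-character sequence, the $2$ being $\dim\Hom(\Qp^{\times},\Fbar)$. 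Your Euler-characteristic computation of $h^1(\mathrm{Ad}\,\rho)=5$ is correct (modulo the small slip that $\rho\cong\rho(1)$ is excluded exactly when $(p,r)\neq(3,1)$, not only when $p\ge 5$), but without an independent automorphic upper bound or a proof of surjectivity of \eqref{VExt} it yields at best one inequality, not the equality $\dim\Ext^1_G(\pi,\pi)=5$.
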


This answers affirmatively for $p>2$ a question of Colmez, see the introduction of \cite{col1}.
 When $p=3$ there are two cases and we can show that in one of them 
$\dim \Ext^1_G(\pi, \pi)=5$, in the other $\dim \Ext^1_G(\pi, \pi)\le 6$, which is the expected dimension. 
We  note that if $\tau$ is a twist of Steinberg representation by a character or irreducible principal series 
then  Colmez \cite[VII.5.4]{col1} and Emerton \cite[Prop. 4.2.8]{em} prove by  different methods that $\Ext^1_G(\tau, \pi)=0$. Our result is new 
when $\tau$ is supersingular or a character.

We now explain the strategy of the proof. We first get rid of the extensions coming from the centre $Z$ of $G$. Let $\zeta: Z\rightarrow \Fbar^{\times}$ be the 
central character of $\pi$, and let $\Rep_{G,\zeta}$ be the full subcategory of $\Rep_G$ consisting of representations with the central character 
$\zeta$. We show in Theorem \ref{centre} that if $\Ext^1_G(\tau,\pi)\neq 0$ then $\tau$ also admits a central character $\zeta$. Let 
$\Ext^1_{G,\zeta}(\tau,\pi)$ parameterise all the isomorphism classes of extensions between $\pi$ and $\tau$ admitting a central character $\zeta$.
We show that if $\tau\not \cong \pi$ then $\Ext^1_{G,\zeta}(\tau,\pi)\cong \Ext^1_G(\tau,\pi)$ and there exists an exact sequence:
\begin{equation}\label{C}
0\rightarrow \Ext^1_{G,\zeta}(\pi,\pi)\rightarrow \Ext^1_G(\pi,\pi)\rightarrow \Hom(Z,\Fbar)\rightarrow 0,
\end{equation}  
where $\Hom$ denotes continuous group homomorphisms. Let $I$ be the `standard' Iwahori 
subgroup of $G$, (see \S2), and $I_1$ the maximal pro-$p$ subgroup of $I$. Since $\zeta$ is smooth, it is trivial on 
the pro-$p$ subgroup $I_1\cap Z$, hence we may consider $\zeta$ as a character of $ZI_1$.
Let $\HH:=\End_G(\cIndu{ZI_1}{G}{\zeta})$ be the Hecke algebra,
and $\Mod_{\HH}$ the category of right $\HH$-modules. Let $\II: \Rep_{G,\zeta}\rightarrow \Mod_{\HH}$ be the functor
$$\II(\kappa):=\kappa^{I_1}\cong \Hom_{G}( \cIndu{ZI_1}{G}{\zeta},\kappa).$$ 
Vign\'{e}ras shows in \cite{vig} that $\II$ induces a bijection between irreducible representations of $G$ with the central character $\zeta$ 
and irreducible $\HH$-modules. Using results of Ollivier \cite{o2} we show that there exists an $E_2$-spectral sequence:
\begin{equation}\label{specseq1}
\Ext^i_{\HH}(\II(\tau), \RR^j \II(\pi))\Longrightarrow \Ext^{i+j}_{G, \zeta}(\tau, \pi).
\end{equation}
The $5$-term sequence associated to \eqref{specseq1} gives an exact sequence:
\begin{equation}\label{R} 
0\rightarrow \Ext^1_{\HH}(\II(\tau),\II(\pi))\rightarrow \Ext^1_{G,\zeta}(\tau,\pi)\rightarrow \Hom_{\HH}(\II(\tau),\RR^1\II(\pi)).
\end{equation}   
Now $\Ext^1_{\HH}(\II(\tau),\II(\pi))$ has been 
determined in \cite{bp} and in fact  is zero if $\tau\not \cong \pi$. The problem is to understand $\RR^1\II(\pi)$ as an $\HH$-module.

We have two approaches to this. Results of Kisin \cite{kis1} imply that the dimension of $ \Ext^1_G(\pi, \pi)$ is 
bounded below by the dimension of $\Ext^1_{\gal}(\rho, \rho)$, where 
$\rho$  is the $2$-dimensional irreducible 
$\Fbar$-rep\-re\-sen\-ta\-tion of $\gal$, the absolute Galois group of $\Qp$,
corresponding to $\pi$ under the mod $p$ Langlands, see \cite{b2}, \cite{col1}. (Excluding one case when $p=3$.)
Let $\mathfrak{I}$ be the image of 
$\Ext^1_{G,\zeta}(\pi,\pi)\rightarrow\Hom_{\HH}(\II(\pi),\RR^1\II(\pi))$. Using \eqref{C} and \eqref{R} we 
obtain a lower bound on  the dimension of $\mathfrak{I}$. By forgetting the $\HH$-module structure  
 we obtain an isomorphism of vector spaces:
$$ \RR^1\II(\pi)\cong H^1(I_1/Z_1,\pi),$$ 
where $Z_1$ is the maximal pro-$p$ subgroup of $Z$. The key idea is to bound the dimension of $H^1(I_1/Z_1,\pi)$ from above and use this to show 
if $\II(\tau)$ was a submodule of  $\RR^1\II(\pi)$ for some $\tau\not \cong \pi$ then this would force the  dimension of $\mathfrak{I}$ 
to be smaller than calculated before.

At the time of writing (an n-th draft of) this,  \cite{kis1} was not written up  and there were some technical issues with 
the outline of the argument in the introductions of \cite{col1} and \cite{kis}, 
 caused by the fact that all the representations in \cite{col1} are assumed to have a central character. 
Since we only need a lower bound on the dimension
of $\Ext^1_G(\pi, \pi)$ and only in the supersingular case, we have written up the proof of a weaker statement in the appendix. 
The proof given there is a variation on Colmez-Kisin argument. 

In order to  bound the dimension of $H^1(I_1/Z_1,\pi)$ we prove a new result about the structure of supersingular representations of $G$.
 Let $M$ be the subspace of $\pi$ generated 
by $\pi^{I_1}$ and the semi-group  $\bigl (\begin{smallmatrix} p^{\NN} & \Zp\\ 0 & 1\end{smallmatrix}\bigr )$.
One may show that $M$ is a representation of $I$. 

\begin{thm}\label{exseq1} The map $(v,w)\mapsto v-w$ induces an exact sequence of $I$-representations:
$$0\rightarrow \pi^{I_1}\rightarrow M\oplus \Pi\centerdot M\rightarrow \pi\rightarrow 0,$$
where $\Pi=\begin{pmatrix} 0 & 1\\ p & 0\end{pmatrix}$. 
\end{thm}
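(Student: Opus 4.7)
The statement is a Mayer--Vietoris-style decomposition of $\pi$: it asserts that $\pi$ is simultaneously the pushout and the pullback of the span $M\leftarrow \pi^{I_1}\to \Pi M$. Morally this reflects the action of $G/Z$ on the Bruhat--Tits tree of $\GL_2(\Qp)$: removing the edge between $v_0$ (with stabiliser $KZ$) and $v_1=\Pi v_0$ splits the tree into disjoint half-trees $V_0\ni v_0$ and $V_1\ni v_1$, interchanged by $\Pi$; the subrepresentation $M$ is to capture the ``$V_0$ part'' of $\pi$ and $\Pi M$ the ``$V_1$ part''. The plan is to verify three things: both maps are well-defined, $\psi$ is surjective (equivalently $M+\Pi M=\pi$), and $\ker\psi=\phi(\pi^{I_1})$ (equivalently $M\cap \Pi M=\pi^{I_1}$).

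Well-definedness is short: $\pi^{I_1}\subseteq M$ by construction, and a direct computation gives $\Pi I_1 \Pi^{-1}=I_1$, so $\pi^{I_1}=\Pi\pi^{I_1}\subseteq \Pi M$; both $\phi$ and $\psi$ are $I$-equivariant because $\Pi$ normalises $I$. For surjectivity of $\psi$, use that $\pi$ is irreducible with $\pi^{I_1}\neq 0$, hence $\pi=G\cdot \pi^{I_1}$, and apply the Iwahori--Bruhat decomposition $G=\bigsqcup I\tilde w I$. Representatives $\tilde w$ of the extended affine Weyl group, taken modulo central elements, may be chosen in $\{\alpha^n,\alpha^n\Pi:n\in\ZZ\}$ with $\alpha=\bigl(\begin{smallmatrix}p&0\\0&1\end{smallmatrix}\bigr)$; using $\alpha^{-n}\equiv \Pi\alpha^n\Pi$ modulo the centre (which acts on $\pi$ by $\zeta$), each representative lies in $B^+$, $B^+\Pi$, $\Pi B^+$, or $\Pi B^+\Pi$, where $B^+=\bigl(\begin{smallmatrix}p^{\NN}&\Zp\\0&1\end{smallmatrix}\bigr)$. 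Since $I\pi^{I_1}=\pi^{I_1}=\Pi\pi^{I_1}$, each $I\tilde w I \cdot \pi^{I_1}$ collapses to either $IB^+\pi^{I_1}\subseteq M$ or $I\Pi B^+\pi^{I_1}\subseteq \Pi M$, giving $\pi\subseteq M+\Pi M$.

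The remaining --- and main --- task is $M\cap \Pi M\subseteq \pi^{I_1}$. Introduce the ascending filtration $M_n:=\sum_{k=0}^n I\alpha^k\pi^{I_1}$, so $M_0=\pi^{I_1}$ (since $I$ normalises $I_1$) and $M=\bigcup_n M_n$. The plan is to argue by induction on $n$ that $M_n\cap \Pi M\subseteq \pi^{I_1}$. Geometrically, the $I$-orbit of $\alpha^n v_0$ is precisely the set of $p^n$ vertices of $V_0$ at distance exactly $n$ from $v_0$, and one expects the graded piece $M_n/M_{n-1}$ to decompose as a direct sum of $I$-representations indexed by these new vertices, each summand being $\pi^{I_{1,v}}$ for $I_{1,v}$ a pro-$p$-Iwahori at $v$ (oriented toward $v_0$). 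Analogously $\Pi M$ is controlled by $V_1$; since $V_0$ and $V_1$ are disjoint, no nontrivial class in $M_n/M_{n-1}$ (for $n\ge 1$) can be matched by an element of $\Pi M$. The main obstacle is justifying this vertex-by-vertex decomposition rigorously: one must show that the natural map $\bigoplus_{v\in V_0}\pi^{I_{1,v}}\to M$ is an isomorphism, ruling out hidden cancellations between the $I$-orbits of $\alpha^k\pi^{I_1}$ for different $k$. This is where the supersingularity of $\pi$ --- via the vanishing of the relevant Hecke operators on $\pi^{I_1}$ --- presumably enters essentially.
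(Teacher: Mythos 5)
Your reductions at the start are fine: well-definedness is easy (modulo the fact that $M$ itself is $I$-stable, which needs the small argument with $t^{-n}(I\cap P^s)t^n\subset I$, cf.\ Lemma \ref{Istable}), and your surjectivity argument via the Iwahori--Bruhat decomposition and the list of representatives $\{\alpha^n,\alpha^n\Pi\}$ modulo the centre is correct; it is a legitimate alternative to the paper's route, which instead shows that $\pi_{\sigma}+\pi_{\tsigma}$ is stable under $G^+$ and $\Pi$ and invokes irreducibility.

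However, the heart of the theorem is the containment $M\cap \Pi\centerdot M\subseteq \pi^{I_1}$, and there you have only a heuristic, which you yourself flag as the ``main obstacle''. The half-tree picture is genuinely misleading here: $\pi$ is a \emph{quotient} of $\cIndu{KZ}{G}{\Sym^r\Fbar^2}$ (by the image of $T$), not a subspace of functions on the tree, so elements of $\pi$ have no well-defined support and the quotient map introduces precisely the cross-edge identifications --- e.g.\ $v_{\sigma}=(-1)^{a+1}\sum_{\lambda}\lambda^{p-1-r}\bigl(\begin{smallmatrix}1&[\lambda]\\0&1\end{smallmatrix}\bigr)t\,v_{\tsigma}$ and its companion \eqref{trel} --- that could a priori make $M\cap\Pi\centerdot M$ strictly larger than $\pi^{I_1}$. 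Asserting that a vertex-by-vertex decomposition $\bigoplus_{v\in V_0}\pi^{I_{1,v}}\to M$ is an isomorphism is essentially a restatement of the claim to be proved, and no mechanism is given for ruling out the cancellations; saying supersingularity ``presumably enters'' is not an argument. The paper's proof supplies exactly this missing mechanism: first the structure theorem that $M_{\sigma}$ (resp.\ $M_{\tsigma}$) is an injective envelope of $\chi$ (resp.\ $\chi^s$) in $\Rep_{H(I_1\cap U)}$, built from the relations \eqref{relX} which encode supersingularity; then a cohomological step, where $Q:=(M\cap\Pi\centerdot M)/\pi^{I_1}$ has $Q^{I_1}$ injecting into both $\Delta$ and $\Pi\centerdot\Delta$ inside $H^1(I_1,\pi^{I_1})\cong\Hom(I_1,\Fbar)^{\oplus 2}$, and the explicit cocycle computation with $w_{\sigma}$ (Lemma \ref{calc1}, Proposition \ref{fix}) shows $\Delta\cap\Pi\centerdot\Delta=0$ in terms of $\kappa^u,\kappa^l,\varepsilon$, forcing $Q=0$. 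Without an argument of this kind (or a proof of your direct-sum claim), the exactness in the middle is unproved, so the proposal has a genuine gap at its central step.
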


 We show that the restrictions of $M$ and $M/\pi^{I_1}$  to $I\cap U$, 
where $U$ is the unipotent upper triangular matrices, are injective objects in $\Rep_{I\cap U}$. If $\psi:I\rightarrow \Fbar^{\times}$ is 
a smooth character and $p>2$, using this,  we work out $\Ext^1_{I/Z_1}(\psi, M)$ and $\Ext^1_{I/Z_1}(\psi, M/\pi^{I_1})$. Theorem \ref{exseq1}
enables us to determine $H^1(I_1/Z_1,\pi)$ as a representation of $I$, see Theorem \ref{main} and Corollary \ref{dimH1}. Once one has this
it is quite easy to work out $\RR^1\II(\pi)$ as an $\HH$-module in the regular case, see Proposition \ref{R1regp5}, without using
Colmez's work. It is also possible to work out directly the $\HH$-module structure of $\RR^1\II(\pi)$ in the Iwahori case. However, 
the proof relies on   heavy calculations of $\Ext^1_K(\Eins, \pi)$ and $\Ext^1_K(\St, \pi)$, where $K:=\GL_2(\Zp)$ and $\St$ is the Steinberg representation 
of $K/K_1\cong \GL_2(\Fp)$. So we decided to exclude it and use ``strat\'egie de Kisin'' instead.

The primes $p=2$, $p=3$ require some special attention. Theorem \ref{exseq1} holds  when $p=2$, but our calculation 
of $H^1(I_1/Z_1, \pi)$ breaks down for the technical reason that  
the trivial character is the only smooth character of $I$, when $p=2$. However, if $p=2$ and we fix a 
central character $\zeta$ then there exists only one supersingular representation (up to isomorphism) with central character $\zeta$. 
Hence, it is enough to show that $\Ext^1_G(\tau,\pi)=0$ when $\tau$ is a character, since all the other cases are handled in 
\cite[VII.5.4]{col1}, \cite[\S4]{em}. It might be easier to do this directly.

Let $\Sp$ be the Steinberg representation of $G$. After the first draft of this paper, it was pointed out to me by Emerton that  
it was not known (although expected) that $\Ext^1_G(\eta, \Sp)=0$, when $\eta:G\rightarrow \Fbar^{\times}$ is a smooth  
character of order $2$ (all the other cases have been worked out in \cite[\S4]{em}, see also \cite[\S VII.4,\S VII.5]{col1}). 
A slight modification of  our proof for supersingular representations also 
works for the Steinberg representation. In the last section we work out $\Ext^1_G(\tau, \Sp)$ for all irreducible $\tau$, when $p>2$.
As a result of this and the results already in the literature (\cite{bp}, \cite{col1}, \cite{em}), one knows $\Ext^1_G(\tau, \pi)$   
for all irreducible $\tau$ and $\pi$, when $p>2$. We record this in the last section.

\textit{Acknowledgements.} The key ideas of this paper stem from a joint work with Christophe Breuil \cite{bp}. 
I would like to thank Pierre Colmez for pointing out this problem to me and Florian Herzig for a number of stimulating discussions.  
I would like to thank Ga\"etan Chenevier,  Pierre Colmez  and Mark Kisin for some very helpful discussion on the 
``strat\'egie de Kisin'' outlined in \cite{col1} and \cite{kis}. This paper was written when I was visiting 
Universit\'e Paris-Sud and IH\'ES, supported by Deutsche Forschungsgemeinschaft. I 
would like to thank these institutions.

\section{Notation}

Let $G:=\GL_2(\Qp)$, let $P$ be the subgroup of upper-triangular matrices, $T$ the subgroup of diagonal matrices, $U$ be the 
unipotent upper triangular matrices and $K:=\GL_2(\Zp)$. Let $\pF:=p\Zp$ and  
$$ I:=\begin{pmatrix} \Zp^{\times} & \Zp \\ \pF & \Zp^{\times} \end{pmatrix},\quad I_1:= \begin{pmatrix} 1+\pF & \Zp \\ \pF & 1+\pF \end{pmatrix}, 
\quad K_1:=\begin{pmatrix} 1+\pF & \pF \\ \pF & 1+\pF \end{pmatrix}.$$
For $\lambda\in \Fp$ we denote the Teichm\"uller lift of $\lambda$ to $\Zp$ by $[\lambda]$.  Set
$$H:=\biggl \{\begin{pmatrix} [\lambda] & 0\\ 0 & [\mu]\end{pmatrix}: \lambda, \mu\in \Fp^{\times}\biggr \}.$$  
Let $\alpha:H\rightarrow \Fbar^{\times}$ be the character
$$\alpha(\begin{pmatrix} [\lambda] & 0\\ 0 & [\mu]\end{pmatrix}):=\lambda\mu^{-1}.$$
Further, define 
$$\Pi:=\begin{pmatrix} 0 & 1\\ p & 0\end{pmatrix}, \quad s:=\begin{pmatrix} 0 & 1 \\ 1 & 0\end{pmatrix}, \quad 
t:= \begin{pmatrix} p & 0 \\ 0 & 1 \end{pmatrix}. $$
For $\lambda\in \Fbar^{\times}$ we define an unramified character
$\mu_{\lambda}:\Qp^{\times} \rightarrow \Fbar^{\times}$, by  $x\mapsto \lambda^{\val(x)}$.

Let $Z$ be the centre of $G$, and set $Z_1:=Z\cap I_1$. Let $G^0:=\{g\in G: \detr g\in \Zp^{\times}\}$ and set $G^+:=ZG^0$.

Let  $\mathcal G$ be a topological group. We denote by  $\Hom(\mathcal G, \Fbar)$ the continuous group homomorphism, where the additive group $\Fbar$ is 
given the  discrete topology. If $\VVV$ is a representation of $\mathcal G$ and $S$ is a subset of $\VVV$ we denote by 
$\langle \mathcal G \centerdot S\rangle$ the smallest subspace of $\VVV$ stable under the action of $\mathcal G$ and containing $S$.
Let $\Rep_{\mathcal G}$ be the category of smooth representations of $\mathcal G$ on $\Fbar$-vector spaces. If $\mathcal Z$ 
is the centre of $\mathcal G$ and $\zeta: \mathcal Z\rightarrow \Fbar^{\times}$ is a smooth character then we denote by $\Rep_{\mathcal G,\zeta}$ the 
full subcategory of $\Rep_{\mathcal G}$ consisting of representations with central character $\zeta$. 

All the representations in this paper are on $\Fbar$-vector spaces.

\section{Irreducible representations of $K$}
We recall some facts about the irreducible representations of $K$ and introduce some notation. Let $\sigma$ be an irreducible smooth 
representation of $K$.  Since $K_1$ is an open  pro-$p$ subgroup of $K$, the space of $K_1$-invariants $\sigma^{K_1}$ is non-zero,
and since $K_1$ is normal in $K$, $\sigma^{K_1}$ is a non-zero $K$-subrepresentation of $\sigma$, and since $\sigma$ is irreducible 
we obtain $\sigma^{K_1}=\sigma$. Hence the smooth irreducible representations of $K$ coincide with the irreducible
representations of $K/K_1\cong \GL_2(\Fp)$, and so there exists a uniquely determined pair of integers $(r,a)$ with 
$0\le r\le p-1$, $0\le a<p-1$, such that 
$$\sigma\cong \Sym^r \Fbar^2\otimes \detr^a.$$
Note that $r=\dim\sigma-1$ and throughout the paper given $\sigma$, $r$ will always mean $\dim \sigma -1$. The space of $I_1$-invariants
$\sigma^{I_1}$ is $1$-dimensional and so $H$ acts on $\sigma^{I_1}$ be a character $\chi_{\sigma}=\chi$. Explicitly,
$$\chi(\begin{pmatrix} [\lambda] & 0\\0 & [\mu]\end{pmatrix})= \lambda^r (\lambda\mu)^a.$$
We define an involution $\sigma\mapsto \tilde{\sigma}$ on the set of isomorphism classes of smooth irreducible representations of $K$ by 
setting 
$$\tilde{\sigma}:=\Sym^{p-r-1}\Fbar^2\otimes \detr^{r+a}.$$
Note that $\chi_{\tilde{\sigma}}=\chi_{\sigma}^s$. For the computational purposes it is convenient to identify $\Sym^r\Fbar^2$ with the space 
of homogeneous polynomials in $\Fbar[x,y]$ of degree $r$. The action of $\GL_2(\Fp)$ is given by
$$\begin{pmatrix} a & b \\ c& d\end{pmatrix} \centerdot P(x,y):= P(ax+cy, bx+dy).$$
With this identification $\sigma^{I_1}$ is spanned by $x^r$.
\begin{lem}\label{calcsym} let $0\le j\le r$ be an integer and define  $f_j\in \Sym^r\Fbar^2\otimes \det^a$ by
$$ f_j:=\sum_{\lambda\in \Fp} \lambda^{p-1-j}\begin{pmatrix} 1  & \lambda\\ 0 & 1\end{pmatrix} s x^r.$$
If $r=p-1$ and $j=0$ then $f_0=(-1)^{a+1}(x^r+y^r)$, otherwise $f_j=(-1)^{a+1}\bigl(\begin{smallmatrix} r\\j\end{smallmatrix}\bigr) x^jy^{r-j}$. 
\end{lem}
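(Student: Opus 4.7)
The plan is to unwind the definition of $f_j$ using the explicit $\GL_2(\Fp)$-action on homogeneous polynomials given above, and then evaluate the resulting character sum over $\lambda\in\Fp$. First I would compute $sx^r$: from $s\centerdot P(x,y)=P(y,x)$ on $\Sym^r\Fbar^2$ and the twist by $\detr^a$ (noting $\detr s=-1$), we get $sx^r=(-1)^a y^r$ in $\Sym^r\Fbar^2\otimes\detr^a$. Next, $\bigl(\begin{smallmatrix}1&\lambda\\0&1\end{smallmatrix}\bigr)\centerdot y^r=(\lambda x+y)^r$ (no determinant twist), so the binomial theorem gives
\[
\begin{pmatrix}1&\lambda\\0&1\end{pmatrix} s x^r=(-1)^a\sum_{k=0}^{r}\binom{r}{k}\lambda^k x^k y^{r-k}.
\]

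Plugging this into the definition of $f_j$ and interchanging summations yields
\[
f_j=(-1)^a\sum_{k=0}^{r}\binom{r}{k}x^ky^{r-k}\,S_k,\qquad S_k:=\sum_{\lambda\in\Fp}\lambda^{p-1-j+k}.
\]
I would then invoke the standard identity (with the convention $0^0=1$) that $\sum_{\lambda\in\Fp}\lambda^m$ equals $-1$ if $m$ is a strictly positive multiple of $p-1$, and $0$ otherwise (in particular when $m=0$, since the sum is $p=0$ in $\Fbar$). Applied with $m=p-1-j+k$ where $0\le j\le r\le p-1$ and $0\le k\le r$: the exponent lies in $[0,2(p-1)]$, so it can be a positive multiple of $p-1$ only for $k=j$ (giving $m=p-1$) or, when $j=0$ and $k=r=p-1$ (giving $m=2(p-1)$).

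This immediately splits into the two cases. In the generic situation, only $k=j$ contributes, so $f_j=(-1)^a\cdot\binom{r}{j}\cdot(-1)\cdot x^jy^{r-j}=(-1)^{a+1}\binom{r}{j}x^jy^{r-j}$. In the exceptional case $r=p-1$, $j=0$, both $k=0$ and $k=p-1$ contribute with $S_k=-1$, while $\binom{r}{0}=\binom{r}{r}=1$, yielding $f_0=(-1)^{a+1}(y^r+x^r)$. There is no real obstacle here; the only point that requires care is not to overlook the second contributing index $k=p-1$ in the exceptional case, which is precisely why the lemma has to single out $r=p-1,\,j=0$.
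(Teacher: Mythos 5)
Your proof is correct and follows essentially the same argument as the paper: expand $\bigl(\begin{smallmatrix}1&\lambda\\0&1\end{smallmatrix}\bigr)sx^r$ via the binomial theorem and evaluate the power sums $\sum_{\lambda\in\Fp}\lambda^{p-1-j+k}$, which vanish except at $k=j$ and, when $r=p-1$, $j=0$, also at $k=p-1$. The only cosmetic difference is that the paper first reduces to $a=0$ by noting the twist contributes $(\det s)^a=(-1)^a$, whereas you carry the factor $(-1)^a$ through directly.
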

\begin{proof} It is enough to prove the statement when $a=0$, since twisting the action by $\det^a$ multiplies $f_j$ by $(\det s)^a=(-1)^a$.
 We have 
\begin{equation}\label{itgetskilled}
f_j=\sum_{\lambda\in \Fp} \lambda^{p-1-j} (\lambda x+y)^r= \sum_{i=0}^r \begin{pmatrix} r\\ i\end{pmatrix} (\sum_{\lambda\in \Fp}
\lambda^{p-1+i-j}) x^i y^{r-i}.
\end{equation}
If $a\ge 0$ is an integer then $\Lambda_a:=\sum_{\lambda\in \Fp} \lambda^{a}$ is zero, unless $a>0$ and $p-1$ divides $a$, 
in which case $\Lambda_a=-1$. Note that $0^0=1$. If $a=p-1+i-j$ then $\Lambda_a \neq 0$ if and only if $i=j$ or $i-j=p-1$,
which is equivalent to $r=i=p-1$ and $j=0$. This implies the assertion.
\end{proof}
Let $\Fbar[[I\cap U]]$ denote the completed group algebra of $I\cap U$. Since $I\cap U\cong \Zp$ mapping $X$ to 
$\begin{pmatrix} 1 & 1 \\ 0& 1\end{pmatrix} -1$ induces an isomorphism between the ring   of formal power series 
in one variable $\Fbar[[X]]$ and $\Fbar[[I\cap U]]$. Every smooth representation $\tau$ of $I\cap U$ is naturally a module over
 $\Fbar[[I\cap U]]$, and we will also view $\tau$ as a module over $\Fbar[[X]]$ via the above isomorphism.

\begin{lem}\label{calcsym2} Let $x^r\in \Sym^r\Fbar^2\otimes\det^a$ then $X^r s x^r= (-1)^a r! x^r$.
\end{lem}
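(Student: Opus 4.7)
The plan is to reduce to the untwisted case and then compute $X^r y^r$ directly via the finite difference calculus.

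First, I would reduce to $a=0$. The unipotent element $u := \begin{pmatrix} 1 & 1 \\ 0 & 1 \end{pmatrix}$ has determinant $1$, so the $\det^a$ twist does not affect the action of $X = u - 1$, while it multiplies $s x^r$ by $(\det s)^a = (-1)^a$. Hence it suffices to show $X^r y^r = r!\, x^r$ in $\Sym^r \Fbar^2$, noting that when $a = 0$ the action formula from the paper gives $s x^r = y^r$.

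Next, I would expand $X^r = (u - 1)^r = \sum_{k=0}^{r} (-1)^{r-k} \binom{r}{k} u^k$. Since $u^k = \begin{pmatrix} 1 & k \\ 0 & 1 \end{pmatrix}$ acts by $P(x, y) \mapsto P(x, kx + y)$, it sends $y^r$ to $(kx + y)^r$. Expanding binomially and swapping the order of summation yields
$$X^r y^r = \sum_{j=0}^{r} \binom{r}{j} \left( \sum_{k=0}^{r} (-1)^{r-k} \binom{r}{k} k^j \right) x^j y^{r-j}.$$

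Finally, I would invoke the standard finite difference identity: $\sum_{k=0}^{r} (-1)^{r-k} \binom{r}{k} k^j$ is the $r$-th forward difference of the monomial $k^j$ evaluated at $0$, so it vanishes for $0 \le j < r$ and equals $r!$ for $j = r$. Only the $j = r$ term survives, giving $X^r y^r = r!\, x^r$. Since $r \le p - 1$, the factor $r!$ is non-zero in $\Fbar$, so the conclusion is substantive. The main ``obstacle'' is really only bookkeeping; the single characteristic-$p$ point to check is that the finite difference identity is an integral identity and therefore descends to $\Fbar$, which is immediate.
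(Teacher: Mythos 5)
Your proof is correct, and it differs from the paper's only in how the core computation $X^r y^r = r!\,x^r$ is carried out. The paper applies $X$ one step at a time: $X\centerdot x^{r-i}y^i = i\,x^{r-i+1}y^{i-1}$ plus terms of strictly smaller degree in $y$, so after $r$ iterations only the chain of leading terms survives, contributing $r!\,x^r$. You instead expand $X^r=(u-1)^r$ binomially in the group algebra, act by the powers $u^k$, swap the order of summation, and invoke the finite-difference identity $\sum_{k=0}^{r}(-1)^{r-k}\binom{r}{k}k^j = r!\,\delta_{j,r}$ for $0\le j\le r$, which, as you note, is an identity over $\ZZ$ and hence descends to $\Fbar$. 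Both arguments compute the same $r$-th difference; the paper's iteration avoids citing the identity and any reordering of sums, while your closed-form calculation is a one-shot argument in the style of the paper's proof of Lemma \ref{calcsym} (expand, exchange summations, evaluate an elementary sum) and makes it transparent why the coefficients of $x^jy^{r-j}$ vanish for $j<r$. The initial reduction to $a=0$, using that $\det u=1$ so the twist does not affect $X$ while $s$ picks up $(\det s)^a=(-1)^a$, is the same in both proofs.
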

\begin{proof} We have $s x^r=(-1)^a y^r$. If $0\le i\le r$ then $X \centerdot x^{r-i}y^i= x^{r-i} (y+1)^i- x^{r-i} y^i= i x^{r-i+1}y^{i-1} +Q$,
where $Q$ is a homogeneous polynomial of degree $r$, such that the degree of $Q$ in $y$ is less than $i-1$. Applying this observation $r$
times we obtain that $X^r \centerdot y^r= r! x^r.$
\end{proof}

\section{Irreducible representations of $G$}
We recall some facts about the irreducible representations of $G$. We fix an integer $r$ with $0\le r\le p-1$. We consider 
$\Sym^r\Fbar^2$ as a representation of $KZ$ by making $p$ act trivially. 
 It is shown in \cite[Prop. 8]{bl} that there exists an isomorphism of algebras:
$$\End_{G}(\cIndu{KZ}{G}{\Sym^r \Fbar^2})\cong \Fbar[T]$$
for a certain $T\in  \End_{G}(\cIndu{KZ}{G}{\Sym^r \Fbar^2})$ defined in \cite[\S 3]{bl}.  One may describe $T$ as follows. Let 
$\varphi\in \cIndu{KZ}{G}{\Sym^r \Fbar^2}$ be such that 
 $\supp \varphi =ZK$ and $\varphi(1)=x^r$. Since $\varphi$ generates $\cIndu{KZ}{G}{\Sym^r \Fbar^2}$ as 
a $G$-representation $T$ is determined by $T\varphi$. 
\begin{lem}\label{T}
 \begin{itemize}
\item[(i)] If $r=0$  then
$$ T \varphi = \Pi \varphi +\sum_{\lambda \in \Fp} \begin{pmatrix} 1 & [\lambda]\\ 0 & 1
\end{pmatrix} t \varphi .$$
\item[(ii)] Otherwise, 
$$ T \varphi = \sum_{\lambda \in \Fp} \begin{pmatrix} 1 & [\lambda] \\ 0 & 1
\end{pmatrix} t  \varphi.$$
\end{itemize}
\end{lem}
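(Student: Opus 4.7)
The proof is a direct computation from the Barthel--Livne definition of $T$ in \cite[\S3]{bl}.

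First, I would establish the coset decomposition
$$KZ \cdot t \cdot KZ = \bigsqcup_{\lambda \in \Fp} \begin{pmatrix} 1 & [\lambda] \\ 0 & 1 \end{pmatrix} t\cdot KZ \sqcup \Pi\cdot KZ,$$
a disjoint union of $p+1$ left-$KZ$-cosets. The identity $\Pi = st$ shows that $\Pi$ lies in this double coset, while disjointness follows from direct matrix computations: for $\lambda \ne \mu$, the element $t^{-1} \bigl(\begin{smallmatrix} 1 & [\mu]-[\lambda] \\ 0 & 1 \end{smallmatrix}\bigr) t = \bigl(\begin{smallmatrix} 1 & ([\mu]-[\lambda])/p \\ 0 & 1 \end{smallmatrix}\bigr)$ has upper-right entry of negative $p$-valuation, so it is not in $KZ$; an analogous computation separates $\Pi$ from the upper-triangular representatives. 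Setting $g_\lambda := \begin{pmatrix} 1 & [\lambda] \\ 0 & 1 \end{pmatrix} t$, this dualizes to $KZ\cdot t^{-1}\cdot KZ = \bigsqcup_\lambda KZ\cdot g_\lambda^{-1} \sqcup KZ\cdot \Pi^{-1}$.

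Next, via Frobenius reciprocity $T$ corresponds to a $KZ$-equivariant map $\phi_T\colon \sigma \to \cIndu{KZ}{G}{\sigma}$ with $\phi_T(x^r) = T\varphi$, supported in $KZ\cdot t^{-1}\cdot KZ$ and normalized so that $\phi_T(x^r)(t^{-1}) = x^r$. Using the compatibility $\phi_T(k\cdot v)(g) = \phi_T(v)(gk)$ for $k \in KZ$, together with the fact that upper-triangular unipotents fix $x^r$, one computes
$$\phi_T(x^r)(g_\lambda^{-1}) = \phi_T\bigl(\bigl(\begin{smallmatrix} 1 & -[\lambda] \\ 0 & 1 \end{smallmatrix}\bigr) x^r\bigr)(t^{-1}) = \phi_T(x^r)(t^{-1}) = x^r.$$
Hence the restriction of $T\varphi$ to $\bigsqcup_\lambda KZ\cdot g_\lambda^{-1}$ equals exactly $\sum_\lambda g_\lambda \cdot \varphi$, since the function $g_\lambda\cdot \varphi$ is supported on $KZ\cdot g_\lambda^{-1}$ with $(g_\lambda\varphi)(g_\lambda^{-1}) = \varphi(1) = x^r$.

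Finally, the value $\phi_T(x^r)(\Pi^{-1})$ determines whether the $\Pi\varphi$-term appears in $T\varphi$. Under the Barthel--Livne normalization, this value is $x^r$ when $\sigma$ is trivial ($r = 0$), yielding the $\Pi\varphi$ term of case (i); and $0$ when $r\ge 1$, yielding case (ii) with no $\Pi\varphi$ term. The choice for $r = 0$ is natural since there $T$ reduces to the standard spherical Hecke operator, in which case summing over all $p+1$ coset representatives recovers $T\varphi$ term-by-term. The vanishing for $r \ge 1$ is the precise content of the Barthel--Livne normalization that singles out $T$ as a generator of the commutative algebra $\End_G(\cIndu{KZ}{G}{\sigma}) \cong \Fbar[T]$. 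The main obstacle in the write-up is a careful bookkeeping of Frobenius-reciprocity conventions and the importation of the normalization of $T$ at $\Pi^{-1}$; once this is done, the formulas (i) and (ii) assemble immediately from the three steps above.
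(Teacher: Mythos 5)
Your skeleton (the decomposition $KZtKZ=\bigsqcup_{\lambda\in\Fp}\bigl(\begin{smallmatrix}1&[\lambda]\\0&1\end{smallmatrix}\bigr)tKZ\sqcup\Pi KZ$, Frobenius reciprocity, and evaluating $T\varphi$ coset by coset) is the right one, and your computation on the cosets $KZg_\lambda^{-1}$ is correct \emph{once} the value $(T\varphi)(t^{-1})=x^r$ is granted. The problem is that the decisive inputs are asserted rather than proved: the coefficient of $\Pi\varphi$ in $T\varphi$ \emph{is} the value $(T\varphi)(\Pi^{-1})$, so declaring that "under the Barthel--Livn\'e normalization" this value is $x^r$ for $r=0$ and $0$ for $r\ge 1$ is assuming exactly the dichotomy (i)/(ii) that the lemma asserts; nothing in your write-up derives these values, or the normalization $(T\varphi)(t^{-1})=x^r$, from the actual definition of $T$ in \cite[\S 3]{bl}. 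Moreover, the justification you offer for the vanishing at $\Pi^{-1}$ is off the mark. Write $T$ as an $\End(\sigma)$-valued kernel $\Phi$ supported on $KZtKZ$ with $\Phi(k_1gk_2)=\sigma(k_1)\Phi(g)\sigma(k_2)$; then the value of $T\varphi$ on the $\Pi$-coset is (up to convention bookkeeping) $\sigma(s)\Phi(t)x^r$, and bi-equivariance alone forces $\Phi(t)$ to factor through the coinvariants of the upper unipotent subgroup of $\GL_2(\Fp)$ and to land in the invariants of the lower one, i.e.\ up to scalar $\Phi(t)$ sends $y^r\mapsto y^r$ and $x^jy^{r-j}\mapsto 0$ for $j\ge 1$. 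Hence the vanishing for $r\ge 1$ holds for \emph{any} nonzero Hecke operator supported on this double coset: it is an equivariance fact, not "the content of the normalization that singles out $T$ as a generator". What genuinely depends on Barthel--Livn\'e's choice is only the overall scalar (equivalently the value at $t^{-1}$, and for $r=0$ the scalar in front of $\Pi\varphi$), and that is precisely what you never pin down against their definition.

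So as it stands the argument is circular at the $\Pi^{-1}$-coset and unanchored at $t^{-1}$. To close the gap you should either compute $\Phi(t)$ explicitly from the definition of $T$ in \cite{bl} (checking that their scalar is $1$) and then run your coset-by-coset evaluation, or simply quote formula (19) in the proof of \cite[Thm.~19]{bl}, which contains exactly this computation of $T([1,e_{\vec 0}])$ --- the latter is what the paper's own proof does in one line.
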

\begin{proof} In the notation of \cite{bl} this is a calculation of $T([1, e_{\vec{0}}])$. The claim follows from the formula (19) in the proof 
of \cite{bl} Theorem 19. 
 \end{proof}
 It follows from  \cite[Thm 19]{bl} that the map $T-\lambda$ is injective, for all $\lambda\in \Fbar$.
\begin{defi} Let $\pi(r,\lambda)$ be a representation of $G$ defined by the exact sequence:
\begin{displaymath}
\xymatrix{ 0 \ar[r] & \cIndu{ZK}{G}{\Sym^r \Fbar^2}\ar[r]^-{T-\lambda}&  \cIndu{ZK}{G}{\Sym^r \Fbar^2}\ar[r]& \pi(r,\lambda)\ar[r]& 0.}
\end{displaymath}
If $\eta: \Qp^{\times}\rightarrow \Fbar^{\times}$ is a smooth character then set $\pi(r,\lambda,\eta):=\pi(r,\lambda)\otimes \eta\circ \det$.
\end{defi}
It follows from \cite[Thm.30]{bl} and \cite[Thm.1.1]{b1} that $\pi(r,\lambda)$ is irreducible unless $(r,\lambda)=(0,\pm1)$ or $(r,\lambda)=(p-1,\pm1)$.
Moreover, one has non-split exact sequences:
\begin{equation}\label{zero}
 0 \rightarrow \mu_{\pm 1}\circ \det\rightarrow \pi(p-1,\pm 1)\rightarrow \Sp\otimes\mu_{\pm 1}\circ \det \rightarrow 0, 
\end{equation}
\begin{equation}\label{p-1}
 0 \rightarrow \Sp\otimes\mu_{\pm 1}\circ \det\rightarrow \pi(0,\pm 1)\rightarrow \mu_{\pm 1}\circ \det \rightarrow 0, 
\end{equation}
where $\Sp$ is the Steinberg representation of $G$, (we take \eqref{zero} as definition) and if $\lambda\in \Fbar^{\times}$ then 
$\mu_{\lambda}:\Qp^{\times} \rightarrow \Fbar^{\times}$, $x\mapsto \lambda^{\val(x)}$.
Further, if $\lambda\neq 0$ and $(r,\lambda)\neq (0,\pm 1)$ then \cite[Thm.30]{bl} asserts that 
\begin{equation}\label{induced}
 \pi(r,\lambda)\cong \Indu{P}{G}{\mu_{\lambda^{-1}}\otimes \mu_{\lambda}\omega^r}.
\end{equation}
It follows from \cite[Thm. 33]{bl} and \cite[Thm 1.1]{b1} that the irreducible smooth representations of $G$ with the central character fall 
into $4$ disjoint classes:
\begin{itemize}
\item[(i)] characters, $\eta\circ \det$;
\item[(ii)] special series, $\Sp\otimes\eta\circ \det$;
\item[(iii)] (irreducible) principal series $\pi(r, \lambda, \eta)$, $0< r\le p-1$, $\lambda\neq 0$, $(r,\lambda)\neq (p-1, \pm 1)$;
\item[(iv)] supersingular $\pi(r, 0, \eta)$, $0\le r\le p-1$.
\end{itemize} 

\subsection{Supersingular representations}\label{supersingularrepresentations}
We discuss the supersingular representations. Breuil has shown  \cite[Thm.1.1]{b1} that the representations $\pi(r,0,\eta)$ are irreducible and using the results \cite{bl} 
classified smooth irreducible representations of $G$ with a central character. 

\begin{defi}\label{supersingularrep} An irreducible representation $\pi$ with a central character is supersingular if 
$\pi\cong \pi(r,0,\eta)$ for some $0\le r \le p-1$ and a smooth character $\eta$.
\end{defi}

All the isomorphism between supersingular representations cor\-res\-pon\-ding to different $r$ and $\eta$ are given by 
\begin{equation}\label{intertwine}
\pi(r,0,\eta)\cong \pi(r,0,\eta\mu_{-1})\cong \pi(p-1-r,0,\eta\omega^{r})\cong \pi(p-1-r,0,\eta\omega^{r}\mu_{-1})
\end{equation}
see \cite[Thm. 1.3]{b1}. It follows from \cite[Cor.36]{bl} that an irreducible smooth representation of $G$ with a central character 
is supersingular if and only if it is not a subquotient of any principal series representation.

We fix a supersingular representation $\pi$ of $G$ and we are interested in $\Ext^1_G(\tau,\pi)$, where $\tau$ is an irreducible 
smooth representation of $G$. If $\eta:G\rightarrow \Fbar^{\times}$ is a smooth character, then twisting by $\eta$ induces an isomorphism
$$\Ext^1_G(\tau,\pi)\cong \Ext^1_G(\tau\otimes\eta, \pi\otimes\eta).$$
Hence, we may assume that $p\in Z$ acts trivially on $\pi$, so that $\pi\cong \pi(r,0, \omega^a)$, for some $0\le r\le p-1$, 
and $0\le a<p-1$. It follows from \cite[Thm. 3.2.4, Cor. 4.1.4]{b1} that $\pi^{I_1}$ is $2$-dimensional. Moreover, \cite[Cor. 4.1.5]{b1}
implies that there exists a basis $\{v_{\sigma}, v_{\tilde{\sigma}}\}$ of $\pi^{I_1}$, such that $\Pi v_{\sigma}=v_{\tsigma}$, 
$\Pi v_{\tsigma}=v_{\sigma}$ and there exists an isomorphism of $K$-representations: 
$$\langle K\centerdot v_{\sigma}\rangle \cong \sigma, \quad  \langle K\centerdot v_{\tsigma}\rangle \cong \tsigma,$$ 
where $\sigma:=\Sym^r\Fbar^2\otimes \det^a$. The group $H$ acts on $v_{\sigma}$ by a character $\chi$ and on $v_{\tsigma}$ by a character 
$\chi^s$. Explicitly,
\begin{equation}\label{explicitchi} 
  \chi(\begin{pmatrix} [\lambda] & 0\\0 & [\mu]\end{pmatrix})=\lambda^r(\lambda \mu)^a, \quad \forall \lambda,\mu\in \Fp^{\times}.
\end{equation}
 
\begin{lem}\label{relations} The following relations hold:
\begin{equation}\label{rel}
v_{\sigma}=(-1)^{a+1}\sum_{\lambda\in \Fp} \lambda^{p-1-r} \begin{pmatrix} 1 & [\lambda] \\ 0 & 1\end{pmatrix} t v_{\tsigma}; 
\end{equation}
\begin{equation}\label{trel} 
v_{\tsigma}=(-1)^{r+a+1}\sum_{\lambda\in \Fp} \lambda^{r} \begin{pmatrix} 1 & [\lambda] \\ 0 & 1\end{pmatrix} t v_{\sigma};
\end{equation}
\begin{equation}\label{relX}
X^r t v_{\tsigma}= (-1)^a r! v_{\sigma}, 
\quad X^{p-1-r}tv_{\sigma}=(-1)^{r+a}(p-1-r)! v_{\tsigma}.
\end{equation}
\end{lem}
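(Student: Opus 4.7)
The plan is to reduce all three identities to computations inside the $K$-subrepresentations $\sigma=\langle K\centerdot v_\sigma\rangle$ and $\tilde\sigma=\langle K\centerdot v_{\tilde\sigma}\rangle$, both of which are handled by the lemmas of the previous section. The crucial observation is the matrix identity
$$s\Pi=\begin{pmatrix}0 & 1\\ 1 & 0\end{pmatrix}\begin{pmatrix}0 & 1\\ p & 0\end{pmatrix}=\begin{pmatrix}p & 0\\ 0 & 1\end{pmatrix}=t.$$
Combined with $\Pi v_\sigma=v_{\tsigma}$ and $\Pi v_{\tsigma}=v_\sigma$, this immediately yields
$$t v_{\tsigma}=s v_\sigma,\qquad t v_\sigma=s v_{\tsigma},$$
so the action of $t$ on either invariant lands back in the other $K$-socle, where $K$ acts through its reduction modulo $p$.

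For \eqref{rel}, substituting $tv_{\tsigma}=sv_\sigma$ rewrites the right hand side as a sum entirely in $\sigma\cong\Sym^r\Fbar^2\otimes\detr^a$; normalising the isomorphism so that $v_\sigma\leftrightarrow x^r$, this sum is precisely $(-1)^{a+1}f_r$ in the notation of Lemma \ref{calcsym}. The pair $(r,r)$ is never the exceptional $(p-1,0)$, so the lemma gives $f_r=(-1)^{a+1}\binom{r}{r}x^r=(-1)^{a+1}x^r$, and the two signs cancel to produce $v_\sigma$.

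Identity \eqref{trel} follows by the symmetric argument inside $\tsigma\cong\Sym^{p-1-r}\Fbar^2\otimes\detr^{r+a}$: after substituting $tv_\sigma=sv_{\tsigma}$, one applies Lemma \ref{calcsym} with parameters $\tilde r=p-1-r$, $\tilde a=r+a$ and index $\tilde j=\tilde r$, so that the weight $\lambda^{p-1-\tilde j}$ matches the $\lambda^r$ in the formula — again safely outside the exceptional case. For \eqref{relX}, the same substitutions reduce $X^r t v_{\tsigma}$ and $X^{p-1-r}tv_\sigma$ to $X^r s v_\sigma$ and $X^{p-1-r}sv_{\tsigma}$ respectively, which are direct applications of Lemma \ref{calcsym2} inside $\sigma$ and $\tsigma$, producing the scalars $(-1)^a r!$ and $(-1)^{r+a}(p-1-r)!$.

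There is no substantive obstacle: once the identity $t=s\Pi$ is noted, everything reduces to the computations already carried out in the previous section, and the only care needed is tracking signs and the substitution $(r,a)\mapsto(p-1-r,r+a)$ when passing between $\sigma$ and $\tsigma$.
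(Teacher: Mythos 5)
Your proof is correct and follows essentially the same route as the paper: the paper's proof is precisely the observation $tv_{\tsigma}=s\Pi v_{\tsigma}=sv_{\sigma}$ (i.e.\ $t=s\Pi$), reducing everything to the computations of Lemmas \ref{calcsym} and \ref{calcsym2} inside $\sigma$ and, by the symmetry $(r,a)\mapsto(p-1-r,r+a)$, inside $\tsigma$. Your write-up merely makes explicit the parameter bookkeeping and the check that the exceptional case of Lemma \ref{calcsym} never occurs, which the paper leaves implicit.
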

\begin{proof} Since $tv_{\tsigma}= s \Pi v_{\tsigma}= s v_{\sigma}$ this is a calculation in $\Sym^r\Fbar^2\otimes\det^a$, which is done in 
Lemmas \ref{calcsym} and \ref{calcsym2}.
\end{proof}

\begin{defi}\label{M} $M:=\biggl \langle \biggl (\begin{smallmatrix} p^{\NN} & \Zp\\ 0 & 1\end{smallmatrix} \biggr ) \pi^{I_1}
\biggr \rangle$, $M_{\sigma}:= \biggl \langle \biggl ( \begin{smallmatrix} p^{2\NN} & \Zp\\ 0 & 1\end{smallmatrix} \biggr ) v_{\sigma}
\biggr \rangle$ and   
 $M_{\tsigma}:= \biggl \langle \biggl ( \begin{smallmatrix} p^{2\NN} & \Zp\\ 0 & 1\end{smallmatrix} \biggl )v_{\tsigma}\biggr \rangle.$
\end{defi}

\begin{lem}\label{Istable} The subspaces  $M$, $M_{\sigma}$, $M_{\tsigma}$ are stable under the action of $I$.
\end{lem}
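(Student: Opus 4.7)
The plan is to use the Iwahori factorization $I = \bigl(\begin{smallmatrix}1&0\\p\Zp&1\end{smallmatrix}\bigr)\cdot(T\cap I)\cdot(I\cap U)$ and to check stability of $M$, $M_{\sigma}$, $M_{\tsigma}$ under each of the three factors in turn. In every case my goal is to rewrite
\[
h\cdot\begin{pmatrix} p^n & y \\ 0 & 1 \end{pmatrix} = \begin{pmatrix} p^n & y' \\ 0 & 1 \end{pmatrix}\cdot h'
\]
with $y' \in \Zp$ and $h' \in I$ acting on $\pi^{I_1}$ --- and, in the refined situations of $M_{\sigma}$ and $M_{\tsigma}$, on $v_{\sigma}$ and $v_{\tsigma}$ themselves --- by a scalar. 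Since the power of $p$ is preserved by this rewriting, if $g$ lies in $\bigl(\begin{smallmatrix} p^{2\NN} & \Zp \\ 0 & 1 \end{smallmatrix}\bigr)$ then so does $g'$; applying the identity to $v \in \pi^{I_1}$ then yields $h(gv) = g'(h'v)$, which lies in $M$ (respectively $M_{\sigma}$ or $M_{\tsigma}$).

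The upper-unipotent and torus cases are essentially free. For $h = \bigl(\begin{smallmatrix}1&b\\0&1\end{smallmatrix}\bigr) \in I\cap U$ one immediately has $hg = \bigl(\begin{smallmatrix}p^n&y+b\\0&1\end{smallmatrix}\bigr)$, so one can take $h' = 1$. For $h = \bigl(\begin{smallmatrix}\alpha&0\\0&\beta\end{smallmatrix}\bigr) \in T\cap I$, the identity $hg = \bigl(\begin{smallmatrix}p^n&\alpha\beta^{-1}y\\0&1\end{smallmatrix}\bigr) h$ gives $h' = h$; decomposing $T\cap I = H\cdot(T\cap I_1)$, one sees that $h'$ acts on $v_{\sigma}$ by $\chi$ and on $v_{\tsigma}$ by $\chi^s$ (with the $T\cap I_1$ factor acting trivially since it is contained in $I_1$), which is exactly the scalar property needed.

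The one substantive case is the lower unipotent factor. For $h = \bigl(\begin{smallmatrix} 1 & 0 \\ pc & 1 \end{smallmatrix}\bigr)$ I propose to set $y' := y(1+pcy)^{-1} \in \Zp$; this is well-defined since $1 + pcy \in \Zp^{\times}$, and a direct calculation produces
\[
h' = \begin{pmatrix} 1 - pcy' & 0 \\ p^{n+1}c & 1+pcy \end{pmatrix}.
\]
The main obstacle is then verifying that $h' \in I_1$: the entries $1+pcy$ and $p^{n+1}c$ manifestly lie in $1+p\Zp$ and $p\Zp$ respectively, and for $1-pcy'$ one uses the congruence $(1+pcy)^{-1} \equiv 1 - pcy \pmod{p^2}$ to conclude $pcy' \equiv pcy \pmod{p^2}$ and hence $1-pcy' \in 1+p\Zp$. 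Such an $h' \in I_1$ in fact fixes $\pi^{I_1}$ pointwise, which is stronger than the scalar property required; this handles the lower unipotent factor and, together with the previous two cases, proves the lemma.
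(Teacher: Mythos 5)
Your proof is correct and takes essentially the same route as the paper: both rest on the Iwahori factorization of $I$, note that stability under $I\cap U$ is immediate from the definitions, and move the lower-triangular part of $I$ across a matrix $\bigl(\begin{smallmatrix} p^n & y\\ 0 & 1\end{smallmatrix}\bigr)$ so that it lands back in $I$ (indeed in $H(T\cap I_1)$ or $I_1$) acting on $\pi^{I_1}$ by a scalar --- the paper doing this abstractly via $I=(I\cap U)(I\cap P^s)$ and $t^{-n}(I\cap P^s)t^{n}\subseteq I$, you by explicit matrix identities. The only (harmless) redundancy is the congruence modulo $p^2$ in the lower-unipotent case: $1-pcy'\in 1+p\Zp$ already because $c,y'\in \Zp$.
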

\begin{proof} We prove the statement for $M$, the rest is identical. By definition $M$ is stable under $I\cap U$. Since 
$I=(I\cap P^s) (I\cap U)$ it is enough to show that $M$ is stable under $I\cap P^s$. Suppose that $g_1\in I\cap P^s$, $g_2\in I\cap U$. 
Since $I=(I\cap U)(I\cap P^s)$ there exists $h_2\in I\cap U$ and $h_1\in I\cap P^s$ such that  $g_1g_2=h_2 h_1$. Moreover, for  $n\ge 0$ 
we have $t^{-n} (I\cap P^s) t^n \subset I$. Hence, if $v\in \pi^{I_1}$ then $(t^{-n}h_1 t^n)v\in \pi^{I_1}$ and so 
$$ g_1 ( g_2 t^n v)= h_2 h_1 t^n v= h_2 t^n (t^{-n} h_1 t^n) v \in M, \quad \forall v\in \pi^{I_1} .$$
This implies that $M$ is stable under $I\cap P^s$.
\end{proof}

The isomorphism $\pi(r,0, \omega^a)\cong \pi(p-r-1, 0, \omega^{r+a})$ allows to exploit the symmetry between $M_{\sigma}$ and $M_{\tsigma}$. 
In particular, if we prove a statement about $M_{\sigma}$ which holds
for all $\sigma$, then it also holds for $M_{\tsigma}$ (with $\sigma$ replaced by $\tsigma$).  

\begin{prop}\label{Minj} The triples $\chi\hookrightarrow M_{\sigma}$ and $\chi^s\hookrightarrow M_{\tilde{\sigma}}$ are injective envelopes 
of $\chi$ and $\chi^s$ 
in $\Rep_{H(I_1\cap U)}$. In particular, $M_{\sigma}^{I_1\cap U}=\Fbar v_{\sigma}$ and  $M_{\tsigma}^{I_1\cap U}=\Fbar v_{\tsigma}$.
\end{prop}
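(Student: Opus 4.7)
The statement has two parts: that $\chi\hookrightarrow M_\sigma$ is an injective envelope in $\Rep_{H(I_1\cap U)}$, and the invariants formula $M_\sigma^{I_1\cap U}=\Fbar v_\sigma$. The plan is to prove essentiality (equivalently, the invariants formula) and then injectivity of $M_\sigma$; together these yield the envelope property. The $M_{\tsigma}$ case reduces to that of $M_\sigma$ via the isomorphism $\pi(r,0,\omega^a)\cong\pi(p-1-r,0,\omega^{r+a})$ from \eqref{intertwine}, which swaps $\sigma\leftrightarrow\tsigma$ and $v_\sigma\leftrightarrow v_{\tsigma}$.

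I filter $M_\sigma=\bigcup_{n\ge 0}M_{\sigma,n}$ with $M_{\sigma,n}:=\langle (I\cap U)\cdot t^{2n}v_\sigma\rangle$. Composing \eqref{rel} with \eqref{trel} (using $t\begin{pmatrix}1 & [\mu]\\ 0 & 1\end{pmatrix}=\begin{pmatrix}1 & p[\mu]\\ 0 & 1\end{pmatrix}t$) yields
\[ v_\sigma=(-1)^r\sum_{\lambda,\mu\in \Fp}\lambda^{p-1-r}\mu^r \begin{pmatrix}1 & [\lambda]+p[\mu]\\ 0 & 1\end{pmatrix}t^2 v_\sigma. \]
Writing this as $v_\sigma=P_1(X)\cdot t^2 v_\sigma$ with $P_1(X)\in \Fbar[[X]]$ (via $\Fbar[[I\cap U]]\cong \Fbar[[X]]$) and conjugating by $t^{2(n-1)}$ (using $t^k X t^{-k}=X^{p^k}$ in characteristic $p$) gives $t^{2(n-1)}v_\sigma=P_1(X^{p^{2(n-1)}})\cdot t^{2n}v_\sigma$. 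Hence $M_{\sigma,n-1}\subset M_{\sigma,n}$, and each $M_{\sigma,n}$ is cyclic over $\Fbar[[X]]$ with generator $t^{2n}v_\sigma$.

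Expanding $T^{2n}\varphi$ by Lemma \ref{T} and pushing it to $\pi=\cIndu{ZK}{G}{\sigma}/T$ (with the extra $\Pi$-term absorbed when $r=0$) produces the Hecke identity $\sum_{a\in \Zp/p^{2n}\Zp}[a]\cdot t^{2n}v_\sigma=0$ in $\pi$. Since $\sum_{a\in \Zp/p^{2n}\Zp}(1+X)^a=X^{p^{2n}-1}$ in $\Fbar[X]/(X^{p^{2n}})$, this is $X^{p^{2n}-1}\cdot t^{2n}v_\sigma=0$. Thus the annihilator of $t^{2n}v_\sigma$ in the DVR $\Fbar[[X]]$ is a nonzero ideal, equal to $(X^{k_n})$ for some $1\le k_n\le p^{2n}-1$; so $M_{\sigma,n}\cong\Fbar[X]/(X^{k_n})$, whose $X$-kernel is one-dimensional. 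Since $v_\sigma\in M_{\sigma,n}$ is a nonzero $X$-invariant element, this kernel equals $\Fbar v_\sigma$; taking the colimit yields $M_\sigma^{I_1\cap U}=\Fbar v_\sigma$.

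For injectivity in $\Rep_{H(I_1\cap U)}$: as $|H|$ is prime to $p$, it suffices to verify $X$-divisibility in $\Rep_{I_1\cap U}$. Evaluating $P_1(0)=(-1)^r\bigl(\sum_{\lambda\in\Fp}\lambda^{p-1-r}\bigr)\bigl(\sum_{\mu\in\Fp}\mu^r\bigr)$ shows it vanishes, because in each case ($r=0$, $r=p-1$, or $0<r<p-1$) at least one character sum over $\Fp$ is zero. Hence $X\mid P_1(X)$, and therefore $X\mid P_1(X^{p^{2n}})$ for all $n\ge 0$. So any $v\in M_{\sigma,n}$, when rewritten in $M_{\sigma,n+1}$ using $t^{2n}v_\sigma=P_1(X^{p^{2n}})\cdot t^{2(n+1)}v_\sigma$, lies in $X\cdot M_{\sigma,n+1}$. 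Thus $M_\sigma$ is $X$-divisible and hence injective. The main technical work is the expansion-and-reduction computation producing the Hecke identity $\sum_{a}[a]\cdot t^{2n}v_\sigma=0$; once this upper bound on the annihilator is in hand, the cyclicity of $M_{\sigma,n}$ delivers both essentiality and $X$-divisibility simultaneously.
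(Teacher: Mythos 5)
Your proof is correct, but the decisive mechanism differs from the paper's, so a comparison is worthwhile. You share the paper's skeleton: the exhaustion $M_\sigma=\bigcup_n M_{\sigma,n}$ by cyclic $\Fbar[[X]]$-modules generated by $t^{2n}v_\sigma$, coming from composing \eqref{rel} and \eqref{trel} and conjugating by powers of $t$. The paper, however, uses the $X$-power form \eqref{relX} of these relations (i.e.\ Lemma \ref{calcsym2}) to pin down the annihilator of $t^{2n}v_\sigma$ exactly, obtaining $M_{\sigma,n}\cong\Fbar[X]/(X^{e_n+1})$, and then proves injectivity by identifying $M_\sigma^{\mathcal U_m}\cong\Fbar[X]/(X^{p^m})$ as an injective envelope at each finite level $\Rep_{H(I_1\cap U)/\mathcal U_m}$ and passing to the colimit. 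You never compute the annihilator: for the invariants you only need it to be a nonzero proper ideal, and for injectivity you verify $X$-divisibility directly from the vanishing of the constant term $P_1(0)$ (a character-sum computation), combined with the standard reductions that injectivity over $H(I_1\cap U)$ is equivalent to injectivity over $I_1\cap U$ (as $p\nmid |H|$) and, for smooth $=$ $X$-power-torsion $\Fbar[[X]]$-modules, to divisibility. This is a softer route and spares you the exponent bookkeeping $e_n$; the paper's sharper computation yields in addition the explicit finite-level structure of $M_\sigma$, which is closer in spirit to how these modules are manipulated later. Two small points: your ``Hecke identity'' $X^{p^{2n}-1}t^{2n}v_\sigma=0$ is superfluous, since $t^{2n}v_\sigma$ is fixed by $\bigl(\begin{smallmatrix}1&p^{2n}\Zp\\0&1\end{smallmatrix}\bigr)$ and hence already killed by $X^{p^{2n}}$, which is all your argument actually uses; and its justification is the one shaky spot, because for $r=0$ the extra $\Pi$-terms in Lemma \ref{T} are not simply ``absorbed'' under iteration of $T$ and would need a genuine computation. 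Delete that step and invoke smoothness instead; then the real technical content of your proof is just $P_1(0)=0$, and everything stands.
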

\begin{proof} We will show the claim for $M_{\sigma}$. The relations \eqref{relX} imply that 
$$v_{\sigma}= (-1)^r r! (p-1-r)! X^{r+p(p-1-r)} t^2 v_{\sigma}.$$
For $n\ge 0$ define  $\lambda_n:= ((-1)^r r! (p-1-r)!)^n$, $e_0:=0$ and $e_n:= r+p(p-1-r)+ p^2 e_{n-1}$. Further define 
$M_{\sigma, n}:=\langle (I_1\cap U) t^{2n}v_{\sigma}\rangle$. Since $t^{2n}v_{\sigma}= \lambda_1 X^{p^{2n}e_1}t^{2(n+1)}v_{\sigma}$, 
$M_{\sigma, n}$ is contained in $M_{\sigma, n+1}$ and hence 
$$M_{\sigma}=\underset{\underset{n}{\longrightarrow}}{\lim} \ M_{\sigma, n}.$$
Since $v_{\sigma}= \lambda_n X^{e_n} t^{2n} v_{\sigma}$ 
and $Xv_{\sigma}=0$ we obtain an isomorphism $M_{\sigma,n}\cong \Fbar[X]/(X^{e_n+1})$. In particular, for all $n\ge 0$ we have 
$M_{\sigma,n}^{I_1\cap U}= \Fbar v_{\sigma}$, and 
so $M_{\sigma}^{I_1\cap U}=\Fbar v_{\sigma}$. Given $m\ge 0$, set $\mathcal U_m:=
\bigl (\begin{smallmatrix} 1 & \pF^m \\ 0 & 1\end{smallmatrix}\bigr )$, 
choose $n$ such that $e_n>p^m$ and define $M'_{\sigma,m}:= \langle (I_1\cap U)\centerdot X^{e_n+1-p^m} t^{2n}v_{\sigma}\rangle$.
Then $M'_{\sigma, m}\cong \Fbar[X]/(X^{p^m})\cong M_{\sigma}^{\mathcal U_m} $ is an injective envelope of $\chi$ in 
$\Rep_{H(I_1\cap U)/\mathcal U_m}$. Since
$M_{\sigma}=\underset{\longrightarrow}{\lim} \ M'_{\sigma,m}$ we obtain that $M_{\sigma}$ is an injective envelope of $\chi$ in 
$\Rep_{H(I_1\cap U)}$.
\end{proof}

\begin{lem}\label{tv_1} Let $n\ge 0$ be an odd integer then $t^n v_{\sigma}\in M_{\tsigma}$ and $t^n v_{\tsigma}\in M_{\sigma}$. Hence, 
$t M_\sigma\subset M_{\tsigma}$ and $tM_{\tsigma}\subset M_{\sigma}$. 
\end{lem}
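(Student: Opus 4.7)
The plan is to apply $t^n$ directly to relations \eqref{rel} and \eqref{trel} and use the commutation $t u^{x} = u^{px} t$, where $u^x := \bigl(\begin{smallmatrix} 1 & x\\ 0 & 1\end{smallmatrix}\bigr)$ for $x \in \Zp$. This commutation is immediate from matrix multiplication. The key point is that moving $t^n$ past a unipotent element multiplies its upper-right entry by $p^n$, which lands us in $I_1 \cap U$ as soon as $n \ge 1$.

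For $t^n v_\sigma \in M_{\tsigma}$ with $n$ odd, apply $t^n$ to \eqref{rel} and commute to obtain
\[
t^n v_\sigma = (-1)^{a+1}\sum_{\lambda \in \Fp} \lambda^{p-1-r}\, u^{p^n[\lambda]}\, t^{n+1} v_{\tsigma}.
\]
Since $n$ is odd, $n+1$ is even, so $t^{n+1} v_{\tsigma} \in M_{\tsigma}$ by the definition of $M_{\tsigma}$; and $u^{p^n [\lambda]} \in I_1 \cap U \subset I$, so by Lemma~\ref{Istable} each summand lies in $M_{\tsigma}$. The symmetric computation with \eqref{trel} gives $t^n v_{\tsigma} \in M_\sigma$.

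For the ``Hence'' part, any element of $M_\sigma$ is a linear combination of vectors of the form $u^{y} t^{2m} v_\sigma$ with $y \in \Zp$ and $m \ge 0$. Left-multiplying by $t$ and commuting gives $t u^{y} t^{2m} v_\sigma = u^{py}\, t^{2m+1} v_\sigma$, and now $t^{2m+1} v_\sigma \in M_{\tsigma}$ by what we just proved (odd exponent), while $u^{py} \in I_1\cap U$ preserves $M_{\tsigma}$ by Lemma~\ref{Istable}. Hence $tM_\sigma \subset M_{\tsigma}$, and swapping the roles of $\sigma$ and $\tsigma$ yields $tM_{\tsigma} \subset M_\sigma$.

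There is no real obstacle here; the only thing to be careful about is that the commutation produces $p^n[\lambda]$ rather than $[\lambda]$, which is exactly what is needed to land in $I_1\cap U$ and thus to use the $I$-stability of $M_\sigma$ and $M_{\tsigma}$.
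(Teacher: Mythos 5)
Your proof is correct and is essentially the paper's argument: the paper likewise combines a relation from Lemma \ref{relations} (it uses \eqref{relX} to get $tv_{\sigma}\in M_{\tsigma}$, then the spanning set $\{X^k t^m v_{\sigma}\}$), while you apply $t^n$ directly to \eqref{rel}, \eqref{trel} and use the spanning set $u^{y}t^{2m}v_{\sigma}$ — the same commutation-plus-parity mechanism. One small remark: the factor $p^n$ in $u^{p^n[\lambda]}$ is not actually needed to land in $I_1\cap U$, since $u^{[\lambda]}$ already lies there (and $M_{\tsigma}$ is stable under all of $I\cap U$ by its very definition), so that closing caution is superfluous.
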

\begin{proof} It follows from the definition that $t^2 M_{\tsigma}\subset M_{\tsigma}$. Hence, it is enough to consider $n=1$. Applying  
$t$ to \eqref{relX} we obtain $t v_{\sigma}= (-1)^a (r!)^{-1} X^{pr} t^2 v_{\tsigma} \in M_{\tsigma}$. 
If $k,m\ge 0$ are integers  and $m$ even then we have $t(X^k t^m v_{\sigma})= X^{p k} t^{m} (t v_{\sigma})$ and since $tv_{\sigma}\in M_{\tsigma}$ and 
$m$ is even we 
obtain $t(X^k t^m v_{\sigma})\in M_{\tsigma}$. The set $\{X^k t^m v_{\sigma}: k, m\ge 0, 2\mid m\}$ spans $M_{\sigma}$ as an $\Fbar$-vector 
space. Hence, $tM_{\sigma} \subset M_{\tsigma}$. The rest follows by symmetry. 
\end{proof}

\begin{lem}\label{sv} We have $sv_{\sigma}\in M_{\sigma}$ and $ s v_{\tsigma}\in M_{\tsigma}$.
\end{lem}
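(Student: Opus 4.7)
The key observation is the matrix identity $\Pi = st$, which immediately gives $s\Pi = s^2 t = t$, so that $sv_{\sigma} = s\Pi v_{\tsigma} = tv_{\tsigma}$, using $\Pi v_{\tsigma} = v_{\sigma}$. Similarly $sv_{\tsigma} = s\Pi v_{\sigma} = tv_{\sigma}$.

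The conclusion then follows directly from Lemma \ref{tv_1}: since $n=1$ is odd, $tv_{\tsigma} \in M_{\sigma}$ and $tv_{\sigma} \in M_{\tsigma}$, which is exactly what we want.

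So the proof is essentially a one-line reduction to Lemma \ref{tv_1} via the identity $s\Pi = t$, together with the intertwining properties $\Pi v_{\sigma} = v_{\tsigma}$, $\Pi v_{\tsigma} = v_{\sigma}$ recorded in Section \ref{supersingularrepresentations}. There is no real obstacle here: the content has already been packaged into Lemma \ref{tv_1}, which in turn rests on the explicit relations \eqref{relX} from Lemma \ref{relations}. The present lemma is just the extraction of the special case $n=1$ rephrased in terms of $s$ instead of $t$.
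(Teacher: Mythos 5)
Your proof is correct and coincides with the paper's own argument: the identity $sv_{\sigma}=s\Pi v_{\tsigma}=tv_{\tsigma}$ (and symmetrically $sv_{\tsigma}=tv_{\sigma}$) reduces the claim to the $n=1$ case of Lemma \ref{tv_1}. Nothing further is needed.
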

\begin{proof} Since $sv_{\sigma}= s \Pi v_{\tsigma}= tv_{\tsigma}$ this follows from Lemma \ref{tv_1}.
\end{proof}

\begin{lem}\label{MI} $M$ is the direct sum of its $I$-submodules $M_{\sigma}$ and $M_{\tsigma}$.
\end{lem}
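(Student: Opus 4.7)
The plan is to show both inclusions for the sum and then verify the intersection is trivial via $I_1\cap U$-invariants.

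First, for $M = M_\sigma + M_{\tilde\sigma}$: the reverse inclusion is immediate from the definitions. For the forward inclusion, it suffices to check that each generator $\bigl(\begin{smallmatrix} p^n & b \\ 0 & 1\end{smallmatrix}\bigr) v$ with $v\in\{v_\sigma, v_{\tilde\sigma}\}$, $n\geq 0$ and $b\in\Zp$, lies in $M_\sigma+M_{\tilde\sigma}$. Write $\bigl(\begin{smallmatrix} p^n & b \\ 0 & 1\end{smallmatrix}\bigr) = \bigl(\begin{smallmatrix} 1 & b \\ 0 & 1\end{smallmatrix}\bigr) t^n$. If $n$ is even, the element is in $M_\sigma$ (resp. $M_{\tilde\sigma}$) by definition. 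If $n$ is odd, Lemma \ref{tv_1} gives $t^n v_\sigma \in M_{\tilde\sigma}$ and $t^n v_{\tilde\sigma}\in M_\sigma$, and since $\bigl(\begin{smallmatrix} 1 & b \\ 0 & 1\end{smallmatrix}\bigr)\in I\cap U\subset I$, Lemma \ref{Istable} pushes the result into the appropriate summand.

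For the directness, set $N:=M_\sigma\cap M_{\tilde\sigma}$; this is an $I$-stable, in particular $(I_1\cap U)$-stable, subspace of $\pi$. Suppose for contradiction that $N\neq 0$ and pick $0\neq v\in N$. Smoothness of $\pi$ forces $v$ to be fixed by $\bigl(\begin{smallmatrix} 1 & p^m\Zp \\ 0 & 1\end{smallmatrix}\bigr)$ for some $m$, hence $X^{p^m}v = 0$ under the $\Fbar[[X]]$-action. Taking $n$ minimal with $X^n v = 0$, the element $X^{n-1}v$ is nonzero and annihilated by $X$, hence fixed by $I_1\cap U$. Therefore $N^{I_1\cap U}\neq 0$.

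But Proposition \ref{Minj} gives $M_\sigma^{I_1\cap U}=\Fbar v_\sigma$ and $M_{\tilde\sigma}^{I_1\cap U}=\Fbar v_{\tilde\sigma}$, so
\[ N^{I_1\cap U}\subset \Fbar v_\sigma \cap \Fbar v_{\tilde\sigma}=0, \]
since $\{v_\sigma,v_{\tilde\sigma}\}$ is a basis of $\pi^{I_1}$. This contradiction shows $N=0$, completing the proof. The main subtlety — which I expect to be the only non-routine point — is the passage from a generic nonzero smooth vector in $N$ to one fixed by $I_1\cap U$; once that is done, the two invariance calculations from Proposition \ref{Minj} cleanly finish the argument.
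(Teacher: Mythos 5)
Your proof is correct and follows essentially the same route as the paper: the sum $M=M_\sigma+M_{\tilde\sigma}$ via Lemma \ref{tv_1} together with $I$-stability, and triviality of the intersection by comparing invariants under a pro-$p$ subgroup with Proposition \ref{Minj} (the paper uses $I_1$-invariants, you use $I_1\cap U$-invariants and spell out the standard nonvanishing-of-invariants step via the $\Fbar[[X]]$-action, which is a fine elaboration of the same idea).
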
 
\begin{proof} Proposition \ref{Minj} implies that $(M_{\sigma}\cap M_{\tsigma})^{I_1}=M_{\sigma}^{I_1}\cap M_{\tsigma}^{I_1}=\Fbar v_{\sigma}\cap 
\Fbar v_{\tsigma} =0$.
Hence $M_{\sigma}\cap M_{\tsigma}=0$ and so it is enough to show that $M=M_{\sigma}+ M_{\tsigma}$. Clearly, $M_{\sigma}\subset M$ and 
$M_{\tsigma}\subset M$. 
Lemma \ref{tv_1} implies $M\subseteq M_{\sigma}+M_{\tsigma}$.
\end{proof} 
  
\begin{defi} We set $\pi_{\sigma}:=M_{\sigma}+\Pi\centerdot M_{\tsigma}$ and $\pi_{\tsigma}:=M_{\tsigma}+\Pi\centerdot M_{\sigma}$.
\end{defi}

\begin{prop}\label{G+} The subspaces $\pi_{\sigma}$ and $\pi_{\tsigma}$ are stable under the action of $G^+$.
\end{prop}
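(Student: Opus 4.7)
The plan is to check $G^+$-stability by verifying stability under a small generating set. Using the Iwahori--Weyl decomposition for $\GL_2(\Qp)$, the subgroup $G^+=ZG^0$ is generated by $I$, $Z$, the finite Weyl reflection $s$, and the affine reflection $s_0:=\Pi s\Pi^{-1}$ (the affine Weyl group $W^{aff}=\langle s,s_0\rangle$ indexes the Iwahori double cosets in $G^+/IZ$). Stability of $\pi_\sigma=M_\sigma+\Pi M_{\tsigma}$ under $I$ is immediate: by Lemma \ref{Istable} both $M_\sigma$ and $M_{\tsigma}$ are $I$-stable, and since $\Pi$ normalizes $I$, so is $\Pi M_{\tsigma}$. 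Stability under $Z$ is trivial because $\pi$ has a central character.

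The main step is $s$-stability. Writing $s\pi_\sigma=sM_\sigma+s\Pi M_{\tsigma}$ and using $s\Pi=t$, the second summand equals $tM_{\tsigma}$, which is contained in $M_\sigma$ by Lemma \ref{tv_1}; thus it remains to show $sM_\sigma\subset \pi_\sigma$. For a generator $\begin{pmatrix} p^{2n} & a\\ 0 & 1\end{pmatrix}v_\sigma$ of $M_\sigma$ one computes
\[
 s\begin{pmatrix} p^{2n} & a\\ 0 & 1\end{pmatrix}v_\sigma=\begin{pmatrix} 0 & 1\\ p^{2n} & a\end{pmatrix}v_\sigma
\]
and handles cases. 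The case $n=0$ is Lemma \ref{sv}. For $n\ge 1$, I would split on the valuation of $a$. If $a\in p\Zp$, the factorisation $\begin{pmatrix} 0 & 1\\ p^{2n} & a\end{pmatrix}=\begin{pmatrix} 1 & 0\\ a & 1\end{pmatrix}\Pi t^{2n-1}$ has left factor in $I$, and $t^{2n-1}v_\sigma\in M_{\tsigma}$ by Lemma \ref{tv_1}, so the vector lies in $I\cdot\Pi M_{\tsigma}=\Pi M_{\tsigma}\subset \pi_\sigma$. If $a\in \Zp^\times$, one uses the LU-type factorisation $\begin{pmatrix} 0 & 1\\ p^{2n} & a\end{pmatrix}=\begin{pmatrix} -p^{2n}/a & 1\\ 0 & a\end{pmatrix}\begin{pmatrix} 1 & 0\\ p^{2n}/a & 1\end{pmatrix}$; the right factor lies in $I_1$ (as $n\ge 1$) and fixes $v_\sigma$, while the left factor further decomposes as $(aI)\cdot\begin{pmatrix} 1 & 1/a\\ 0 & 1\end{pmatrix}\cdot\begin{pmatrix} -1/a^2 & 0\\ 0 & 1\end{pmatrix}\cdot t^{2n}$, i.e.\ as a central scalar in $Z$ times elements of $I$ times the shift $t^{2n}$. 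The result is therefore a scalar multiple of an $I$-translate of $t^{2n}v_\sigma\in M_\sigma$, and so lies in $M_\sigma\subset \pi_\sigma$.

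For $s_0$-stability, note that $\Pi\pi_\sigma=\Pi M_\sigma+\Pi^2 M_{\tsigma}=\Pi M_\sigma+M_{\tsigma}=\pi_{\tsigma}$ (using $\Pi^2=p\in Z$ acts trivially), and similarly $\Pi\pi_{\tsigma}=\pi_\sigma$, so $\Pi^{\pm 1}$ swaps $\pi_\sigma$ and $\pi_{\tsigma}$. Running the above argument with $\sigma$ and $\tsigma$ interchanged (a symmetric situation by \eqref{intertwine}) gives $s\pi_{\tsigma}\subset \pi_{\tsigma}$, and therefore
\[
 s_0\pi_\sigma=\Pi s\Pi^{-1}\pi_\sigma=\Pi s\pi_{\tsigma}\subset \Pi\pi_{\tsigma}=\pi_\sigma.
\]
The hard part will be the subcase $n\ge 1$, $a\in \Zp^\times$ of the $s$-stability argument: moving $s$ past a lower-triangular matrix with a unit in the $(2,1)$-entry does not keep one inside $I$, and one needs precisely the Bruhat-style reshuffling above to absorb the unit $a$ into the centre and push the $p^{2n}$ into the $t^{2n}$ piece, where it is harmless.
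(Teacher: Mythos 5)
Your proof is correct and follows essentially the same route as the paper: reduce to the generators $I$, $Z$, $s$, $\Pi s\Pi^{-1}$ of $G^+$, check $sM_\sigma\subset\pi_\sigma$ on the vectors $ut^{2n}v_\sigma$ by the same case split (Lemma \ref{sv} for $n=0$; $st^{2n}=\Pi t^{2n-1}$ together with Lemma \ref{tv_1} when the unipotent entry lies in $p\Zp$; and the Bruhat-type identity \eqref{trix} when it is a unit, the lower unipotent with entry in $p^{2n}\Zp$ being absorbed into $I_1$, which fixes $v_\sigma$), and then pass to $\pi_{\tsigma}$ and to $\Pi s\Pi^{-1}$ via $\pi_{\sigma}=\Pi\pi_{\tsigma}$ and the $\sigma\leftrightarrow\tsigma$ symmetry. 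The only cosmetic difference is that in the unit case you factor out a central scalar, whereas the paper keeps the upper-triangular factor $\bigl(\begin{smallmatrix}-\beta^{-1} & 1\\ 0 & \beta\end{smallmatrix}\bigr)\in I$ directly and commutes $t^{2n}$ past the lower unipotent.
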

\begin{proof} We claim that $s\pi_{\sigma}\subseteq \pi_{\sigma}$. Now $s (\Pi M_{\tsigma})= t M_{\tsigma} \subset M_{\sigma}$ by 
Lemma \ref{tv_1}. It is enough to show that $s M_{\sigma} \subset \pi_{\sigma}$. By definition of $M_{\sigma}$ it is enough to show that
$s(ut^n v_{\sigma})\in \pi_{\sigma}$ for all $u\in I_1\cap U$ and all even non-negative integers $n$.
 Lemma \ref{sv} gives $s v_{\sigma}\in M_{\sigma}$ and 
if $n\ge 2$ is an even integer then $s t^n v_{\sigma} = \Pi t^{n-1}v_{\sigma} \in \Pi M_{\tsigma}$ by Lemma \ref{tv_1}. Since $s (K_1\cap U)s= I_1\cap U^s$
for all $u\in K_1\cap U$, and $n\ge 0$ even, we get that $ s u t^n v_{\sigma} \in \pi_{\sigma}$. If $u\in (I_1\cap U)\setminus (K_1\cap U)$ and 
$n>0$ even, then the matrix identity:  
\begin{equation}\label{trix}
\begin{pmatrix} 0 & 1 \\ 1 & 0\end{pmatrix} \begin{pmatrix} 1 & \beta \\ 0 & 1\end{pmatrix}=
\begin{pmatrix} -\beta^{-1} & 1 \\ 0 & \beta \end{pmatrix}\begin{pmatrix} 1 & 0 \\ \beta^{-1} & 1\end{pmatrix}
\end{equation}   
implies that $s u t^n v_{\sigma}\in M_{\sigma}$. This settles the claim. By symmetry $\pi_{\tsigma}$ is also stable under $s$, and 
since $\pi_{\sigma}=\Pi \pi_{\tsigma}$, we obtain that $\pi_{\sigma}$ is stable under $\Pi s \Pi^{-1}$. Lemma \ref{Istable} 
implies that $\pi_{\sigma}$ is stable under $I$. Since $s$, $\Pi s \Pi^{-1}$ and  $I$ generate $G^0$, we get that $\pi_{\sigma}$ is stable
under $G^0$. Since $Z$ acts by a central character, $\pi_{\sigma}$ is stable under $G^+=Z G^0$. The result for $\pi_{\tsigma}$ follows 
by symmetry.
\end{proof} 

\section{Extensions}
In this section we compute extensions of characters for different subgroups of $I$. 

\begin{defi} Let $\kappa^u$, $\varepsilon$, $\kappa^l : I_1\rightarrow \Fbar$ be functions defined as follows, 
for $A=\begin{pmatrix} a & b \\ c & d\end{pmatrix} \in I_1$ we set  
$$\kappa^u(A)=\omega(b), \quad \varepsilon(A)=\omega(p^{-1}(a-d)), \quad \kappa^l(A)=\omega(p^{-1}c),$$
where $\omega: \Zp\rightarrow \Fbar$ is the reduction map composed with the canonical embedding.
\end{defi}

\begin{prop}\label{homi} If $p\neq 2$ then $\Hom(I_1/Z_1, \Fbar)=\langle \kappa^u,\kappa^l\rangle$. If $p=2$ then 
$\dim \Hom(I_1/Z_1, \Fbar)=4$.
\end{prop}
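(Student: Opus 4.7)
The plan is to exhibit $\kappa^u,\kappa^l$ as independent elements of $\Hom(I_1/Z_1,\Fbar)$ and then establish the matching upper bound via the Iwahori decomposition together with one key commutator calculation. A direct matrix calculation shows $\kappa^u,\kappa^l$ are continuous homomorphisms: for $A,A'\in I_1$ the upper-right entry of $AA'$ is $ab'+bd'\equiv b+b'\pmod p$ (since $a\equiv d'\equiv 1\pmod p$), and analogously for the lower-left; both vanish on $Z_1$ since scalar matrices have $b=c=0$, and evaluating on $u:=\begin{pmatrix}1&1\\0&1\end{pmatrix}$, $v:=\begin{pmatrix}1&0\\p&1\end{pmatrix}$ shows they are linearly independent, yielding $\dim\Hom(I_1/Z_1,\Fbar)\ge 2$ for every $p$.

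For the upper bound, use the Iwahori factorization $I_1=U_1^-T_1U_1$ (unique product) with $U_1:=I_1\cap U$, $U_1^-:=I_1\cap U^-$, $T_1:=T\cap I_1$. Any $\phi\in\Hom(I_1,\Fbar)$ is determined by its restrictions to the three factors. Since $U_1\cong U_1^-\cong\Zp$ as pro-$p$ groups, each of these restrictions varies in a $1$-dimensional space. The torus $T_1\cong(1+p\Zp)^2$ contributes $\dim\Hom(T_1,\Fbar)=2$ for odd $p$ (via $\log\colon 1+p\Zp\cong\Zp$) and $=4$ for $p=2$ (using $1+2\Zp\cong\ZZ/2\times\Zp$).

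The essential relation comes from the commutator
\[
[u,v]=\begin{pmatrix}1+p+p^2&-p\\p^2&1-p\end{pmatrix}.
\]
In its Iwahori decomposition $[u,v]=v'h'u'$ one checks that the off-diagonal entries of $v'$ and $u'$ lie in $p\cdot p\Zp$ and $p\Zp$ respectively, so $v'\in(U_1^-)^p$, $u'\in(U_1)^p$, and hence $\phi(v')=\phi(u')=0$. Therefore $\phi(h')=0$. Modulo the Frattini subgroup of $T_1$ one has $h'\equiv\text{diag}(1+p,1-p)$ for odd $p$, resp.\ $h'\equiv\text{diag}(-1,-1)$ for $p=2$, yielding one linear constraint on $\phi|_{T_1}$. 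The other basic commutators $[u,t],[v,t]$ with $t\in T_1$ land in $(U_1)^p$ and $(U_1^-)^p$ by direct calculation, giving no further constraint.

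Combining with $Z_1$-vanishing finishes the count. For $p>2$, writing $\phi|_{T_1}=(c_1,c_2)$ in the basis $(\text{diag}(1+p,1),\text{diag}(1,1+p))$, the $[u,v]$-constraint reads $c_1=c_2$ and $Z_1$-triviality reads $c_1+c_2=0$; since $2\in\Fbar^\times$ both $c_i$ vanish, so $\phi|_{T_1}=0$ and the total dimension is $0+1+1=2$. For $p=2$, in the analogous $4$-dimensional basis of $T_1/\Phi(T_1)$, the $[u,v]$-constraint coincides with one of the two $Z_1$-triviality constraints, leaving only two independent conditions and a $2$-dimensional torus contribution, for a total of $2+1+1=4$. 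The main bookkeeping obstacle is checking that no commutator beyond $[u,v]$ produces a nontrivial condition on $\phi|_{T_1}$; this rests on the fact that every other basic commutator lands in $(U_1)^p$ or $(U_1^-)^p$, where $\phi$ vanishes because $\Fbar$ has characteristic $p$.
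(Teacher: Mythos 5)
For $p\neq 2$ your argument is correct, and it takes a genuinely different route from the paper. You work with the Iwahori factorization $I_1=(I_1\cap U^s)(I_1\cap T)(I_1\cap U)$, so that any $\phi$ is determined by its restrictions to the three factors, and you kill the torus direction by combining one explicit commutator computation ($[u,v]=v'h'u'$ with $v'\in (I_1\cap U^s)^p$, $u'\in (I_1\cap U)^p$, forcing $\phi(h')=0$, i.e.\ $c_1=c_2$) with triviality on $Z_1$ (i.e.\ $c_1+c_2=0$) and the invertibility of $2$. The paper instead uses the identity \eqref{ssap} to see that $I_1\cap U$ and $I_1\cap U^s$ generate $I_1\cap \SL_2(\Qp)$, so any homomorphism vanishing on both unipotent subgroups factors through $\det$, and then notes $\det(Z_1)=(1+\pF)^2=1+\pF$ for $p>2$. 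Your route trades the generation statement for a concrete matrix calculation and is perfectly sound for odd $p$ (your upper bound $0+1+1$ together with the explicit $\kappa^u,\kappa^l$ gives the span statement); the paper's route is shorter and, crucially, describes the ``torus direction'' structurally as $\Hom$ of $\det(I_1)/\det(Z_1)$, which is what makes the $p=2$ count fall out for free.

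For $p=2$, however, there is a genuine gap. Your bookkeeping only establishes the upper bound $\dim\Hom(I_1/Z_1,\Fbar)\le 1+2+1=4$: after imposing the two $Z_1$-constraints (the $[u,v]$-constraint being redundant, as you observe), the injective restriction map lands in a $4$-dimensional space. The statement $\dim=4$ also needs the matching lower bound, i.e.\ two homomorphisms independent of $\kappa^u,\kappa^l$, and your justification --- that $[u,t]$ and $[v,t]$ land in $(I_1\cap U)^p$ and $(I_1\cap U^s)^p$, so ``no further constraint'' arises --- is not a proof of existence: checking that finitely many chosen commutators impose no condition does not show that a function prescribed on the three Iwahori factors extends to a homomorphism of $I_1$; for that you would need to control the full quotient of $I_1$ by $Z_1$ and the closure of $I_1^2[I_1,I_1]$. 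The fix is cheap and is exactly what the paper's argument supplies: the maps $\psi\circ\det$ with $\psi\in\Hom\bigl((1+\pF)/(1+\pF)^2,\Fbar\bigr)$ (a $2$-dimensional space when $p=2$, every continuous homomorphism into $\Fbar$ killing squares) are homomorphisms on $I_1$, trivial on $Z_1$ and on both unipotent subgroups, and are visibly independent of $\kappa^u,\kappa^l$; together with your upper bound this yields $\dim=4$. I recommend adding this construction (or replacing the $p=2$ discussion by the paper's ``factors through $\det$'' argument) to close the case $p=2$.
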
 
\begin{proof} Let $\psi: I_1/Z_1\rightarrow \Fbar$ be a continuous group homomorphism. Since $I_1\cap U\cong I_1 \cap U^s\cong \Zp$
there exist $\lambda, \mu\in \Fbar$ such that $\psi|_{I_1\cap U}=\lambda \kappa^u$ and $\psi|_{I_1\cap U^s}= \mu \kappa^l$. Then 
$\psi-\lambda \kappa^u -\mu\kappa^l$ is trivial on $I_1\cap U$ and $I_1\cap U^s$. The matrix identity 
\begin{equation}\label{ssap}
\begin{pmatrix} 1 & \beta \\ 0 & 1 \end{pmatrix} \begin{pmatrix} 1 & 0\\ \alpha & 1\end{pmatrix}= 
\begin{pmatrix} 1 & 0\\ \alpha(1+\alpha\beta)^{-1} & 1\end{pmatrix} \begin{pmatrix} (1+\alpha\beta) &  \beta \\
0 & (1+\alpha \beta)^{-1}\end{pmatrix}
\end{equation}
implies that $I_1\cap U$ and $I_1\cap U^s$ generate $I_1\cap \SL_2(\Qp)$. So $\psi- \lambda \kappa^u -\mu\kappa^l$ must factor through 
$\detr$. The image of $Z_1$ in $1+\pF$ under $\detr$ is $(1+\pF)^2$. If $p>2$ then $(1+\pF)^2=1+\pF$ and hence $\psi= \lambda \kappa^u +\mu\kappa^l$.
If $p=2$ then $\dim \Hom((1+\pF)/(1+\pF)^2, \Fbar)=2$.
\end{proof}

\begin{lem}\label{homip} Assume $p>2$ then  $\Hom((I_1\cap P)/Z_1, \Fbar)=\langle \kappa^u,\varepsilon\rangle$ and 
 $\Hom((I_1\cap P^s)/Z_1, \Fbar)=\langle \kappa^l,\varepsilon\rangle$.
\end{lem}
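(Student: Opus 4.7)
The plan is to imitate the argument of Proposition \ref{homi}, but exploit the simpler structure of $I_1\cap P$, which is the semidirect product $(I_1\cap U)\rtimes (I_1\cap T)$, where $I_1\cap T=(1+\pF)^2$ is the diagonal torus. Since $I_1\cap P^s$ is obtained by transposition, it suffices to handle $I_1\cap P$ in detail and then invoke symmetry.

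First I would check that $\kappa^u$ and $\varepsilon$ are continuous homomorphisms on $I_1\cap P$ that vanish on $Z_1$: for $\kappa^u$ this is immediate since $c=0$ on $I_1\cap P$ and diagonal elements have $b=0$; for $\varepsilon$ one uses that $aa'=(1+pa_0)(1+pa_0')\equiv 1+p(a_0+a_0')\pmod{p^2}$ to see it is additive on the torus, and it is trivial on $Z_1$ since $a=d$ there. Linear independence is clear from the values on $\bigl(\begin{smallmatrix}1&1\\0&1\end{smallmatrix}\bigr)$ and $\bigl(\begin{smallmatrix}1+p&0\\0&1\end{smallmatrix}\bigr)$, so $\langle \kappa^u,\varepsilon\rangle$ is two-dimensional inside $\Hom((I_1\cap P)/Z_1,\Fbar)$.

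For the reverse inclusion, let $\psi\in\Hom((I_1\cap P)/Z_1,\Fbar)$. Restricting to $I_1\cap U\cong\Zp$, continuity and the fact that $\Fbar$ has characteristic $p$ force the restriction to factor through $\Zp/p\Zp$, so $\psi|_{I_1\cap U}=\lambda\,\kappa^u|_{I_1\cap U}$ for a unique $\lambda\in\Fbar$. Replacing $\psi$ by $\psi-\lambda\kappa^u$, I may assume $\psi$ is trivial on $I_1\cap U$, hence factors through the quotient $(I_1\cap P)/(I_1\cap U)\cong I_1\cap T$. The map $\mathrm{diag}(a,d)\mapsto a/d$ identifies $(I_1\cap T)/Z_1$ with $1+\pF$, and because $p>2$ the group $1+\pF$ is topologically isomorphic to $\Zp$, so $\Hom((I_1\cap T)/Z_1,\Fbar)$ is one-dimensional. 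A brief computation (write $a=1+pa_0$, $d=1+pd_0$ and expand $a/d$) shows that $\varepsilon$ descends to a nonzero element of this space, so $\psi=\mu\,\varepsilon$ on $I_1\cap T$ for some $\mu\in\Fbar$. Thus $\psi=\lambda\kappa^u+\mu\varepsilon$, proving the first claim.

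The second claim follows by the analogous argument with $U$ replaced by $U^s$: restrict to $I_1\cap U^s\cong\Zp$ to extract a multiple of $\kappa^l$, then conclude on the torus quotient exactly as above. No step presents a real obstacle; the only place where the hypothesis $p>2$ is used is in asserting that $1+\pF$ is pro-cyclic (equivalently $(1+\pF)^2=1+\pF$), which is precisely the reason the dimension jumps for $p=2$ in Proposition \ref{homi}.
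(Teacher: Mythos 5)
Your argument is correct and is essentially the paper's own proof: restrict $\psi$ to $I_1\cap U\cong\Zp$ to peel off $\lambda\kappa^u$, observe the difference factors through $(I_1\cap P)/(I_1\cap U)\cong I_1\cap T$, and use $p>2$ to get $(I_1\cap T)/Z_1\cong 1+p\Zp\cong\Zp$ so that the remaining homomorphism is a multiple of $\varepsilon$. The only cosmetic differences are that you verify $\kappa^u,\varepsilon$ are independent homomorphisms explicitly and treat $I_1\cap P^s$ by the symmetric argument, whereas the paper simply conjugates by $\Pi$; both are fine.
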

\begin{proof} Let $\psi: (I_1\cap P)/Z_1\rightarrow  \Fbar$ be a continuous group homomorphism. Since $I_1\cap U\cong \Zp$
there exist $\lambda\in \Fbar$ such that $\psi|_{I_1\cap U}=\lambda \kappa^u$. Then $\psi-\lambda \kappa^u$ is trivial on 
$I_1\cap U$, and hence defines a homomorphism $(I_1\cap P)/Z_1(I_1\cap U)\cong (T\cap I_1)/Z_1\rightarrow \Fbar$. Since 
$p>2$ we have an isomorphism $(T\cap I_1)/Z_1\cong 1+p\Zp\cong \Zp$. Hence, there 
exists $\mu\in \Fbar$ such that $\psi=\mu \varepsilon +\lambda \kappa^u$.
Conjugation by $\Pi$ gives the second assertion.
\end{proof}

\begin{prop}\label{extI} Let $\chi,\psi:H\rightarrow \Fbar^{\times}$ be characters. $\Ext^1_{I/Z_1}(\psi, \chi)$ is non-zero if and only
if  $\psi=\chi\alpha$ or $\psi=\chi\alpha^{-1}$. Moreover, 
\begin{itemize} 
 \item[(i)] if $p>3$ then $\dim \Ext^1_{I/Z_1}(\chi\alpha, \chi)=\dim \Ext^1_{I/Z_1}(\chi\alpha^{-1}, \chi)=1$;
 \item[(ii)] if $p=3$ then $\chi\alpha=\chi\alpha^{-1}$ and $\dim \Ext^1_{I/Z_1}(\chi\alpha, \chi)=2$;
 \item[(iii)] if $p=2$ then $\chi=\chi\alpha=\chi\alpha^{-1}=\Eins$ and $\dim \Ext^1_{I/Z_1}(\Eins, \Eins)=4$.
\end{itemize}  
\end{prop}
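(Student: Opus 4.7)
The plan is to convert the $\Ext^1$ into group cohomology and apply a Lyndon--Hochschild--Serre argument that degenerates thanks to $|H|$ being coprime to $p$. Since $\chi$ and $\psi$ are one-dimensional, $\Ext^1_{I/Z_1}(\psi,\chi)\cong H^1(I/Z_1,\chi\psi^{-1})$. Setting $\eta:=\chi\psi^{-1}$, the semidirect product decomposition $I/Z_1=(I_1/Z_1)\rtimes H$ yields the spectral sequence
\[
E_2^{i,j}=H^i(H,H^j(I_1/Z_1,\eta))\Longrightarrow H^{i+j}(I/Z_1,\eta).
\]
Because $|H|=(p-1)^2$ (or $1$ if $p=2$) is invertible in $\Fbar$, the group algebra $\Fbar[H]$ is semisimple and $H^i(H,-)=0$ for $i>0$, so the spectral sequence collapses to
\[
\Ext^1_{I/Z_1}(\psi,\chi)=H^1(I_1/Z_1,\eta)^H.
\]

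Since $\eta$ is inflated from $H$, it is trivial on $I_1/Z_1$, and there is a natural $H$-equivariant identification $H^1(I_1/Z_1,\eta)\cong\Hom(I_1/Z_1,\Fbar)\otimes\eta$, where $\Hom(I_1/Z_1,\Fbar)$ carries the action induced by conjugation. For $p>2$, Proposition \ref{homi} supplies the basis $\{\kappa^u,\kappa^l\}$. A direct computation of $h^{-1}Ah$ for $h=\bigl(\begin{smallmatrix}[\lambda]&0\\0&[\mu]\end{smallmatrix}\bigr)$ and $A=\bigl(\begin{smallmatrix}a&b\\c&d\end{smallmatrix}\bigr)\in I_1$ reads off the off-diagonal entries $b[\mu]/[\lambda]$ and $c[\lambda]/[\mu]$, whence conjugation scales $\kappa^u$ by $\alpha^{-1}$ and $\kappa^l$ by $\alpha$. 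Thus $H^1(I_1/Z_1,\eta)$ decomposes under $H$ as $\eta\alpha^{-1}\oplus\eta\alpha$, and its $H$-invariants are non-zero exactly when $\eta\in\{\alpha,\alpha^{-1}\}$, equivalently $\psi\in\{\chi\alpha^{-1},\chi\alpha\}$.

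The three cases then follow by inspection. When $p>3$ the characters $\alpha$ and $\alpha^{-1}$ of $H$ are distinct, so the conditions $\psi=\chi\alpha$ and $\psi=\chi\alpha^{-1}$ are incompatible and each contributes a single invariant line, giving (i). When $p=3$ one has $\alpha^2=\Eins$, hence $\chi\alpha=\chi\alpha^{-1}$ and both $\kappa^u$ and $\kappa^l$ become $H$-invariant after twisting by $\eta=\alpha$, yielding dimension $2$ and (ii). For $p=2$ the group $H$ is trivial, so necessarily $\chi=\psi=\Eins$; the $H$-invariants step is vacuous and the second half of Proposition \ref{homi} gives $\dim\Ext^1_{I/Z_1}(\Eins,\Eins)=\dim\Hom(I_1/Z_1,\Fbar)=4$, proving (iii).

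The only substantive calculation is the determination of the $H$-action on $\kappa^u$ and $\kappa^l$; the conceptual content is merely that the coprime-to-$p$ quotient $H$ of $I/Z_1$ reduces the problem to identifying isotypic components in the pro-$p$ cohomology computed in Proposition \ref{homi}.
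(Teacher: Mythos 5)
Your proof is correct and follows essentially the same route as the paper: both reduce, via the prime-to-$p$ order of $H$ and $I=HI_1$, to computing the $\psi$-isotypic part (equivalently, after twisting by $\chi\psi^{-1}$, the $H$-invariants) of $H^1(I_1/Z_1,\Fbar)\cong\Hom(I_1/Z_1,\Fbar)$, and then invoke Proposition \ref{homi} together with the conjugation action of $H$ on $\kappa^u$ and $\kappa^l$. Phrasing the degeneration via the Lyndon--Hochschild--Serre spectral sequence rather than directly as $\Hom_H(\psi,H^1(I_1/Z_1,\chi))$ is only a cosmetic difference.
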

\begin{proof} Since the order of $H$ is prime to $p$ and $I=H I_1$ we have 
$$\Ext^1_{I/Z_1}(\psi,\chi)\cong \Hom_H(\psi, H^1(I_1/Z_1, \chi)).$$
Now $H^1(I_1/Z_1, \chi)\cong \Hom(I_1/Z_1, \Fbar)$,
where if $\xi\in \Hom(I_1/Z_1, \Fbar)$ and $h\in H$ then $[h\centerdot \xi](u)=\chi(h)\xi(h^{-1}uh)$. The assertion follows from 
Proposition \ref{homi}.
\end{proof}
  
Similarly one obtains:

\begin{lem}\label{up} Let $\chi, \psi:H\rightarrow \Fbar^{\times}$ be characters and let $\mathcal U=\left(\begin{smallmatrix} 1 & \pF^k\\ 0 & 1\end{smallmatrix}\right )$
for some integer $k$ then $\Ext^1_{H\mathcal U}(\psi, \chi)\neq 0$
if and only if  $\psi=\chi\alpha^{-1}$. Moreover,  $\dim \Ext^1_{H\mathcal U}(\chi\alpha^{-1}, \chi)=1$. 
\end{lem}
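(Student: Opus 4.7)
The plan is to follow the same template as the proof of Proposition \ref{extI}, exploiting the fact that $H\mathcal U$ is a semidirect product with $H$ of order prime to $p$.

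First I would observe that since $|H|$ is prime to $p$, the Hochschild--Serre spectral sequence (or just averaging over $H$) collapses to give
$$\Ext^1_{H\mathcal U}(\psi,\chi)\cong \Hom_H(\psi, H^1(\mathcal U, \chi)).$$
Because $\chi$ is trivial on $\mathcal U$, the $\mathcal U$-action on the coefficients is trivial, so $H^1(\mathcal U,\chi)=\Hom(\mathcal U, \Fbar)$, and $\mathcal U\cong \Zp$ gives that this space is one-dimensional over $\Fbar$.

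Next I would determine the $H$-action on this line. Pick a generator $\xi$ of $\Hom(\mathcal U,\Fbar)$, normalised so that $\xi\bigl(\begin{smallmatrix}1 & p^k x\\ 0 & 1\end{smallmatrix}\bigr)=\omega(x)$. For $h=\begin{pmatrix}[\lambda] & 0\\ 0 & [\mu]\end{pmatrix}\in H$ and $u=\begin{pmatrix}1 & b\\ 0 & 1\end{pmatrix}\in\mathcal U$, a direct conjugation gives $h^{-1}uh=\begin{pmatrix}1 & \lambda^{-1}\mu b\\ 0 & 1\end{pmatrix}$, so $(h\centerdot \xi)(u)=\chi(h)\xi(h^{-1}uh)=\chi(h)\lambda^{-1}\mu\,\xi(u)$. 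Since $\alpha(h)=\lambda\mu^{-1}$, this shows that $H$ acts on $\Hom(\mathcal U,\Fbar)$ through the character $\chi\alpha^{-1}$.

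Combining these two steps, $\Hom_H(\psi,H^1(\mathcal U,\chi))$ is nonzero precisely when $\psi=\chi\alpha^{-1}$, and in that case it is one-dimensional; this is exactly the asserted statement. There is no real obstacle here: the only thing to double-check is the continuity of homomorphisms $\mathcal U\cong\Zp\to\Fbar$ (they automatically factor through $\mathcal U/p\mathcal U$, since $\Fbar$ is $p$-torsion), and the sign conventions for the twisting character. Everything else is a direct analog of Proposition \ref{extI} with $I_1$ replaced by $\mathcal U$ and $\kappa^u,\kappa^l$ replaced by the single generator $\xi$.
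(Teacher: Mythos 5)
Your argument is correct and is exactly the route the paper intends: Lemma \ref{up} is stated right after Proposition \ref{extI} with the remark ``similarly one obtains'', and your reduction $\Ext^1_{H\mathcal U}(\psi,\chi)\cong \Hom_H(\psi,H^1(\mathcal U,\chi))$ via the prime-to-$p$ order of $H$, followed by the computation that $H$ acts on the one-dimensional space $\Hom(\mathcal U,\Fbar)$ through $\chi\alpha^{-1}$, is precisely the template of that proof. The conjugation computation and the observation that continuous homomorphisms $\mathcal U\cong\Zp\to\Fbar$ form a one-dimensional space are both accurate, so nothing is missing.
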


\begin{lem}\label{down} Let $\chi, \psi:H\rightarrow \Fbar^{\times}$ be characters and let 
$\mathcal U=\bigl(\begin{smallmatrix} 1 & 0 \\\pF^k & 1\end{smallmatrix}\bigr )$
for some integer $k$ then $\Ext^1_{H\mathcal U}(\psi, \chi)\neq 0$
if and only if  $\psi=\chi\alpha$. Moreover,  $\dim \Ext^1_{H\mathcal U}(\chi\alpha, \chi)=1$. 
\end{lem}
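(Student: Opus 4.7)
The plan is to mirror the argument used in Proposition \ref{extI} (and its variant, Lemma \ref{up}), replacing the role of the upper unipotent by the lower unipotent. Since $|H|=(p-1)^2$ is prime to $p$ and $\mathcal U$ is a normal pro-$p$ subgroup of $H\mathcal U$, the Hochschild--Serre spectral sequence collapses and yields
$$\Ext^1_{H\mathcal U}(\psi,\chi)\cong \Hom_H(\psi, H^1(\mathcal U,\chi)).$$
As $\chi$, inflated from $H$, is trivial on $\mathcal U$, we have an identification of underlying vector spaces $H^1(\mathcal U,\chi)=\Hom(\mathcal U,\Fbar)$, with $H$-action given by the twisted conjugation $(h\cdot\xi)(u)=\chi(h)\xi(h^{-1}uh)$.

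Next, I would pin down the one-dimensional space $\Hom(\mathcal U,\Fbar)$: via the topological isomorphism $\mathcal U\cong \Zp$ sending $\bigl(\begin{smallmatrix}1 & 0\\ p^k x & 1\end{smallmatrix}\bigr)$ to $x$, it is spanned by (a scalar multiple of) the restriction of $\kappa^l$ to $\mathcal U$. I would then compute directly, for $h=\bigl(\begin{smallmatrix}[\lambda] & 0\\ 0 & [\mu]\end{smallmatrix}\bigr)\in H$, that
$$h^{-1}\begin{pmatrix}1 & 0\\ x & 1\end{pmatrix}h=\begin{pmatrix}1 & 0\\ \lambda\mu^{-1}x & 1\end{pmatrix},$$
so that $h\cdot\kappa^l=\chi(h)\alpha(h)\kappa^l$. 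In other words, $H^1(\mathcal U,\chi)\cong\chi\alpha$ as an $H$-module. The lemma then follows immediately, since $\Hom_H(\psi,\chi\alpha)$ is non-zero iff $\psi=\chi\alpha$, in which case it is one-dimensional.

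There is no serious obstacle in this argument --- the only point requiring care is getting the right twist ($\alpha$ rather than $\alpha^{-1}$), which is exactly what distinguishes this lemma from Lemma \ref{up}. Alternatively, one can deduce the statement formally from Lemma \ref{up} by conjugating by $\Pi$, noting that $\Pi$ normalises $H$ (swapping its two diagonal entries, hence sending $\alpha$ to $\alpha^{-1}$) and conjugates $\mathcal U$ onto $\bigl(\begin{smallmatrix}1 & \pF^{k-1}\\ 0 & 1\end{smallmatrix}\bigr)$; after tracking the replacement $\chi\mapsto \chi^s$, $\psi\mapsto \psi^s$, the condition $\psi^s=\chi^s\alpha^{-1}$ from Lemma \ref{up} becomes exactly $\psi=\chi\alpha$.
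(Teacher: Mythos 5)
Your argument is correct and is essentially the proof the paper intends: Lemma \ref{down} is stated after ``Similarly one obtains'', referring to the same reduction $\Ext^1_{H\mathcal U}(\psi,\chi)\cong\Hom_H(\psi,H^1(\mathcal U,\chi))$ (order of $H$ prime to $p$) and twisted-conjugation computation used for Proposition \ref{extI}, and your alternative deduction from Lemma \ref{up} by conjugating by $\Pi$ is also fine. One small slip of wording: for $k\ge 2$ the literal restriction of $\kappa^l$ to $\mathcal U$ is zero, so the generator of $\Hom(\mathcal U,\Fbar)$ should rather be $u\mapsto\omega(p^{-k}c)$ with $c$ the lower-left entry; this does not affect your conjugation computation, which correctly yields the twist by $\alpha$ and hence the stated conclusion.
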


\begin{lem}\label{upt} Assume $p>2$ and let $\chi, \psi:H\rightarrow \Fbar^{\times}$ be characters  then $\Ext^1_{(I_1\cap P)/Z_1}(\psi, \chi)\neq 0$
if and only if  $\psi\in \{\chi, \chi\alpha^{-1}\}$. Moreover,  
$$\dim \Ext^1_{(I\cap P)/Z_1}(\chi\alpha^{-1}, \chi)= \dim \Ext^1_{(I\cap P)/Z_1}(\chi, \chi)=1.$$
\end{lem}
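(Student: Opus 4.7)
The proof proceeds in direct parallel with Proposition \ref{extI}. Since $I\cap P = H(I_1\cap P)$ with $H$ of order prime to $p$, the Hochschild--Serre / inflation--restriction argument yields
$$\Ext^1_{(I\cap P)/Z_1}(\psi, \chi) \cong \Hom_H(\psi, H^1((I_1\cap P)/Z_1, \chi)).$$
Because $I_1\cap P$ is pro-$p$ and $\chi$ has values in $\Fbar^\times$, the $I_1\cap P$-action on the coefficients $\chi$ is trivial, so
$$H^1((I_1\cap P)/Z_1, \chi) \cong \chi \otimes \Hom((I_1\cap P)/Z_1, \Fbar),$$
with $H$ acting on the second factor by conjugation: $(h\cdot \xi)(u) = \xi(h^{-1}uh)$.

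By Lemma \ref{homip}, the right-hand $\Hom$-space is two-dimensional, spanned by $\kappa^u$ and $\varepsilon$. The plan is then to compute the $H$-action on this basis. For $h = \begin{pmatrix} [\lambda] & 0 \\ 0 & [\mu]\end{pmatrix}\in H$ and $A = \begin{pmatrix} a & b \\ 0 & d\end{pmatrix}\in I_1\cap P$, conjugation gives $h^{-1}Ah = \begin{pmatrix} a & [\lambda]^{-1}[\mu]b \\ 0 & d\end{pmatrix}$, so $\kappa^u(h^{-1}Ah) = \lambda^{-1}\mu\, \kappa^u(A)$ and $\varepsilon(h^{-1}Ah) = \varepsilon(A)$. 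Consequently $H$ acts on $\kappa^u$ by $\alpha^{-1}$ and trivially on $\varepsilon$, so as $H$-representations
$$H^1((I_1\cap P)/Z_1, \chi) \cong \chi\alpha^{-1} \oplus \chi.$$

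Applying $\Hom_H(\psi, -)$ and using that $H$ has order prime to $p$ (so this decomposition is multiplicity-free), we conclude that the space vanishes unless $\psi \in \{\chi, \chi\alpha^{-1}\}$, and is one-dimensional in either of those cases. There is essentially no obstacle here beyond the bookkeeping in the conjugation calculation; the argument is the exact analogue of Proposition \ref{extI}, with Lemma \ref{homip} replacing Proposition \ref{homi}.
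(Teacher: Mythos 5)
Your proof is correct and is exactly the argument the paper intends: the lemma is stated with "Similarly one obtains," meaning the proof of Proposition \ref{extI} carried over with $I\cap P=H(I_1\cap P)$ and Lemma \ref{homip} in place of Proposition \ref{homi}, which is precisely what you spell out (including the conjugation computation showing $H$ acts by $\chi\alpha^{-1}$ on $\kappa^u$ and by $\chi$ on $\varepsilon$, and the fact that $\chi\neq\chi\alpha^{-1}$ since $p>2$). You also implicitly read the group in the first sentence as $(I\cap P)/Z_1$ rather than the typographical $(I_1\cap P)/Z_1$, which is the correct interpretation and the one used later in Lemma \ref{extXP}.
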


\begin{lem}\label{downt} Assume $p>2$ and let $\chi, \psi:H\rightarrow \Fbar^{\times}$ be characters  then $\Ext^1_{(I_1\cap P)/Z_1}(\psi, \chi)\neq 0$
if and only if  $\psi\in \{\chi, \chi\alpha\}$. Moreover,  
$$\dim \Ext^1_{(I\cap P^s)/Z_1}(\chi\alpha, \chi)= \dim \Ext^1_{(I\cap P^s)/Z_1}(\chi, \chi)=1.$$ 
\end{lem}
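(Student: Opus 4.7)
The plan is to run the same computation as in Proposition \ref{extI} and Lemma \ref{upt}, with $I_1\cap P^s$ in place of $I_1$. Since $|H|$ is prime to $p$ and $I\cap P^s = H(I_1\cap P^s)$ is a semidirect product, Hochschild--Serre gives
$$\Ext^1_{(I\cap P^s)/Z_1}(\psi,\chi)\cong \Hom_H\bigl(\psi,\, H^1((I_1\cap P^s)/Z_1,\chi)\bigr),$$
and since $I_1\cap P^s$ acts trivially on $\chi$, the $H^1$ on the right is identified with $\Hom((I_1\cap P^s)/Z_1,\Fbar)$, equipped with the twisted conjugation action $[h\centerdot\xi](u) = \chi(h)\xi(h^{-1}uh)$, exactly as in the proof of Proposition \ref{extI}.

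The $P^s$-analogue of Lemma \ref{homip}, obtained either by $\Pi$-conjugation from that lemma or by repeating its proof, shows $\Hom((I_1\cap P^s)/Z_1,\Fbar) = \langle \kappa^l,\varepsilon\rangle$; the hypothesis $p>2$ enters here to kill $(1+\pF)/(1+\pF)^2$ and thereby pin the space down to dimension $2$. A direct calculation with $h=\operatorname{diag}([\lambda],[\mu])\in H$ shows that conjugation scales the $(2,1)$-entry of an element of $I_1\cap P^s$ by $\lambda\mu^{-1}=\alpha(h)$ while leaving the diagonal entries fixed, so
$$h\centerdot \kappa^l = \alpha(h)\,\kappa^l, \qquad h\centerdot \varepsilon = \varepsilon.$$
Combining with the $\chi(h)$ twist, the $H$-module structure on $H^1((I_1\cap P^s)/Z_1,\chi)$ is $\chi\alpha \oplus \chi$.

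Taking $H$-equivariant homomorphisms from $\psi$ then forces $\Ext^1_{(I\cap P^s)/Z_1}(\psi,\chi) = 0$ unless $\psi\in\{\chi,\chi\alpha\}$, with a $1$-dimensional Ext-space in each of the two non-vanishing cases. The argument is routine; the only place to slip would be in the sign of the conjugation character, i.e.\ getting $\alpha$ versus $\alpha^{-1}$ backwards, which is precisely the point where this lemma differs from Lemma \ref{upt}. As a sanity check, one can alternatively deduce the statement formally from Lemma \ref{upt} by conjugating by $\Pi$: this normalizes $I$ and $H$, interchanges $P$ with $P^s$, and sends $\chi\mapsto \chi^s$ and $\alpha\mapsto \alpha^{-1}$, so the condition $\psi^s\in\{\chi^s,\chi^s\alpha^{-1}\}$ from Lemma \ref{upt} translates exactly into $\psi\in\{\chi,\chi\alpha\}$.
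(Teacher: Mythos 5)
Your proposal is correct and is essentially the paper's own (implicit) argument: the paper derives Lemmas \ref{up}--\ref{downt} "similarly" to Proposition \ref{extI}, i.e.\ via $\Ext^1_{(I\cap P^s)/Z_1}(\psi,\chi)\cong\Hom_H(\psi,H^1((I_1\cap P^s)/Z_1,\chi))$, Lemma \ref{homip} for the identification of the $\Hom$-space with $\langle\kappa^l,\varepsilon\rangle$, and the twisted conjugation action of $H$, exactly as you do. Your closing $\Pi$-conjugation check also matches the mechanism the paper itself invokes in Lemma \ref{homip}, and your computed $H$-module structure $\chi\alpha\oplus\chi$ is the right one.
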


\begin{prop}\label{modX} Let $\chi:H\rightarrow \Fbar^{\times}$ be a character and let $\chi\hookrightarrow J_{\chi}$ be an injective envelope 
of $\chi$ in $\Rep_{H(I_1\cap U)}$, then $(J_{\chi}/\chi)^{I_1\cap U}$ is $1$-dimensional and $H$ acts on it by $\chi\alpha^{-1}$. Moreover,
$\chi\alpha^{-1}\hookrightarrow J_{\chi}/\chi$ is an injective envelope of $\chi\alpha^{-1}$ in $\Rep_{H(I_1\cap U)}$. 
\end{prop}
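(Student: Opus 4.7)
Proof plan.

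The strategy is to apply $\Hom_{H(I_1\cap U)}(\psi,-)$ to the short exact sequence
$$0 \to \chi \to J_\chi \to J_\chi/\chi \to 0$$
for every smooth character $\psi$ of $H$, read off the socle of $J_\chi/\chi$ from the long exact sequence together with Lemma \ref{up}, and then obtain injectivity of $J_\chi/\chi$ from a cohomological dimension argument.

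For the socle computation, since $J_\chi$ is injective we have $\Ext^1_{H(I_1\cap U)}(\psi, J_\chi) = 0$, and since $\psi$ is simple with $\soc J_\chi = \chi$, every non-zero morphism $\psi \to J_\chi$ factors through the socle, so the natural map $\Hom(\psi,\chi) \to \Hom(\psi, J_\chi)$ is an isomorphism. The long exact sequence therefore yields a canonical isomorphism $\Hom_{H(I_1\cap U)}(\psi, J_\chi/\chi) \cong \Ext^1_{H(I_1\cap U)}(\psi, \chi)$. By Lemma \ref{up} (taking $k = 0$), the right-hand side is one-dimensional precisely when $\psi = \chi\alpha^{-1}$ and vanishes otherwise. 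Decomposing the semisimple $H$-module $(J_\chi/\chi)^{I_1\cap U}$ into isotypic components gives the first assertion.

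To complete the proof I need essentialness of the inclusion $\chi\alpha^{-1}\hookrightarrow J_\chi/\chi$ and injectivity of $J_\chi/\chi$. The first is automatic: any non-zero subrepresentation of a smooth $H(I_1\cap U)$-module contains a non-zero $I_1\cap U$-invariant (because $I_1\cap U$ is pro-$p$ and $\Fbar$ has characteristic $p$), and this invariant must lie in the one-dimensional socle just computed. For injectivity, I would invoke that the category $\Rep_{H(I_1\cap U)}$ has cohomological dimension at most $1$: the group $I_1\cap U \cong \Zp$ has $p$-cohomological dimension $1$ and $|H|$ is coprime to $p$, so inflation-restriction forces $\Ext^2_{H(I_1\cap U)}(V,\chi) = 0$ for every $V$. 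Applied to the fundamental short exact sequence, the long exact sequence
$$\Ext^1(V, J_\chi) \to \Ext^1(V, J_\chi/\chi) \to \Ext^2(V, \chi)$$
has both outer terms zero (the first by injectivity of $J_\chi$), so $\Ext^1(V, J_\chi/\chi) = 0$ for all $V$.

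The main subtlety is the cohomological dimension input; once that is granted, the proposition falls out formally from Lemma \ref{up} and the injectivity of $J_\chi$, with no further explicit calculation in $M_\sigma$ needed.
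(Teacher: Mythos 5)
Your proposal is correct, and its first half coincides with the paper's own argument: the paper likewise reads off $(J_{\chi}/\chi)^{I_1\cap U}\cong H^1(I_1\cap U,\chi)$ from the long exact sequence (using that $J_{\chi}$ is injective and an essential extension of $\chi$) and then applies Lemma \ref{up}; this is the same computation as your identification $\Hom_{H(I_1\cap U)}(\psi,J_{\chi}/\chi)\cong \Ext^1_{H(I_1\cap U)}(\psi,\chi)$, and your remark that essentialness is automatic (non-zero invariants of a smooth representation of a pro-$p$ group land in the one-dimensional socle) is exactly what the paper uses implicitly.

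Where you genuinely diverge is the proof that $J_{\chi}/\chi$ is injective. The paper embeds $J_{\chi}/\chi$ into an injective envelope $J_{\chi\alpha^{-1}}$ and kills the cokernel $Q$: taking $I_1\cap U$-invariants identifies $Q^{I_1\cap U}$ with $H^1(I_1\cap U,J_{\chi}/\chi)\cong H^2(I_1\cap U,\chi)$, which vanishes because $I_1\cap U\cong \Zp$ is a free pro-$p$ group, and then $Q=0$ since a non-zero smooth representation of a pro-$p$ group has non-zero invariants. That route only needs the vanishing of $H^2$ with coefficients in a single character. Your route instead needs $\Ext^2_{H(I_1\cap U)}(V,\chi)=0$ for an \emph{arbitrary} smooth $V$ in the first variable, plus the existence of enough injectives so that $\Ext^1(-,J_{\chi}/\chi)=0$ implies injectivity. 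This is true, but your justification is loose: cohomological dimension of $\Zp$ together with inflation--restriction controls $H^i(I_1\cap U,-)=\Ext^i(\Eins,-)$, not $\Ext^i$ with a general first argument. The clean way to supply it is module-theoretic: smooth $\Fbar$-representations of $I_1\cap U$ are the $X$-power-torsion modules over $\Fbar[[X]]$, the injective objects are the divisible ones, and a quotient of a divisible module is divisible, so every object has injective dimension at most one; passing from $I_1\cap U$ to $H(I_1\cap U)$ is harmless since $|H|$ is prime to $p$. With that step filled in, your argument is complete and somewhat more formal/general; the paper's embedding trick buys the same conclusion from only the cited fact $H^2(\Zp,\Fbar)=0$, at the cost of introducing and eliminating the auxiliary quotient $Q$.
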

\begin{proof} Consider an exact sequence of $H(I\cap U)$-representations:
$$0\rightarrow \chi\rightarrow J_{\chi}\rightarrow J_{\chi}/\chi\rightarrow 0.$$
Since $J_{\chi}$ is an injective envelope of $\chi$ in $\Rep_{I\cap U}$ taking $I_1\cap U$ invariants induces $H$-equivariant
isomorphism $(J_{\chi}/\chi)^{I_1\cap U}\cong H^1(I_1\cap U, \chi)$. It follows from Lemma \ref{up} that 
$\dim (J_{\chi}/\chi)^{I_1\cap U}=1$ and $H$ acts on $(J_{\chi}/\chi)^{I_1\cap U}$ via the character $\chi\alpha^{-1}$. 
Let $J_{\chi\alpha^{-1}}$ be an injective envelope of $\chi\alpha^{-1}$ in $\Rep_{H(I_1\cap U)}$, then there  exists an exact sequence of 
$H(I_1\cap U)$-representations:
$$0\rightarrow J_{\chi}/\chi\rightarrow J_{\chi\alpha^{-1}}\rightarrow Q\rightarrow 0.$$
Since $J_{\chi\alpha^{-1}}$ is an essential extension of $\chi\alpha^{-1}$, we have $J_{\chi\alpha^{-1}}^{I_1\cap U}\cong \chi\alpha^{-1}$. 
Hence taking $(I_1\cap U)$-invariants induces an isomorphism $Q^{I_1\cap U}\cong H^1(I_1\cap U, J_{\chi}/\chi)\cong H^2(I_1\cap U, \chi)$. Since 
$I_1\cap U \cong \Zp$ is a free pro-$p$  group we have $H^2(I_1\cap U, \chi)=0$, see \cite[\S3.4]{cohgal}. Hence $Q^{I_1\cap U}=0$,
which implies $Q=0$.
\end{proof}

\begin{lem}\label{extriv} Let $\iota: J\hookrightarrow A$ be a monomorphism in an abelian category $\mathcal A$. If $J$ is an injective
object in $\mathcal A$ then there exists $\sigma: A\rightarrow J$ such that $\sigma \circ \iota=\id$.
\end{lem}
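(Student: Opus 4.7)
This is essentially a restatement of the defining universal property of an injective object, so the proof will be a single line. Recall that by definition an object $J$ in an abelian category $\mathcal{A}$ is injective if and only if for every monomorphism $\iota: B \hookrightarrow A$ in $\mathcal{A}$ and every morphism $f: B \to J$, there exists an extension $g: A \to J$ with $g \circ \iota = f$.

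The plan is simply to apply this universal property in the special case $B = J$ and $f = \mathrm{id}_J$. Injectivity of $J$ then produces a morphism $\sigma: A \to J$ satisfying $\sigma \circ \iota = \mathrm{id}_J$, which is exactly the splitting required by the statement. There is no real obstacle here; the only thing worth noting is that the lemma is the standard categorical fact that any monomorphism with injective domain is split, and it will presumably be used later in the paper to peel off a direct summand isomorphic to $J$ from $A$ via $A \cong J \oplus \ker(\sigma)$.
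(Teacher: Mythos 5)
Your proposal is correct and coincides with the paper's own argument: the paper simply observes that injectivity of $J$ makes the precomposition map $\Hom_{\mathcal A}(A,J)\rightarrow \Hom_{\mathcal A}(J,J)$ surjective, which is exactly your application of the extension property to $\id_J$. Nothing further is needed.
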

\begin{proof} Since $J$ is injective the map $\Hom_{\mathcal A}(A,J)\rightarrow \Hom_{\mathcal A}(J,J)$ is surjective.
\end{proof}

\section{Exact sequence}\label{exactsequence} 
Let $\pi:=\pi(r, 0, \eta)$ with $0\le r\le p-1$. We use the notation of \S\ref{supersingularrepresentations}, so that $\sigma:=\Sym^r\Fbar^2 \otimes \det^a$, with $\det^a=\eta\circ \det |_K$, and $\chi: H\rightarrow \Fbar^{\times}$ a character as in \eqref{explicitchi}. We construct an exact sequence of $I$-representations which will be used to calculate $H^1(I_1/Z_1, \pi)$.
 
\begin{lem}\label{calc1} If $r\neq 0$ then set 
$$w_{\sigma}:=\sum_{\lambda\in \Fp} \lambda^{p-r} \begin{pmatrix} 1 & [\lambda]\\ 0 & 1 \end{pmatrix} t v_{\tsigma}+ (\sum_{\mu\in \Fp} \mu) v_{\sigma}.$$
Then $w_{\sigma}$ is fixed by $I_1\cap P^s$ and 
$$\begin{pmatrix} 1 & 1 \\ 0 & 1\end{pmatrix} w_{\sigma}= w_{\sigma} - (-1)^a r v_{\sigma}.$$

If $r=0$ then set 
$$w_{\sigma}:= \sum_{\lambda, \mu\in \Fp}\lambda\begin{pmatrix} 1 & [\mu]+p[\lambda] \\ 0 & 1\end{pmatrix} t^2 v_{\sigma}.$$ 
Then 
$$\begin{pmatrix} 1 & 1\\ 0 & 1\end{pmatrix} w_{\sigma}=w_{\sigma} +v_{\sigma}, \quad \begin{pmatrix} 1 & 0\\ p & 1\end{pmatrix} w_{\sigma}=
w_{\sigma}-(\sum_{\mu\in \Fp}\mu^2)v_{\sigma}.$$
If $\alpha\in [x]+\pF$, $\beta\in [y]+\pF$ then
$$\begin{pmatrix} 1+p\alpha & 0\\ 0 & 1+p\beta\end{pmatrix} w_{\sigma}=w_{\sigma} +(x-y)(\sum_{\mu\in\Fp} \mu)v_{\sigma}.$$
\end{lem}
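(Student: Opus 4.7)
The proof is a direct matrix calculation inside $\pi$, and the unifying plan is to push every computation into the $K$-subrepresentation $\sigma=\langle K\centerdot v_\sigma\rangle\cong \Sym^r\Fbar^2\otimes\det^a$ via the identity $tv_{\tsigma}=s\Pi v_{\tsigma}=sv_\sigma$. Sums of the form $\sum_\lambda \lambda^j \bigl(\begin{smallmatrix}1 & [\lambda]\\ 0 & 1\end{smallmatrix}\bigr) tv_{\tsigma}$ then become exactly the vectors $f_j$ evaluated by Lemma \ref{calcsym}, and sums involving $t^2 v_\sigma$ reduce to the previous type by a second application together with \eqref{rel} or \eqref{trel}. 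Throughout, the fact that $v_\sigma,v_{\tsigma}\in \pi^{I_1}$ means that any $I_1$-element one picks up on the right may be dropped.

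\textit{Case $r\neq 0$.} Writing $u_\lambda:=\bigl(\begin{smallmatrix}1 & [\lambda]\\ 0 & 1\end{smallmatrix}\bigr) tv_{\tsigma}$, one checks $\bigl(\begin{smallmatrix}1 & 1\\ 0 & 1\end{smallmatrix}\bigr) u_\lambda=u_{\lambda+1}$ because $1+[\lambda]-[\lambda+1]\in p\Zp$ and the resulting unipotent, after conjugation by $t$, lies in $I_1\cap U$. Binomially expanding $\mu^{p-r}-(\mu-1)^{p-r}$ then rewrites $w_\sigma-\bigl(\begin{smallmatrix}1 & 1\\ 0 & 1\end{smallmatrix}\bigr) w_\sigma$ as a combination of the vectors $\sum_\mu \mu^{p-r-k}u_\mu$, $1\le k\le p-r$; by Lemma \ref{calcsym}, all but $k=1$ vanish (the relevant exponents in $\Fp$ fall strictly between $0$ and $p-1$), and the $k=1$ summand collapses to $(-1)^a r v_\sigma$. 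The term $(\sum_\mu\mu)v_\sigma$ is trivially $\bigl(\begin{smallmatrix}1 & 1\\ 0 & 1\end{smallmatrix}\bigr)$-fixed and plays no role here. For the stability under $I_1\cap P^s$ it suffices to fix each $u_\lambda$: the diagonal part sends $u_\lambda$ to itself because $a[\lambda]/d\equiv[\lambda]\pmod{p}$ for $a,d\in 1+\pF$, and for the lower unipotent the case $\lambda=0$ reduces to $t^{-1}\bigl(\begin{smallmatrix}1 & 0\\ pc & 1\end{smallmatrix}\bigr)t=\bigl(\begin{smallmatrix}1 & 0\\ p^2c & 1\end{smallmatrix}\bigr)\in I_1$, while for $\lambda\neq 0$ one produces by hand an explicit $n_\lambda\in I_1$ with $\bigl(\begin{smallmatrix}1 & 0\\ pc & 1\end{smallmatrix}\bigr)\bigl(\begin{smallmatrix}1 & [\lambda]\\ 0 & 1\end{smallmatrix}\bigr)t=\bigl(\begin{smallmatrix}1 & [\lambda]\\ 0 & 1\end{smallmatrix}\bigr)t \cdot n_\lambda$.

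\textit{Case $r=0$.} Here $\sigma=\det^a$ is one-dimensional, so each $u_\mu$ is a scalar multiple of $v_\sigma$. Setting $T_{\mu,\lambda}:=\bigl(\begin{smallmatrix}1 & [\mu]+p[\lambda]\\ 0 & 1\end{smallmatrix}\bigr) t^2v_\sigma$ so that $w_\sigma=\sum_{\mu,\lambda}\lambda T_{\mu,\lambda}$, the key identity is $\sum_{\lambda''}T_{\mu,\lambda''}=-v_\sigma$, obtained by applying \eqref{trel} (for $r=0$) together with the one-dimensionality of $\sigma$. Each of the three formulas then reduces to identifying how the group action permutes the indices $(\mu,\lambda)\in\Fp\times\Fp$ modulo $p^2$: $\bigl(\begin{smallmatrix}1 & 1\\ 0 & 1\end{smallmatrix}\bigr)$ gives $(\mu,\lambda)\mapsto(\mu+1,\lambda+\overline{\delta_\mu})$, where $\delta_\mu:=p^{-1}([\mu]+1-[\mu+1])$ is the Teichm\"uller carry; $\bigl(\begin{smallmatrix}1 & 0\\ p & 1\end{smallmatrix}\bigr)$ gives $(\mu,\lambda)\mapsto(\mu,\lambda-\mu^2)$ after writing the perturbed matrix in Iwahori form $\bigl(\begin{smallmatrix}1 & b\\0 & 1\end{smallmatrix}\bigr)t^2 n$ with $n\in I_1$; and $\bigl(\begin{smallmatrix}1+p\alpha & 0\\0 & 1+p\beta\end{smallmatrix}\bigr)$ gives $(\mu,\lambda)\mapsto(\mu,\lambda+(x-y)\mu)$. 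Reindexing and applying the key identity yields the three formulas, the only remaining piece being $\sum_{\mu\in\Fp}\overline{\delta_\mu}=1$. This is the main arithmetic obstacle, and follows from $[\mu]\equiv \mu^p\pmod{p^2}$ together with the binomial expansion of $(\mu+1)^p$, giving $\overline{\delta_\mu}\equiv\sum_{k=1}^{p-1}\frac{(-1)^k}{k}\mu^k\pmod p$; since $\sum_\mu\mu^k$ vanishes for $0<k<p-1$ and equals $-1$ for $k=p-1$, only the $k=p-1$ term contributes, giving $1$.
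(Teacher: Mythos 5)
Your proof is correct, and for $r=0$ it is essentially the paper's own argument: you track the Teichm\"uller carries under the three group actions, collapse the inner $\lambda$-sum to $-v_\sigma$ using \eqref{trel} and the one-dimensionality of $\sigma$ (the paper does the same via \eqref{rel}, \eqref{trel}), and evaluate the carry sum by power sums over $\Fp$; the only cosmetic difference is that the paper quotes Serre for the carry polynomial $P(X)=\frac{X^p+1-(X+1)^p}{p}$, while you rederive it from $[\mu]\equiv\mu^p\pmod{p^2}$. Where you genuinely diverge is the case $r\neq 0$: the paper simply observes that $w_\sigma$ lies in the finite-dimensional $K$-subrepresentation $\langle K\centerdot v_\sigma\rangle\cong\Sym^r\Fbar^2\otimes\det^a$ and, by Lemma \ref{calcsym} with $j=r-1$, equals $-(-1)^a r\,x^{r-1}y$ (up to the $(\sum_\mu\mu)v_\sigma$ term); both claims are then read off instantly, since $I_1\cap P^s\subseteq K_1$ acts trivially on $\sigma$ and the upper unipotent acts on the monomial $x^{r-1}y$ by $y\mapsto x+y$. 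Your route --- index shift $u_\lambda\mapsto u_{\lambda+1}$, binomial expansion, termwise vanishing, and explicit Iwahori factorizations for the $I_1\cap P^s$-invariance --- is valid but longer; note in particular that your invariance check could be replaced by the one-line remark that each $u_\lambda\in\langle K\centerdot v_\sigma\rangle$ is $K_1$-fixed. One small citation point: for the terms with $2\le k\le p-r$ the relevant exponent is $p-r-k=p-1-j$ with $j=r+k-1>r$, which falls outside the range $0\le j\le r$ covered by the statement of Lemma \ref{calcsym}; the vanishing is nonetheless immediate from the expansion \eqref{itgetskilled} in its proof, since the power sums $\Lambda_{e+i}$ there have $0\le e+i\le p-2$ and hence vanish, so you should invoke that computation rather than the lemma itself. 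This is an imprecision of reference, not a gap.
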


\begin{proof} We set $w:=w_{\sigma}$. Suppose that $r\neq 0$. Now  $tv_{\tsigma}= s\Pi v_{\tsigma}= sv_{\sigma}$. Hence, if we identify $v_{\sigma}$ with 
$x^r\in \Sym^r \Fbar^2\otimes \det^a$ then Lemma \ref{calcsym} applied to $j=r-1$ gives $w=-(-1)^a r x^{r-1}y$. This implies the assertion.  
 
Suppose that $r=0$ and let $P(X):=\frac{X^p+1-(X+1)^p}{p}\in \mathbb Z[X]$, then \cite{serre} implies that 
\begin{equation}
\begin{split}
\begin{pmatrix} 1 & 1 \\ 0 & 1 \end{pmatrix} w&= \sum_{\lambda,\mu\in \Fp} 
\lambda\begin{pmatrix} 1 & 1+[\mu]+p[\lambda] \\ 0 & 1\end{pmatrix} t^2 v_{\sigma}\\ &=  \sum_{\lambda,\mu\in \Fp} 
\lambda\begin{pmatrix} 1 & [\mu+1]+p[\lambda+P(\mu)] \\ 0 & 1\end{pmatrix} t^2 v_{\sigma}.
\end{split}
\end{equation}
Hence,   
\begin{equation} 
\begin{split}
\begin{pmatrix} 1 & 1 \\ 0 & 1 \end{pmatrix} w& =  
\sum_{\lambda,\mu\in \Fp} 
\lambda\begin{pmatrix} 1 & [\mu]+p[\lambda+P(\mu-1)] \\ 0 & 1\end{pmatrix} t^2 v_{\sigma}\\
&=\sum_{\lambda,\mu\in \Fp} 
(\lambda-P(\mu-1))\begin{pmatrix} 1 & [\mu]+p[\lambda] \\ 0 & 1\end{pmatrix} t^2 v_{\sigma}\\
&= w- \sum_{\lambda,\mu\in \Fp} P(\mu-1)\begin{pmatrix} 1 & [\mu]+p[\lambda] \\ 0 & 1\end{pmatrix} t^2 v_{\sigma}\\
&= w+(-1)^a\sum_{\mu\in\Fp} P(\mu-1)\begin{pmatrix} 1 & [\mu] \\ 0 & 1\end{pmatrix} t v_{\tsigma}\\ &= w+(\sum_{\mu\in\Fp} P(\mu-1))v_{\sigma},
 \end{split}
\end{equation}
where the last two equalities follow from  \eqref{rel}, \eqref{trel}. If $p=2$ then $P(X-1)=1-X$, otherwise 
 $P(X-1)=\sum_{i=1}^{p-1} p^{-1} \begin{pmatrix} p\\ i\end{pmatrix} X^i(-1)^{p-i}$. Hence  $\sum_{\mu\in \Fp} P(\mu-1)=
-\sum_{\mu\in \Fp^{\times}} \mu^{p-1}=1$. 

Now  $t^2v_{\sigma}$ is fixed by $\begin{pmatrix} 1 & \pF^2\\ 0 & 1\end{pmatrix}$  and $I_1\cap P^s$, so the matrix identity
\begin{equation}\label{pass}
\begin{pmatrix} 1 & 0 \\ \beta & 0 \end{pmatrix} \begin{pmatrix} 1 & \alpha\\ 0 & 1\end{pmatrix}= 
\begin{pmatrix} 1 & \alpha(1+\alpha\beta)^{-1} \\ 0 & 1\end{pmatrix} \begin{pmatrix} (1+\alpha\beta)^{-1} & 0 \\ \beta & 
1+\alpha \beta\end{pmatrix}
\end{equation}
implies that 
\begin{equation}
\begin{split}
\begin{pmatrix} 1 & 0 \\ p & 1\end{pmatrix} w&= \sum_{\lambda,\mu\in\Fp} \lambda 
\begin{pmatrix} 1 & [\mu]+ p[\lambda-\mu^2] \\ 0 & 1\end{pmatrix}t^2 v_{\sigma}\\ 
&=\sum_{\lambda,\mu\in \Fp} (\lambda+\mu^2) \begin{pmatrix} 1 & [\mu]+ p[\lambda] \\ 0 & 1\end{pmatrix}t^2 v_{\sigma}=w-(\sum_{\mu\in \Fp} \mu^2)v_{\sigma}.
\end{split}
\end{equation} 
If $\alpha\in [x] +\pF$ and $\beta\in [y]+\pF$ then the same argument gives 
\begin{equation}\label{pass2}
\begin{split}
\begin{pmatrix} 1+p\alpha & 0 \\ 0 & 1+p\beta\end{pmatrix} w&= \sum_{\lambda, \mu\in \Fp}
\lambda\begin{pmatrix} 1 & [\mu]+p[\lambda +\mu (x-y)] \\ 0 & 1\end{pmatrix} t^2 v_{\sigma}\\
&= w+(x-y)(\sum_{\mu\in \Fp} \mu)v_{\sigma}.
\end{split}
\end{equation}
\end{proof}

\begin{prop}\label{fix} We have $(M_{\sigma}/\Fbar v_{\sigma})^{I_1\cap U} =(M_{\sigma}/\Fbar v_{\sigma})^{I_1}$. Moreover, let 
$\Delta_{\sigma}$ be the image of  $(M_{\sigma}/\Fbar v_{\sigma})^{I_1}$ in $H^1(I_1, \Eins)\cong \Hom(I_1, \Fbar)$. Then the following hold:
 \begin{itemize} 
 \item[(i)] if either $r\neq 0$ or $p>3$ then $\Delta_{\sigma}=\Fbar \kappa^u$;
\item[(ii)] if $p=3$ and $r=0$ then $\Delta_{\sigma}=\Fbar(\kappa^u+\kappa^l)$;
\item[(iii)] if $p=2$ and $r=0$ then $\Delta_{\sigma}=\Fbar(\kappa^u+\kappa^l +\varepsilon)$.
\end{itemize}
\end{prop}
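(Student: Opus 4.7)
My plan is to combine the long exact sequence in $I_1$-cohomology attached to $0\to\Fbar v_\sigma\to M_\sigma\to M_\sigma/\Fbar v_\sigma\to 0$ with the explicit calculations of Lemma \ref{calc1}. Since $v_\sigma\in\pi^{I_1}$, the coefficient module $\Fbar v_\sigma$ is trivial under $I_1$; by Proposition \ref{Minj} one has $M_\sigma^{I_1}\subseteq M_\sigma^{I_1\cap U}=\Fbar v_\sigma$, hence $M_\sigma^{I_1}=\Fbar v_\sigma$ and the connecting map
$$\delta:(M_\sigma/\Fbar v_\sigma)^{I_1}\hookrightarrow H^1(I_1,\Fbar)\cong\Hom(I_1,\Fbar)$$
is injective. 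On the other hand $(M_\sigma/\Fbar v_\sigma)^{I_1}\subseteq(M_\sigma/\Fbar v_\sigma)^{I_1\cap U}$, and the latter is $1$-dimensional by Proposition \ref{modX} applied to the injective envelope $\chi\hookrightarrow M_\sigma$ provided by Proposition \ref{Minj}. Hence everything reduces to producing one nonzero $I_1$-invariant class in $M_\sigma/\Fbar v_\sigma$ and computing its image under $\delta$.

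The natural candidate is $w_\sigma\in M_\sigma$ constructed in Lemma \ref{calc1}. In each case ($r\ne 0$ and $r=0$) the lemma evaluates $(g-1)w_\sigma$ for $g$ running over generators of $I_1$ compatible with the Iwahori factorisation $I_1=(I_1\cap U)(I_1\cap P^s)$, and each such difference lies in $\Fbar v_\sigma$. Since $v_\sigma$ is itself $I_1$-fixed, this propagates multiplicatively to show $(g-1)w_\sigma\in\Fbar v_\sigma$ for every $g\in I_1$; so $w_\sigma\bmod\Fbar v_\sigma$ is $I_1$-invariant, and $c(g):=(g-1)w_\sigma\in\Fbar v_\sigma\cong\Fbar$ is a continuous homomorphism $I_1\to\Fbar$ representing $\delta(w_\sigma\bmod\Fbar v_\sigma)$.

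It then suffices to match $c$ against $\kappa^u,\kappa^l,\varepsilon$ on the generators, using $\kappa^u(\bigl(\begin{smallmatrix}1&1\\0&1\end{smallmatrix}\bigr))=1$, $\kappa^l(\bigl(\begin{smallmatrix}1&0\\p&1\end{smallmatrix}\bigr))=1$, and $\varepsilon(\bigl(\begin{smallmatrix}1+p\alpha&0\\0&1+p\beta\end{smallmatrix}\bigr))=x-y$. For $r\ne 0$, Lemma \ref{calc1} gives $c=0$ on $I_1\cap P^s$ and $c(\bigl(\begin{smallmatrix}1&1\\0&1\end{smallmatrix}\bigr))=-(-1)^a r$, so $c=-(-1)^a r\,\kappa^u$, a nonzero multiple of $\kappa^u$; this settles (i) in the case $r\neq 0$. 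For $r=0$, the three displayed formulas of the lemma yield $c=\kappa^u-\Sigma_2\,\kappa^l+\Sigma_1\,\varepsilon$, where $\Sigma_k:=\sum_{\mu\in\Fp}\mu^k\in\Fbar$. Evaluating these elementary sums in $\Fp$ — they vanish unless $p-1\mid k$, in which case they equal $-1$ — gives $c=\kappa^u$ for $p>3$, $c=\kappa^u+\kappa^l$ for $p=3$, and $c=\kappa^u+\kappa^l+\varepsilon$ for $p=2$, matching (i), (ii), (iii) respectively. In each case $c\neq 0$, so by injectivity of $\delta$ we get $(M_\sigma/\Fbar v_\sigma)^{I_1}\neq 0$, forcing equality with the ambient $1$-dimensional $(M_\sigma/\Fbar v_\sigma)^{I_1\cap U}$.

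The main subtlety, though essentially already handled by Lemma \ref{calc1}, is the torus contribution for $r=0$: one needs the behaviour of $w_\sigma$ under the full diagonal $I_1\cap T$, not merely under $\bigl(\begin{smallmatrix}1&0\\p&1\end{smallmatrix}\bigr)$, to pin down the $\varepsilon$-coefficient via the identity $\varepsilon(\bigl(\begin{smallmatrix}1+p\alpha&0\\0&1+p\beta\end{smallmatrix}\bigr))=x-y$ and the vanishing or non-vanishing of $\Sigma_1$ in the relevant characteristic.
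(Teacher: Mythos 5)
Your proposal is correct and follows essentially the same route as the paper: the $1$-dimensionality of $(M_{\sigma}/\Fbar v_{\sigma})^{I_1\cap U}$ comes from Propositions \ref{Minj} and \ref{modX}, and the explicit element $w_{\sigma}$ of Lemma \ref{calc1} produces the nonzero $I_1$-invariant class whose image under the connecting map is read off on the generators, exactly as in the paper. You merely spell out what the paper leaves implicit (injectivity of the connecting map via $M_{\sigma}^{I_1}=\Fbar v_{\sigma}$, and the propagation of invariance from the generators of Lemma \ref{calc1} to all of $I_1$ using smoothness), which is fine.
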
  

\begin{proof} It follows from Proposition \ref{modX} that $(M_{\sigma}/\Fbar v_{\sigma})^{I_1\cap U}$ is $1$\--di\-men\-sio\-nal. Since  
$(M_{\sigma}/\Fbar v_{\sigma})^{I_1}\neq 0$
the  inclusion $(M_{\sigma}/\Fbar v_{\sigma})^{I_1}\subseteq (M_{\sigma}/\Fbar v_{\sigma})^{I_1\cap U}$ is an equality. The image of $w_{\sigma}$ of 
Lemma \ref{calc1} spans $(M_{\sigma}/\Fbar v_{\sigma})^{I_1}$ and the last assertion follows from Lemma \ref{calc1}.

\end{proof}

\begin{thm}\label{exseq} The map $(v,w)\mapsto v-w$ induces an exact sequence of $I$-representations:
$$0\rightarrow \pi^{I_1}\rightarrow M\oplus \Pi\centerdot M\rightarrow \pi\rightarrow 0.$$ 
\end{thm}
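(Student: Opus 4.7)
The content of the theorem breaks into two substantive claims: (S) surjectivity, $M+\Pi\centerdot M=\pi$; and (K) kernel identification, $M\cap\Pi\centerdot M=\pi^{I_1}$. Injectivity of $v\mapsto(v,v)$ is immediate, and its image is contained in the kernel of $(v,w)\mapsto v-w$ because $\pi^{I_1}\subseteq M$.

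For (S), note $M+\Pi\centerdot M=(M_\sigma+\Pi\centerdot M_{\tsigma})+(M_{\tsigma}+\Pi\centerdot M_\sigma)=\pi_\sigma+\pi_{\tsigma}$, which is $G^+$-stable by Proposition \ref{G+}. Since we have reduced to the case that $p\in Z$ acts trivially on $\pi$, the element $\Pi^2\in Z$ also acts trivially, and hence $\Pi(M+\Pi M)=\Pi M+M$, so $M+\Pi M$ is $\Pi$-stable. Because $G=G^+\sqcup\Pi G^+$ (distinguished by the parity of $\val\circ\det$), $M+\Pi M$ is $G$-stable, and being a non-zero subspace of the irreducible $\pi$ it equals $\pi$.

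For (K), the inclusion $\pi^{I_1}\subseteq M\cap\Pi M$ is immediate: $\pi^{I_1}\subseteq M$ by definition of $M$, and a direct matrix check gives $\Pi I_1\Pi^{-1}=I_1$, whence $\pi^{I_1}=\Pi\pi^{I_1}\subseteq\Pi M$. For the reverse inclusion---the main obstacle---my plan is to pass to $\pi/\pi^{I_1}$ and to show that $M/\pi^{I_1}$ and $\Pi M/\pi^{I_1}$ admit no common non-zero $I$-subrepresentation of $\pi/\pi^{I_1}$. Combining Lemma \ref{MI}, Proposition \ref{modX} and Proposition \ref{fix}, the $I$-socle of $M/\pi^{I_1}$ is semisimple with $H$-characters $\chi\alpha^{-1}$ and $\chi^s\alpha^{-1}$; conjugating by $\Pi$ and using the lower-triangular analogue (Lemma \ref{down}) of Proposition \ref{modX} together with the $\Pi$-conjugate of Proposition \ref{fix}, the $I$-socle of $\Pi M/\pi^{I_1}$ has $H$-characters $\chi\alpha$ and $\chi^s\alpha$. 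Whenever these four characters are pairwise distinct, any common $I$-subrepresentation $W$ of $M/\pi^{I_1}$ and $\Pi M/\pi^{I_1}$ satisfies $W^{I_1}\subseteq(M/\pi^{I_1})^{I_1}\cap(\Pi M/\pi^{I_1})^{I_1}=0$, so $W=0$ by smoothness of $W$ and pro-$p$-ness of $I_1$.

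The genuine technical obstacle is to handle the coincidences among these four characters arising for small primes $p\in\{2,3\}$ (where $\alpha=\alpha^{-1}$) and for low weights $r$ (where $\chi=\chi^s$ or $\chi/\chi^s=\alpha^{\pm 2}$). For these residual cases I would induct along the filtration $M_\sigma=\varinjlim M_{\sigma,n}$ from the proof of Proposition \ref{Minj}, exploiting that $M\cap\Pi M$ is an $I$-subrepresentation and that the $(I_1\cap U)$-invariants equal $\pi^{I_1}$ at every finite level (by Proposition \ref{Minj} applied to each $M_{\sigma,n}$ plus the symmetric statement for $\Pi\centerdot M$ with $I_1\cap U^s$); the explicit identities of Lemma \ref{calc1} then let one pin down the possible $I$-isotypic components at each level and check directly that a putative vector of $(M\cap\Pi M)\setminus\pi^{I_1}$ cannot occur. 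Geometrically this reflects the fact that $M$ and $\Pi\centerdot M$ occupy opposite half-subtrees of the Bruhat--Tits tree of $G$, meeting only at the central edge stabilised (mod centre) by $I$.
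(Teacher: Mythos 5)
Your overall skeleton matches the paper's: surjectivity via $M+\Pi\centerdot M=\pi_\sigma+\pi_{\tsigma}$, Proposition \ref{G+} and irreducibility (this part is correct and essentially identical), and the kernel via $I_1$-invariants of $(M\cap\Pi\centerdot M)/\pi^{I_1}$. Your generic-case kernel argument is also correct and is in fact a slight simplification of the paper's: when $\{\chi\alpha^{-1},\chi^s\alpha^{-1}\}$ and $\{\chi\alpha,\chi^s\alpha\}$ are disjoint, comparing $H$-characters of $I_1$-invariants inside $(\pi/\pi^{I_1})^{I_1}$ already suffices, with no need for cohomology.

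The genuine gap is in the residual cases, and these are not a fringe phenomenon: besides $p\in\{2,3\}$ (where $\alpha=\alpha^{-1}$) and $r\in\{0,p-1\}$ (where $\chi=\chi^s$), the character sets overlap whenever $\chi\chi^{-s}=\alpha^{\pm 2}$, i.e.\ $r\equiv\pm 2\pmod{p-1}$, so e.g.\ $r=2$ or $r=p-3$ for every $p\ge 5$. In those cases the character $\chi\alpha^{-1}=\chi^s\alpha$ occurs both in $(M/\pi^{I_1})^{I_1}$ and in $(\Pi\centerdot M/\pi^{I_1})^{I_1}$, and no amount of $H$-isotypic bookkeeping (which is all your finite-level induction along $M_{\sigma,n}$ would record) can tell the two lines apart or exclude that they coincide inside $(\pi/\pi^{I_1})^{I_1}$. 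The paper resolves exactly this point by pushing one level deeper: the connecting map sends $Q^{I_1}$, for $Q=(M\cap\Pi\centerdot M)/\pi^{I_1}$, injectively into $H^1(I_1,\pi^{I_1})\cong\Hom(I_1,\Fbar)\oplus\Hom(I_1,\Fbar)$, and Proposition \ref{fix} (whose computational content is Lemma \ref{calc1}) identifies the image $\Delta$ of $(M/\pi^{I_1})^{I_1}$ as spanned by explicit homomorphisms built from $\kappa^u$ (plus $\kappa^l,\varepsilon$ corrections when $r=0$), while $\Pi\centerdot\Delta$ is spanned by the corresponding $\kappa^l$-type homomorphisms; since $\kappa^u$ and $\kappa^l$ are linearly independent in $\Hom(I_1,\Fbar)$, one gets $\Delta\cap\Pi\centerdot\Delta=0$ uniformly in $p$ and $r$, even when the $H$-characters coincide. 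Your proposal cites Proposition \ref{fix} only for the socle character, not for this finer cocycle-level information, and the phrase ``check directly that a putative vector cannot occur'' is where the actual proof would have to go; as written it is a plan rather than an argument, and making it work would essentially force you to reconstruct the paper's comparison of extension classes (distinguishing whether the nontrivial unipotent action on a lift comes from $I_1\cap U$ or from $I_1\cap U^s$).
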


\begin{proof} We claim that $M\cap \Pi\centerdot M=\pi^{I_1}$. Consider an exact sequence:
$$0\rightarrow \pi^{I_1}\rightarrow M\cap \Pi\centerdot M\rightarrow Q\rightarrow 0.$$
Since $M\cap \Pi\centerdot M$ is an $I_1$-invariant subspace of $\pi$, we have $(M\cap \Pi\centerdot M)^{I_1}\subseteq \pi^{I_1}$. 
Since $M\cap \Pi\centerdot M$ contains $\pi^{I_1}$ the inclusion is an equality. Hence, by taking $I_1$-invariants we obtain an 
injection $\partial: Q^{I_1}\hookrightarrow H^1(I_1, \pi^{I_1})\cong \Hom(I_1, \Fbar)\oplus \Hom(I_1, \Fbar)$.
The element $\Pi$ acts on $H^1(I_1, \pi^{I_1})$ by $\Pi\centerdot (\psi_1, \psi_2)= (\psi_2^{\Pi}, \psi_1^{\Pi})$.
Let $\Delta_{\sigma}$ (resp. $\Delta_{\tsigma}$) denote the image of $(M_{\sigma}/\Fbar v_{\sigma})^{I_1}$ (resp. $(M_{\tsigma}/\Fbar v_{\tsigma})^{I_1}$) in 
$\Hom(I_1, \Fbar)$. 
Let $\Delta$ be the  image of $(M/\pi^{I_1})^{I_1}$ in $H^1(I_1, \pi^{I_1})$ so that  $\Delta=\Delta_{\sigma}\oplus \Delta_{\tsigma}$.
By taking $I_1$-invariants of the diagram 
\begin{displaymath} 
\xymatrix@1{ 0\ar[r] & \pi^{I_1}\ar[r]\ar[d]& M\cap\Pi\centerdot M \ar[r]\ar[d] & Q\ar[r]\ar[d] & 0\\
           0\ar[r] & \pi^{I_1}\ar[r]& M \ar[r] & M/\pi^{I_1}\ar[r] & 0}
\end{displaymath}
we obtain a commutative diagram:
\begin{displaymath}
\xymatrix@1{ \;Q^{I_1}\;\ar[d]\ar@{^(->}[r]^-{\partial} & H^1(I_1, \pi^{I_1})\ar[d]^{\id}\\
             \;(M/\pi^{I_1})^{I_1}\; \ar@{^{(}->}[r]^-{\partial} &  H^1(I_1, \pi^{I_1}).}
\end{displaymath}
and hence an injection $\partial(Q^{I_1})\hookrightarrow \Delta$. Acting by $\Pi$ we obtain an injection
$\partial(Q^{I_1})\hookrightarrow \Pi\centerdot \Delta$. We claim that $\Delta\cap \Pi\centerdot \Delta=0$. We have
$$\Delta\cap \Pi\centerdot \Delta=(\Delta_{\sigma}\cap \Pi\centerdot \Delta_{\tsigma} )\oplus  
(\Delta_{\tsigma}\cap \Pi\centerdot \Delta_{\sigma}).$$ 
By symmetry we may assume $r<p-1$. Proposition \ref{fix} applied to $M_{\sigma}$ and $M_{\tsigma}$ implies that 
 that  if $r\neq 0$ then $\Delta= \Fbar \kappa^u\oplus \Fbar \kappa^u$, hence $\Pi\centerdot \Delta= \Fbar(\kappa^u)^{\Pi}  \oplus  \Fbar (\kappa^u)^{\Pi}=
\Fbar\kappa^l\oplus \Fbar\kappa^l$, so that $\Delta\cap \Pi\centerdot \Delta=0$. If $r=0$ then Proposition \ref{fix} implies that 
$\Delta= \Fbar (\kappa^u-(\sum_{\mu\in\Fp} \mu^2) \kappa^l +(\sum_{\mu\in \Fp} \mu )\varepsilon)\oplus \Fbar \kappa^u$, hence 
$\Pi\centerdot \Delta=\Fbar \kappa^l \oplus  \Fbar (\kappa^l-(\sum_{\mu\in\Fp} \mu^2) \kappa^u -(\sum_{\mu\in \Fp} \mu )\varepsilon)$, again 
$\Delta\cap \Pi\centerdot \Delta=0$. Note that if $r=0$ then we have to apply Proposition \ref{fix} to $M_{\tsigma}$ with $r=p-1$, and $p-1\neq 0$.
This implies that $Q^{I_1}=0$ and hence $Q=0$.

Since $G^+$ and $\Pi$ generate $G$, Proposition \ref{G+} implies that $\pi_{\sigma}+\pi_{\tsigma}$ is stable under the action of $G$. 
Since $\pi$ is irreducible we get $\pi=\pi_{\sigma}+\pi_{\tsigma}$. This implies surjectivity.  
\end{proof}

\begin{cor}\label{one} We have $M_{\sigma}\cap \Pi\centerdot M_{\tsigma}=\pi_{\sigma}^{I_1}=\Fbar v_{\sigma}$ and 
$M_{\tsigma}\cap \Pi\centerdot M_{\sigma}=\pi_{\tsigma}^{I_1}=\Fbar v_{\tsigma}$.
\end{cor}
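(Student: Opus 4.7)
The plan is to reduce the statement to linear algebra inside the two-dimensional space $\pi^{I_1} = \Fbar v_{\sigma} \oplus \Fbar v_{\tsigma}$, using Theorem \ref{exseq} together with the direct sum decomposition $M = M_{\sigma} \oplus M_{\tsigma}$ from Lemma \ref{MI}. The preliminary observation is that the kernel of the map in Theorem \ref{exseq} identifies $\pi^{I_1}$ with $M \cap \Pi\centerdot M$ (viewed inside $\pi$, since $(v,w)\mapsto v-w$ has kernel the antidiagonal). Intersecting with $M_{\sigma}$ and using the direct sum yields $M_{\sigma} \cap \pi^{I_1} = \Fbar v_{\sigma}$ at once: if $av_{\sigma} + bv_{\tsigma}$ lies in $M_{\sigma}$, then $bv_{\tsigma} \in M_{\sigma} \cap M_{\tsigma} = 0$. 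Symmetrically, $M_{\tsigma} \cap \pi^{I_1} = \Fbar v_{\tsigma}$.

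For the first equality $M_{\sigma} \cap \Pi\centerdot M_{\tsigma} = \Fbar v_{\sigma}$, the inclusion $\supseteq$ is immediate from $v_{\sigma} = \Pi v_{\tsigma} \in \Pi\centerdot M_{\tsigma}$. Conversely, any element of the intersection lies in $M \cap \Pi\centerdot M = \pi^{I_1}$, hence in $M_{\sigma} \cap \pi^{I_1} = \Fbar v_{\sigma}$ by the preliminary step.

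For $\pi_{\sigma}^{I_1} = \Fbar v_{\sigma}$, I first note $\Fbar v_{\sigma} \subseteq \pi_{\sigma}^{I_1} \subseteq \pi^{I_1}$. Given $y \in \pi_{\sigma}^{I_1}$, I write $y$ in two ways: $y = av_{\sigma} + bv_{\tsigma}$ using $\pi^{I_1}$, and $y = m + \Pi n$ with $m \in M_{\sigma}$, $n \in M_{\tsigma}$ using $\pi_{\sigma} = M_{\sigma} + \Pi\centerdot M_{\tsigma}$. Substituting $v_{\tsigma} = \Pi v_{\sigma}$ and rearranging gives
$$m - av_{\sigma} = \Pi(bv_{\sigma} - n),$$
with LHS in $M_{\sigma}$ and RHS in $\Pi\centerdot M$; hence both sides lie in $M_{\sigma} \cap \Pi\centerdot M \subseteq M_{\sigma} \cap \pi^{I_1} = \Fbar v_{\sigma}$, so both equal some $cv_{\sigma}$. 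Canceling $\Pi$ on the right yields $n = bv_{\sigma} - cv_{\tsigma}$, so $bv_{\sigma} = n + cv_{\tsigma} \in M_{\tsigma}$, forcing $b = 0$ by the direct sum decomposition. Thus $y = av_{\sigma} \in \Fbar v_{\sigma}$.

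The symmetric statements for $M_{\tsigma} \cap \Pi\centerdot M_{\sigma}$ and $\pi_{\tsigma}^{I_1}$ follow by interchanging $\sigma$ and $\tsigma$. I do not anticipate any real obstacle: once Theorem \ref{exseq} is in hand, everything is driven by the direct sum $M = M_{\sigma} \oplus M_{\tsigma}$ and the identity $v_{\sigma} = \Pi v_{\tsigma}$. The only mild subtlety is that the sum $\pi_{\sigma} = M_{\sigma} + \Pi\centerdot M_{\tsigma}$ is \emph{not} direct, its overlap being exactly the line $\Fbar v_{\sigma}$ identified above.
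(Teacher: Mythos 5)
Your proof is correct and uses essentially the same ingredients as the paper's: Theorem \ref{exseq} to identify $M\cap \Pi\centerdot M$ with $\pi^{I_1}$, the decomposition $M=M_{\sigma}\oplus M_{\tsigma}$, and the relation $v_{\sigma}=\Pi v_{\tsigma}$, with the $v_{\tsigma}$-component killed by $M_{\sigma}\cap M_{\tsigma}=0$. The only difference is presentational: the paper argues by contradiction that $v_{\tsigma}\notin\pi_{\sigma}$ using that $\pi^{I_1}$ is $2$-dimensional, whereas you decompose an arbitrary element of $\pi_{\sigma}^{I_1}$ directly (and note that, strictly, $\Pi^{-1}v_{\sigma}=\zeta(p)^{-1}v_{\tsigma}$, which under the paper's normalization $\zeta(p)=1$ is exactly your cancellation step).
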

\begin{proof} It is enough to show that $\pi_{\sigma}^{I_1}=\Fbar v_{\sigma}$, since by Theorem \ref{exseq} 
$M_{\sigma}\cap \Pi\centerdot M_{\tsigma}$ is contained in $\pi^{I_1}$.
Suppose not. Clearly $v_{\sigma}\in \pi_{\sigma}$, so since $\pi^{I_1}$ is $2$-dimensional, we obtain that $v_{\tsigma}\in \pi_{\sigma}$. Then there
exists $u_1\in M_{\sigma}$ and $u_2\in \Pi\centerdot M_{\tsigma}$ such that $v_{\tsigma}=u_1+u_2$. So $u_2\in \Pi\centerdot M_{\tsigma}\cap (M_{\sigma}+M_{\tsigma})\subset 
\pi^{I_1}$ by Theorem \ref{exseq}. Hence $u_2=\lambda v_{\sigma}$ for some $\lambda\in \Fbar$, and so $u_2\in M_{\sigma}$, and so $v_{\tsigma}\in M_{\sigma}$.
This contradicts $M_{\sigma}\cap M_{\tsigma}=0$. 
\end{proof} 

\begin{cor}\label{two} As $G^+$-representation $\pi$ is the direct sum of its subrepresentations $\pi_{\sigma}$ and $\pi_{\tsigma}$.
\end{cor}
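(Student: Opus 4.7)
The corollary asserts both stability and direct-sum decomposition, but the $G^+$-stability of $\pi_\sigma$ and $\pi_{\tsigma}$ is already established in Proposition \ref{G+}. So what remains is to verify the two statements $\pi = \pi_\sigma + \pi_{\tsigma}$ and $\pi_\sigma \cap \pi_{\tsigma} = 0$. My plan is to read both off as immediate consequences of the preceding results.

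First I would handle the sum. By definition $\pi_\sigma + \pi_{\tsigma} = M_\sigma + M_{\tsigma} + \Pi \centerdot M_\sigma + \Pi \centerdot M_{\tsigma}$, and Lemma \ref{MI} gives $M = M_\sigma \oplus M_{\tsigma}$, so $\pi_\sigma + \pi_{\tsigma} = M + \Pi \centerdot M$. The exact sequence of Theorem \ref{exseq} says precisely that the map $M \oplus \Pi \centerdot M \to \pi$, $(v,w) \mapsto v - w$, is surjective, hence $\pi = M + \Pi \centerdot M = \pi_\sigma + \pi_{\tsigma}$.

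For the trivial intersection I would exploit Corollary \ref{one}. The intersection $\pi_\sigma \cap \pi_{\tsigma}$ is an $I_1$-stable (indeed $G^+$-stable) subspace of $\pi$, hence a smooth $I_1$-representation. Its $I_1$-invariants satisfy
$$(\pi_\sigma \cap \pi_{\tsigma})^{I_1} \subseteq \pi_\sigma^{I_1} \cap \pi_{\tsigma}^{I_1} = \Fbar v_\sigma \cap \Fbar v_{\tsigma} = 0,$$
the middle equality by Corollary \ref{one} and the last because $v_\sigma, v_{\tsigma}$ are linearly independent elements of $\pi^{I_1}$. Since $I_1$ is a pro-$p$ group and we work over $\Fbar$, any nonzero smooth $\Fbar[I_1]$-module has nonzero $I_1$-invariants; therefore $\pi_\sigma \cap \pi_{\tsigma} = 0$, and $\pi = \pi_\sigma \oplus \pi_{\tsigma}$ as $G^+$-representations.

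There is no real obstacle here: the proof is a two-line assembly of Theorem \ref{exseq} (for the surjectivity) with Corollary \ref{one} (for the triviality of the intersection, via the pro-$p$ invariants argument). The substantive work was already done in establishing these two statements.
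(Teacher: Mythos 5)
Your proposal is correct and follows essentially the same route as the paper: the sum $\pi=\pi_{\sigma}+\pi_{\tsigma}$ is read off from the surjectivity in Theorem \ref{exseq}, and the vanishing of the intersection follows from $(\pi_{\sigma}\cap\pi_{\tsigma})^{I_1}\subseteq \Fbar v_{\sigma}\cap\Fbar v_{\tsigma}=0$ via Corollary \ref{one} together with the standard pro-$p$ invariants argument. The only difference is that you spell out the identification $\pi_{\sigma}+\pi_{\tsigma}=M+\Pi\centerdot M$ explicitly, which the paper leaves implicit.
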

\begin{proof} It follows from Theorem \ref{exseq} that $\pi=\pi_{\sigma}+\pi_{\tsigma}$. Now 
$$(\pi_{\sigma}\cap \pi_{\tsigma})^{I_1}=\pi_{\sigma}^{I_1}\cap \pi_{\tsigma}^{I_1}=\Fbar v_{\sigma} \cap \Fbar v_{\tsigma}=0.$$
Hence, $\pi_{\sigma}\cap \pi_{\tsigma}=0$.
\end{proof}

\begin{cor}\label{three} We have $\pi\cong \Indu{G^+}{G}{\pi_{\sigma}}\cong  \Indu{G^+}{G}{\pi_{\tsigma}}$.
\end{cor}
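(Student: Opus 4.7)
The plan is to construct the natural map provided by Frobenius reciprocity applied to the $G^+$-equivariant inclusion $\pi_{\sigma}\hookrightarrow \pi$ (which exists by Proposition \ref{G+}), and to show it is an isomorphism by direct inspection, using Corollary \ref{two} as the key input.

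First I would verify the elementary group-theoretic fact that $G^+$ is an open normal subgroup of $G$ of index $2$: it is the kernel of the composition $G\xrightarrow{\detr}\Qp^{\times}\xrightarrow{\val}\ZZ\twoheadrightarrow\ZZ/2\ZZ$, and $\Pi$ represents the non-trivial coset since $\val(\detr\Pi)=\val(-p)=1$. Next, a direct manipulation using $\Pi^2=p$ (which acts trivially on $\pi$ by our normalization $\pi\cong\pi(r,0,\omega^{a})$) and the definitions $\pi_{\sigma}=M_{\sigma}+\Pi\centerdot M_{\tsigma}$, $\pi_{\tsigma}=M_{\tsigma}+\Pi\centerdot M_{\sigma}$ gives
$$\Pi\centerdot \pi_{\sigma}=\Pi\centerdot M_{\sigma}+\Pi^{2}\centerdot M_{\tsigma}=\Pi\centerdot M_{\sigma}+M_{\tsigma}=\pi_{\tsigma}.$$

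By Frobenius reciprocity the inclusion $\iota\colon \pi_{\sigma}\hookrightarrow\pi$ of $G^+$\--representations induces a $G$\--equivariant map $\Phi\colon \Indu{G^{+}}{G}{\pi_{\sigma}}\to \pi$. Realizing $\Indu{G^{+}}{G}{\pi_{\sigma}}$ as functions $f\colon G\to \pi_{\sigma}$ satisfying $f(hg)=hf(g)$ for $h\in G^{+}$, the map is
$$\Phi(f)=f(1)+\Pi\centerdot f(\Pi).$$
For surjectivity, letting $f$ run over functions supported on $G^{+}$ shows $\Image\Phi\supseteq\pi_{\sigma}$, while letting $f$ run over functions supported on $G^{+}\Pi$ shows $\Image\Phi\supseteq\Pi\centerdot\pi_{\sigma}=\pi_{\tsigma}$; hence $\Image\Phi\supseteq \pi_{\sigma}+\pi_{\tsigma}=\pi$ by Corollary \ref{two}. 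For injectivity, if $\Phi(f)=0$ then $f(1)=-\Pi\centerdot f(\Pi)\in \pi_{\sigma}\cap \pi_{\tsigma}$, which is zero again by Corollary \ref{two}; hence $f(1)=0$ and $\Pi\centerdot f(\Pi)=0$, so $f(\Pi)=0$ as $\Pi$ acts invertibly on $\pi$. Thus $f=0$ and $\Phi$ is an isomorphism. The second assertion $\pi\cong\Indu{G^{+}}{G}{\pi_{\tsigma}}$ follows by exactly the same argument with the roles of $\sigma$ and $\tsigma$ interchanged.

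There is no real obstacle here: the corollary is essentially a Mackey-type statement for the index-two subgroup $G^{+}\subset G$, and all the substantive work, namely the construction of the $G^{+}$-stable subspaces $\pi_{\sigma},\pi_{\tsigma}$ and the verification that $\pi$ is their direct sum, has already been carried out in Proposition \ref{G+} and Corollary \ref{two}.
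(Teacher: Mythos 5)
Your proof is correct and is exactly the argument the paper intends: Corollary \ref{three} is stated there without proof, as an immediate consequence of Corollary \ref{two} together with $[G:G^+]=2$ and $\Pi\centerdot \pi_{\sigma}=\pi_{\tsigma}$, and your Frobenius-reciprocity map $\Phi(f)=f(1)+\Pi\centerdot f(\Pi)$, with surjectivity from $\pi=\pi_{\sigma}+\pi_{\tsigma}$ and injectivity from $\pi_{\sigma}\cap\pi_{\tsigma}=0$, spells out precisely that standard index-two induction argument. No gaps.
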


\section{Computing $H^1(I_1/Z_1, \pi)$}

We keep the notation of \S\ref{exactsequence} and compute $H^1(I_1/Z_1, \pi)$ as a representation of $H$ under the assumption $p>2$.

\begin{lem}\label{extXP}  Assume that $p>2$. Let $\psi,\chi: H\rightarrow \Fbar^{\times}$ be characters. Let $N$ be a smooth 
representation of $(I\cap P)/Z_1$, such that $N|_{H(I_1\cap U)}$ is an injective envelope of $\chi$ in $\Rep_{H(I_1\cap U)}$. Suppose that 
$\Ext^1_{(I\cap P)/Z_1}(\psi, N)\neq 0$ then $\psi=\chi$. Moreover, $\Ext^1_{(I\cap P)/Z_1}(\chi, N)\cong \Ext^1_{(I\cap P)/Z_1}(\chi,\chi)$ 
is $1$-dimensional.
\end{lem}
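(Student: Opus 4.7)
The plan is to apply the Hochschild--Serre spectral sequence for the normal subgroup $H(I_1\cap U)\triangleleft I\cap P$. Conjugation by $\bigl(\begin{smallmatrix} 1+pa & 0 \\ 0 & 1+pb\end{smallmatrix}\bigr)\in T\cap I_1$ fixes $H$ elementwise and sends $\bigl(\begin{smallmatrix} 1 & x \\ 0 & 1\end{smallmatrix}\bigr)$ to $\bigl(\begin{smallmatrix} 1 & x(1+pa)(1+pb)^{-1} \\ 0 & 1\end{smallmatrix}\bigr)$, so $H(I_1\cap U)$ is indeed normal. Since $H\cap (T\cap I_1)=\{1\}$ and $Z_1\subset T\cap I_1$, the quotient $(I\cap P)/\bigl(Z_1\cdot H(I_1\cap U)\bigr)$ is naturally isomorphic to $(T\cap I_1)/Z_1$, which for $p>2$ is topologically isomorphic to $\Zp$.

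Applying Hochschild--Serre, the spectral sequence
$$E_2^{p,q}=H^p\bigl((T\cap I_1)/Z_1,\Ext^q_{H(I_1\cap U)}(\psi,N)\bigr)\Longrightarrow \Ext^{p+q}_{(I\cap P)/Z_1}(\psi,N)$$
degenerates at $E_2$ because $N|_{H(I_1\cap U)}$ is injective in $\Rep_{H(I_1\cap U)}$, yielding an isomorphism $\Ext^p_{(I\cap P)/Z_1}(\psi,N)\cong H^p\bigl(\Zp,\Hom_{H(I_1\cap U)}(\psi,N)\bigr)$. Since $N$ is an essential extension of its $H(I_1\cap U)$-socle $\chi$, the Hom space vanishes whenever $\psi\ne\chi$, which immediately proves the first assertion of the lemma. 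When $\psi=\chi$ the Hom space is one-dimensional, spanned by the socle line $\Fbar v\subseteq N$, and the residual $(T\cap I_1)/Z_1$-action on it is a continuous homomorphism $\Zp\to\Fbar^\times$; this is forced to be trivial since $\Zp$ is pro-$p$ while $\Fbar^\times$ has no nontrivial pro-$p$ subgroup. Thus $\Ext^1_{(I\cap P)/Z_1}(\chi,N)\cong H^1(\Zp,\Fbar)$ is one-dimensional.

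To obtain the identification with $\Ext^1_{(I\cap P)/Z_1}(\chi,\chi)$, I would apply $\Hom_{(I\cap P)/Z_1}(\chi,-)$ to the short exact sequence $0\to\chi\to N\to N/\chi\to 0$. By Proposition \ref{modX}, $(N/\chi)|_{H(I_1\cap U)}$ is an injective envelope of $\chi\alpha^{-1}$, so the first assertion of the present lemma applied to $N/\chi$ (using $\chi\ne\chi\alpha^{-1}$ when $p>2$) gives $\Hom_{(I\cap P)/Z_1}(\chi,N/\chi)=\Ext^1_{(I\cap P)/Z_1}(\chi,N/\chi)=0$. The long exact sequence of Ext then forces $\Ext^1_{(I\cap P)/Z_1}(\chi,\chi)\xrightarrow{\sim}\Ext^1_{(I\cap P)/Z_1}(\chi,N)$, establishing both the isomorphism and (by Lemma \ref{upt}) the one-dimensionality independently.

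The only delicate point is the group-theoretic preliminary: checking that $H(I_1\cap U)$ is normal in $I\cap P$ and that the quotient modulo $Z_1$ is $\Zp$ (this is where $p>2$ enters). Once this is set up, the vanishing of continuous characters of pro-$p$ groups into $\Fbar^\times$, combined with the injectivity hypothesis on $N|_{H(I_1\cap U)}$, makes the remaining analysis formal.
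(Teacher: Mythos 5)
Your proof is correct, and for the first assertion it takes a genuinely different (more homological) route than the paper. The paper argues by hand: by Lemma \ref{extriv} the extension splits over $H(I_1\cap U)$, one picks $v\in E^{I_1\cap U}$ on which $H$ acts by $\psi$, notes that if $v$ were fixed by $I_1\cap T$ the whole extension would split (since $I_1\cap T$ and $H(I_1\cap U)$ generate $I\cap P$), and then observes that $(h-1)v$ is a non-zero vector of $N^{I_1\cap U}$ on which $H$ acts by $\psi$, forcing $\psi=\chi$; the dimension count is then extracted from the long exact sequence together with Lemma \ref{upt}. You instead invoke the Lyndon--Hochschild--Serre spectral sequence for $H(I_1\cap U)\trianglelefteq I\cap P$, which degenerates because $N|_{H(I_1\cap U)}$ is injective, giving $\Ext^p_{(I\cap P)/Z_1}(\psi,N)\cong H^p(\Zp,\Hom_{H(I_1\cap U)}(\psi,N))$; this yields the characterization of $\psi$ and the one-dimensionality of $\Ext^1_{(I\cap P)/Z_1}(\chi,N)$ in one stroke (your observation that the residual $\Zp$-action on the socle line must be trivial, $\Fbar^{\times}$ having no $p$-torsion, is exactly the right point), at the cost of the group-theoretic bookkeeping you carried out (normality of $H(I_1\cap U)$ and the quotient being $\Zp$ for $p>2$); the paper's argument is in effect an explicit inflation--restriction computation, so the two proofs rest on the same structural facts, yours being shorter and giving the dimension independently of Lemma \ref{upt}, the paper's being elementary and self-contained. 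Your second paragraph coincides with the paper's proof of the second assertion. Two small points to tighten: the vanishing of $\Hom_{(I\cap P)/Z_1}(\chi,N/\chi)$ is not literally the first assertion of the lemma (which concerns $\Ext^1$), but it follows at once since the $H(I_1\cap U)$-socle of $N/\chi$ is $\chi\alpha^{-1}\neq\chi$, or from the degree-zero term of your degenerate spectral sequence; and, as in the paper, one should remark that the socle line $\chi\subseteq N$ is stable under $I\cap P$ (because $I\cap P$ normalizes $H(I_1\cap U)$ and so preserves the one-dimensional $H(I_1\cap U)$-socle), so that $N/\chi$ is indeed an $(I\cap P)/Z_1$-representation.
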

\begin{proof} Suppose that we have a non-split extension $0\rightarrow N\rightarrow E\rightarrow \psi\rightarrow 0$. Since 
$N|_{H(I_1\cap U)}$ is injective Lemma \ref{extriv} implies that the extension splits when restricted to $H(I_1\cap U)$. Hence, 
there exists $v\in E^{I_1\cap U}$ such that $H$ acts on $v$ by $\psi$ and the image of $v$ 
spans the underlying vector space of $\psi$. If $v$ is fixed by $I_1\cap T$, then since $I_1\cap T$ and $H(I_1\cap U)$ generate $I\cap P$ 
we would obtain a splitting of $E$ as an  $I\cap P$-representation. Hence, there exists some $h\in I_1\cap T$, such that $(h-1)v\in N$ is non-zero.
Since $h$ normalizes $I_1\cap U$ and $v$ is fixed by $I_1\cap U$, we obtain that $(h-1)v \in N^{I_1\cap U}$. 
Since $H$ acts on $v$ by $\psi$ and $T$ is abelian,  we get that $H$ acts on $(h-1)v$ by $\psi$. Since $N|_{H(I_1\cap U)}$ is an injective envelope of $\chi$
we obtain that $\chi=\psi$. 

By Proposition \ref{modX}, $N/\chi$ is an injective envelope of $\chi\alpha^{-1}$. Since $p>2$, $\chi\neq 
\chi\alpha^{-1}$ and so $\Hom_{I\cap P}(\chi, N/\chi)= \Ext^1_{(I\cap P)/Z_1}(\chi, N/\chi)=0$. So applying $\Hom_{I\cap P}(\chi, \centerdot)$ to the short exact sequence 
of $(I\cap P)/Z_1$ representations $0\rightarrow \chi\rightarrow N\rightarrow N/\chi\rightarrow 0$ gives us an isomorphism 
$\Ext^1_{(I\cap P)/Z_1}(\chi, N)\cong \Ext^1_{(I\cap P)/Z_1}(\chi,\chi)$. Lemma \ref{upt} implies that these spaces are $1$-dimensional.
\end{proof}
 
\begin{prop}\label{extX} Assume that $p>2$. Let $\psi,\chi: H\rightarrow \Fbar^{\times}$ be characters. Let $N$ be a smooth 
representation of $I/Z_1$, such that $N|_{H(I_1\cap U)}$ is an injective envelope of $\chi$ in $\Rep_{H(I_1\cap U)}$. Suppose that 
$\Ext^1_{I/Z_1}(\psi, N)\neq 0$ and let $\mathcal K$ be the kernel of the restriction map $\Ext^1_{I/Z_1}(\psi, N)\rightarrow  
\Ext^1_{(I\cap P)/Z_1}(\psi, N)$ then one of the following holds:
\begin{itemize}
\item[(i)] if $\mathcal K\neq 0$ then $\psi=\chi\alpha$;
\item[(ii)] if $\mathcal K=0$ then $\psi=\chi$. 
\end{itemize} 
Moreover, $\dim\Ext^1_{I/Z_1}(\chi\alpha, N)=1,$ and let $R$ be the submodule of $N$, fitting in the exact sequence
$0\rightarrow N^{I_1}\rightarrow R\rightarrow (N/N^{I_1})^{I_1}\rightarrow 0$, then there exists an exact sequence:
$$0\rightarrow \Hom_I(\chi,\chi\alpha^{-2})\rightarrow \Ext^1_{I/Z_1}(\chi,R)\rightarrow \Ext^1_{I/Z_1}(\chi,N)\rightarrow 0.$$
\end{prop}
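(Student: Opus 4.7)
The plan is to split $\Ext^1_{I/Z_1}(\psi, N)$ according to the restriction map to $\Ext^1_{(I\cap P)/Z_1}(\psi,N)$. For any extension $0 \to N \to E \to \psi \to 0$, the injectivity of $N|_{H(I_1\cap U)}$ together with Lemma \ref{extriv} produces a vector $v \in E$ fixed by $I_1\cap U$ on which $H$ acts via $\psi$; this $v$ is unique up to $(N^{H(I_1\cap U)})_\psi$, which is zero unless $\psi=\chi$.

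For case (ii), $[E]\notin\mathcal K$ means $v$ cannot be modified to become $I_1\cap T$-fixed, so $(t-1)v$ is a non-zero element of $N^{I_1\cap U}=\chi$ for some $t\in I_1\cap T$; the $H$-equivariance of this element together with $H$ normalising $I_1\cap T$ forces $\psi=\chi$ exactly as in Lemma \ref{extXP}. For case (i), $[E]\in\mathcal K$ provides $v\in E^{I\cap P}$, and the non-triviality of the extension becomes a non-zero cocycle $\delta\colon I_1\cap U^s\to N$, $u\mapsto(u-1)v$. Since $I_1\cap U^s\cong\Zp$ is topologically cyclic with generator $u_0$, the class of $\delta$ in $H^1_{\mathrm{cts}}(I_1\cap U^s,N)\cong N/(u_0-1)N$ is $[\delta(u_0)]$. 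The conjugation identity $hu_0h^{-1}=u_0^{\alpha^{-1}(h)}$ together with the cocycle relation $\delta(u_0^a)\equiv a\delta(u_0)\pmod{(u_0-1)N}$ shows $[\delta(u_0)]$ is a $(\psi\alpha^{-1})$-eigenvector of $H$. The identity \eqref{ssap} then shows the image of $\delta(u_0)$ modulo $(u_0-1)N$ is $(I\cap U)$-fixed modulo lower socle, and placing it at the top of the $I_1$-socle filtration of $N$ gives $\psi\alpha^{-1}=\chi$, so $\psi=\chi\alpha$. The main obstacle is tracking $[\delta(u_0)]$ through the socle filtration carefully enough to certify it lies in the copy of $\chi$ at the top rather than in some deeper layer $\chi\alpha^{-k}$.

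For the \emph{moreover} part, $\dim\Ext^1_{I/Z_1}(\chi\alpha,N)\le 1$ follows because $[\delta(u_0)]$ is constrained to lie in a one-dimensional eigenspace inside $R/N^{I_1}$, while the lower bound comes from an explicit construction of a non-split extension using the same cocycle. For the exact sequence, I apply $\Hom_{I/Z_1}(\chi,-)$ to $0\to R\to N\to N/R\to 0$. Iterating Proposition \ref{modX} twice shows $(N/R)^{I_1\cap U}$ has $H$-character $\chi\alpha^{-2}$, and the case analysis (i)/(ii) just proved, applied to $N/R$, then gives $\Ext^1_{I/Z_1}(\chi,N/R)=0$ (since $\chi\neq\chi\alpha^{-2}$ unless $p=3$, a case that is absorbed into the $\Hom_I(\chi,\chi\alpha^{-2})$ term). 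Hence $\Ext^1_{I/Z_1}(\chi,R)\to\Ext^1_{I/Z_1}(\chi,N)$ is surjective; its kernel is the cokernel of $\Hom_{I/Z_1}(\chi,N)\to\Hom_{I/Z_1}(\chi,N/R)$, which equals $\Hom_I(\chi,\chi\alpha^{-2})$ because $\Hom_{I/Z_1}(\chi,N)=\Hom_{I/Z_1}(\chi,R)$ and $\Hom_{I/Z_1}(\chi,N/R)$ picks out exactly the $\chi\alpha^{-2}$-eigenspace in $(N/R)^{I_1}$.
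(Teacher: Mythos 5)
Your overall skeleton (splitting along the restriction to $I\cap P$, using injectivity of $N|_{H(I_1\cap U)}$ and Lemma \ref{extriv} to produce eigenvectors, the Lemma \ref{extXP}-style torus argument for case (ii), and the long exact sequence plus iterated Proposition \ref{modX} for the final statement) matches the paper, but the heart of case (i) is missing, and you say so yourself. Reformulating the obstruction as a class in $H^1(I_1\cap U^s,N)\cong N/(u_0-1)N$ and computing that $[\delta(u_0)]$ is a $\psi\alpha^{-1}$-eigenvector cannot by itself pin down $\psi$: the hypothesis only controls $N$ as a representation of $H(I_1\cap U)$ (the \emph{upper} unipotent), so you have no handle on the coinvariants for the lower unipotent, and since the socle filtration of $N$ has layers $\chi\alpha^{-k}$ for every $k\ge 0$ (and $\alpha$ has order $p-1$), every relevant $H$-character already occurs in $N$; eigencharacter bookkeeping gives no constraint unless you certify the class is detected in the top layer, which is exactly the step you flag as an "obstacle" and do not carry out. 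There is also a prior unjustified step: non-splitness of $E$ over $I$ gives $\delta\neq 0$ as a function, but your argument needs the \emph{cohomology class} $[\delta]\in H^1(I_1\cap U^s,N)$ to be non-zero, i.e.\ that $E$ does not split over $H(I_1\cap U^s)$, which does not follow formally from splitting over $I\cap P$ plus non-splitting over $I$. The paper's proof avoids both problems by a different device: take the minimal $k\ge 2$ such that $v$ is fixed by $\bigl(\begin{smallmatrix}1&0\\ \pF^k&1\end{smallmatrix}\bigr)$, and use \eqref{ssap} to show that $v'=\bigl(\begin{smallmatrix}1&0\\ p^{k-1}&1\end{smallmatrix}\bigr)v-v$ is fixed by all of $I_1\cap U$, hence lies in the \emph{one-dimensional} $\chi$-eigenspace $N^{I_1\cap U}$; this yields a non-split extension of $\psi$ by $\chi$ over $H\mathcal U$ with $\mathcal U$ a lower congruence subgroup, and Lemma \ref{down} then forces $\psi=\chi\alpha$.

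The same gap propagates into your "moreover" statements. Your bound $\dim\Ext^1_{I/Z_1}(\chi\alpha,N)\le 1$ presupposes the unproved layer-tracking, and the asserted lower bound by "an explicit construction" is not carried out; the paper instead shows the pushforward of any such class to $N/N^{I_1}$ splits (again by the minimal-$k$ plus Lemma \ref{down} argument, using $p>2$ so $\alpha\neq\Eins$), obtaining the exact sequence $0\to\Hom_I(\chi\alpha,\chi\alpha^{-1})\to\Ext^1_{I/Z_1}(\chi\alpha,\chi)\to\Ext^1_{I/Z_1}(\chi\alpha,N)\to 0$ and a dimension count via Proposition \ref{extI} (which also handles $p=3$). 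Finally, in the last exact sequence your surjectivity argument fails precisely at $p=3$: there $\chi=\chi\alpha^{-2}$, so $\Ext^1_{I/Z_1}(\chi,N/R)$ need not vanish, and saying the discrepancy is "absorbed into the $\Hom_I(\chi,\chi\alpha^{-2})$ term" conflates the kernel term with the cokernel issue. The paper proves surjectivity uniformly by constructing, inside any non-split extension $E$ of $\chi$ by $N$ that is non-split over $I\cap P$, explicit elements $w_1$ and $u$ with $R=\langle w_1,u\rangle$ and $v$ becoming $I_1$-fixed modulo $R$ (this uses $2w_1\neq 0$, hence $p>2$), so that the composite $\Ext^1_{I/Z_1}(\chi,N)\to\Ext^1_{I/Z_1}(\chi,N/R)$ is zero; your identification of the kernel with $\Hom_I(\chi,\chi\alpha^{-2})$ is fine, but the surjectivity at $p=3$ needs this additional construction.
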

\begin{proof} Suppose that $\mathcal K\neq 0$ then there exists a non-split extension $0\rightarrow N\rightarrow E\rightarrow \psi\rightarrow 0$
of $I/Z_1$-representations, which splits when restricted to $I\cap P$. Hence, there exists $v\in E^{I_1\cap P}$ such that $H$ acts on $v$ by $\psi$ and the image of $v$ 
spans the underlying vector space of $\psi$.  Let $k$ be the smallest integer $k\ge 1$ such that $v$ is fixed by $
\bigl (\begin{smallmatrix} 1 & 0 \\ \pF^k & 1 \end{smallmatrix}\bigr )$. If $k=1$ then $v$ is fixed by $I\cap U^s$. Since  $I\cap U^s$ and $I\cap P$
generate $I$, we would obtain that $I$ acts on $v$ by $\psi$ and hence the extension splits. Hence, $k$ is at least $2$. Set 
$\mathcal U:=\bigl ( \begin{smallmatrix} 1 & 0\\ \pF^{k-1} & 1 \end{smallmatrix}\bigr )$.
Our assumption on $k$ implies that $v':=\bigl ( \begin{smallmatrix} 1 & 0 \\ p^{k-1} & 1 \end{smallmatrix} \bigr )v-v\in N$ is non-zero.
The matrix identity \eqref{ssap} implies that $v'$ is fixed by $I_1\cap U$. Since  $N^{I_1\cap U}$ is $1$-dimensional and $H$ acts on $N^{I_1\cap U}$
by $\chi$, we obtain a non-zero element in $\Ext^1_{H\mathcal U}(\psi, \chi)$. Lemma \ref{down} implies that  
$\psi= \chi\alpha$. Let $\bar{v}$ be the image of $v$ in $E/N^{I_1}$. Again  by Proposition \ref{modX} 
$(N/N^{I_1})^{I_1\cap U}$ is $1$-dimensional and $H$ acts on $(N/N^{I_1})^{I_1\cap U}$
by $\chi\alpha^{-1}$. If the extension $0\rightarrow N/N^{I_1}\rightarrow E/N^{I_1}\rightarrow \psi\rightarrow 0$ is non-split,
then by the same argument we would obtain a non-zero element in $\Ext^1_{H\mathcal U'}(\chi\alpha,\chi\alpha^{-1})$, where 
$\mathcal U':=\bigl (\begin{smallmatrix} 1 & 0\\ \pF^{m} & 1 \end{smallmatrix}\bigr )$, for some $m\ge 1$. This contradicts Lemma \ref{down}, as $p>2$ and so 
$\alpha$ is non-trivial. Hence we obtain an exact sequence:
\begin{equation}\label{yetanotherseq}
0\rightarrow \Hom_I(\chi\alpha,\chi\alpha^{-1})\rightarrow \Ext^1_{I/Z_1}(\chi\alpha,\chi)\rightarrow \Ext^1_{I/Z_1}(\chi\alpha,N)\rightarrow 0.
\end{equation}   
If $p>3$ then $\dim \Hom_I(\chi\alpha,\chi\alpha^{-1})=0$ and $\dim  \Ext^1_{I/Z_1}(\chi\alpha,\chi)=1$. If $p=3$ then 
$\dim \Hom_I(\chi\alpha,\chi\alpha^{-1})=1$ and $\dim  \Ext^1_{I/Z_1}(\chi\alpha,\chi)=2$. Hence, $\dim \Ext^1_{I/Z_1} (\chi\alpha,N)=1$.

Assume that $\mathcal K=0$. Since we have assumed that $\Ext^1_{I/Z_1}(\psi,N)\neq 0$ we obtain that $\Ext^1_{(I\cap P)/Z_1}(\psi, N)\neq 0$ and 
Lemma \ref{extXP} implies that $\psi=\chi$ and $\dim \Ext^1_{I/Z_1}(\chi,N)\le 1$. Suppose that there exists a non-split extension 
$0\rightarrow N\rightarrow E\rightarrow \chi\rightarrow 0$
of $I/Z_1$-representations, which remains non-split when restricted to $I\cap P$. Let $w_1$ be a basis vector of $N^{I_1\cap U}$. Lemmas \ref{extXP}, \ref{upt}
and \ref{homip} imply that there exists $v\in E$ such that $H$ acts on $v$ by $\chi$ and for all $g\in I_1\cap P$ we have $g v= v+\varepsilon(g) w_1$.
In particular, $v$ is fixed by $I\cap U$ and  $\bigl (\begin{smallmatrix} 1 +\pF^2 & 0\\ 0 & 1+\pF^2\end{smallmatrix} \bigr )$.
As before, let $k$ be the smallest integer $k\ge 1$ such that $v$ is fixed by $
\bigl (\begin{smallmatrix} 1 & 0 \\ \pF^k & 1 \end{smallmatrix}\bigr )$. We claim that $k=2$. Indeed, if $k>2$ then let $v':=\bigl ( \begin{smallmatrix} 1 & 0 \\ p^{k-1} & 1 \end{smallmatrix}\bigr )v -v$. Then $v'\in N$ is non-zero, and 
the matrix identity \eqref{ssap} implies that $v'$ is fixed by $I_1\cap U$. Since  $N^{I_1\cap U}$ is $1$-dimensional and $H$ acts on 
$N^{I_1\cap U}$ by $\chi$, we obtain a non-zero element in $\Ext^1_{H\mathcal U}(\chi, \chi)$, with 
$\mathcal U:=\bigl ( \begin{smallmatrix} 1 & 0\\ \pF^{k-1} & 1 \end{smallmatrix}\bigr )$.
Lemma \ref{down} implies that  $\chi= \chi\alpha$. Since $p>2$ this cannot happen.

Consider $u:=\bigl ( \begin{smallmatrix} 1 & 0 \\ p & 1\end{smallmatrix} \bigr )v-v$. Using \eqref{ssap} and the fact that 
$k\ge 2$ we obtain
\begin{equation}
\begin{split}
\begin{pmatrix} 1 & 1 \\ 0 & 1 \end{pmatrix} u&=\begin{pmatrix} 1 & 0 \\ p(1+p)^{-1} & 1 \end{pmatrix} \begin{pmatrix} 1+p & 1 \\
0 & (1+p)^{-1}\end{pmatrix} v -v\\  &= \begin{pmatrix} 1 & 0 \\ p(1+p)^{-1} & 1 \end{pmatrix} (v+ 2w_1)-v= u+2w_1.
\end{split}
\end{equation}  
Since $2w_1\neq 0$ we get $u\neq 0$ and so $k=2$. By Proposition \ref{modX} $(N/\Fbar w_1)^{I_1\cap U}$ is $1$-dimensional. 
This implies that $(N/\Fbar w_1)^{I_1\cap U}\cong (N/\Fbar w_1)^{I_1}$ and 
the image of $u$ in $N/\Fbar w_1$ spans $(N/\Fbar w_1)^{I_1\cap U}$. If we set $R:=\langle w_1, u\rangle$ then by construction we obtain that 
the map $\Ext^1_{I/Z_1}(\chi, N)\rightarrow \Ext^1_{I/Z_1}(\chi,N/R)$ is zero. Proposition \ref{modX} implies that $(N/R)^{I_1}$ is $1$-dimensional 
and $H$ acts on it by a character $\chi\alpha^{-2}$. This implies the claim.

\end{proof}

\begin{cor}\label{ext2} Assume $p>2$ then the restriction maps 
$$\Ext^2_{I/Z_1}(\chi,\chi)\rightarrow \Ext^2_{(I\cap P^s)/Z_1}(\chi,\chi),$$ 
$$\Ext^2_{I/Z_1}(\chi,\chi)\rightarrow \Ext^2_{(I\cap P)/Z_1}(\chi,\chi)$$
are injective. 
\end{cor}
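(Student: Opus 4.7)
The plan is to observe that $\Ext^2_{I/Z_1}(\chi,\chi)$ already vanishes for $p>2$, which makes the injectivity of both restriction maps automatic.

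First I would reduce to group cohomology. Since $|H|$ is prime to $p$ and $I/Z_1 \cong H \ltimes (I_1/Z_1)$, the Hochschild--Serre spectral sequence degenerates to
$$\Ext^i_{I/Z_1}(\chi,\chi) \cong H^i(I_1/Z_1,\Fbar)^H,$$
with $H$ acting by conjugation on $I_1/Z_1$ (the coefficients $\chi^{-1}\otimes\chi$ cancel), and analogously for both parabolic subgroups. So the corollary reduces to showing that the $H$-invariants of $H^2$ vanish in all three cases.

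For $p>2$, the pro-$p$ group $I_1/Z_1$ is uniform of dimension $3$; by the Lazard correspondence, $H^*(I_1/Z_1,\Fbar) \cong H^*_{\mathrm{Lie}}(\mathfrak{g},\Fbar)$ as graded $H$-modules, where $\mathfrak{g}$ is the mod-$p$ Lie algebra of $I_1/Z_1$. A direct matrix commutator computation in the basis $x = E_{12}$, $y = pE_{21}$, $h = pE_{11} \pmod{\mathfrak{z}_1}$ yields $[x,y] \equiv 2h$ and $[x,h]\equiv[y,h]\equiv 0 \pmod p$, so $\mathfrak{g}$ is the $3$-dimensional Heisenberg Lie algebra. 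The Chevalley--Eilenberg differential satisfies $dh^* = -2\,x^*\wedge y^*$, so $H^2(\mathfrak{g},\Fbar)$ is spanned by $x^*\wedge h^*$ and $y^*\wedge h^*$.

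Under the adjoint action, $H$ acts with weight $\alpha$ on $x$, weight $\alpha^{-1}$ on $y$, and trivially on $h$; hence the two generators of $H^2$ carry $H$-weights $\alpha^{-1}$ and $\alpha$. For $p>2$ both are nontrivial, so $H^2(I_1/Z_1,\Fbar)^H=0$. The analogous computation for the $2$-dimensional abelian Lie algebras $\langle x,h\rangle$ and $\langle y,h\rangle$ of $(I_1\cap P)/Z_1$ and $(I_1\cap P^s)/Z_1$ gives one-dimensional $H^2$'s of weights $\alpha^{-1}$ and $\alpha$ respectively, again with vanishing $H$-invariants. Hence all three $\Ext^2$-groups vanish, and the restriction maps are trivially injective.

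The main technical worry is justifying the Lazard correspondence at $p=3$, where uniformity of $I_1/Z_1$ requires a direct check. A Lazard-free alternative would compute $H^2((I_1\cap P)/Z_1,\Fbar)=\Fbar$ of weight $\alpha^{-1}$ via Hochschild--Serre for $I_1\cap U \triangleleft I_1\cap P$ (and similarly for $P^s$), and then deduce $H^2(I_1/Z_1,\Fbar)^H=0$ by a more elaborate cocycle analysis; but the Heisenberg route above is cleanest.
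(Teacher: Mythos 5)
There is a genuine gap: your whole argument rests on the claim that $I_1/Z_1$ is uniform of dimension $3$ and that Lazard gives $H^*(I_1/Z_1,\Fbar)\cong H^*_{\mathrm{Lie}}(\mathfrak{g},\Fbar)$ with mod~$p$ coefficients, and both points fail. The group $I_1/Z_1$ is $3$-dimensional but topologically generated by two elements: by Proposition \ref{homi} its Frattini quotient $\Hom(I_1/Z_1,\Fbar)$ is only $2$-dimensional for $p>2$, so it is not powerful, hence not uniform, for any $p$. Moreover, even for uniform groups the mod~$p$ statement of Lazard's theorem is that $H^*(G,\Fp)$ is the exterior algebra on $H^1(G,\Fp)$ (which ignores the bracket); the comparison with Chevalley--Eilenberg cohomology of the Lie algebra is a $\Qp$-coefficient statement, so the isomorphism you invoke is not a theorem you can quote. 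The situation is worst exactly where the corollary is still needed: for $p=3$ the group $I_1/Z_1$ contains torsion (the image of $\bigl(\begin{smallmatrix}1&1\\-3&-2\end{smallmatrix}\bigr)\in I_1$ has order $3$ modulo the centre), so it has infinite cohomological dimension, no Lie-theoretic computation of $H^2$ is available, and your asserted vanishing of $\Ext^2_{I/Z_1}(\chi,\chi)$ is unsubstantiated there --- yet the corollary is used at $p=3$ (e.g.\ in Lemma \ref{p3}). Your fallback remark defers precisely this point ($H^2(I_1/Z_1,\Fbar)^H=0$) to an unspecified ``more elaborate cocycle analysis'', so it does not close the gap either. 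For what it is worth, for $p\ge 5$ your conclusion is in fact correct and can be reached honestly (for $p\ge 5$ the group $I_1/Z_1$ is torsion-free $p$-adic analytic of dimension $3$, hence a Poincar\'e duality group; $H^3$ carries the trivial $H$-action and $H^2\cong\Hom(H^1,H^3)$ has weights $\alpha^{\pm1}$), but that is a different argument from the one you gave, and it still leaves $p=3$ open.

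Note also that the statement to be proved is only injectivity of the two restriction maps, which is strictly weaker than vanishing of the source, and the paper exploits this: it applies $\Hom_{I/Z_1}(\chi,\centerdot)$ to $0\rightarrow \chi\rightarrow \Indu{I\cap P^s}{I}{\chi}\rightarrow Q\rightarrow 0$, uses the Iwahori decomposition to see that $\Indu{I\cap P^s}{I}{\chi}$ restricted to $H(I_1\cap U)$ is an injective envelope of $\chi$, and then combines Shapiro's lemma with $\Ext^1_{I/Z_1}(\chi,\chi)=0$, $\dim\Ext^1_{(I\cap P^s)/Z_1}(\chi,\chi)=1$ and $\dim\Ext^1_{I/Z_1}(\chi,Q)=1$ (Proposition \ref{extX}) to see that the connecting map into $\Ext^2_{I/Z_1}(\chi,\chi)$ vanishes, which gives the injectivity for all $p>2$ without ever computing $H^2(I_1/Z_1,\Fbar)$. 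If you want to salvage your approach you would have to either restrict to $p\ge5$ and argue via Poincar\'e duality, or supply a genuine computation of the relevant isotypic pieces of $H^2(I_1/Z_1,\Fbar)$ at $p=3$; as written, the proof does not establish the corollary.
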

\begin{proof} Consider the exact sequence of $I$-representations $0\rightarrow \chi\rightarrow \Indu{I\cap P^s}{I}{\chi}\rightarrow Q\rightarrow 0$.
Iwahori decomposition implies that 
$$ (\Indu{I\cap P^s}{I}{\chi})|_{H(I_1\cap U)}\cong \Indu{H}{H(I_1\cap U)}{\chi},$$ and 
hence it is an injective envelope of $\chi$ in $\Rep_{H(I_1\cap U)}$. Proposition \ref{modX} implies that $Q|_{H(I_1\cap U)}$ is an injective 
envelope of $\chi\alpha^{-1}$ in $\Rep_{H(I_1\cap U)}$. Since $p>2$ Lemma \ref{extI} implies that $\Ext^1_{I/Z_1}(\chi,\chi)=0$, so
using Shapiro's lemma we obtain an exact sequence:
\begin{displaymath}
\begin{split}
\Ext^1_{(I\cap P^s)/Z_1}(\chi,\chi)\hookrightarrow \Ext^1_{I/Z_1}(\chi,Q)&\rightarrow \Ext^2_{I/Z_1}(\chi,\chi)\\
&\rightarrow\Ext^2_{(I\cap P^s)/Z_1}(\chi,\chi).
\end{split}
\end{displaymath}
Now $\dim  \Ext^1_{(I\cap P^s)/Z_1}(\chi,\chi)=1$ and $\dim  \Ext^1_{I/Z_1}(\chi,Q)=1$ by Proposition \ref{extX}. This implies the result
for $I\cap P^s$. By conjugating by $\Pi$ we obtain the result for $I\cap P$. 
\end{proof}

\begin{cor}\label{take} Assume $p>2$ and let $N$ be as in Proposition \ref{extX} then $\dim\Ext^1_{I/Z_1}(\chi,N)=1$, the natural 
maps 
\begin{equation}\label{natural1}
\Ext^2_{I/Z_1}(\chi,\chi)\rightarrow \Ext^2_{I/Z_1}(\chi, N),
\end{equation}
\begin{equation}\label{natural2}
\Ext^1_{I/Z_1}(\chi,N)\rightarrow \Ext^1_{(I\cap P)/Z_1}(\chi,N)
\end{equation}
 are injective and \eqref{natural2} is an isomorphism.
\end{cor}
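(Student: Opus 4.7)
The plan is to exploit the short exact sequence of $I/Z_1$-representations $0\to\chi\to N\to N/\chi\to 0$, where I identify $\chi=N^{I_1}$ (one-dimensional with $H$-action $\chi$, using $N^{I_1\cap U}=\chi$ together with the fact that $I_1\cap T$ is pro-$p$ and hence acts trivially on this line). By Proposition \ref{modX}, $(N/\chi)|_{H(I_1\cap U)}$ is an injective envelope of $\chi\alpha^{-1}$, so the hypotheses of Proposition \ref{extX} are met by $N/\chi$ with $\chi\alpha^{-1}$ playing the role of the base character. Its ``$\chi\alpha$'' case, corresponding here to source character $\psi=\chi=(\chi\alpha^{-1})\alpha$, gives $\dim\Ext^1_{I/Z_1}(\chi,N/\chi)=1$. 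Similarly, Lemma \ref{extXP} applied to $N/\chi$ yields $\Ext^1_{(I\cap P)/Z_1}(\chi,N/\chi)=0$, since $\chi\neq\chi\alpha^{-1}$ for $p>2$.

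The long exact sequence of $\Ext^*_{I/Z_1}(\chi,-)$ attached to $0\to\chi\to N\to N/\chi\to 0$, combined with $\Ext^1_{I/Z_1}(\chi,\chi)=0$ from Proposition \ref{extI}, takes the form
$$0\to\Ext^1_{I/Z_1}(\chi,N)\to\Ext^1_{I/Z_1}(\chi,N/\chi)\xrightarrow{\delta}\Ext^2_{I/Z_1}(\chi,\chi)\to\Ext^2_{I/Z_1}(\chi,N).$$
Once the connecting map $\delta$ is shown to vanish, both $\dim\Ext^1_{I/Z_1}(\chi,N)=1$ and injectivity of \eqref{natural1} follow at once.

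Showing $\delta=0$ is the heart of the argument, and I expect it to be the main obstacle. I will use the commutative square
$$\begin{array}{ccc}
\Ext^1_{I/Z_1}(\chi,N/\chi) & \xrightarrow{\delta} & \Ext^2_{I/Z_1}(\chi,\chi) \\
\downarrow & & \downarrow \\
\Ext^1_{(I\cap P)/Z_1}(\chi,N/\chi) & \xrightarrow{\delta'} & \Ext^2_{(I\cap P)/Z_1}(\chi,\chi),
\end{array}$$
in which the vertical maps are restriction and $\delta,\delta'$ are the connecting homomorphisms of the same short exact sequence. The bottom-left vanishes by the first paragraph, so $\delta'\circ\mathrm{res}=0$, and commutativity yields $\mathrm{res}\circ\delta=0$. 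Corollary \ref{ext2} asserts that the right vertical map is injective, forcing $\delta=0$.

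For \eqref{natural2}, injectivity follows directly from Proposition \ref{extX}: with $\psi=\chi$, any non-zero element in the kernel would, by case (i), force $\chi=\chi\alpha$, impossible for $p>2$. Combined with $\dim\Ext^1_{(I\cap P)/Z_1}(\chi,N)=1$ from Lemma \ref{extXP} and the just-established $\dim\Ext^1_{I/Z_1}(\chi,N)=1$, the map is an isomorphism.
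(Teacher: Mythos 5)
Your proposal is correct and follows essentially the same route as the paper: the short exact sequence $0\rightarrow\chi\rightarrow N\rightarrow N/\chi\rightarrow 0$, Propositions \ref{modX} and \ref{extX} (and Lemma \ref{extXP}) applied to $N/\chi$, the vanishing of the connecting map via the restriction square and the injectivity from Corollary \ref{ext2}, and Proposition \ref{extX}(i) plus Lemma \ref{extXP} for \eqref{natural2}. The only cosmetic remark is that the cleanest justification for $\chi=N^{I_1}=N^{I_1\cap U}$ is simply that $N^{I_1}\neq 0$ because $I_1/Z_1$ is pro-$p$ and $N^{I_1}\subseteq N^{I_1\cap U}$, which is one-dimensional.
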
 
\begin{proof} We have an exact sequence:
$$\Ext^1_{I/Z_1}(\chi, N)\hookrightarrow \Ext^1_{I/Z_1}(\chi,N/\chi)\rightarrow \Ext^2_{I/Z_1}(\chi,\chi).$$ 
Proposition \ref{modX} and Lemma \ref{extXP} imply that  $\Ext^1_{(I\cap P)/Z_1}(\chi,N/\chi)=0.$ The commutative diagram:
\begin{displaymath}
\xymatrix@1{ \Ext^1_{I/Z_1}(\chi, N/\chi)\ar[r]\ar[d]^{0}&
\Ext^2_{I/Z_1}(\chi, \chi)\ar@{^(->}[d]^{\ref{ext2}}\\
\Ext^1_{(I\cap P)/Z_1}(\chi, N/\chi)\ar[r]^-{0}&
\Ext^2_{(I\cap P)/Z_1}(\chi, \chi)}
\end{displaymath}
and Corollary \ref{ext2} implies that $\Ext^1_{I/Z_1}(\chi,N/\chi)\rightarrow \Ext^2_{I/Z_1}(\chi,\chi)$ is the zero map.
Hence, \eqref{natural1} is injective and 
$$\dim \Ext^1_{I/Z_1}(\chi, N)=\dim  \Ext^1_{I/Z_1}(\chi,N/\chi)=1,$$
where the last equality is given by Propositions \ref{modX} and  \ref{extX}. We know that $\Ext^1_{I/Z_1}(\chi, N)\neq 0$.
So if \eqref{natural2} is not injective, then Proposition \ref{extX} gives $\chi=\chi\alpha$, but this cannot hold, since $p>2$.
Since both sides have dimension $1$, \eqref{natural2} is an isomorphism.
\end{proof}
\subsection{$p=3$}
The case $p=3$ requires some extra arguments. If you are only interested in $p\ge 5$ then please skip this subsection. 
\begin{lem}\label{p3} Assume $p=3$ and let $N$ be as in Proposition \ref{extX} then the composition:
\begin{displaymath}
\xymatrix@1{ \Ext^1_{I/Z_1}(\chi\alpha, N/\chi)\ar[r]^-\partial & \Ext^2_{I/Z_1}(\chi\alpha,\chi)\ar[r]^-{\Res} & \Ext^2_{(I\cap P)/Z_1}(\chi\alpha,\chi)}
\end{displaymath}
is injective, where $\partial$ is induced by a short exact sequence $0\rightarrow \chi\rightarrow N\rightarrow N/\chi\rightarrow 0$.
\end{lem}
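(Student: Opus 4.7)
The strategy is to exploit functoriality of the long exact $\Ext$-sequences associated to $0\to\chi\to N\to N/\chi\to 0$, which gives the commutative square
\begin{displaymath}
\xymatrix{\Ext^1_{I/Z_1}(\chi\alpha, N/\chi) \ar[r]^-{\partial} \ar[d]_-{\Res_1} & \Ext^2_{I/Z_1}(\chi\alpha, \chi) \ar[d]^-{\Res} \\
\Ext^1_{(I\cap P)/Z_1}(\chi\alpha, N/\chi) \ar[r]^-{\partial_P} & \Ext^2_{(I\cap P)/Z_1}(\chi\alpha, \chi).}
\end{displaymath}
Hence $\Res\circ\partial=\partial_P\circ\Res_1$, and it suffices to show that both $\Res_1$ and $\partial_P$ are injective.

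First I would handle the horizontal map $\partial_P$. Looking at the piece of the long exact sequence of $\Ext^{\bullet}_{(I\cap P)/Z_1}(\chi\alpha,\cdot)$ just preceding $\partial_P$, Lemma \ref{extXP} applied to $N$ forces $\Ext^1_{(I\cap P)/Z_1}(\psi, N)=0$ unless $\psi=\chi$. Since $\alpha$ remains non-trivial at $p=3$, the source character $\chi\alpha$ is not $\chi$ and this term vanishes, making $\partial_P$ injective.

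Next I would handle $\Res_1$ by applying Proposition \ref{extX} with $N$ replaced by $N/\chi$. By Proposition \ref{modX}, the $H(I_1\cap U)$-restriction of $N/\chi$ is an injective envelope of $\chi\alpha^{-1}$, so the dichotomy of that proposition now asserts that non-vanishing of $\Ext^1_{I/Z_1}(\psi, N/\chi)$ forces either $\psi=(\chi\alpha^{-1})\alpha=\chi$ (case: kernel of $\Res_1$ non-zero) or $\psi=\chi\alpha^{-1}$ (case: kernel vanishes). The key point at $p=3$ is that $\alpha$ has order $2$, so $\chi\alpha=\chi\alpha^{-1}$ while both still differ from $\chi$; therefore the source character $\chi\alpha$ of $\Res_1$ is compatible only with the second case, forcing $\Res_1$ to be injective. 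Composing the two injections yields the claim. The main subtlety I foresee is purely book-keeping: tracking which socle character plays the role of $\chi$ when Proposition \ref{extX} is applied to $N/\chi$, and correctly exploiting the $p=3$ coincidence $\chi\alpha=\chi\alpha^{-1}$ (which is exactly why this case fell outside of the earlier corollaries and needs a separate argument).
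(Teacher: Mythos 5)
Your proposal is correct and takes essentially the same route as the paper: the same commutative square of connecting maps and restrictions, injectivity of the bottom connecting map from $\Ext^1_{(I\cap P)/Z_1}(\chi\alpha,N)=0$ via Lemma \ref{extXP} (as $\chi\alpha\neq\chi$), and injectivity of the left-hand restriction by applying the machinery for $N$ to $N/\chi$, whose socle is $\chi\alpha^{-1}=\chi\alpha$ when $p=3$ (Proposition \ref{modX}). The only cosmetic difference is that you get the latter injectivity from the dichotomy of Proposition \ref{extX} applied to $N/\chi$, whereas the paper cites the isomorphism \eqref{natural2} of Corollary \ref{take} applied to $N/\chi$; these amount to the same thing.
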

\begin{proof} Since $p=3$ we have $\alpha=\alpha^{-1}$ and hence it follows from the Corollary \ref{take} that 
$\dim \Ext^1_{I/Z_1}(\chi\alpha, N/\chi)=1$. Corollary  \ref{extX} implies that  the restriction map 
$\Ext^1_{I/Z_1}(\chi\alpha, N/\chi)\rightarrow \Ext^1_{(I\cap P)/Z_1}(\chi\alpha, N/\chi)$ is injective. Moreover, Lemma \ref{extXP} gives
$\Ext^1_{(I\cap P)/Z_1}(\chi\alpha, N)=0$, and so the map 
$\partial:  \Ext^1_{(I\cap P)/Z_1}(\chi\alpha, N/\chi) \rightarrow   \Ext^2_{(I\cap P)/Z_1}(\chi\alpha,\chi)$ is injective. 
The assertion follows from the commutative diagram:
\begin{displaymath}
\xymatrix@1{ \Ext^1_{I/Z_1}(\chi\alpha, N/\chi)\ar[r]^-\partial\ar@{^(->}[d]^{\Res}_{\eqref{natural2}} & \Ext^2_{I/Z_1}(\chi\alpha,\chi)\ar[d]^{\Res}\\
        \;\Ext^1_{(I\cap P)/Z_1}(\chi\alpha, N/\chi)\; \ar@{^{(}->}[r]^-{\partial} &  \Ext^2_{(I\cap P)/Z_1}(\chi\alpha,\chi).}
\end{displaymath}
\end{proof}

\begin{lem}\label{p3bis} Assume $p=3$ and let $N$ be as in Proposition \ref{extX}. Assume that 
$N^{K_1}\cong \Indu{HK_1}{I}{\chi}$ as a representation of $I$, 
 then the composition:
\begin{displaymath}
\xymatrix@1{ \Ext^1_{I/Z_1}(\chi\alpha, N/\chi)\ar[r]^-\partial & \Ext^2_{I/Z_1}(\chi\alpha,\chi)\ar[r]^-{\Res} 
& \Ext^2_{(I\cap P^s)/Z_1}(\chi\alpha,\chi)}
\end{displaymath}
is zero, where $\partial$ is induced by a short exact sequence $0\rightarrow \chi\rightarrow N\rightarrow N/\chi\rightarrow 0$.
\end{lem}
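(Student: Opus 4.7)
The plan is to factor the connecting map $\partial$ through the inclusion $N^{K_1}\hookrightarrow N$ and to exploit the additional hypothesis on $N^{K_1}$, which makes $N^{K_1}$ semisimple as an $(I\cap P^s)$-representation.

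First, I would analyze $N^{K_1}\cong \Indu{HK_1}{I}{\chi}$ restricted to $I\cap P^s$. The Iwahori factorization $I=HK_1\cdot(I_1\cap U)$ with $(I_1\cap U)\cap HK_1 = K_1\cap U$ gives $[I:HK_1]=p$, and $I\cap P^s\subset HK_1$. With coset representatives $u_i=\bigl(\begin{smallmatrix}1 & [i] \\ 0 & 1\end{smallmatrix}\bigr)$ for $i\in\Fp$, and using that $K_1$ (which contains $I_1\cap U^s$) acts trivially on $N^{K_1}$, the $(I\cap P^s)$-action factors through $H$ via $h\cdot(u_i\otimes 1_\chi)=\chi(h)\,u_{\alpha(h)i}\otimes 1_\chi$. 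For $p=3$ this gives $N^{K_1}|_{I\cap P^s}\cong \chi\oplus\chi\oplus\chi\alpha$, which is semisimple. Consequently the sequence $0\to\chi\to N^{K_1}\to N^{K_1}/\chi\to 0$ splits over $I\cap P^s$, and the associated $(I\cap P^s)$-connecting map on $\Ext^1(\chi\alpha,-)$ vanishes.

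Second, the inclusion $\iota:N^{K_1}\hookrightarrow N$ yields a morphism of short exact sequences, which by naturality of the $\Ext$-boundary induces a commutative square
\begin{displaymath}
\xymatrix{\Ext^1_{I/Z_1}(\chi\alpha, N^{K_1}/\chi)\ar[r]^-{\partial'}\ar[d]^-{a} & \Ext^2_{I/Z_1}(\chi\alpha,\chi)\ar@{=}[d]\\
\Ext^1_{I/Z_1}(\chi\alpha, N/\chi)\ar[r]^-{\partial} & \Ext^2_{I/Z_1}(\chi\alpha,\chi).}
\end{displaymath}
After restriction to $I\cap P^s$ the top composition $\Res\circ\partial'$ vanishes by the first step. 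Since Corollary \ref{take} applied to $N/\chi$ and the character $\chi\alpha=\chi\alpha^{-1}$ (valid for $p=3$) gives $\dim\Ext^1_{I/Z_1}(\chi\alpha, N/\chi)=1$, to prove the lemma it suffices to show $a\neq 0$: then $a$ is surjective, every $\xi$ lifts to some $\tilde\xi\in\Ext^1_{I/Z_1}(\chi\alpha, N^{K_1}/\chi)$, and $\Res\circ\partial(\xi)=\Res\circ\partial'(\tilde\xi)=0$.

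The non-vanishing of $a$ is the main obstacle. My plan is to exhibit an explicit non-split class in $\Ext^1_{I/Z_1}(\chi\alpha, N^{K_1}/\chi)$ whose image under $a$ generates the one-dimensional target. The key input is the concrete structure of $N^{K_1}/\chi$: a direct computation of the action of $\bigl(\begin{smallmatrix}1 & 1\\ 0 & 1\end{smallmatrix}\bigr)$ on the basis $\{\bar e_i\}_{i\in\Fp}$ shows that $N^{K_1}/\chi$ is itself a non-split $I$-extension $0\to\chi\alpha\to N^{K_1}/\chi\to\chi\to 0$, which naturally produces a non-trivial class in the source. The non-triviality of its image in $\Ext^1_{I/Z_1}(\chi\alpha, N/\chi)$ then follows from the long exact sequence for $0\to N^{K_1}/\chi\to N/\chi\to N/N^{K_1}\to 0$, using the injective envelope hypothesis on $N|_{H(I_1\cap U)}$ to control $(N/N^{K_1})^{K_1}$ sufficiently to show that the obstruction map $\Ext^1_{I/Z_1}(\chi\alpha, N/\chi)\to \Ext^1_{I/Z_1}(\chi\alpha, N/N^{K_1})$ vanishes.
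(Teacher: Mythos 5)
Your first two steps are sound and in fact coincide with the paper's own use of the hypothesis: $\Indu{HK_1}{I}{\chi}$ restricted to $I\cap P^s$ is $\chi\oplus\chi\oplus\chi\alpha$, so $0\rightarrow\chi\rightarrow N^{K_1}\rightarrow N^{K_1}/\chi\rightarrow 0$ splits over $I\cap P^s$ and all its boundary maps vanish there; and the reduction of the lemma to the non-vanishing of $a\colon \Ext^1_{I/Z_1}(\chi\alpha, N^{K_1}/\chi)\rightarrow \Ext^1_{I/Z_1}(\chi\alpha, N/\chi)$, using $\dim\Ext^1_{I/Z_1}(\chi\alpha,N/\chi)=1$ from Corollary \ref{take}, is correct. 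Note that $N^{K_1}/\chi$ is exactly the submodule $R$ of $N/\chi$ of Proposition \ref{extX}: the hypothesis forces $\dim N^{K_1}=3$, so its image in $N/\chi$ is the unique two-dimensional $I$-stable subspace. The paper argues in the same spirit, only restricting to $I\cap P^s$ first and then lifting along $N\rightarrow N/\chi$ using the splitting, rather than lifting over $I$ first as you do; that reordering is harmless.

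The genuine gap is your proof that $a\neq 0$, which is the heart of the lemma. First, the non-split extension $0\rightarrow\chi\alpha\rightarrow N^{K_1}/\chi\rightarrow\chi\rightarrow 0$ is a class in $\Ext^1_{I/Z_1}(\chi,\chi\alpha)$; it does not "naturally produce" an element of $\Ext^1_{I/Z_1}(\chi\alpha, N^{K_1}/\chi)$, and even the non-vanishing of that group requires an argument. Second, the statement you propose to establish instead, namely that $\Ext^1_{I/Z_1}(\chi\alpha,N/\chi)\rightarrow \Ext^1_{I/Z_1}(\chi\alpha,N/N^{K_1})$ is zero, is by the long exact sequence for $0\rightarrow N^{K_1}/\chi\rightarrow N/\chi\rightarrow N/N^{K_1}\rightarrow 0$ precisely equivalent to the surjectivity of $a$, so it cannot serve as an intermediate reduction, and no proof of it is offered; moreover it does not show that the image of your particular class is non-zero, only that some class hits a generator. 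Nor do cheap socle considerations rescue this: iterating Proposition \ref{modX}, $(N/N^{K_1})^{I_1\cap U}$ is $\chi\alpha^{-3}=\chi\alpha$-isotypic for $p=3$, so $\Hom_I(\chi\alpha, N/N^{K_1})$ is one-dimensional and you cannot even control the kernel of $a$ without further work. The missing input is exactly the last exact sequence of Proposition \ref{extX} applied to $N/\chi$ (with $\chi\alpha^{-1}=\chi\alpha$), equivalently the vanishing of $\Ext^1_{I/Z_1}(\chi\alpha, N/\chi)\rightarrow \Ext^1_{I/Z_1}(\chi\alpha,(N/\chi)/R)$ established in its proof by the explicit construction of the vectors $w_1$, $u$ and the argument that $k=2$; this gives $\Ext^1_{I/Z_1}(\chi\alpha,R)\twoheadrightarrow\Ext^1_{I/Z_1}(\chi\alpha,N/\chi)$, i.e.\ the surjectivity of $a$, and is what the paper quotes. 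Citing it closes your argument; as written, the decisive step is absent and the sketch offered in its place does not supply it.
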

\begin{proof}  Since $p=3$ we have $\alpha=\alpha^{-1}$ and hence it follows from the Corollary \ref{take} that 
$\dim \Ext^1_{I/Z_1}(\chi\alpha, N/\chi)=1$. Let $\Delta$ be the image of the restriction map 
$$\Delta:=\Image(\Ext^1_{I/Z_1}(\chi\alpha, N/\chi)\rightarrow \Ext^1_{(I\cap P^s)/Z_1}(\chi\alpha, N/\chi)).$$ 
We claim that $\Delta$ is 
contained in the image of the natural map 
\begin{equation}\label{map}
\Ext^1_{(I\cap P^s)/Z_1}(\chi\alpha, N)\rightarrow \Ext^1_{(I\cap P^s)/Z_1}(\chi\alpha, N/\chi).
\end{equation}
Since $p=3$ we have $\dim N^{K_1}=3$ and so the image of $N^{K_1}$ in $N/N^{I_1}$ is a $2$-dimensional $I$-stable subspace. 
Since it follows from Proposition \ref{modX} that $(N/N^{I_1})^{I_1}$ and $((N/N^{I_1})/(N/N^{I_1})^{I_1})^{I_1}$ are $1$-dimensional 
 we obtain an exact sequence $0\rightarrow N^{I_1}\rightarrow N^{K_1}\rightarrow R\rightarrow 0$,
where where $R$ is the subspace of $N/\chi$ defined in  Proposition \ref{extX} (with $N/\chi$ instead of $N$). 
Since  $N^{K_1}\cong \Indu{HK_1}{I}{\chi}$
we get:
$$N^{K_1}|_{I\cap P^s}\cong \chi \oplus \chi\alpha \oplus \chi \cong \chi \oplus R|_{I\cap P^s}.$$
Let $\phi$ be the composition: 
\begin{equation}\notag
\begin{split}
 \Ext^1_{(I\cap P^s)/Z_1}&(\chi\alpha, R)\rightarrow \Ext^1_{(I\cap P^s)/Z_1}(\chi\alpha, N^{K_1})\rightarrow 
\\&\Ext^1_{(I\cap P^s)/Z_1}(\chi\alpha, N)
\rightarrow \Ext^1_{(I\cap P^s)/Z_1}(\chi\alpha, N/\chi).
\end{split}
\end{equation}
Then we have a commutative diagram:
\begin{displaymath}
\xymatrix{  \Ext^1_{I/Z_1}(\chi\alpha, R)\ar@{>>}[r]^-{\ref{extX}}\ar[d]^{\Res} & \Ext^1_{I/Z_1}(\chi\alpha, N/\chi)\ar[d]^{\Res}\\
\Ext^1_{(I\cap P^s)/Z_1}(\chi\alpha, R)\ar[r]^-{\phi} &  \Ext^1_{(I\cap P^s)/Z_1}(\chi\alpha, N/\chi).}
\end{displaymath}
The top horizontal arrow is surjective by Proposition \ref{extX}. Hence, $\Delta$ equals to the image of $\phi\circ \Res$. Since 
the image of $\phi$ is contained in the image of \eqref{map} we get the claim. The assertion follows from the commutative diagram:
\begin{displaymath}
\xymatrix@1{ \Ext^1_{I/Z_1}(\chi\alpha, N/\chi)\ar[r]^-\partial\ar[d]^{\Res} & \Ext^2_{I/Z_1}(\chi\alpha,\chi)\ar[d]^{\Res}\\
         \Ext^1_{(I\cap P^s)/Z_1}(\chi\alpha, N/\chi) \ar[r]^-{\partial} &  \Ext^2_{(I\cap P^s)/Z_1}(\chi\alpha,\chi),}
\end{displaymath}   
since the claim implies that the composition $\partial \circ \Res$ is the zero map. 
\end{proof}

\begin{lem}\label{done} Assume $p=3$ let $N_{\chi}$ and $N_{\chi^s}$ be as in Proposition \ref{extX} with respect to  $\chi$ and $\chi^s$.
Further assume that $N_{\chi^s}^{K_1}\cong \Indu{HK_1}{I}{\chi^s}$ as a representation of $I$, then the natural map 
\begin{equation}\label{injExt2}
\Ext^2_{I/Z_1}(\chi\alpha,\chi)\rightarrow \Ext^2_{I/Z_1}(\chi\alpha, N_{\chi})\oplus \Ext^2_{I/Z_1}(\chi\alpha, N_{\chi^s}^{\Pi})
\end{equation}
is injective, where $N_{\chi^s}^\Pi$ denotes the twist of action of $I$ on $N_{\chi^s}$ by $\Pi$.
\end{lem}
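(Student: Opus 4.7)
The plan is to identify the kernel of \eqref{injExt2} using the long exact sequences attached to
$0\to\chi\to N_\chi\to N_\chi/\chi\to 0$ and $0\to\chi\to N_{\chi^s}^\Pi\to N_{\chi^s}^\Pi/\chi\to 0$, and then combine Lemma \ref{p3} (applied to $N_\chi$) with Lemma \ref{p3bis} (applied to $N_{\chi^s}$, transported through the $\Pi$-twist). Write $\partial_1:\Ext^1_{I/Z_1}(\chi\alpha,N_\chi/\chi)\to\Ext^2_{I/Z_1}(\chi\alpha,\chi)$ and $\partial_2:\Ext^1_{I/Z_1}(\chi\alpha,N_{\chi^s}^\Pi/\chi)\to\Ext^2_{I/Z_1}(\chi\alpha,\chi)$ for the corresponding connecting maps. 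Then the kernel of \eqref{injExt2} is exactly $\Image(\partial_1)\cap\Image(\partial_2)$, so it suffices to prove this intersection is zero.

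The first step is to pin down $\Image(\partial_2)$. Conjugation by $\Pi$ induces an isomorphism $\Ext^*_{I/Z_1}(A,B)\cong\Ext^*_{I/Z_1}(A^\Pi,B^\Pi)$, and since $\Pi^{-1}(I\cap P^s)\Pi=I\cap P$, restriction to $I\cap P^s$ corresponds under this isomorphism to restriction to $I\cap P$. For $p=3$ we have $\alpha=\alpha^{-1}$, so $(\chi^s\alpha)^\Pi=\chi\alpha$, $(\chi^s)^\Pi=\chi$ and $(N_{\chi^s}/\chi^s)^\Pi=N_{\chi^s}^\Pi/\chi$. Since $N_{\chi^s}^{K_1}\cong\Indu{HK_1}{I}{\chi^s}$, Lemma \ref{p3bis} applies to $N_{\chi^s}$ with character $\chi^s$ and tells us that the composition $\Ext^1_{I/Z_1}(\chi^s\alpha,N_{\chi^s}/\chi^s)\xrightarrow{\partial}\Ext^2_{I/Z_1}(\chi^s\alpha,\chi^s)\xrightarrow{\Res}\Ext^2_{(I\cap P^s)/Z_1}(\chi^s\alpha,\chi^s)$ vanishes. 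Twisting by $\Pi$ and translating, this gives
\[\Res_{I\cap P}\circ\partial_2=0,\qquad\text{i.e.}\qquad \Image(\partial_2)\subseteq\Ker\bigl(\Res:\Ext^2_{I/Z_1}(\chi\alpha,\chi)\to\Ext^2_{(I\cap P)/Z_1}(\chi\alpha,\chi)\bigr).\]

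The second step applies Lemma \ref{p3} directly to $N=N_\chi$: the composition $\Res_{I\cap P}\circ\partial_1:\Ext^1_{I/Z_1}(\chi\alpha,N_\chi/\chi)\to\Ext^2_{(I\cap P)/Z_1}(\chi\alpha,\chi)$ is injective. Now take any $x\in\Image(\partial_1)\cap\Image(\partial_2)$ and write $x=\partial_1(y)$. Because $x\in\Image(\partial_2)$, the previous step forces $\Res_{I\cap P}(x)=0$, i.e.\ $(\Res_{I\cap P}\circ\partial_1)(y)=0$, whence $y=0$ by Lemma \ref{p3} and so $x=0$. This gives $\Image(\partial_1)\cap\Image(\partial_2)=0$, proving the injectivity of \eqref{injExt2}.

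The main bookkeeping obstacle is making the $\Pi$-twist match everything correctly: one has to verify not only that $(\chi^s\alpha)^\Pi=\chi\alpha$ and $(\chi^s)^\Pi=\chi$ (which uses $\alpha=\alpha^{-1}$ in characteristic $3$), but also that the hypothesis of Lemma \ref{p3bis} on $N_{\chi^s}^{K_1}$ transports correctly so that the relevant $\Res_{I\cap P^s}$ statement for $N_{\chi^s}$ yields a $\Res_{I\cap P}$ statement for $N_{\chi^s}^\Pi$. Once this symmetry is set up, the proof amounts to the elementary linear-algebra observation that two subspaces each annihilated by one of a pair of restriction maps, one of which is injective on one of the subspaces, can only intersect in zero.
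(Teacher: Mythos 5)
Your proof is correct and follows essentially the same route as the paper: identify the kernel of the map \eqref{injExt2} with the images of the two connecting maps, then play off Lemma \ref{p3} (injectivity of $\Res_{I\cap P}\circ\partial$ for $N_{\chi}$) against the $\Pi$-twisted Lemma \ref{p3bis} (vanishing of $\Res_{I\cap P}\circ\partial$ for $N_{\chi^s}^{\Pi}$). The only cosmetic difference is that the paper first uses the surjectivity in \eqref{yetanotherseq} to identify each kernel with $\Ext^1_{I/Z_1}(\chi\alpha, N/\chi)$, whereas you work directly with the images of $\partial_1,\partial_2$, which is equivalent.
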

\begin{proof} Applying $\Hom_{I/Z_1}(\chi\alpha, \centerdot)$ to the short exact sequence $0\rightarrow \chi\rightarrow N_{\chi}\rightarrow N_{\chi}/\chi\rightarrow 0$
gives a long exact sequence. Equation \eqref{yetanotherseq} shows that the map $\Ext^1_{I/Z_1}(\chi\alpha,\chi)\rightarrow \Ext^1_{I/Z_1}(\chi\alpha,N_\chi)$
is surjective, which implies that  
$$\Ker(\Ext^2_{I/Z_1}(\chi\alpha,\chi)\rightarrow \Ext^2_{I/Z_1}(\chi\alpha, N_{\chi}))\cong \Ext^1_{I/Z_1}(\chi\alpha, N_{\chi}/\chi).$$
If we replace $N_{\chi}$ with $N_{\chi^s}$ and $\chi$ with $\chi^s$ the same isomorphism holds. Twisting by $\Pi$ gives:
$$\Ker(\Ext^2_{I/Z_1}(\chi\alpha,\chi)\rightarrow \Ext^2_{I/Z_1}(\chi\alpha, N_{\chi^s}^{\Pi}))\cong \Ext^1_{I/Z_1}(\chi\alpha, N_{\chi^s}^{\Pi}/\chi).$$
Lemma \ref{p3} implies that the  composition 
$$\Res\circ \partial: \Ext^1_{I/Z_1}(\chi\alpha, N_{\chi}/\chi)\rightarrow  \Ext^2_{(I\cap P)/Z_1}(\chi\alpha,\chi)$$
is an injection. And Lemma \ref{p3bis} implies that the composition
$$\Res\circ\partial: \Ext^1_{I/Z_1}(\chi\alpha, N_{\chi^s}^{\Pi}/\chi)\rightarrow \Ext^2_{(I\cap P)/Z_1}(\chi\alpha,\chi)$$
is zero. Hence, $\partial( \Ext^1_{I/Z_1}(\chi\alpha, N_{\chi}/\chi))\cap \partial(\Ext^1_{I/Z_1}(\chi\alpha, N_{\chi}^{\Pi}/\chi))=0$ 
and so the map in \eqref{injExt2} is injective.
\end{proof}

\begin{lem}\label{p3r0} Assume $p=3$ and $r=0$ then $M_{\tsigma}$ satisfies the assumptions of Lemma \ref{p3bis}. 
\end{lem}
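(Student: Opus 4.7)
The plan is to verify the two hypotheses of Lemma \ref{p3bis} applied to $N=M_{\tsigma}$: that $M_{\tsigma}|_{H(I_1\cap U)}$ is an injective envelope of $\chi^s$ in $\Rep_{H(I_1\cap U)}$, and that $M_{\tsigma}^{K_1}\cong \Indu{HK_1}{I}{\chi^s}$ as $I$-representations. The first is Proposition \ref{Minj} with the roles of $\sigma$ and $\tsigma$ exchanged, so the work lies in the second.

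My first step is to produce the inclusion $\tsigma\subseteq M_{\tsigma}^{K_1}$, identifying $\tsigma\cong\langle K\centerdot v_{\tsigma}\rangle\subset \pi$ via $v_{\tsigma}\leftrightarrow x^{p-1}$. Lemma \ref{sv} gives $sv_{\tsigma}\in M_{\tsigma}$, which under this identification is $(-1)^a y^{p-1}$. The translates $\bigl(\begin{smallmatrix} 1 & [\lambda]\\ 0 & 1\end{smallmatrix}\bigr)y^{p-1}=([\lambda]x+y)^{p-1}$ for $\lambda\in\Fp$ form a Vandermonde-like system and span $\tsigma$, and all lie in $M_{\tsigma}$ by the $I$-stability of $M_{\tsigma}$ (Lemma \ref{Istable}); since $K_1$ acts trivially on $\tsigma$ we deduce $\tsigma\subseteq M_{\tsigma}^{K_1}$. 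To show equality I bound $\dim M_{\tsigma}^{K_1}\le p=3$ using the filtration $M_{\tsigma}=\bigcup_n M_{\tsigma,n}$ from the proof of Proposition \ref{Minj}, where $M_{\tsigma,n}\cong\Fbar[X]/(X^{e_n+1})$ as $H(I_1\cap U)$-module and $X=\bigl(\begin{smallmatrix} 1 & 1\\ 0 & 1\end{smallmatrix}\bigr) -1$. The topological generator $\bigl(\begin{smallmatrix} 1 & p\\ 0 & 1\end{smallmatrix}\bigr)=\bigl(\begin{smallmatrix} 1 & 1\\ 0 & 1\end{smallmatrix}\bigr)^p$ of $K_1\cap U$ acts as $(1+X)^p=1+X^p$ in characteristic $p$, so $M_{\tsigma,n}^{K_1\cap U}$ is the $X^p$-torsion of $\Fbar[X]/(X^{e_n+1})$ and has dimension $\min(p,e_n+1)$; since here the ``$r$'' of \S 4 equals $p-1$ for $\tsigma$, one has $e_1=p-1$, making this dimension equal to $p$ for all $n\ge 1$. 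The transition maps $M_{\tsigma,n}^{K_1\cap U}\hookrightarrow M_{\tsigma,n+1}^{K_1\cap U}$ are then injections between $p$-dimensional spaces, hence isomorphisms, giving $\dim M_{\tsigma}^{K_1\cap U}=p=3$. Combined with $\tsigma\subseteq M_{\tsigma}^{K_1}\subseteq M_{\tsigma}^{K_1\cap U}$ and $\dim\tsigma=p=3$, this forces $M_{\tsigma}^{K_1}=\tsigma$.

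It remains to exhibit an $I$-equivariant isomorphism $\tsigma\cong\Indu{HK_1}{I}{\chi^s}$. Since $y^{p-1}$ is $K_1$-fixed in $\tsigma$ with $H$-character $\chi^s$, Frobenius reciprocity supplies an $I$-map $\phi\colon \Indu{HK_1}{I}{\chi^s}\to \tsigma$ with $\phi(e_0)=y^{p-1}$ and therefore $\phi(e_\lambda)=([\lambda]x+y)^{p-1}$, where $e_\lambda=\bigl(\begin{smallmatrix} 1 & [\lambda]\\ 0 & 1\end{smallmatrix}\bigr) e_0$. Any nonzero $I$-subrepresentation of $\Indu{HK_1}{I}{\chi^s}$ contains the $(I\cap U)$-invariant line spanned by $\sum_{\lambda\in\Fp}e_\lambda$, so injectivity of $\phi$ reduces to checking
\[\phi\Bigl(\sum_{\lambda\in\Fp}e_\lambda\Bigr)=\sum_{\lambda\in\Fp}([\lambda]x+y)^{p-1}=\sum_{j=0}^{p-1}\binom{p-1}{j}\Bigl(\sum_{\lambda\in\Fp}\lambda^j\Bigr)x^jy^{p-1-j}=-x^{p-1}\neq 0,\]
which holds because $\sum_{\lambda\in\Fp}\lambda^j=0$ for $0\le j<p-1$ and $\sum_{\lambda\in\Fp}\lambda^{p-1}=-1$. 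Hence $\phi$ is injective, and since both sides have dimension $p=3$ it is an isomorphism. The delicate point is picking the right $HK_1$-fixed vector: had I chosen $\phi(e_0)=v_{\tsigma}=x^{p-1}$ instead, each $\phi(e_\lambda)$ would equal $v_{\tsigma}$ and the image of this invariant line would be $p\cdot v_{\tsigma}=0$, collapsing $\phi$ to a one-dimensional image. Choosing $y^{p-1}=\pm sv_{\tsigma}$ is precisely what makes the Vandermonde calculation work.
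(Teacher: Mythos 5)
Your proof is correct and takes essentially the same route as the paper's: both arguments exhibit the $3$-dimensional representation $\Indu{HK_1}{I}{\chi^s}$ (the restriction to $I$ of the copy of $\tsigma$, i.e.\ the Steinberg of $\GL_2(\mathbb{F}_3)$, generated by $sv_{\tsigma}$) inside $M_{\tsigma}^{K_1}$, and then use the injective-envelope structure of $M_{\tsigma}|_{H(I_1\cap U)}$ to see that $\dim M_{\tsigma}^{K_1\cap U}=3$, forcing $M_{\tsigma}^{K_1}\cong \Indu{HK_1}{I}{\chi^s}$. The only difference is that you verify by hand the two facts the paper quotes — the isomorphism $\tsigma|_I\cong\Indu{HK_1}{I}{\chi^s}$ (via Frobenius reciprocity and the Vandermonde computation) and the dimension of the $(K_1\cap U)$-invariants (via the explicit filtration $M_{\tsigma,n}\cong\Fbar[X]/(X^{e_n+1})$ and $X^p$-torsion) — which is a sound, slightly more self-contained rendering of the same argument.
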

\begin{proof} Now $\langle (I\cap U)tv_{\sigma}\rangle =\langle I sv_{\tsigma}\rangle \cong \St|_I\cong \Indu{HK_1}{I}{\chi^s}$ as a representation of $I$,
where $\St\cong \Sym^2\overline{\mathbb F}_3^2$ is the Steinberg representation of $\GL_2(\mathbb{F}_3)$.
Hence we have an injection $\Indu{HK_1}{I}{\chi^s}\hookrightarrow M_{\tsigma}$. Since $M_{\tsigma}|_{H(I\cap U)}$ is an injective envelope 
of $\chi^s$ in $\Rep_{H(I\cap U)}$ we obtain that $M_{\tsigma}^{K_1\cap U}\cong \Indu{H(K_1\cap U)}{H(I\cap U)}{\chi^s}$ as
a representation of $H(I\cap U)$. Hence $\dim M_{\tsigma}^{K_1\cap U}=3$ and so we obtain $M_{\tsigma}^{K_1\cap U}\cong M_{\tsigma}^{K_1}\cong 
\Indu{HK_1}{I}{\chi^s}$.
\end{proof}
\subsection{}
Using the Lemmas above we prove the main result of this section.
\begin{thm}\label{main} Assume $p>2$ and let $\psi: H\rightarrow \Fbar^{\times}$ be a character, such that $\Ext^1_{I/Z_1}(\psi,\pi_{\sigma})\neq 0$. 
Then $\psi\in \{\chi\alpha, \chi\}$. Moreover, 
\begin{itemize}
\item[(i)] $\dim \Ext^1_{I/Z_1}(\chi,\pi_{\sigma})=2$;
\item[(ii)] if $p>3$ or $p=3$ and $r\in \{0,2\}$ then  $\Ext^1_{I/Z_1}(\chi\alpha,\pi_{\sigma})=0$;
\item[(iii)] if $p=3$ and $r=1$ then $\dim  \Ext^1_{I/Z_1}(\chi\alpha,\pi_{\sigma})\le 1$.
\end{itemize}
\end{thm}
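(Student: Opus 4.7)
The plan is to apply $\Hom_{I/Z_1}(\psi,-)$ to the short exact sequence of $I$-representations from Corollary \ref{one},
$$0 \to \Fbar v_\sigma \to M_\sigma \oplus \Pi\centerdot M_{\tsigma} \to \pi_\sigma \to 0,$$
on which $H$ acts through $\chi$ on $v_\sigma$. By Proposition \ref{Minj}, $M_\sigma|_{H(I_1\cap U)}$ is an injective envelope of $\chi$, so Proposition \ref{extX} and Corollary \ref{take} apply directly to $M_\sigma$. For $\Pi\centerdot M_{\tsigma}$ the situation is mirror-symmetric: conjugation by $\Pi$ interchanges $I_1\cap U$ with $I_1\cap U^s$ and the characters $\chi$ with $\chi^s$, so $\Pi\centerdot M_{\tsigma}|_{H(I_1\cap U^s)}$ is an injective envelope of $\chi$, and rereading the proofs of Proposition \ref{extX} and Corollary \ref{take} with $U$ and $U^s$ exchanged (and hence $\alpha$ and $\alpha^{-1}$ swapped via Lemmas \ref{up} and \ref{down}) gives the parallel conclusions for $\Pi\centerdot M_{\tsigma}$. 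In particular $\Ext^1_{I/Z_1}(\psi, M_\sigma) \neq 0$ forces $\psi \in \{\chi, \chi\alpha\}$, whereas $\Ext^1_{I/Z_1}(\psi, \Pi\centerdot M_{\tsigma}) \neq 0$ forces $\psi \in \{\chi, \chi\alpha^{-1}\}$, both one-dimensional when nonzero.

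Combined with Proposition \ref{extI}, which forces $\psi \in \{\chi\alpha, \chi\alpha^{-1}\}$ whenever $\Ext^1_{I/Z_1}(\psi, \chi) \neq 0$, and the fact that $\Hom_I(\psi, \pi_\sigma) = 0$ unless $\psi = \chi$ (since $\pi_\sigma^{I_1} = \Fbar v_\sigma$), the long exact sequence confines the possibilities to $\psi \in \{\chi, \chi\alpha, \chi\alpha^{-1}\}$. For $\psi = \chi$, part (i) follows immediately: the boundary $\Hom_I(\chi, \pi_\sigma) \to \Ext^1_{I/Z_1}(\chi, \chi)$ vanishes because $\Ext^1_{I/Z_1}(\chi, \chi) = 0$ by Proposition \ref{extI}, while Corollary \ref{take} kills the subsequent boundary into $\Ext^2_{I/Z_1}(\chi, \chi)$, so that $\Ext^1(\chi, M_\sigma) \oplus \Ext^1(\chi, \Pi\centerdot M_{\tsigma}) \cong \Fbar^2$ descends intact to $\Ext^1_{I/Z_1}(\chi, \pi_\sigma)$.

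For $\psi = \chi\alpha$ with $p > 3$, the mirror of Proposition \ref{extX} gives $\Ext^1(\chi\alpha, \Pi\centerdot M_{\tsigma}) = 0$, while \eqref{yetanotherseq} with $\Hom_I(\chi\alpha, \chi\alpha^{-1}) = 0$ makes $\Ext^1(\chi\alpha, \chi) \to \Ext^1(\chi\alpha, M_\sigma)$ an isomorphism; hence the cokernel of the boundary $\delta$ vanishes and $\Ext^1_{I/Z_1}(\chi\alpha, \pi_\sigma)$ injects into $\Ext^2_{I/Z_1}(\chi\alpha, \chi)$. Repeating the argument of Corollary \ref{take} with coefficient $\chi\alpha$ now goes through, since by Proposition \ref{modX}, $M_\sigma/\chi$ is an injective envelope of $\chi\alpha^{-1}$ and $\chi\alpha \notin \{\chi\alpha^{-1}, \chi\}$ for $p > 3$, so $\Ext^1(\chi\alpha, M_\sigma/\chi) = 0$; hence $\Ext^2(\chi\alpha, \chi) \hookrightarrow \Ext^2(\chi\alpha, M_\sigma)$, forcing the vanishing. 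The case $\psi = \chi\alpha^{-1}$ for $p > 3$ is identical after swapping the roles of $M_\sigma$ and $\Pi\centerdot M_{\tsigma}$; this removes $\chi\alpha^{-1}$ from the candidate list and completes (ii) in this range.

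For $p = 3$ one has $\alpha = \alpha^{-1}$ and the individual cancellations above collapse: both $\Ext^1(\chi\alpha, M_\sigma)$ and $\Ext^1(\chi\alpha, \Pi\centerdot M_{\tsigma})$ are nonzero, and the boundary into $\Ext^2(\chi\alpha, \chi)$ no longer dies termwise. The required injectivity $\Ext^2(\chi\alpha, \chi) \hookrightarrow \Ext^2(\chi\alpha, M_\sigma) \oplus \Ext^2(\chi\alpha, \Pi\centerdot M_{\tsigma})$ is exactly Lemma \ref{done}, whose hypothesis on $M_{\tsigma}^{K_1}$ is provided by Lemma \ref{p3r0} when $r = 0$, and by the intertwining $\pi(r,0,\omega^a) \cong \pi(p-1-r, 0, \omega^{r+a})$ (which swaps $\sigma$ and $\tsigma$) when $r = 2$; this settles (ii) in characteristic $3$. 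The main obstacle is (iii), the case $p = 3$, $r = 1$, where Lemma \ref{p3r0}'s hypothesis fails for both $M_\sigma$ and $M_{\tsigma}$ and Lemma \ref{done} is unavailable. Here I would make do with Lemma \ref{p3} applied to $M_\sigma$ together with its mirror applied to $\Pi\centerdot M_{\tsigma}$; each individually bounds the corresponding kernel in $\Ext^2(\chi\alpha, \chi)$ by one dimension, which yields the weaker bound $\dim \Ext^1_{I/Z_1}(\chi\alpha, \pi_\sigma) \le 1$ rather than the full vanishing.
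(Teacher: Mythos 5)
Your parts (i), (ii) for $p>3$, and (iii) are essentially sound, but the very first assertion of the theorem — that $\Ext^1_{I/Z_1}(\psi,\pi_{\sigma})\neq 0$ forces $\psi$ into a short explicit list — has a genuine gap as you argue it. From the sequence $0\rightarrow\chi\rightarrow M_{\sigma}\oplus\Pi\centerdot M_{\tsigma}\rightarrow\pi_{\sigma}\rightarrow 0$ the relevant portion of the long exact sequence is $\Ext^1_{I/Z_1}(\psi,M_{\sigma})\oplus\Ext^1_{I/Z_1}(\psi,\Pi\centerdot M_{\tsigma})\rightarrow\Ext^1_{I/Z_1}(\psi,\pi_{\sigma})\rightarrow\Ext^2_{I/Z_1}(\psi,\chi)$. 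The two facts you invoke — Proposition \ref{extI} (which concerns $\Ext^1_{I/Z_1}(\psi,\chi)$) and the vanishing of $\Hom_I(\psi,\pi_{\sigma})$ for $\psi\neq\chi$ — sit to the left of this portion and say nothing about the connecting map into $\Ext^2_{I/Z_1}(\psi,\chi)$. For $\psi$ outside your candidate list the first term vanishes, so $\Ext^1_{I/Z_1}(\psi,\pi_{\sigma})$ injects into $\Ext^2_{I/Z_1}(\psi,\chi)$, and nothing in the paper (or in your proposal) shows this $\Ext^2$ vanishes, or that the image does; the paper never computes $H^2(I_1/Z_1,\Fbar)$ as an $H$-representation, and at $p=3$ the group $I_1/Z_1$ even has $p$-torsion, so this is not a harmless omission. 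The paper avoids the issue entirely by using Corollary \ref{one} differently: it writes $0\rightarrow M_{\sigma}\rightarrow\pi_{\sigma}\rightarrow(\Pi\centerdot M_{\tsigma})/\chi\rightarrow 0$ (and its mirror $0\rightarrow\Pi\centerdot M_{\tsigma}\rightarrow\pi_{\sigma}\rightarrow M_{\sigma}/\chi\rightarrow 0$), so that $\pi_{\sigma}$ is flanked by two modules to which Proposition \ref{extX} applies (via Proposition \ref{modX} for the quotient), no $\Ext^2$ of characters ever enters, and one gets at once $\psi\in\{\chi,\chi\alpha\}$ together with $\dim\Ext^1_{I/Z_1}(\chi\alpha,\pi_{\sigma})\le 1$ — note this also rules out $\chi\alpha^{-1}$ uniformly, whereas your route has to kill it by a separate mirrored argument.

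A secondary problem is your treatment of $p=3$, $r=2$. The intertwining $\pi(r,0,\omega^a)\cong\pi(p-1-r,0,\omega^{r+a})$ swaps $\sigma$ and $\tsigma$, so the hypothesis of Lemma \ref{done} that you need for the $r=2$ representation concerns the $K_1$-invariants of what is, under this identification, the $r=0$ representation's $M_{\sigma}$ (the trivial-weight side); Lemma \ref{p3r0} only establishes the hypothesis for the $r=0$ representation's $M_{\tsigma}$, and nothing in the paper supplies it for $M_{\sigma}$. The correct use of the symmetry, as in the paper, is to observe that the $r=2$ case's $\pi_{\sigma}$ is the $r=0$ case's $\pi_{\tsigma}=\Pi\centerdot\pi_{\sigma}$, and then conjugate by $\Pi$: since for $p=3$, $r=0$ one has $(\chi\alpha)^{\Pi}=\chi\alpha$, the already-proved vanishing $\Ext^1_{I/Z_1}(\chi\alpha,\pi_{\sigma})=0$ transfers to $\pi_{\tsigma}$. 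With these two repairs (the flanking sequences from Corollary \ref{one} for the classification, and the $\Pi$-twist for $r=2$), your remaining arguments — in particular your $\Ext^2$-based vanishing for $\chi\alpha$ and $\chi\alpha^{-1}$ when $p>3$, which differs mildly from the paper's — would go through.
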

\begin{proof} Corollary  \ref{take}, \eqref{natural1} gives injections: 
$$\Ext^2_{I/Z_1}(\chi,\chi)\hookrightarrow \Ext^2_{I/Z_1}(\chi,M_{\sigma}),$$  
$$\Ext^2_{I/Z_1}(\chi,\chi)\hookrightarrow \Ext^2_{I/Z_1}(\chi,\Pi\centerdot M_{\tsigma}).$$ 
 Moreover, $\Ext^1_{I/Z_1}(\chi,\chi)=0$.
Corollary \ref{one} gives a short exact sequence 
$0\rightarrow \chi\rightarrow M_{\sigma}\oplus \Pi\centerdot M_{\tsigma}\rightarrow \pi_{\sigma}\rightarrow 0$,
which induces an isomorphism:
$$\Ext^1_{I/Z_1}(\chi,M_{\sigma})\oplus \Ext^1_{I/Z_1}(\chi,\Pi\centerdot M_{\tsigma})\cong \Ext^1_{I/Z_1}(\chi,\pi_{\sigma}).$$ 
Corollary \ref{take} implies that $\dim \Ext^1_{I/Z_1}(\chi,\pi_{\sigma})=2$. 

Assume that $\psi\neq \chi$. From  
$0\rightarrow M_{\sigma}\rightarrow \pi_{\sigma}\rightarrow (\Pi\centerdot M_{\tsigma})/\chi\rightarrow 0$ we obtain a long exact sequence:
\begin{displaymath}
\begin{split}
\Hom_{I}(\psi,\chi\alpha)\hookrightarrow \Ext^1_{I/Z_1}(\psi,M_{\sigma})\rightarrow &\Ext^1_{I/Z_1}(\psi,\pi_{\sigma})\rightarrow\\ 
&\Ext^1_{I/Z_1}(\psi,(\Pi\centerdot M_{\tsigma})/\chi).
\end{split}
\end{displaymath}
If $\Ext^1_{I/Z_1}(\psi,M_{\sigma})\neq 0$ then Proposition \ref{extX} implies 
$\psi=\chi\alpha$. Similarly, if $\Ext^1_{I/Z_1}(\psi,(\Pi\centerdot M_{\tsigma})/\chi)\neq 0$
then $\psi=(\chi^s \alpha^{-1})^{\Pi}=\chi\alpha$. Hence, $\psi=\chi\alpha$ and $\dim \Ext^1_{I/Z_1}(\chi\alpha, \pi_\sigma)\le 1$.

If $p>3$ then Proposition \ref{extX} implies that $\Ext^1_{I/Z_1}(\chi\alpha, M_{\sigma}/\chi)=0$. Hence
the exact sequence $0\rightarrow \Pi\centerdot M_{\tsigma}\rightarrow \pi_{\sigma}\rightarrow M_{\sigma}/\chi\rightarrow 0$
gives an exact sequence:
$$\Hom_{I}(\chi\alpha,\chi\alpha^{-1})\hookrightarrow \Ext^1_{I/Z_1}(\chi\alpha,\Pi\centerdot M_{\tsigma})\twoheadrightarrow \Ext^1_{I/Z_1}(\chi\alpha,\pi_{\sigma}).$$
Since $p>3$ Proposition \ref{extX} implies that $\Ext^1_{I/Z_1}(\chi\alpha,\Pi\centerdot M_{\tsigma})=0$ and hence $\Ext^1_{I/Z_1}(\chi\alpha,\pi_{\sigma})=0$.

Assume that $p=3$ and  $r=0$ Lemmas \ref{done} and \ref{p3r0} give an exact sequence:
$$ \Ext^1_{I/Z_1}(\chi\alpha, \chi)\hookrightarrow \Ext^1_{I/Z_1}(\chi\alpha, M_{\sigma}\oplus \Pi\centerdot M_{\tsigma})\twoheadrightarrow  
\Ext^1_{I/Z_1}(\chi\alpha, \pi_\sigma).$$
Since $p=3$ we have  $\dim  \Ext^1_{I/Z_1}(\chi\alpha, \chi)=2$ 
and Proposition \ref{extX} gives $\dim \Ext^1_{I/Z_1}(\chi\alpha, M_{\sigma}\oplus \Pi\centerdot M_{\tsigma})=2$.
Hence $\Ext^1_{I/Z_1}(\chi\alpha, \pi_\sigma)=0$. Since $p=3$ and $r=0$ we have $(\chi\alpha)^{\Pi}=\chi\alpha$, $\chi=\chi^s$ and since 
$\pi_{\tsigma}=\Pi\centerdot \pi_{\sigma}$, we also obtain $\Ext^1_{I/Z_1}(\chi\alpha, \pi_{\tsigma})=0$, which deals with the case $p=3$ and $r=2$.
\end{proof} 

\begin{cor}\label{dimH1} Assume $p>2$ and let $\psi:H\rightarrow \Fbar^{\times}$ be a character. Suppose that $\Hom_I(\psi, H^1(I_1/Z_1, \pi))\neq 0$ then 
$\psi\in \{\chi, \chi^s\}$. Moreover, the following hold:
\begin{itemize} 
\item[(i)] if $p=3$ and $r=1$ then $\dim H^1(I_1/Z_1, \pi)\le 6$;
\item[(ii)] otherwise, $\dim H^1(I_1/Z_1,\pi)=4$.
\end{itemize}
\end{cor}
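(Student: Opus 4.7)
The plan is to turn the cohomology question into an Ext question and then read off everything from Theorem \ref{main} together with Corollary \ref{two}. Since $H=I/I_1$ has order prime to $p$, the Hochschild--Serre spectral sequence for the normal subgroup $I_1/Z_1 \trianglelefteq I/Z_1$ degenerates, yielding a natural isomorphism
$$\Hom_H(\psi, H^1(I_1/Z_1, \pi)) \cong \Ext^1_{I/Z_1}(\psi, \pi)$$
for every smooth character $\psi : H \to \Fbar^{\times}$. Moreover, since $|H|$ is prime to $p$, the $H$-representation $H^1(I_1/Z_1,\pi)$ is semisimple, so its isomorphism class (and in particular its dimension) is computed by summing these Ext-dimensions over all characters $\psi$.

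To exploit Theorem \ref{main}, I use Corollary \ref{two}: as $I \subset G^0 \subset G^+$, the decomposition $\pi|_{G^+}=\pi_\sigma \oplus \pi_{\tsigma}$ restricts to an $I$-decomposition, whence
$$\Ext^1_{I/Z_1}(\psi, \pi) = \Ext^1_{I/Z_1}(\psi, \pi_\sigma) \oplus \Ext^1_{I/Z_1}(\psi, \pi_{\tsigma}).$$
Theorem \ref{main} applied to $\pi_\sigma$ shows the first summand is nonzero only for $\psi \in \{\chi, \chi\alpha\}$; and by the symmetry $\pi(r,0,\omega^a) \cong \pi(p-1-r,0,\omega^{r+a})$ recorded in \eqref{intertwine}, which swaps the roles of $\sigma$ and $\tsigma$ (and thus of $\chi$ and $\chi^s$), the same theorem applied to $\pi_{\tsigma}$ shows the second summand is nonzero only for $\psi \in \{\chi^s, \chi^s\alpha\}$. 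So $\psi$ must lie in $\{\chi,\chi\alpha,\chi^s,\chi^s\alpha\}$, and I must eliminate $\chi\alpha$ and $\chi^s\alpha$ or identify them with $\chi,\chi^s$.

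For this I split into cases matching the hypotheses of Theorem \ref{main}. If $p>3$, or if $p=3$ and $r\in\{0,2\}$ (noting $r=0$ forces the symmetric partner to have $r=2$), part (ii) of Theorem \ref{main} forces the $\chi\alpha$ and $\chi^s\alpha$ contributions to vanish, so $\psi \in \{\chi,\chi^s\}$. For the dimension, part (i) gives $\dim\Ext^1_{I/Z_1}(\chi,\pi_\sigma)=\dim\Ext^1_{I/Z_1}(\chi^s,\pi_{\tsigma})=2$, and either $\chi\neq \chi^s$ (so the total splits as $2+2=4$ across two distinct characters) or $\chi=\chi^s$ (so a single character picks up $2+2=4$); in either event $\dim H^1(I_1/Z_1,\pi)=4$.

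The remaining case $p=3,\ r=1$ is the one point requiring a small extra check and will be the only mildly delicate step. Using \eqref{explicitchi} with $r=1$ and $p=3$ (so $p-1=2$), a direct computation on $\mathrm{diag}([\lambda],[\mu])$ shows $\chi\alpha=\chi^s$ and $\chi^s\alpha=\chi$, so $\{\chi\alpha,\chi^s\alpha\}=\{\chi^s,\chi\}$; hence again $\psi \in \{\chi,\chi^s\}$. For the dimension, part (i) of Theorem \ref{main} gives $\dim\Ext^1_{I/Z_1}(\chi,\pi_\sigma)=2$, while part (iii) gives $\dim\Ext^1_{I/Z_1}(\chi^s\alpha,\pi_{\tsigma})\le 1$; the latter equals $\dim\Ext^1_{I/Z_1}(\chi,\pi_{\tsigma})$ since $\chi^s\alpha=\chi$. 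Summing, $\dim\Ext^1_{I/Z_1}(\chi,\pi)\le 3$, and symmetrically $\dim\Ext^1_{I/Z_1}(\chi^s,\pi)\le 3$, giving $\dim H^1(I_1/Z_1,\pi)\le 6$.
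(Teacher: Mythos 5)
Your proposal is correct and follows essentially the same route as the paper: the identification $\Hom_H(\psi,H^1(I_1/Z_1,\pi))\cong\Ext^1_{I/Z_1}(\psi,\pi)$ (via $|H|$ prime to $p$), the $I$-decomposition $\pi\cong\pi_{\sigma}\oplus\pi_{\tsigma}$ from Corollary \ref{two}, and then Theorem \ref{main} together with the $\sigma\leftrightarrow\tsigma$ symmetry, with the only special point being that $\chi\alpha=\chi^s$ and $\chi^s\alpha=\chi$ when $p=3$, $r=1$. You have merely written out in detail the bookkeeping that the paper leaves implicit.
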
 
\begin{proof} By Corollary \ref{two} $\pi\cong \pi_{\sigma}\oplus \pi_{\tsigma}$ as $I$-representations. 
The assertion follows from Theorem \ref{main}. We note that if $p=3$ and 
$r=1$ then $\chi\alpha=\chi^s$ and $\chi^s\alpha=\chi$. 
\end{proof}

\section{Extensions and central characters}\label{extandcenter}
We fix a smooth character $\zeta: Z\rightarrow \Fbar^{\times}$ and let $\Rep_{G, \zeta}$ be the full category of $\Rep_G$ consisting 
of representations with central character $\zeta$. Let $V$ be an $\Fbar$-vector space with an action of $Z$, given by 
$zv=\zeta(z)v$, for all $z\in Z$ and $v\in V$. Then $\Indu{Z}{G}{V}$ is an object of $\Rep_{G, \zeta}$, moreover given $\pi$ in $\Rep_{G, \zeta}$ by Frobenius 
reciprocity we get  
\begin{equation}\label{Frobrec}
\Hom_G(\pi, \Indu{Z}{G}{V})\cong \Hom_Z(\pi, V)\cong \Hom_{\Fbar}(\pi, V).
\end{equation}
Hence, the functor $\Hom_G(\centerdot, \Indu{Z}{G}{V})$ is exact and so $\Indu{Z}{G}{V}$ is an injective object in $\Rep_{G, \zeta}$. Further, 
if $V$ is the underlying vector space of $\pi$ then we may embed $\pi\hookrightarrow \Indu{Z}{G}{V}$, $v\mapsto [g\mapsto g v]$. Hence, 
$\Rep_{G, \zeta}$ has enough injectives.
  
For  $\pi_1, \pi_2$ in $\Rep_{G, \zeta}$ we denote $\Ext^1_{G,\zeta}(\pi_1,\pi_2):=\RR^1\Hom(\pi_1,\pi_2)$ 
computed in the category $\Rep_{G,\zeta}$.  
\begin{prop}\label{centre} Let $\pi_1$ and $\pi_2$ be irreducible representations of $G$ admitting a central character. 
Let $\zeta$ be the central character of $\pi_2$. If $\Ext^1_G(\pi_1,\pi_2)\neq 0$ then $\zeta$ is also the central character of $\pi_1$.
If $\pi_1\not\cong \pi_2$ then $\Ext^1_{G,\zeta}(\pi_1,\pi_2)=\Ext^1_{G}(\pi_1,\pi_2)$. If $\pi_1\cong\pi_2$ then 
there exists an exact sequence:
$$0\rightarrow \Ext^1_{G,\zeta}(\pi_1,\pi_2)\rightarrow \Ext^1_{G}(\pi_1,\pi_2)\rightarrow \Hom(Z,\Fbar)\rightarrow 0.$$
\end{prop}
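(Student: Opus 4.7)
The plan is to analyse, for any short exact sequence $0\to \pi_2 \to E \to \pi_1 \to 0$ in $\Rep_G$, the $G$-equivariant endomorphism $\eta_z := z - \zeta(z) \id_E$ of $E$ attached to each $z\in Z$; it commutes with $G$ because $Z$ is central, annihilates $\pi_2$ by the definition of $\zeta$, and acts on $E/\pi_2 = \pi_1$ as the scalar $\zeta_1(z) - \zeta(z)$, where $\zeta_1$ denotes the central character of $\pi_1$ (assuming one exists; otherwise one works with the induced $Z$-action and extracts a character below).

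First I would dispose of the case $\zeta_1 \neq \zeta$: picking $z_0\in Z$ with $\lambda := \zeta_1(z_0)-\zeta(z_0)\neq 0$, the map $\eta_{z_0}$ factors through a $G$-equivariant $\pi_1 \to E$ whose composition with $E \twoheadrightarrow \pi_1$ is multiplication by $\lambda$; dividing by $\lambda$ produces a $G$-equivariant splitting. Hence $\Ext^1_G(\pi_1,\pi_2)\neq 0$ forces $\zeta_1 = \zeta$, settling the first assertion. Assuming $\zeta_1=\zeta$ from now on, $\eta_z$ sends $E$ into $\pi_2$ and kills $\pi_2$, so it factors through a $G$-map $\bar\eta_z:\pi_1\to\pi_2$. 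If $\pi_1 \not\cong \pi_2$, Schur's lemma forces $\bar\eta_z=0$, so $E$ lies in $\Rep_{G,\zeta}$; this identifies $\Ext^1_{G,\zeta}(\pi_1,\pi_2)$ with $\Ext^1_G(\pi_1,\pi_2)$ (injectivity of the canonical map is immediate, since any $G$-splitting automatically preserves central character).

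For $\pi_1 \cong \pi_2 =: \pi$, Schur gives $\End_G(\pi) = \Fbar\cdot\id$, so one may write $\bar\eta_z = \zeta(z)\psi_E(z)\cdot\id_\pi$ for some scalar $\psi_E(z)\in\Fbar$. The identity $\eta_{zw}=\zeta(z)\eta_w+\zeta(w)\eta_z$, combined with $\eta_z\circ\eta_w=0$ (since $\eta_w$ lands in $\pi_2$ and $\eta_z$ kills $\pi_2$), yields $\psi_E(zw)=\psi_E(z)+\psi_E(w)$, while smoothness of $E$ makes $\psi_E$ continuous; thus $\psi_E\in \Hom(Z,\Fbar)$. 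A routine verification then shows $E\mapsto\psi_E$ descends to a well-defined additive map $\Ext^1_G(\pi,\pi) \to \Hom(Z,\Fbar)$ (compatibility with Baer sum is immediate from the diagonal definition of the sum), with kernel exactly $\Ext^1_{G,\zeta}(\pi,\pi)$, since $\psi_E=0$ iff every $\eta_z = 0$ iff $E$ has central character $\zeta$.

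The remaining and main step is surjectivity. Given $\phi\in\Hom(Z,\Fbar)$, I would first extend it to a group homomorphism $\tilde\phi:G\to \Fbar$ and then define $E:=\pi\oplus\pi$ with $G$-action $g\cdot(v,w):=(gv+\tilde\phi(g)\,gw,\,gw)$; associativity of this action reduces precisely to the homomorphism property of $\tilde\phi$, smoothness follows from its continuity, and a direct computation gives $\psi_E=\phi$. The existence of $\tilde\phi$ is the crux: any continuous homomorphism $G\to \Fbar$ factors through $\det:G\twoheadrightarrow\Qp^{\times}$, while the restriction of $\det$ to $Z$ is the squaring map $z\mapsto z^2$, so extending $\phi$ amounts to extending a homomorphism along squaring. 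For $p>2$ (the regime of this paper) this is straightforward from the divisibility --- hence injectivity as a $\ZZ$-module --- of $\Fbar$; this is the only delicate point in the argument.
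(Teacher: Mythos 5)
Your argument is correct in substance, and for the first two assertions it coincides with the paper's: the paper introduces exactly your maps $\theta_z\colon v\mapsto zv-\zeta(z)v$, observes they are $G$-equivariant and kill $\pi_2$, and deduces both that a non-split extension forces the central characters to agree and that for $\pi_1\not\cong\pi_2$ every extension lies in $\Rep_{G,\zeta}$. Where you genuinely diverge is the self-extension case. The paper does not define the map to $\Hom(Z,\Fbar)$ directly on extension classes; instead it embeds $\pi$ into the injective object $\Indu{Z}{G}{\zeta}$ of $\Rep_{G,\zeta}$, compares the long exact sequences obtained by applying $\Hom_G(\pi,\centerdot)$ in $\Rep_{G,\zeta}$ and in $\Rep_G$ to $0\to\pi\to\Indu{Z}{G}{\zeta}\to Q\to 0$, and then runs the $\theta_z$-analysis on extensions of $\pi$ by $\Indu{Z}{G}{\zeta}$, asserting that the image in $\Hom(Z,\pi^*)$ is $\Hom(Z,\Fbar\varphi)$. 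Your route is more hands-on: you identify the kernel directly (which is fine, since $\Rep_{G,\zeta}$ has enough injectives so Yoneda and derived $\Ext^1$ agree, and a $G$-splitting of an extension in $\Rep_{G,\zeta}$ is automatically a splitting there), and you prove surjectivity by an explicit construction, twisting $\pi\oplus\pi$ by $\tilde\phi=\tfrac12\phi\circ\det$. This actually makes explicit a point the paper leaves rather implicit (the paper never exhibits extensions realizing every $\phi$), which is a genuine merit of your write-up; what the paper's formulation buys is a statement with no visible restriction on $p$, whereas your construction is intrinsically limited to $p>2$, since for $p=2$ every homomorphism $G\to\Fbar$ restricts to $2\phi=0$ on $Z$.

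Two small corrections. First, the justification ``divisibility --- hence injectivity as a $\ZZ$-module --- of $\Fbar$'' is false: the additive group of $\Fbar$ is $p$-torsion, hence neither divisible nor an injective $\ZZ$-module. No such fact is needed; since $\det$ restricted to $Z$ is squaring, the required extension is simply $\tilde\phi:=\tfrac12\phi\circ\det$ (after identifying $Z\cong\Qp^{\times}$), and all you use is that $2\in\Fbar^{\times}$, i.e.\ $p>2$. Second, the observation that every continuous homomorphism $G\to\Fbar$ factors through $\det$ is true but not needed for your argument; only the existence of one extension of $\phi$ matters. With these cosmetic repairs your proof is complete for $p>2$, which covers every use of the proposition in the paper, but you should state that hypothesis if you intend your argument to replace the paper's proof of the displayed exact sequence.
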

\begin{proof}Suppose that we have a non-split extension $0\rightarrow \pi_2\rightarrow E\rightarrow \pi_1\rightarrow 0$ in $\Rep_G$.
For all $z\in Z$ we define $\theta_z: E\rightarrow E$, $v\mapsto z v -\zeta(z)v$. Since $z$ is central in $G$, $\theta_z$ is $G$-equivariant. 
If $\theta_z=0$ for all $z\in Z$ then $E$ admits a central character $\zeta$, and hence $\zeta$ is the central character of $\pi_1$ and 
the extension lies in $\Ext^1_{G,\zeta}(\pi_1,\pi_2)$. 
If $\theta_z\neq 0$ for some $z\in Z$ then it induces an isomorphism $\pi_1\cong \pi_2$.

We assume that $\pi_1\cong\pi_2$ and drop the subscript. Then \eqref{Frobrec} gives $\Hom_G(\pi,\Indu{Z}{G}{\zeta})\cong\pi^*$.
Fix a non-zero $\varphi\in \Hom_Z(\pi,\zeta)$. Since $\pi$ is irreducible we obtain an exact sequence:
\begin{equation}\label{seqcen}
0\rightarrow \pi\overset{\varphi}{\rightarrow} \Indu{Z}{G}{\zeta}\rightarrow Q\rightarrow 0.
\end{equation}
Since $\Indu{Z}{G}{\zeta}$ is an injective object in $\Rep_{G,\zeta}$, and \eqref{seqcen} is in $\Rep_{G,\zeta}$ by applying  $\Hom_G(\pi, \centerdot )$ to 
\eqref{seqcen} we obtain an exact sequence:
\begin{equation}\label{seq1}
\pi^*\rightarrow \Hom_G(\pi, Q)\rightarrow \Ext^1_{G,\zeta}(\pi,\pi)\rightarrow 0.
\end{equation}
If we consider \eqref{seqcen} as an exact sequence in $\Rep_G$ then by applying $\Hom_G(\pi, \centerdot )$ we get  an exact sequence:
\begin{equation}\label{seq2}
\pi^*\rightarrow \Hom_G(\pi, Q)\rightarrow \Ext^1_{G}(\pi,\pi)\rightarrow \Ext^1_{G}(\pi,  \Indu{Z}{G}{\zeta}).
\end{equation}
Putting \eqref{seq1} and \eqref{seq2} together we obtain an exact sequence:
$$0\rightarrow \Ext^1_{G,\zeta}(\pi,\pi)\rightarrow \Ext^1_G(\pi,\pi)\rightarrow \Ext^1_{G}(\pi,  \Indu{Z}{G}{\zeta}).$$

Let $0\rightarrow  \Indu{Z}{G}{\zeta}\rightarrow E\rightarrow \pi\rightarrow 0$ be an extension in $\Rep_G$. For all $z\in Z$, $\theta_z:E\rightarrow E$ 
induces $\theta_z(E)\in \Hom_G(\pi,  \Indu{Z}{G}{\zeta})$. Now $\theta_z(E)=0$ for all $z\in Z$ if and only if $E$ has 
a central character $\zeta$, but since $\Indu{Z}{G}{\zeta}$ is an injective object in $\Rep_{G,\zeta}$ Lemma \ref{extriv} implies that the sequence is split 
if and only if $E$ has a central character $\zeta$. Now 
\begin{equation}\label{thetapsi}
\begin{split}
\theta_{z_1z_2}(v)&= z_1z_2 v-\zeta(z_1z_2)v= z_1( z_2 v-\zeta(z_2)v)+ z_1\zeta(z_2)v-\zeta(z_1z_2)v\\&= \zeta(z_1)\theta_{z_2}(v)+\zeta(z_2)\theta_{z_1}(v).
\end{split}
\end{equation}
Hence, if we set  $\psi_E(z):=\zeta(z)^{-1} \theta_z(E)$, then \eqref{thetapsi} gives $\psi_E(z_1z_2)=\psi_E(z_1)+\psi_E(z_2)$. 
 Hence, the map $E\mapsto \psi_E$ induces an injection 
 $\Ext^1_{G}(\pi,  \Indu{Z}{G}{\zeta})\hookrightarrow \Hom(Z, \pi^*)$. The image 
of $$\Ext^1_G(\pi,\pi)\rightarrow \Ext^1_{G}(\pi,  \Indu{Z}{G}{\zeta})\hookrightarrow \Hom(Z, \pi^*)$$
is $\Hom(Z, \Fbar\varphi)$, which is isomorphic to $\Hom(Z,\Fbar)$.
\end{proof}

\begin{prop}\label{CK} Let $\pi:=\pi(r, 0, \eta)$ and $\zeta$ the central character of $\pi$. Assume that $p>2$ and $(p,r)\neq (3,1)$ then 
$\dim \Ext^1_{G, \zeta}(\pi,\pi)\ge 3$.
\end{prop}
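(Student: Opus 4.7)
The plan is to follow a variant of the Colmez–Kisin strategy. Colmez's Montreal functor $V: \Rep_{G,\zeta}\to \Rep_\gal$ is exact and sends the supersingular $\pi=\pi(r,0,\eta)$ to an irreducible two-dimensional $\Fbar$-representation $\rho$ of $\gal$, whose determinant is determined by $\zeta$ (twisted by the mod-$p$ cyclotomic character). Exactness of $V$ produces a map
$$V_* : \Ext^1_{G,\zeta}(\pi,\pi)\longrightarrow \Ext^1_\gal(\rho,\rho).$$
Because every extension on the left admits central character $\zeta$, its image has fixed determinant, so $V_*$ factors through the subspace $H^1(\gal,\mathrm{ad}^0\rho)$. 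The proposition will follow once one shows (a) $\dim H^1(\gal,\mathrm{ad}^0\rho)=3$ and (b) $V_*$ has image of dimension at least $3$.

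For (a), I would use the local Euler characteristic formula $\chi(\gal,\mathrm{ad}^0\rho)=-3$. Irreducibility of $\rho$ gives $H^0(\gal,\mathrm{ad}^0\rho)=0$, and local Tate duality identifies $H^2(\gal,\mathrm{ad}^0\rho)$ with $H^0(\gal,\mathrm{ad}^0\rho\otimes\omega)^\vee$. Using the explicit description of $\rho$ attached to $\pi(r,0,\eta)$ via Breuil's mod-$p$ correspondence, one checks directly that this $H^0$ vanishes in all cases except $(p,r)=(3,1)$ (where a sporadic twist by $\omega$ interferes), giving $\dim H^1(\gal,\mathrm{ad}^0\rho)=3$ under our hypotheses.

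For (b) one constructs, for each tangent vector $\tilde\rho$ to the fixed-determinant deformation space of $\rho$, a matching extension $0\to\pi\to\tilde\pi\to\pi\to 0$ in $\Rep_{G,\zeta}$. The Colmez–Kisin recipe is to lift $\tilde\rho$ to a family of characteristic-zero Galois representations over $\oE[\epsilon]/\epsilon^2$, apply Colmez's functor $V\mapsto\Pi(V)$ to obtain an admissible unitary Banach deformation of a chosen $p$-adic lift of $\pi$, and then reduce modulo the uniformizer. Compatibility of the Montreal functor with central characters and determinants forces the reduction to lie in $\Rep_{G,\zeta}$ and to map to $\tilde\rho$ under $V_*$, giving surjectivity of $V_*$ onto $H^1(\gal,\mathrm{ad}^0\rho)$.

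The main obstacle is making (b) rigorous without appealing to \cite{kis1} (unwritten at the time) and without the fixed-central-character hypothesis tacitly built into \cite{col1} and \cite{kis}. The appendix gives a self-contained variant: one works directly with a characteristic-zero universal deformation with prescribed determinant, applies Colmez's construction to its specialisations, and then verifies exactness of the reduction procedure by hand. Irreducibility of $\rho$ and the exclusion $(p,r)\neq(3,1)$ enter precisely to guarantee that the fixed-determinant deformation ring is formally smooth of the expected dimension $3$, so the constructed family produces enough independent extensions.
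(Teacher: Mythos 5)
Your step (a) and the overall shape of your argument (push extensions through Colmez's functor, compare with the fixed-determinant deformation problem, use $p>2$ and $(p,r)\neq(3,1)$ to force $H^2(\GG_{\Qp},\mathrm{Ad}^0(\rho))=0$ and get the dimension $3$) agree with what the appendix of the paper does; the body of the paper simply quotes \cite[2.3.4]{kis1}. The genuine gap is in your step (b). You propose, for each tangent vector $\tilde\rho\in H^1(\GG_{\Qp},\mathrm{Ad}^0(\rho))$, to lift it to a family over $\oE[\epsilon]/\epsilon^2$, apply the functor $V\mapsto \Pi(V)$, and reduce modulo the uniformizer. Neither half of this is available: an arbitrary mod~$p$ tangent vector with fixed determinant has no preferred (let alone crystalline) lift to a characteristic-zero infinitesimal family to which the Banach-space construction applies, and the construction $V\mapsto \Pi(V)$ of Berger--Breuil \cite{bergerbreuil} is defined only for certain crystalline two-dimensional representations over a field $E$ (with conditions on the Frobenius eigenvalues), not for representations over Artinian rings such as $E[\epsilon]/\epsilon^2$. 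So the claimed surjectivity of $V_*$ onto $H^1(\GG_{\Qp},\mathrm{Ad}^0(\rho))$ -- which is the whole content of the proposition -- is not established by your recipe.

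What the paper's appendix does instead is to avoid handling individual tangent vectors altogether. It introduces the categorical deformation ring $R^{\pi,\eta}$, parametrizing deformations of $\rho$ lying in the essential image of $\VV$ on $\Rep^{\pi,\eta}_{\OO}G$; to make this a legitimate deformation condition, and to translate its tangent space back into extensions of $\pi$ by $\pi$, one needs Lemma \ref{Ext1inj}, a d\'evissage of Colmez's injectivity theorem \cite[VII.5.3]{col1}, which your sketch never addresses. The comparison with $R^{\omega\eta}$ is then made at the level of rings: by Kisin's theorem \cite[1.3.4]{kis1} the crystalline points whose Frobenius eigenvalues are distinct and do not differ by $p^{\pm1}$ are Zariski dense in $\Spec R^{\omega\eta}[1/p]$; at each such point Berger--Breuil give a Banach representation, Berger's theorem \cite{berger} identifies the reduction of a unit ball with $\pi$, and the mutual-inverse compatibility with $\VV$ shows the point lies in $\Spec R^{\pi,\eta}[1/p]$; density, reducedness of $R^{\omega\eta}[1/p]$ and $p$-torsion-freeness of $R^{\omega\eta}$ then yield a surjection $R^{\pi,\eta}\twoheadrightarrow R^{\omega\eta}$ compatible with the maps from $R^u$, and passing to reduced cotangent spaces gives $\dim D^{\pi,\eta}(\FF[\varepsilon])\ge \dim D^{\omega\eta}(\FF[\varepsilon])=3$, whence the bound on $\Ext^1_{G,\zeta}(\pi,\pi)$. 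Your closing paragraph gestures at this (``apply Colmez's construction to specialisations''), but without the density theorem, Berger's mod~$p$ result, the compatibility of the two constructions, and the commutative-algebra descent from the generic fibre to tangent spaces, the step you label (b) remains unproved.
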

\begin{proof} This follows from \cite[2.3.4]{kis1}.
\end{proof}
\begin{remar} At the time of writing this note, \cite{kis1} was not written up and there were some technical issues 
with the outline of the argument given in the introductions to \cite{col1} and \cite{kis}. Since we  only need 
a lower bound on the dimension and only in the supersingular case, we have written up  another proof of Proposition 
\ref{CK} in the appendix.  The proof given there is a variation of Colmez-Kisin argument.    
\end{remar}

\section{Hecke Algebra}\label{hecke}

Let $\zeta$ be the central character of $\pi$. Let $\HH:=\End_{G}(\cIndu{ZI_1}{G}{\zeta})$.
 Let $\II: \Rep_{G,\zeta}\rightarrow \Mod_{\HH}$ be the functor: 
$$\II(\pi):=\pi^{I_1}\cong \Hom_G(\cIndu{ZI_1}{G}{\zeta}, \pi).$$
Let $\TT:\Mod_{\HH}\rightarrow \Rep_{G,\zeta}$ be the functor:
$$\TT(M):=M\otimes_{\HH} \cIndu{ZI_1}{G}{\zeta}.$$
One has $\Hom_{\HH}(M, \II(\pi))\cong \Hom_G(\TT(M),\pi)$. Moreover, Vign\'{e}ras 
in \cite[Thm.5.4]{vig} shows that $\II$ induces a bijection between irreducible objects in $\Rep_{G,\zeta}$ and $\Mod_{\HH}$. 
Let $\Rep^{I_1}_{G, \zeta}$ be the  full subcategory of  $\Rep_{G,\zeta}$ consisting 
of representations generated by their $I_1$-invariants.  Ollivier 
has shown \cite{o2} that 
\begin{equation}
\II: \Rep^{I_1}_{G, \zeta}\rightarrow \Mod_{\HH}, \quad \TT: \Mod_{\HH}\rightarrow \Rep^{I_1}_{G, \zeta}
\end{equation} 
are quasi-inverse to each other and so $\Mod_{\HH}$ is equivalent to   $\Rep^{I_1}_{G, \zeta}$. In particular, 
suppose that $\tau=\langle G \centerdot \tau^{I_1}\rangle$,  $\pi$ 
in $\Rep_{G, \zeta}$  and let $\pi_1:=\langle G\centerdot \pi^{I_1}\rangle \subseteq \pi$ then one has:
\begin{equation}\label{homit}
\begin{split}
\Hom_G(\tau, \pi)\cong  \Hom_{G}(\tau, \pi_1) \cong  \Hom_{\HH}&(\II(\tau), \II(\pi_1))\\ &\cong  \Hom_{\HH}(\II(\tau), \II(\pi))
\end{split}
\end{equation}
and the natural map $\TT \II(\tau)\rightarrow \tau$  is an isomorphism.

Let $J$ be an injective object in $\Rep_{G, \zeta}$, then the first isomorphism of \eqref{homit} 
implies that $J_1:=\langle G\centerdot J^{I_1}\rangle$ is an injective object in $\Rep^{I_1}_{G, \zeta}$. Since $\TT$ and $\II$ 
induce an equivalence of categories between $\Mod_{\HH}$ and $\Rep^{I_1}_{G, \zeta}$ we obtain that $\II(J_1)=\II(J)$ is an injective 
object in $\Mod_{\HH}$. Hence, \eqref{homit} gives an $E_2$-spectral sequence:
\begin{equation}\label{specseq}
\Ext^i_{\HH}(\II(\tau), \RR^j \II(\pi))\Longrightarrow \Ext^{i+j}_{G, \zeta}(\tau, \pi)
\end{equation}
The $5$-term sequence associated to \eqref{specseq} gives us:
\begin{prop}\label{5term} Let $\tau$ and $\pi$ be in $\Rep_{G, \zeta}$ suppose that $\tau$ is generated as a $G$-representation by $\tau^{I_1}$
then there exists an exact sequence:
\begin{equation}\label{5T}
\begin{split}
0\rightarrow &\Ext^1_{\HH}(\II(\tau), \II(\pi))\rightarrow \Ext^1_{G, \zeta}(\tau, \pi)\rightarrow \Hom_{\HH}(\II(\tau), \RR^1\II(\pi))\\
&\rightarrow \Ext^2_{\HH}(\II(\tau), \II(\pi))\rightarrow \Ext^2_{G, \zeta}(\tau, \pi)
\end{split}
\end{equation}
\end{prop}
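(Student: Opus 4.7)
The plan is to derive \eqref{5T} as the standard low-degree five-term exact sequence of the spectral sequence \eqref{specseq}, so the real content is verifying that \eqref{specseq} genuinely exists as a Grothendieck spectral sequence for the composition of two left exact functors, each of which has the right acyclicity properties.

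First, I would identify \eqref{specseq} as the Grothendieck spectral sequence attached to the composition
\[
\Rep_{G,\zeta} \xrightarrow{\;\II\;} \Mod_{\HH} \xrightarrow{\;\Hom_{\HH}(\II(\tau),\,-)\;} (\text{Ab}),
\]
both functors being left exact, with $\Rep_{G,\zeta}$ having enough injectives (as recalled at the beginning of \S\ref{extandcenter}). For this I would check the two hypotheses of Grothendieck's theorem. The first is that $\II$ carries injectives to objects acyclic for $\Hom_{\HH}(\II(\tau),-)$: if $J$ is an injective in $\Rep_{G,\zeta}$, then the first isomorphism of \eqref{homit} shows that $J_{1}:=\langle G\cdot J^{I_{1}}\rangle$ is injective in $\Rep^{I_{1}}_{G,\zeta}$, and Ollivier's equivalence $\TT\dashv\II$ between $\Mod_{\HH}$ and $\Rep^{I_{1}}_{G,\zeta}$ transports this to show that $\II(J_{1})=\II(J)$ is injective in $\Mod_{\HH}$, hence in particular acyclic for $\Hom_{\HH}(\II(\tau),-)$. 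The second is the identification of the composite functor: for any $\pi\in\Rep_{G,\zeta}$ the chain of natural isomorphisms in \eqref{homit} (which is exactly where the hypothesis $\tau=\langle G\cdot \tau^{I_{1}}\rangle$ enters, via $\TT\II(\tau)\cong\tau$) yields $\Hom_{\HH}(\II(\tau),\II(\pi))\cong\Hom_{G}(\tau,\pi)$ functorially in $\pi$, so that its right derived functors compute $\Ext^{\bullet}_{G,\zeta}(\tau,\pi)$.

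Granted \eqref{specseq}, the sequence \eqref{5T} is then a purely formal consequence: for any first-quadrant cohomological spectral sequence $E_{2}^{i,j}\Rightarrow E^{i+j}$ one has the canonical five-term exact sequence
\[
0\to E_{2}^{1,0}\to E^{1}\to E_{2}^{0,1}\to E_{2}^{2,0}\to E^{2},
\]
obtained from the filtration on $E^{1}$ (whose associated graded is $E_{\infty}^{1,0}\subset E^{1}$ with quotient $E_{\infty}^{0,1}\subset E_{2}^{0,1}$, the inclusion having cokernel equal to the image of the differential $d_{2}\colon E_{2}^{0,1}\to E_{2}^{2,0}$) together with the edge map $E_{2}^{2,0}\to E_{\infty}^{2,0}\hookrightarrow E^{2}$. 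Substituting $E_{2}^{i,j}=\Ext^{i}_{\HH}(\II(\tau),\RR^{j}\II(\pi))$ and $E^{n}=\Ext^{n}_{G,\zeta}(\tau,\pi)$ gives exactly \eqref{5T}.

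The only step that requires real care is the first one, namely ensuring that injectives in $\Rep_{G,\zeta}$ are sent by $\II$ to injective (and not merely acyclic) $\HH$-modules, since $\II$ is not a priori the right adjoint of anything defined on the whole of $\Rep_{G,\zeta}$; this is where Ollivier's theorem intervenes essentially. Once this is in place, everything else is the standard five-term sequence machinery.
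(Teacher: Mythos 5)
Your proposal is correct and follows essentially the same route as the paper: one checks via \eqref{homit} and Ollivier's equivalence that $\II$ sends injectives of $\Rep_{G,\zeta}$ to injective $\HH$-modules, identifies $\Hom_{\HH}(\II(\tau),\II(-))\cong\Hom_G(\tau,-)$ using $\TT\II(\tau)\cong\tau$ (which is where the hypothesis that $\tau$ is generated by $\tau^{I_1}$ enters), and then extracts the five-term exact sequence of the resulting Grothendieck spectral sequence \eqref{specseq}. No gaps.
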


It is easy to write down the first two non-trivial arrows of \eqref{5T} explicitly. An extension class 
of $0\rightarrow \II(\pi)\rightarrow E\rightarrow \II(\tau)\rightarrow 0$ maps to the extension 
class of $0\rightarrow \TT\II(\pi)\rightarrow \TT(E)\rightarrow \TT\II(\tau)\rightarrow 0$. Let    
  $\epsilon$ be an extension class of $0\rightarrow \pi\rightarrow \kappa\rightarrow \tau\rightarrow 0$. We may apply $\II$ to get  
\begin{equation}\label{nearlydone}
\xymatrix@1{0 \ar[r]& \II(\pi)\ar[r]& \II(\kappa)\ar[r]& \II(\tau)\ar[r]^-{\partial_{\epsilon}}& \mathbb R^1\II(\pi).}
\end{equation}
The second non-trivial arrow in \eqref{5T} is given by $\epsilon\mapsto \partial_{\epsilon}$.

We are interested in \eqref{specseq} when both $\pi$ and $\tau$ are irreducible.
 We recall some 
facts about the structure of $\HH$ and its irreducible modules, for proofs see \cite{vig} or \cite[\S1]{coeff}. As an $\Fbar$-vector space 
$\HH$ has a basis indexed by double cosets $I_1\backslash G/Z I_1$, we write $T_g$ for the element corresponding to a double coset $I_1g Z I_1$.
Given $\pi$ in $\Rep_{G,\zeta}$, and $v\in \pi^{I_1}$, the action of $T_g$ is given  by:
\begin{equation}\label{Tg}
vT_g= \sum_{u\in I_1/(I_1\cap g^{-1} I_1 g)} u g^{-1} v.
\end{equation}
Let $\chi:H\rightarrow \Fbar^{\times}$ be a character then we define $e_{\chi}\in \HH$ by 
$$e_{\chi}:=\frac{1}{|H|} \sum_{h\in H} \chi(h) T_h.$$ 
Then $e_{\chi}e_{\psi}=e_{\chi}$ if $\chi=\psi$ and $0$ otherwise and it follows from \eqref{Tg} that $\pi^{I_1}e_{\chi}$ is the $\chi$-isotypical 
subspace of 
$\pi^{I_1}$ as a representation of $H$. The elements $T_{n_s}$, $T_{\Pi}$ and $e_{\chi}$, for all $\chi$ generate $\HH$ as an algebra, 
and are subject to the following relations:  $T_{\Pi}^2=\zeta(p)^{-1}$,
\begin{equation}\label{relTH}
e_{\chi}T_{n_s}=T_{n_s}e_{\chi^s}, \quad e_{\chi}T_{\Pi}=T_{\Pi}e_{\chi^s}, \quad   \ e_{\chi}T_{n_s}^2=-e_{\chi}e_{\chi^s}T_{n_s}.
\end{equation}
Note that $e_{\chi}e_{\chi^s}=e_{\chi}$ if $\chi=\chi^s$ and $e_{\chi}e_{\chi^s}=0$, otherwise.  We let $\HH^+$ be the subalgebra of $\HH$ generated by 
$T_{n_s}$, $T_{\Pi} T_{n_s} T_{\Pi}^{-1}$ and $e_{\chi}$ for all characters $\chi$. One may naturally identify $\HH^+\cong \End_{G^+}(\cIndu{ZI_1}{G^+}{\zeta})$.

\begin{defi}\label{Mrlambdaeta} Let $0\le r \le p-1$ be an integer, $\lambda\in \Fbar$ and $\eta: \Qp^{\times}\rightarrow \Fbar^{\times}$ a smooth character,
and let $\zeta$ be the central character of $\pi(r,\lambda,\eta)$ then we 
define $\HH$-modules $M(r,\lambda):= \pi(r,\lambda)^{I_1}$, $M(r,\lambda, \eta):=\pi(r,\lambda, \eta)^{I_1}$.
\end{defi}
Assume for simplicity that $\zeta(p)=1$ then it is shown in \cite[Cor. 6.4]{bp} that $M(r,\lambda, \eta)$ has an $\Fbar$-basis 
$\{v_1, v_2\}$ such that  
\begin{itemize}
\item[(i)]$v_1 e_{\chi}= v_1,\quad  v_1 T_{\Pi}= v_2, \quad v_2 e_{\chi^s}= v_2, \quad v_2 T_{\Pi}=v_1$
and such that $v_1 T_{n_s}=-v_1$ if $r=p-1$ and $v_1 T_{n_s}=0$ otherwise.
\item[(ii)]$v_2(1+T_{n_s})= \eta(-p^{-1})\lambda v_1$ if $r=0$ and $v_2 T_{n_s}= \eta(-p^{-1})\lambda v_1$ otherwise,
\end{itemize}
where $\chi:H\rightarrow \Fbar^{\times}$ is the character $\chi(\bigl (\begin{smallmatrix} [\lambda]& 0\\ 0 & [\mu]\end{smallmatrix}\bigr))= 
\lambda^r\eta([\lambda\mu])$. If $\lambda=0$ so that $\pi(r,\lambda,\eta)$ is supersingular, then $v_1=v_{\sigma}$ and $v_2=v_{\tsigma}$.

\begin{lem}\label{extH} Let $\pi$ be a supersingular representation of $G$ then
\begin{itemize}
\item[(i)] if $r\in \{0,p-1\}$ then 
\begin{itemize}
\item[(a)] $\dim \Ext^1_{\HH}(\pi^{I_1},\pi^{I_1})=1$;
\item[(b)]  $\Ext^i_{\HH}(\pi^{I_1}, \ast)=0$ for $i>1$;
\end{itemize}
\item[(ii)] otherwise, $\dim \Ext^1_{\HH}(\pi^{I_1},\pi^{I_1})=2$.
\end{itemize}
\end{lem}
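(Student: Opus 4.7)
The plan is to resolve $M := \pi^{I_1} = M(r,0,\eta)$ by an explicit short complex of projective right $\HH$-modules and then compute by applying $\Hom_\HH(-,N)$. Recall that $M$ has basis $\{v_1,v_2\}$ with $v_1 e_\chi = v_1$, $v_2 e_{\chi^s}=v_2$, $v_1 T_\Pi = v_2$, together with $v_1 T_{n_s}=-\delta_{r,p-1}v_1$ and $v_2T_{n_s}=-\delta_{r,0}v_2$ in the supersingular case $\lambda=0$. Since $v_1$ alone generates $M$ (using $v_2=v_1T_\Pi$), I would take the projective cover $P_0:=e_\chi\HH \twoheadrightarrow M$, $e_\chi\mapsto v_1$. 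Translating the $T_{n_s}$-relations on $v_1$ and $v_2$ through the identities $e_\chi T_\Pi = T_\Pi e_{\chi^s}$ from \eqref{relTH} and $T_\Pi^2=\zeta(p)^{-1}$ produces two generating relations for the kernel $K_0\subseteq P_0$, whose precise form depends on whether $r=0$, $r=p-1$, or $0<r<p-1$.

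The pivotal structural input is the braid relation $e_\chi T_{n_s}^2=-e_\chi e_{\chi^s}T_{n_s}$. In case (i), where $\chi=\chi^s$, this becomes $e_\chi T_{n_s}(T_{n_s}+1)=0$; consequently $f:=-T_{n_s}e_\chi$ and $g:=(T_{n_s}+1)e_\chi$ are orthogonal idempotents summing to $e_\chi$, so $e_\chi\HH = f\HH\oplus g\HH$ as right $\HH$-modules. A parallel decomposition applies after $T_\Pi$-conjugation, noting that left multiplication by $T_\Pi$ is an automorphism of $e_\chi\HH$ (since $T_\Pi e_\chi = e_{\chi^s}T_\Pi = e_\chi T_\Pi$ and $T_\Pi$ is a unit). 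A direct check for $r=0$ and $r=p-1$ then shows that each of the two generating relations for $K_0$ lies in one of the four projective summands $f\HH,\; g\HH,\; T_\Pi f\HH,\; T_\Pi g\HH$ of $e_\chi\HH$ and generates that summand freely, whence $K_0$ itself is projective. The resulting length-one resolution $0\to K_0 \to P_0 \to M\to 0$ immediately gives $\Ext^i_\HH(M,N)=0$ for all right $\HH$-modules $N$ and $i\ge 2$, proving (i)(b). Applying $\Hom_\HH(-,M)$ and using $\Hom_\HH(e_\chi\HH,M)\cong Me_\chi$ then computes $\dim\Ext^1_\HH(M,M)=1$ as in (i)(a).

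In case (ii), $\chi\neq\chi^s$ forces $e_\chi e_{\chi^s}=0$, so the braid relation degenerates to $e_\chi T_{n_s}^2=0$, a nilpotent rather than split-quadratic relation; the clean idempotent decomposition of $e_\chi\HH$ is lost and $K_0$ is no longer projective in general. This is consistent with the lemma claiming only $\Ext^1$ in case (ii); the dimension count $\dim\Ext^1_\HH(M,M)=2$ then follows by reading off $\Hom_\HH(K_0,M)$ modulo those maps that extend to $P_0$, with two independent extensions coming from the fact that $T_{n_s}$ now moves the $\chi$-component into the $\chi^s$-component without interference. Both parts of (ii) and the dimension statement (i)(a) may alternatively be quoted from \cite{bp}, so the new content here is truly the higher vanishing (i)(b). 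I expect the main obstacle to be the case-by-case bookkeeping in step three: verifying that in case (i) the generating relations of $K_0$ really assemble into a \emph{direct} sum of projective summands, which requires tracking how $T_\Pi$-conjugation interacts with the $(f,g)$ idempotent decomposition and using that $T_\Pi$ acts invertibly on $e_\chi\HH$.
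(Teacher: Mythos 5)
Your proposal is correct in substance, but it takes a genuinely different route from the paper: the paper does not prove Lemma \ref{extH} at all, it simply quotes \cite[Cor.~6.6, 6.7]{bp}, whereas you sketch a self-contained computation. Your key structural point is right: in the Iwahori case $\chi=\chi^s$ the idempotent $e_\chi$ is central in $\HH$ (it commutes with $T_{n_s}$, $T_\Pi$ and with the span of the $T_h$), the quadratic relation becomes $e_\chi T_{n_s}(T_{n_s}+1)=0$, and $f=-e_\chi T_{n_s}$, $g=e_\chi(1+T_{n_s})$ are orthogonal idempotents with $f+g=e_\chi$; moreover left multiplication by $T_\Pi$ is indeed an automorphism of the right module $e_\chi\HH$. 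For $r=0$ the kernel of $e_\chi\HH\rightarrow \pi^{I_1}$, $e_\chi\mapsto v_1$, is $e_\chi T_{n_s}\HH + e_\chi T_\Pi(1+T_{n_s})\HH=f\HH\oplus T_\Pi g\HH\cong f\HH\oplus g\HH\cong e_\chi\HH$ (and symmetrically $g\HH\oplus T_\Pi f\HH$ for $r=p-1$), so $\pi^{I_1}$ has projective dimension $\le 1$, which is (i)(b), and the four-term exact sequence obtained by applying $\Hom_{\HH}(\centerdot,\pi^{I_1})$ gives $\dim\Ext^1=\dim Mf+\dim Mg-\dim Me_\chi+\dim\Hom_{\HH}(\pi^{I_1},\pi^{I_1})=1+1-2+1=1$, which is (i)(a). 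The two things you defer to ``bookkeeping'' are exactly what must be checked, and both do go through once one uses the Iwahori--Matsumoto type basis of $e_\chi\HH$ (alternating words in $e_\chi T_{n_s}$ and $e_\chi T_\Pi$): one verifies that the sum $f\HH+T_\Pi g\HH$ is direct, and---a point you assert but do not argue---that these two relation elements generate the \emph{whole} kernel, i.e.\ that the quotient of $e_\chi\HH$ by the right ideal they generate is already $2$-dimensional; without the latter the resolution could be too short. Your discussion of case (ii) is only heuristic (the ``without interference'' sentence would need the analogous identification of the kernel in the regular component, where $e_\chi$ is no longer central), but since you, like the paper, may simply quote \cite{bp} for (ii) and for (i)(a), this does not affect correctness; what your argument genuinely adds beyond the citation is a clean, direct proof of the vanishing (i)(b). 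A small wording point: $e_\chi\HH\twoheadrightarrow\pi^{I_1}$ is a projective presentation, and nothing in the argument requires it to be a projective cover.
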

\begin{proof}  \cite[Cor. 6.7, 6.6]{bp}. 
\end{proof}

We look more closely at the regular case. Let $\pi$ be supersingular with $0<r<p-1$ and assume for 
simplicity that $p\in Z$ acts trivially on $\pi$. For $(\lambda_1, \lambda_2)\in \Fbar^2$ we define 
an $\HH$-module  $E_{\lambda_1, \lambda_2}$ to be a $4$-dimensional vector 
space with basis $\{v_{\chi}, v_{\chi^s}, w_{\chi}, w_{\chi^s}\}$ with the action of $\HH$ given on the generators
\begin{equation}
w_{\chi} T_{n_s}= \lambda_1 v_{\chi^s}, \quad w_{\chi^s}T_{n_s}= \lambda_2 v_{\chi}, \quad 
v_{\chi} T_{n_s}= v_{\chi^s} T_{n_s}=0
\end{equation}
and $w_\psi T_{\Pi}=w_{\psi^s}$, $v_{\psi}T_{\Pi}= v_{\psi^s}$, $w_{\psi} e_{\psi}= w_{\psi}$, 
$v_{\psi} e_{\psi}= v_{\psi}$, for $\psi\in \{\chi, \chi^s\}$. Then 
$\langle v_{\chi}, v_{\chi^s}\rangle$ is stable under the action of $\HH$ and we have an exact sequence:
\begin{equation}\label{stex}
0\rightarrow \II(\pi)\rightarrow E_{\lambda_1, \lambda_2}\rightarrow \II(\pi)\rightarrow 0 
\end{equation}
The extension \eqref{stex} is split if and only if $(\lambda_1,\lambda_2)=(0,0)$. It is immediate that 
the map $\Fbar^2\rightarrow  \Ext^1_{\HH}(\II(\pi), \II(\pi))$ 
sending $(\lambda_1, \lambda_2)$ to the equivalence class of \eqref{stex} 
is an isomorphism of $\Fbar$-vector spaces. 

\begin{lem}\label{lastdino} Let $\lambda\in \Fbar^{\times}$ then 
\begin{equation}\label{lastdino1}
\TT(E_{0,\lambda})\cong \frac{\cIndu{KZ}{G}{\sigma}}{(T_{\sigma}^2)}, \quad  \TT(E_{\lambda,0})\cong \frac{\cIndu{KZ}{G}{\tsigma}}{(T_{\tsigma}^2)}
\end{equation}
where $T_{\sigma}\in \End_G(\cIndu{KZ}{G}{\sigma})$ is given  by Lemma \ref{T}.
\end{lem}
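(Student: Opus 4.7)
The plan is to apply the Ollivier equivalence between $\Mod_{\HH}$ and $\Rep^{I_1}_{G,\zeta}$: setting $W := \cIndu{KZ}{G}{\sigma}$ and $F := W/(T_\sigma^2)$, I will show $F \in \Rep^{I_1}_{G,\zeta}$ and exhibit an explicit isomorphism $\II(F) \cong E_{0,1}$ of $\HH$-modules, from which $\TT(E_{0,\lambda}) \cong F$ follows for every $\lambda \in \Fbar^\times$. Since $T_\sigma$ is injective, $T_\sigma W/T_\sigma^2 W \cong W/T_\sigma W = \pi$ and there is a short exact sequence
\begin{equation*}
0 \to \pi \xrightarrow{T_\sigma} F \to \pi \to 0.
\end{equation*}

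Let $\bar\varphi$ denote the image of the canonical generator $\varphi$ in $F$. First I check that $\bar\varphi, \Pi\bar\varphi, T_\sigma \bar\varphi, T_\sigma \Pi \bar\varphi$ all lie in $F^{I_1}$: $\varphi$ is $I_1$-invariant because $I_1$ maps to the upper unipotent subgroup of $\GL_2(\Fp)$ which fixes $x^r$; $\Pi$ normalises $I_1$; and $T_\sigma$ is $G$-equivariant. These four vectors are linearly independent because the first two project to $v_\sigma$ and $v_{\tsigma}$ under $F \twoheadrightarrow \pi$, while the last two, lying in the kernel copy $T_\sigma W/T_\sigma^2 W \cong \pi$, correspond to $v_\sigma$ and $v_{\tsigma}$ respectively. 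The long exact sequence of $I_1$-cohomology forces $\dim F^{I_1} \le 4$, so these vectors form a basis. In particular $F$ is generated by $\bar\varphi \in F^{I_1}$ and therefore belongs to $\Rep^{I_1}_{G,\zeta}$.

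Setting $w_\chi := \bar\varphi$, $w_{\chi^s} := \Pi\bar\varphi$, $v_\chi := T_\sigma \bar\varphi$, $v_{\chi^s} := T_\sigma \Pi \bar\varphi$, I match this basis with that of $E_{\lambda_1,\lambda_2}$. The $H$-weights are correct, and the $T_\Pi$-relations follow from $v T_\Pi = \Pi^{-1} v = \Pi v$ (using $\zeta(p) = 1$); the relations $v_\chi T_{n_s} = v_{\chi^s} T_{n_s} = 0$ on the submodule follow from the known vanishing in $\pi^{I_1}$ for $0 < r < p-1$. The crucial step is to compute the two $T_{n_s}$-actions on $w_\chi, w_{\chi^s}$. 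Using $s = t\Pi^{-1}$ and $\Pi^{-1}\bar\varphi = \Pi\bar\varphi$,
\begin{equation*}
\bar\varphi T_{n_s} = \sum_{\lambda \in \Fp} \begin{pmatrix} 1 & [\lambda] \\ 0 & 1 \end{pmatrix} t \Pi \bar\varphi = \sum_{\lambda \in \Fp} \begin{pmatrix} [\lambda] & 1 \\ 1 & 0 \end{pmatrix} \bar\varphi,
\end{equation*}
via the matrix identity $\begin{pmatrix} 1 & [\lambda] \\ 0 & 1 \end{pmatrix} t \Pi = p \begin{pmatrix} [\lambda] & 1 \\ 1 & 0 \end{pmatrix}$ together with the triviality of the $p$-action. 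Each summand is supported on $KZ$; evaluating at $1$ yields $(-1)^a \sum_{\lambda \in \Fp}(\lambda x + y)^r \in \sigma$, which vanishes term by term since $\sum_{\lambda \in \Fp}\lambda^i = 0$ for all $0 \le i \le r < p-1$. Hence $w_\chi T_{n_s} = 0$. For the other, $s\Pi = t$ gives $w_{\chi^s} T_{n_s} = \sum_\lambda \begin{pmatrix} 1 & [\lambda] \\ 0 & 1 \end{pmatrix} t \bar\varphi = T_\sigma \bar\varphi = v_\chi$ by Lemma \ref{T}(ii).

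Therefore $\II(F) \cong E_{0,1}$ as $\HH$-modules; the rescaling $v_\chi \mapsto \lambda v_\chi$, $v_{\chi^s} \mapsto \lambda v_{\chi^s}$ (fixing $w_\chi, w_{\chi^s}$) is an isomorphism $E_{0,1} \cong E_{0,\lambda}$, and applying $\TT$ produces $\TT(E_{0,\lambda}) \cong F$. The second isomorphism follows by running the same argument with $\sigma$ replaced by $\tsigma$, using the intertwining $\pi \cong \pi(p-1-r, 0, \eta\omega^r)$ from \eqref{intertwine} which swaps $\chi \leftrightarrow \chi^s$ and hence $(\lambda_1,\lambda_2) \leftrightarrow (\lambda_2,\lambda_1)$ in $E_{\lambda_1,\lambda_2}$. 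The main obstacle is the $T_{n_s}$-computation for $w_\chi$, whose vanishing is exactly what distinguishes $E_{0,1}$ from $E_{1,1}$ and hinges on the elementary sum $\sum_\lambda \lambda^i = 0$ for $0 < i < p-1$.
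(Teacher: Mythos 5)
Your proof is correct and takes essentially the same route as the paper: both identify $\II\bigl(\cIndu{KZ}{G}{\sigma}/(T_{\sigma}^2)\bigr)$ with some $E_{\lambda_1,\lambda_2}$ by showing that $\bar\varphi,\Pi\bar\varphi,T_{\sigma}\bar\varphi,T_{\sigma}\Pi\bar\varphi$ form a basis of the $I_1$-invariants, checking the $e_{\chi}$- and $T_{\Pi}$-relations, and then pinning down $(\lambda_1,\lambda_2)$. The only (harmless) difference is local: where the paper gets $\lambda_2\neq 0$ indirectly from the non-splitness of $0\rightarrow\pi\rightarrow\tau\rightarrow\pi\rightarrow 0$, you compute $(\Pi\bar\varphi)T_{n_s}=T_{\sigma}\bar\varphi$ directly via $s\Pi=t$ and Lemma \ref{T}(ii), which makes the determination of $\lambda_2$ slightly more self-contained.
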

\begin{proof} Let $\varphi\in \cIndu{KZ}{G}{\sigma}$ such that $\supp \varphi=KZ$ and $\varphi(1)$ spans $\sigma^{I_1}$. 
Let $\tau:= \frac{\cIndu{KZ}{G}{\sigma}}{(T_{\sigma}^2)}$ and $v$ the image of $\varphi$ in $\tau$. Then 
$\tau=\langle G\centerdot v\rangle=\langle G\centerdot \tau^{I_1}\rangle$. And so it is enough to show that 
$\II(\tau)\cong E_{0, \lambda}$. Since $T_{\sigma}: \cIndu{KZ}{G}{\sigma}\rightarrow \cIndu{KZ}{G}{\sigma}$ is 
injective and $\pi\cong \frac{\cIndu{KZ}{G}{\sigma}}{(T_{\sigma})}$, we have a an exact sequence 
\begin{equation}\label{lastdino2}
0\rightarrow \pi\rightarrow \tau\rightarrow \pi\rightarrow 0
\end{equation}
 and we may identify the subobject with $T_{\sigma}(\tau)$. Now, $v$, $\Pi v$, $T_{\sigma}(v)$ and $T_{\sigma}(\Pi v)$
are linearly independent and $I_1$-invariant. Thus $\dim \tau^{I_1}\ge 4$ and since $\dim \pi^{I_1}=2$ we obtain an 
exact sequence of $\HH$-modules 
\begin{equation}\label{lastdino3}
0\rightarrow \II(\pi)\rightarrow \II(\tau)\rightarrow \II(\pi)\rightarrow 0
\end{equation}
Hence, $\II(\tau)\cong E_{\lambda_1, \lambda_2}$ for some $\lambda_1, \lambda_2\in \Fbar$. Since 
$\sigma\cong \langle K\centerdot \varphi\rangle\cong  \langle K\centerdot v\rangle$  and 
$\langle K\centerdot T_{\sigma}(v)\rangle\cong  T_{\sigma}(\langle K\centerdot v\rangle)\cong \sigma$, \cite[3.1.3]{coeff}
gives
\begin{equation}\label{lastdino4}
v e_{\chi}= v, \quad (T_{\sigma}(v))e_{\chi}=T_{\sigma}(v), \quad v T_{n_s}= (T_{\sigma}(v))T_{n_s}=0.
\end{equation}
Hence, $\lambda_1=0$. If $\lambda_2=0$ then \eqref{lastdino3} would split and so would  \eqref{lastdino2}. Hence, $\lambda_2\neq 0$. 
We leave it to the reader to check that for any $\lambda\in \Fbar^{\times}$, $E_{0,\lambda}\cong E_{0,1}$. 
\end{proof}

\begin{lem}\label{kuku} If  $E=E_{\lambda_1,\lambda_2}$, $\lambda_1\lambda_2\neq 0$ then 
$\dim \Ext^1_{\HH}(E, \II(\pi))=1$.
\end{lem}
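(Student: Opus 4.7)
The approach splits into a lower bound $\dim \geq 1$ via a long exact sequence and an upper bound $\dim \leq 1$ via a direct parametrization of extensions.

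For the lower bound, I would apply $\Hom_{\HH}(-, \II(\pi))$ to the defining short exact sequence
\[
0 \to V \to E \to \II(\pi) \to 0,
\]
where $V := \langle v_{\chi}, v_{\chi^s}\rangle \cong \II(\pi)$ is the submodule highlighted just above the lemma. Schur's lemma (valid via Ollivier's equivalence, since $\pi$ is irreducible) gives $\Hom_{\HH}(\II(\pi), \II(\pi)) = \Fbar$, and Lemma~\ref{extH}(ii) gives $\dim \Ext^1_{\HH}(\II(\pi), \II(\pi)) = 2$. The connecting homomorphism $\partial\colon \Fbar \to \Fbar^2$ sends $\id$ to the class $(\lambda_1, \lambda_2) \neq 0$ of the extension and so is injective with one-dimensional cokernel; the exact sequence then yields $\dim \Ext^1_{\HH}(E, \II(\pi)) \geq 1$.

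For the upper bound, I would parametrize extensions $0 \to \II(\pi) \to F \to E \to 0$ directly. Let $u_{\chi}, u_{\chi^s}$ be the standard basis of the submodule copy of $\II(\pi)$. Since $T_{\Pi}^2 = \zeta(p)^{-1}$ is a scalar and $T_{\Pi}$ swaps the $\chi$- and $\chi^s$-isotypic pieces of $F$, I can choose lifts $v_{\chi}, w_{\chi} \in F e_{\chi}$ of the quotient basis and set $v_{\chi^s} := v_{\chi} T_{\Pi}$, $w_{\chi^s} := w_{\chi} T_{\Pi}$. The $T_{n_s}$-action in $F$ then differs from its action on $E$ by four submodule corrections:
\[
v_{\chi} T_{n_s} = c u_{\chi^s}, \quad v_{\chi^s} T_{n_s} = d u_{\chi}, \quad w_{\chi} T_{n_s} = \lambda_1 v_{\chi^s} + a u_{\chi^s}, \quad w_{\chi^s} T_{n_s} = \lambda_2 v_{\chi} + b u_{\chi}.
\]
Applying the Hecke relation $e_{\chi} T_{n_s}^2 = 0$ (which holds in the regular case because $\chi \neq \chi^s$ forces $e_{\chi} e_{\chi^s} = 0$) to $w_{\chi}$ gives $\lambda_1 d = 0$, hence $d = 0$ since $\lambda_1 \neq 0$; symmetrically $c = 0$. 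The changes of lift preserving the $T_{\Pi}$-normalization are $v_{\chi} \mapsto v_{\chi} + \gamma u_{\chi}$ (which forces $v_{\chi^s} \mapsto v_{\chi^s} + \gamma u_{\chi^s}$), sending $(a,b)$ to $(a - \lambda_1 \gamma, b - \lambda_2 \gamma)$, and $w_{\chi} \mapsto w_{\chi} + \delta u_{\chi}$, which leaves $(a,b)$ unchanged. Equivalence classes of extensions are therefore parametrized by the one-dimensional quotient $\Fbar^2 / \Fbar \cdot (\lambda_1, \lambda_2)$.

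The main technical point is verifying that every pair $(a, b) \in \Fbar^2$ does define a valid $\HH$-module, i.e.\ that the relations recorded in Section~\ref{hecke} impose no further constraint beyond $c = d = 0$. The $H$-equivariance relations $e_{\chi} T_{n_s} = T_{n_s} e_{\chi^s}$ and $e_{\chi} T_{\Pi} = T_{\Pi} e_{\chi^s}$ hold by construction of the grading, $T_{\Pi}^2 = \zeta(p)^{-1}$ holds by our normalization, and one checks by direct calculation that both $w_{\chi} T_{\Pi} T_{n_s} T_{\Pi} T_{n_s}$ and $w_{\chi} T_{n_s} T_{\Pi} T_{n_s} T_{\Pi}$ vanish, consistent with the regular-case identity $T_{n_s}^2 = 0$. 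Combining the two bounds yields $\dim \Ext^1_{\HH}(E, \II(\pi)) = 1$.
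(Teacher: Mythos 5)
Your proof is correct, and it reaches the count by a route that overlaps with, but is not identical to, the paper's. The lower bound is exactly the paper's first step: applying $\Hom_{\HH}(\centerdot,\II(\pi))$ to \eqref{stex} and noting that the connecting map sends $\id$ to the non-zero class $(\lambda_1,\lambda_2)$, so its cokernel contributes one dimension. For the upper bound the paper argues differently: it must show that the image $\Upsilon$ of the restriction map $\Ext^1_{\HH}(E,\II(\pi))\rightarrow \Ext^1_{\HH}(\II(\pi),\II(\pi))$ along the submodule vanishes, and it does this by a Yoneda pull-back: a non-zero class would yield a module $B$ containing a non-split $A\cong E_{\mu_1,\mu_2}$ mapping compatibly onto $E$, and the identity $vT_{n_s}^2=0$ in $B$ forces $\mu_1\lambda_2=\mu_2\lambda_1=0$, contradicting $\lambda_1\lambda_2\neq 0$. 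You instead parametrize all extensions $0\rightarrow \II(\pi)\rightarrow F\rightarrow E\rightarrow 0$ by explicit lifts and corrections $(c,d,a,b)$; the computation killing $c$ and $d$ is precisely the same use of $e_{\chi}T_{n_s}^2=0$ together with $\lambda_1\lambda_2\neq 0$, so the key input is shared, but your packaging computes $\Ext^1_{\HH}(E,\II(\pi))$ in one stroke (making the long exact sequence and Lemma~\ref{extH} in principle redundant), whereas the paper only needs the vanishing of $\Upsilon$ and never classifies the extensions. Two small remarks: to pass from ``equivalence classes are parametrized by $\Fbar^2/\Fbar(\lambda_1,\lambda_2)$'' to a statement about dimension you should observe that your assignment $(a,b)$ is additive for Baer sums (standard for such cocycle descriptions, but worth saying), and for the upper bound alone you do not need your ``main technical point'' that every $(a,b)$ is realized --- that surjectivity is only needed if you want the parametrization itself to give the lower bound, which you already obtain from the exact sequence; your implicit reliance on the generators-and-relations description of $\HH$-modules is at the same level of rigor as the paper's own definition of $E_{\lambda_1,\lambda_2}$.
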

\begin{proof} Applying $\Hom_{\HH}(\ast, \II(\pi))$ to \eqref{stex} gives an exact sequence
\begin{equation}\label{stex2}
  \begin{split}
 \Hom_{\HH}(\II(\pi), \II(\pi))\hookrightarrow &\Ext^1_{\HH}(\II(\pi), \II(\pi))\\ &\rightarrow  
\Ext^1_{\HH}(E, \II(\pi))\rightarrow \Ext^1_{\HH}(\II(\pi), \II(\pi))
\end{split}
\end{equation}
Hence, $\dim \Ext^1_{\HH}(E, \II(\pi))= 1 +\dim \Upsilon$, where  $\Upsilon$ is the image of the last arrow in  \eqref{stex2}. 
Yoneda's interpretation of $\Ext$ says that 
$\Upsilon\neq 0$ is equivalent to the following commutative diagram of $\HH$-modules:
\begin{displaymath}
\xymatrix{0\ar[r]& \II(\pi)\ar[r]\ar[d]^{=}& A\ar[r]\ar@{^(->}[d]& \II(\pi)\ar[r]\ar@{^(->}[d]& 0 \\
0\ar[r]& \II(\pi)\ar[r]& B \ar[r]& E \ar[r]& 0}
\end{displaymath}
with $A$ non-split. Then $A\cong E_{\mu_1, \mu_2}$ for some $\mu_1, \mu_2\in \Fbar$. The condition  
$v T_{n_s}^2=0$ for all $v\in B$ is equivalent to  $\mu_1\lambda_2=0$ and $\mu_2\lambda_1=0$. 
Since $\lambda_1\lambda_2\neq 0$ we obtain $\mu_1=\mu_2=0$ and hence a contradiction to a non-split $A$.  
\end{proof}

\section{Main result}

Let $\pi$ an irreducible representation with a central character $\zeta$.  A construction of \cite[\S6]{coeff}, 
\cite[\S9]{bp} gives an injection $\pi\hookrightarrow \Omega$, where $\Omega$ is in $\Rep_{G,\zeta}$
and $\Omega|_{K}$ is an injective envelope of $\soc_K \pi$ in $\Rep_{K,\zeta}$.

\begin{lem}\label{Opi} If $\pi\cong \pi(r, 0, \eta)$ with $0<r<p-1$ then $\Omega^{I_1}\cong E_{\lambda_1, \lambda_2}$
with  $\lambda_1\lambda_2\neq 0$. Otherwise, $\Omega^{I_1}\cong \pi^{I_1}$.
\end{lem}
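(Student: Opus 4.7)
The strategy is to analyse $\Omega^{I_1}$ via the structure of $\Omega|_K$ as the $K$-injective envelope of $\soc_K \pi$, treating the generic supersingular case separately from the rest.

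Consider first $\pi \cong \pi(r,0,\eta)$ with $0 < r < p-1$. Here $\soc_K \pi = \sigma \oplus \tsigma$ with both summands of dimension strictly between $1$ and $p$, so $\Omega|_K$ decomposes as the direct sum of the injective envelopes of $\sigma$ and of $\tsigma$ in $\Rep_{K,\zeta}$. From the explicit Loewy structure of these injective envelopes (as recorded in \cite[\S9]{bp} or \cite[\S6]{coeff}), I extract that the $I_1$-invariants of each summand form a $2$-dimensional $H$-module, with $H$-characters $\chi, \chi^s$ (respectively $\chi^s, \chi$). Hence $\dim \Omega^{I_1} = 4$ with $H$-eigenspace decomposition $2\chi \oplus 2\chi^s$, and the matrix $\Pi$ interchanges the two summands, realising the $T_\Pi$-action of the template $E_{\lambda_1,\lambda_2}$. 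Since $0 < r < p-1$, the formulas of Section~\ref{hecke} give $v_\sigma T_{n_s} = v_{\tsigma} T_{n_s} = 0$, so $\II(\pi) \subset \Omega^{I_1}$ matches the submodule $\langle v_\chi, v_{\chi^s}\rangle$ of $E_{\lambda_1,\lambda_2}$, and the quotient $\Omega^{I_1}/\II(\pi)$ is again isomorphic to $\II(\pi)$ as an $\HH$-module. This forces $\Omega^{I_1} \cong E_{\lambda_1,\lambda_2}$ for some $(\lambda_1,\lambda_2) \in \Fbar^2$.

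To show $\lambda_1\lambda_2 \neq 0$, I argue by contradiction and suppose $\lambda_1 = 0$. By Lemma~\ref{lastdino} combined with Ollivier's equivalence $\Rep^{I_1}_{G,\zeta} \simeq \Mod_{\HH}$, the $G$-subrepresentation $\Omega_1 := \langle G \cdot \Omega^{I_1}\rangle \subseteq \Omega$ is isomorphic to $\tau := \cIndu{KZ}{G}{\sigma}/(T_\sigma^2)$, which sits in a non-split exact sequence $0 \to \pi \to \tau \to \pi \to 0$ with subobject $T_\sigma \tau$. Letting $v$ denote the image in $\tau$ of the standard generator of $\cIndu{KZ}{G}{\sigma}$, the $K$-stable subspaces $\langle K \cdot v\rangle$ and $\langle K \cdot T_\sigma v\rangle$ are both isomorphic to $\sigma$ and linearly independent (since $T_\sigma v$ lies in the subobject while $v$ projects to a non-zero element of the quotient), so $\soc_K \tau$ contains $\sigma$ with multiplicity at least two. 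But $\soc_K \Omega = \sigma \oplus \tsigma$ has $\sigma$ with multiplicity one, contradicting $\tau \hookrightarrow \Omega$. Hence $\lambda_1 \neq 0$, and by the symmetry exchanging $\sigma$ with $\tsigma$, also $\lambda_2 \neq 0$.

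The remaining cases ($\pi$ a character, a twist of Steinberg, an irreducible principal series, or supersingular with $r \in \{0, p-1\}$) are handled by an analogous but simpler case-by-case inspection: in each case one identifies the irreducible summands of $\soc_K \pi$ and uses the structure of their $K$-injective envelopes in $\Rep_{K,\zeta}$ to verify that every $I_1$-fixed vector of $\Omega|_K$ already lies in $\pi^{I_1}$, yielding $\Omega^{I_1} = \pi^{I_1}$. The main obstacle is the non-vanishing $\lambda_1 \lambda_2 \neq 0$ in the generic case, which crucially combines the multiplicity-one structure of $\soc_K \Omega$ with Ollivier's equivalence to transfer the $\HH$-module obstruction into a contradictory $K$-socle computation.
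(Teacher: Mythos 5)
Your argument for the crucial non-vanishing $\lambda_1\lambda_2\neq 0$ is sound and genuinely different from the paper, which simply quotes \cite[6.4.5]{coeff} for the whole generic case: granting $\Omega^{I_1}\cong E_{0,\lambda}$ with $\lambda\neq 0$, Ollivier's equivalence identifies $\langle G\centerdot \Omega^{I_1}\rangle$ with $\TT(E_{0,\lambda})\cong \cIndu{KZ}{G}{\sigma}/(T_{\sigma}^2)$ via Lemma \ref{lastdino}, whose $K$-socle contains $\sigma$ with multiplicity two, contradicting $\soc_K\Omega=\soc_K\pi=\sigma\oplus\tsigma$ (and $\sigma\not\cong\tsigma$ since $0<r<p-1$). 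Two small repairs are needed there: you must also dispose of $\lambda_1=\lambda_2=0$, which Lemma \ref{lastdino} does not cover (but then $\TT(E_{0,0})\cong\pi\oplus\pi$ and the same socle count applies), and you should note that $\langle G\centerdot\Omega^{I_1}\rangle^{I_1}=\Omega^{I_1}$, so that Ollivier's equivalence really gives $\langle G\centerdot\Omega^{I_1}\rangle\cong\TT(\Omega^{I_1})$. Your last paragraph is essentially the paper's own treatment of the remaining cases (dimension count of $(\Inj\sigma)^{I_1}$ via \cite[6.4.1]{coeff}), though in the tamely ramified principal series case the conclusion comes from $\dim\Omega^{I_1}=2=\dim\pi^{I_1}$ rather than from the invariants lying in the socle.

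The genuine gap is the step where you assert that $\Omega^{I_1}/\II(\pi)\cong\II(\pi)$, i.e. that $\Omega^{I_1}$ has the shape $E_{\lambda_1,\lambda_2}$ at all. Knowing that $\dim\Omega^{I_1}=4$ with $H$-isotypic pattern $2\chi\oplus 2\chi^s$, that $T_{\Pi}$ interchanges the eigenspaces, and that $\II(\pi)$ is killed by $T_{n_s}$ does not determine the extension type: a priori the quotient could be a principal-series module $M(r,\lambda,\eta')$ on which $T_{n_s}$ maps one eigenline isomorphically to the other (such modules also satisfy $T_{n_s}^2=0$ when $\chi\neq\chi^s$, so no algebra relation rules them out formally), and then $\Omega^{I_1}$ would not be any $E_{\lambda_1,\lambda_2}$. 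You need an extra input. A fix entirely inside the paper: by \eqref{Tg}, $T_{n_s}$ is a sum of translates by elements of $K$, hence preserves each $K$-summand $(\Inj\sigma)^{I_1}$ and $(\Inj\tsigma)^{I_1}$; since the $\chi$-eigenline of $(\Inj\sigma)^{I_1}$ is $\Fbar v_{\sigma}$ and the $\chi^s$-eigenline of $(\Inj\tsigma)^{I_1}$ is $\Fbar v_{\tsigma}$, every vector outside $\II(\pi)$ is sent by $T_{n_s}$ into $\II(\pi)$, so $T_{n_s}$ kills the quotient, which is then the supersingular module $\II(\pi)$, and the classification of such extensions in \S\ref{hecke} gives $\Omega^{I_1}\cong E_{\lambda_1,\lambda_2}$. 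Alternatively, $\soc_K\Omega\subseteq\pi$ forces $\soc_{\HH}\Omega^{I_1}=\II(\pi)$ (a simple submodule $N$ yields a nonzero map $\TT(N)\rightarrow\Omega$ whose image must contain $\pi$), and a principal-series quotient would split off because $\Ext^1_{\HH}(M,\II(\pi))=0$ for $M\not\cong\II(\pi)$ by \cite[Cor. 6.5]{bp}, contradicting that socle computation. With this step supplied, your proof is complete and gives a self-contained alternative to the paper's citation.
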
 
\begin{proof}
 Let $\sigma$ be an irreducible smooth representation of $K$ and $\Inj \sigma$ injective 
envelope of $\sigma$ in $\Rep_{K, \zeta}$. If $\sigma=\chi\circ \det$ or $\sigma\cong \St \otimes \chi\circ \det$ 
then $\dim (\Inj \sigma)^{I_1}=\dim \sigma^{I_1}=1$ and $\dim (\Inj \sigma)^{I_1}=2$ otherwise, \cite[6.4.1, \S4.1]{coeff}.
If $\pi$ is either a character, special series, a twist of unramified series or $\pi\cong \pi(0,0, \eta)$ 
then $\soc_K \pi$ is a direct summand of $(\Eins\oplus \St)\otimes \chi\circ \det$. Hence, 
$$\Omega^{I_1}=(\soc_K \Omega)^{I_1}=(\soc_K \pi)^{I_1} \subseteq \pi^{I_1}\subseteq \Omega^{I_1}$$  
and so  $\pi^{I_1}\cong \Omega^{I_1}$. If $\pi$ is  a tamely ramified principal series, which is not 
a twist of unramified principal series, then $\dim \pi^{I_1}=2$ and $\soc_K \pi$ is irreducible, so 
$\dim \Omega^{I_1}=2$. Finally,  if $\pi\cong \pi(r, 0, \eta)$ with $0< r< p-1$ then  it follows from 
\cite[6.4.5]{coeff} that $\Omega^{I_1}\cong E_{\lambda_1, \lambda_2}$ with $\lambda_1\lambda_2\neq 0$.
\end{proof} 

\begin{prop}\label{chop00} Let $\pi, \tau$ be irreducible representations of $G$ with a central character, 
and let $\zeta$ be the  central character of $\pi$. 
Suppose that $\Ext^1_G(\tau,\pi)\neq 0$. If 
\begin{equation}\label{mapdelta} 
\Ext^1_{G, \zeta}(\tau, \pi)\rightarrow \Hom_{\HH}(\II(\tau),\RR^1\II(\pi))
\end{equation} 
is not surjective then $\tau\cong \pi\cong \pi(r,0, \eta)$  with $0<r<p-1$. 
\end{prop}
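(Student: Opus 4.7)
By Proposition \ref{centre}, the hypothesis $\Ext^1_G(\tau,\pi)\neq 0$ forces $\tau$ to admit the central character $\zeta$. By the five-term exact sequence of Proposition \ref{5term}, the cokernel of \eqref{mapdelta} injects into $\Ext^2_\HH(\II(\tau),\II(\pi))$; equivalently, non-surjectivity is witnessed by an $\HH$-module map $\phi\colon\II(\tau)\to\RR^1\II(\pi)$ that is not of the form $\partial_\epsilon$ from \eqref{nearlydone} for any extension class $\epsilon$. The goal is then to show: unless $\tau\cong\pi\cong\pi(r,0,\eta)$ with $0<r<p-1$, every such $\phi$ does arise from an extension.

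The strategy is to pin down a lower bound for the image of \eqref{mapdelta} using the embedding $\pi\hookrightarrow\Omega$ and to check that, in all but the exceptional pair, this lower bound is already the whole target. Set $Q:=\Omega/\pi$. Applying $\II$ to $0\to\pi\to\Omega\to Q\to 0$ gives the long exact sequence
$$0\to\II(\pi)\to\II(\Omega)\to\II(Q)\overset{\delta}{\to}\RR^1\II(\pi)\to\cdots,$$
while applying $\Hom_{G,\zeta}(\tau,\;\cdot\;)$ to the same short exact sequence gives a connecting map $\Delta\colon\Hom_{G,\zeta}(\tau,Q)\to\Ext^1_{G,\zeta}(\tau,\pi)$. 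Functoriality of the connecting homomorphism combined with the description of \eqref{mapdelta} in \eqref{nearlydone} yields $\partial_{\Delta(f)}=\delta\circ\II(f)$, and \eqref{homit} identifies $\Hom_{G,\zeta}(\tau,Q)$ with $\Hom_\HH(\II(\tau),\II(Q))$. Consequently, the image of \eqref{mapdelta} contains $\delta_*\bigl(\Hom_\HH(\II(\tau),\II(Q))\bigr)$.

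Lemma \ref{Opi} drives the case analysis. If $\pi\not\cong\pi(r,0,\eta)$ with $0<r<p-1$, then $\II(\Omega)=\II(\pi)$, so $\delta$ is injective. Surjectivity of \eqref{mapdelta} in this case amounts to checking $\Image(\phi)\subseteq\Image(\delta)$ for every $\phi$, which one verifies by comparing the explicit structure of $\RR^1\II(\pi)$ --- given by Corollary \ref{dimH1} in the supersingular case, and routine in the character, Steinberg, or principal series case --- with the structure of $\II(Q)$ extracted from the long exact sequence above. If instead $\pi$ is exceptional, then $\II(\Omega)\cong E_{\lambda_1,\lambda_2}$ with $\lambda_1\lambda_2\neq 0$, so $\delta$ has a two-dimensional kernel; the relevant tool is Lemma \ref{kuku}, whose computation $\dim\Ext^1_\HH(E_{\lambda_1,\lambda_2},\II(\pi))=1$ supplies a one-dimensional obstruction to lifting $\phi$ through $\delta$. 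Matching this single obstruction class against the possibilities for irreducible $\II(\tau)$ --- classified via the Vign\'eras bijection recalled in \S\ref{hecke} --- forces $\tau\cong\pi$ in the only case where non-surjectivity can actually occur.

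The main obstacle is this final matching step in the exceptional case: one must verify, for each irreducible $\tau\not\cong\pi$, that the composition of any $\phi\colon\II(\tau)\to\RR^1\II(\pi)$ with the projection onto $\RR^1\II(\pi)/\Image(\delta)$ vanishes, so that $\phi$ lifts through $\delta$ and hence comes from an extension. This requires the explicit generators and relations for $E_{\lambda_1,\lambda_2}$ and $\II(\pi)$ collected at the end of \S\ref{hecke}, applied in turn to each type of irreducible $\tau$, which is the computational heart of the argument.
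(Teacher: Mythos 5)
Your skeleton is the same as the paper's (embed $\pi\hookrightarrow\Omega$, apply $\II$ to $0\to\pi\to\Omega\to\Omega/\pi\to 0$, identify $\Hom_G(\tau,\Omega/\pi)\cong\Hom_{\HH}(\II(\tau),\II(\Omega/\pi))$ via \eqref{homit}, and pull back inside $\Omega$ to produce extensions hitting $\delta\circ\II(f)$), but two essential ingredients are missing and your substitute for them would not close the argument. First, you never use that $\Omega|_{K}$ is injective in $\Rep_{K,\zeta}$, hence $\Omega|_{I_1}$ is injective in $\Rep_{I_1,\zeta}$ and $\RR^1\II(\Omega)=0$; this is what makes $\delta\colon\II(\Omega/\pi)\to\RR^1\II(\pi)$ \emph{surjective}. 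Without it you have no handle on $\Image(\delta)$ at all, so in the non-exceptional case your plan to ``compare the explicit structure of $\RR^1\II(\pi)$ with the structure of $\II(\Omega/\pi)$'' is not an argument: knowing $\dim\RR^1\II(\pi)$ (Corollary \ref{dimH1}) says nothing about which submodule $\delta$ hits. Once $\RR^1\II(\Omega)=0$ is noted, the non-exceptional case is immediate, since Lemma \ref{Opi} gives $\II(\Omega)=\II(\pi)$ and hence $\delta$ is an isomorphism, so every $\phi$ lifts and \eqref{mapdelta} is surjective for all $\tau$; no structure comparison is needed or available.

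Second, in the exceptional case $\pi\cong\pi(r,0,\eta)$, $0<r<p-1$, you have misidentified the obstruction. Since $\delta$ is surjective with kernel $\II(\Omega)/\II(\pi)\cong\II(\pi)$, the obstruction to lifting $\phi\colon\II(\tau)\to\RR^1\II(\pi)$ through $\delta$ lies in $\Ext^1_{\HH}(\II(\tau),\II(\pi))$, and the whole point is that this group vanishes for every irreducible $\tau\not\cong\pi$ by \cite[Cor.6.5]{bp}; that vanishing gives the surjection $\Hom_{\HH}(\II(\tau),\II(\Omega/\pi))\twoheadrightarrow\Hom_{\HH}(\II(\tau),\RR^1\II(\pi))$ and hence surjectivity of \eqref{mapdelta} whenever $\tau\not\cong\pi$, which is exactly the contrapositive of the statement. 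Lemma \ref{kuku} computes $\Ext^1_{\HH}(E_{\lambda_1,\lambda_2},\II(\pi))$, i.e.\ extensions \emph{of} $\II(\Omega)$ by $\II(\pi)$; it is not the lifting obstruction for $\phi$ and plays no role in this proposition (the paper uses it later, for $\Ext^1_{G,\zeta}(\TT(E),\pi)$). Finally, your proposed ``computational heart'' --- checking that each $\phi$ composed with the projection onto $\RR^1\II(\pi)/\Image(\delta)$ vanishes --- is doubly off: that quotient is zero once surjectivity of $\delta$ is known, and even if it weren't, image containment does not produce a lift through a map with nonzero kernel; only the $\Ext^1_{\HH}$-vanishing does.
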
   
\begin{proof} We note that Proposition \ref{centre} implies that $\zeta$ is the  central character of $\tau$. Since  $\Omega|_K$ is 
an injective object in $\Rep_{K, \zeta}$, $\Omega|_{I_1}$ is an injective object in $\Rep_{I_1,\zeta}$. Hence, $\RR^1\II(\Omega)=0$ and we 
have an exact sequence: 
\begin{equation}
 0\rightarrow \II(\pi)\rightarrow \II(\Omega)\rightarrow \II(\Omega/\pi)\rightarrow \RR^1\II(\pi)\rightarrow 0.
\end{equation}
Assume  $\pi\cong \pi(r, 0,\eta)$, $0<r<p-1$. Let $\partial\in  \Hom_{\HH}(\II(\tau),\mathbb R^1\II(\pi))$ be non-zero.
Suppose that $\tau\not \cong \pi$ then  $\Ext^1_{\HH}(\II(\tau),\II(\pi))=0$,  \cite[6.5]{bp},
Lemma \ref{Opi} implies  $\II(\Omega)/\II(\pi)\cong \II(\pi)$. So  we have a  surjection
\begin{equation} \label{chop000}
\Hom_{\HH}(\II(\tau), \II(\Omega/\pi))\twoheadrightarrow \Hom_{\HH}(\II(\tau), \RR^1\II(\pi)).
\end{equation}
 Further, we have an isomorphism
\begin{equation}\label{aberhallo}
\Hom_G(\tau,\Omega/\pi)\cong \Hom_{\HH}(\II(\tau), \II(\Omega/\pi)).
\end{equation}
Choose $\psi\in \Hom_{G}(\tau, \Omega/\pi)$ mapping to $\partial$ under the composition of \eqref{aberhallo} and \eqref{chop000}.
Since $\tau$ is irreducible, by pulling back the image of $\psi$ we obtain an extension
$0\rightarrow \pi\rightarrow E_{\psi}\rightarrow \tau\rightarrow 0$ 
inside of $\Omega$. By construction, 
\eqref{mapdelta} maps the class  of this extension to $\partial$. 

If $\pi\not \cong \pi(r, 0,\eta)$ with $0<r<p-1$ then Lemma \ref{Opi} says that $\II(\Omega/\pi)\cong \RR^1\II(\pi)$ 
and arguing as above we get that \eqref{mapdelta} is surjective.
\end{proof}

\begin{cor}\label{chop1} Let $\pi$, $\tau$ be irreducible representations of $G$ with a central character, and 
suppose that $\pi$ is supersingular with a central character $\zeta$. 
If $\Ext^1_G(\tau,\pi)\neq 0$ and $\tau\not\cong \pi$ then 
$$\Ext^1_{G}(\tau,\pi)\cong \Hom_{\HH}(\II(\tau),\mathbb R^1\II(\pi)).$$
\end{cor}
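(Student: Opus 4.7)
The plan is to assemble Corollary \ref{chop1} by stringing together three results already proved in the paper: Proposition \ref{centre} (to discard the central character issue), Proposition \ref{5term} (the five-term exact sequence coming from the spectral sequence \eqref{specseq}), and Proposition \ref{chop00} (surjectivity of the edge map onto $\Hom_{\HH}$).

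First I would observe that, since $\Ext^1_G(\tau,\pi)\neq 0$ and $\tau\not\cong\pi$, Proposition \ref{centre} forces $\tau$ to admit the same central character $\zeta$ as $\pi$ and supplies the identification $\Ext^1_G(\tau,\pi)\cong \Ext^1_{G,\zeta}(\tau,\pi)$. So from this point on we work entirely inside $\Rep_{G,\zeta}$. Because $\tau$ is irreducible, one automatically has $\tau=\langle G\centerdot\tau^{I_1}\rangle$ (Vign\'eras' theorem ensures $\tau^{I_1}\neq 0$, and then irreducibility forces equality), which is the hypothesis needed to apply Proposition \ref{5term}.

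Next I would feed this into the five-term exact sequence
$$0\rightarrow \Ext^1_{\HH}(\II(\tau),\II(\pi))\rightarrow \Ext^1_{G,\zeta}(\tau,\pi)\rightarrow \Hom_{\HH}(\II(\tau),\RR^1\II(\pi))\rightarrow \Ext^2_{\HH}(\II(\tau),\II(\pi)).$$
The left-hand term vanishes: by the Vign\'eras bijection, $\II(\tau)\not\cong\II(\pi)$ (since $\tau\not\cong\pi$), and the paper has already cited \cite{bp} for the fact that $\Ext^1_{\HH}(\II(\tau),\II(\pi))=0$ whenever $\II(\tau)\not\cong \II(\pi)$. This yields injectivity of the edge map $\Ext^1_{G,\zeta}(\tau,\pi)\hookrightarrow\Hom_{\HH}(\II(\tau),\RR^1\II(\pi))$.

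Finally, surjectivity comes from Proposition \ref{chop00}: that proposition asserts that failure of surjectivity forces $\tau\cong\pi\cong\pi(r,0,\eta)$ with $0<r<p-1$, and our hypothesis $\tau\not\cong\pi$ excludes this. Thus the edge map is both injective and surjective, which gives the desired isomorphism. There is no real obstacle here beyond bookkeeping; the only subtlety is remembering to pass from $\Ext^1_G$ to $\Ext^1_{G,\zeta}$ before invoking the spectral sequence, since the five-term sequence lives in the category $\Rep_{G,\zeta}$ and not in $\Rep_G$.
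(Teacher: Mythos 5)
Your proposal is correct and follows essentially the same route as the paper: Proposition \ref{centre} to reduce to $\Ext^1_{G,\zeta}$, the five-term sequence of Proposition \ref{5term} with the vanishing $\Ext^1_{\HH}(\II(\tau),\II(\pi))=0$ from \cite{bp}, and Proposition \ref{chop00} for surjectivity of the edge map. One small caution: the cited vanishing is a statement about supersingular modules (it is false for arbitrary non-isomorphic irreducibles, e.g.\ $\Ext^1_{\HH}(\II(\Eins),\II(\Sp))\neq 0$ by Lemma \ref{extmod1Sp}), so it should be invoked because $\pi$ is supersingular, not merely because $\II(\tau)\not\cong\II(\pi)$.
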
   
\begin{proof}  Proposition \ref{centre} implies that the central character of $\tau$ is $\zeta$ and 
$\Ext^1_G(\tau, \pi)\cong \Ext^1_{G, \zeta}(\tau, \pi)$. 
By \cite[Cor.6.5]{bp}, $\Ext^1_{\HH}(\II(\tau),\II(\pi))=0$. The assertion follows from Propositions \ref{5term}, \ref{chop00}.
\end{proof}

\begin{lem}\label{obv} Let $\pi$ and $\tau$ be supersingular representations of $G$ with the same central character. Suppose that 
$\pi^{I_1}\cong \tau^{I_1}$ as $H$-rep\-re\-sen\-tations then $\pi\cong \tau$.
\end{lem}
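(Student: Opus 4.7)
The plan is to combine the classification of supersingular representations recalled in \S\ref{supersingularrepresentations} with the explicit list of isomorphisms \eqref{intertwine}. Write $\pi\cong\pi(r,0,\eta)$ and $\tau\cong\pi(r',0,\eta')$ with $0\le r,r'\le p-1$, and let $0\le a,a'<p-1$ be the unique integers such that $\eta|_{\Zp^{\times}}=\omega^{a}$ and $\eta'|_{\Zp^{\times}}=\omega^{a'}$. The two-dimensional space $\pi^{I_1}$ decomposes as an $H$-module as $\chi_\pi\oplus\chi_\pi^s$, where $\chi_\pi$ is the character of $H$ given by the formula \eqref{explicitchi} in terms of $(r,a)$, and similarly $\tau^{I_1}\cong\chi_\tau\oplus\chi_\tau^s$.

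First I would translate the hypothesis $\pi^{I_1}\cong\tau^{I_1}$ as $H$-modules into the equality of unordered pairs $\{\chi_\pi,\chi_\pi^s\}=\{\chi_\tau,\chi_\tau^s\}$. A direct inspection of \eqref{explicitchi} then leaves only two possibilities: either $(r',a')=(r,a)$, or else $\chi_\pi=\chi_\tau^s$, in which case $(r',a')=(p-1-r,(r+a)\bmod(p-1))$. In the second case, \eqref{intertwine} gives $\tau\cong\pi(p-1-r',0,\eta'\omega^{r'})=\pi(r,0,\eta'\omega^{r'})$, and the twisted character $\eta'':=\eta'\omega^{r'}$ satisfies $\eta''|_{\Zp^{\times}}=\omega^{a'+r'}=\omega^{a}$. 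Replacing $\tau$ by this isomorphic representation, I am reduced to the situation $r=r'$ and $\eta|_{\Zp^{\times}}=\eta'|_{\Zp^{\times}}$.

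Finally, the hypothesis that $\pi$ and $\tau$ share the same central character $\zeta$ forces $\eta(p)^2=\zeta(p)=\eta'(p)^2$, hence $\eta(p)=\pm\eta'(p)$. If $\eta(p)=\eta'(p)$ then $\eta=\eta'$ and $\pi\cong\tau$ directly; if $\eta(p)=-\eta'(p)$ then $\eta=\eta'\mu_{-1}$, and the isomorphism $\pi(r,0,\eta'\mu_{-1})\cong\pi(r,0,\eta')$ from \eqref{intertwine} concludes the proof.

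I do not expect any real obstacle: the argument is essentially bookkeeping built on the explicit shape of $\chi_\pi$ coming from \eqref{explicitchi} and the complete list of isomorphisms given in \eqref{intertwine}. The only sanity check worth singling out is that the four representations listed in \eqref{intertwine} really do share the same central character, which one verifies by evaluating at $p$ and at diagonal matrices with Teichm\"uller entries, using that $\mu_{-1}$ is trivial on $\Zp^{\times}$ and that the substitution $(r,a)\mapsto(p-1-r,r+a)$ preserves $r+2a$ modulo $p-1$.
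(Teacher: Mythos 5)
Your argument is correct, but it follows a different route from the paper. The paper does not touch the parameters $(r,a,\eta)$ at all: it observes that the explicit presentation of the supersingular Hecke modules $M(r,0,\eta)$ in \S\ref{hecke} shows that the $\HH$-module $\II(\pi)=\pi^{I_1}$ is determined by the $H$-action on $\pi^{I_1}$ together with $\zeta(p)$ (which fixes $T_{\Pi}^2$ and forces the $T_{n_s}$-action), so the hypothesis gives $\II(\tau)\cong\II(\pi)$ as $\HH$-modules; since supersingular representations are irreducible, hence generated by their $I_1$-invariants, Ollivier's equivalence then yields $\tau\cong\TT\II(\tau)\cong\TT\II(\pi)\cong\pi$ in one line. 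You instead do the bookkeeping directly on Breuil's classification, using the full list of intertwinings \eqref{intertwine}; this is perfectly legitimate and more elementary in that it bypasses the $\II$, $\TT$ formalism, at the cost of a case analysis. One place in your ``direct inspection'' deserves an explicit word: the character $\chi$ of \eqref{explicitchi} only sees $r$ modulo $p-1$, so for $r\in\{0,p-1\}$ (where $\chi=\chi^s$) the pair $\{\chi_\pi,\chi_\pi^s\}$ does not pin down $r'$ among $\{0,p-1\}$. Your dichotomy still covers this, because the boundary swap is exactly $(r',a')=(p-1-r,(r+a)\bmod(p-1))$ and $\pi(0,0,\eta)\cong\pi(p-1,0,\eta)$ is itself the case $r=0$ of \eqref{intertwine}; also note the harmless notational slip at the end, where $\eta'$ should be the twisted character $\eta''=\eta'\omega^{r'}$ introduced after your replacement of $\tau$. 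The paper's proof buys uniformity (no boundary cases, no appeal to the classification list beyond irreducibility), while yours makes the result visibly a consequence of Breuil's Theorem 1.3.
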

\begin{proof} It follows from the explicit description of supersingular modules $M(r,0,\eta)$ of $\HH$ in \S\ref{hecke} or \cite[Def.2.1.2]{coeff} that 
$\II(\tau)\cong\II(\pi)$ as $\HH$-modules. Hence, $\tau \cong \TT \II(\tau)\cong \TT\II(\pi)\cong \pi$.
\end{proof}

\begin{prop}\label{R1regp5} Let $\pi=\pi(r,0,\eta)$ with $0<r<p-1$, and let $\zeta$ be the central character of $\pi$. Assume that $p\ge 5$ then 
$\RR\II^1(\pi)\cong \II(\pi)\oplus \II(\pi)$.
\end{prop}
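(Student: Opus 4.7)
The plan is to exploit the $G^+$-stable decomposition $\pi=\pi_\sigma\oplus\pi_{\tsigma}$ from Corollary \ref{two} to split the computation of $\RR^1\II(\pi)$ into two pieces on which the Hecke operator $T_{n_s}$ is forced to vanish by an $H$-character parity argument. The underlying $\Fbar$-vector space of $\RR^1\II(\pi)$ is $H^1(I_1/Z_1,\pi)$, and the subalgebra $\HH^+\cong\End_{G^+}(\cIndu{ZI_1}{G^+}{\zeta})$ of $\HH$ (generated by $T_{n_s}$, $T_\Pi T_{n_s}T_\Pi^{-1}$ and the $e_\chi$) acts on $V^{I_1}$, and by naturality on $H^1(I_1/Z_1,V)$, for any $V$ in $\Rep_{G^+,\zeta}$ via Hecke formulas involving only elements of $G^+$. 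Hence the $G^+$-decomposition yields an identification
\[
\RR^1\II(\pi)\cong H^1(I_1/Z_1,\pi_\sigma)\oplus H^1(I_1/Z_1,\pi_{\tsigma})
\]
as $\HH^+$-modules. Theorem \ref{main}(i), together with its analogue for $\pi_{\tsigma}$ obtained from the symmetry $\sigma\leftrightarrow\tsigma$ in \eqref{intertwine}, shows that for $p\ge 5$ each summand is $2$-dimensional and purely isotypic as an $H$-module, of type $\chi$ on the first summand and $\chi^s$ on the second.

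The next step is to kill $T_{n_s}$. The relation $e_\chi T_{n_s}=T_{n_s}e_{\chi^s}$ from \eqref{relTH}, together with the symmetric identity $e_{\chi^s}T_{n_s}=T_{n_s}e_\chi$, forces $T_{n_s}$ to interchange the $\chi$- and $\chi^s$-isotypic subspaces of any $\HH^+$-module. On $H^1(I_1/Z_1,\pi_\sigma)$ the target $\chi^s$-isotypic component is zero, so $T_{n_s}$ annihilates the first summand; by the same argument it annihilates the second. Hence $T_{n_s}$ acts as zero on all of $\RR^1\II(\pi)$.

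To finish, $T_\Pi$ acts on $V^{I_1}$ by the single-coset formula $vT_\Pi=\Pi^{-1}v$, and since $\Pi\cdot\pi_{\tsigma}=\pi_\sigma$ by the definitions of these subspaces, $T_\Pi$ induces an isomorphism interchanging the two summands of $\RR^1\II(\pi)$. Choosing a basis $\{w_1,w_2\}$ of $H^1(I_1/Z_1,\pi_\sigma)$ and setting $w_i':=w_iT_\Pi$, the four resulting vectors satisfy $w_ie_\chi=w_i$, $w_i'e_{\chi^s}=w_i'$, $w_iT_\Pi=w_i'$, $w_i'T_\Pi=\zeta(p)^{-1}w_i$, and $w_iT_{n_s}=w_i'T_{n_s}=0$; comparison with the explicit description of $\II(\pi)=M(r,0,\eta)$ in Section \ref{hecke} then shows each pair $\langle w_i,w_i'\rangle$ spans an $\HH$-submodule isomorphic to $\II(\pi)$, giving the desired decomposition. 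The main obstacle is the first step: verifying that the $\HH^+$-action on $\RR^1\II(\pi)$ restricted to each summand coincides with the $\HH^+$-action on the corresponding derived functor $\RR^1\II_{G^+}(\pi_\sigma)$ coming from the intrinsic $G^+$-structure. This is essentially a naturality statement for derived functors and Hecke formulas along the embedding $\HH^+\hookrightarrow\HH$, but deserves careful verification since the decomposition $\pi=\pi_\sigma\oplus\pi_{\tsigma}$ is only $G^+$-equivariant.
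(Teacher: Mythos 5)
Your proposal is correct and follows essentially the same route as the paper: split $\RR^1\II(\pi)$ along the $G^+$-decomposition $\pi=\pi_{\sigma}\oplus\pi_{\tsigma}$, use Theorem \ref{main} to see each summand is $2$-dimensional and $\chi$- (resp.\ $\chi^s$-) isotypic, kill $T_{n_s}$ via $e_{\chi}T_{n_s}=T_{n_s}e_{\chi^s}$ and $\chi\neq\chi^s$, and let $T_{\Pi}$ swap the summands to match the explicit description of $M(r,0,\eta)$. The $\HH^+$-equivariance point you flag at the end is exactly what the paper settles by invoking Corollary \ref{three} (equivalently your Corollary \ref{two}), so no genuinely different idea is involved.
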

\begin{proof} Corollary \ref{three} implies that we have an isomorphism of $\HH^+$-modules $\RR^1\II(\pi)\cong \RR^1\II(\pi_{\sigma})\oplus \RR^1\II(\pi_{\tsigma})$.
Let $v\in \RR^1\II(\pi_{\sigma})$ it follows from Theorem \ref{main} that $ve_{\chi}=v$. Since $0<r<p-1$ we have $\chi\neq \chi^s$ and so $ve_{\chi^s}=0$. 
Since $T_{n_s}\in \HH^+$, $vT_{n_s}\in \RR^1\II(\pi_{\sigma})$ and hence $vT_{n_s}= vT_{n_s}e_{\chi}=ve_{\chi^s}T_{n_s}=0$. So $T_{n_s}$ kills $\RR^1\II(\pi_{\sigma})$ 
and by symmetry it also kills $\RR^1\II(\pi_{\tsigma})$. Theorem  \ref{main} gives $\dim \RR^1\II(\pi_{\sigma})=2$. If we chose a basis $\{v, w\}$ of 
$\RR^1\II(\pi_{\sigma})$ then $\{vT_{\Pi}, w T_{\Pi}\}$ is a basis of $\RR^1\II(\pi_{\tsigma})$. And it follows from the explicit description of 
$M(r,0,\eta)$ in \S\ref{hecke} that $\langle v, vT_{\Pi}\rangle$ is stable under the action of $\HH$ and is isomorphic to $M(r,0,\eta)$.
\end{proof}

\begin{prop}\label{R1regp3} Let $\pi$ and $\zeta$ be as in Proposition \ref{R1regp5} and let $\tau$ be an irreducible representation of $G$
with a central character $\zeta$. Assume $p>2$ and $\tau\not \cong \pi$ then $\Hom_{\HH}(\II(\tau), \RR^1\II(\pi))=0$.
\end{prop}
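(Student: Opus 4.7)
The plan is to assume for contradiction that a non-zero $\HH$-linear map $\phi\colon \II(\tau)\to \RR^1\II(\pi)$ exists, and to rule out every irreducible $\tau \not\cong \pi$ in turn. Any such $\phi$ is automatically $H$-equivariant, so Corollary \ref{dimH1} forces every $H$-character appearing in $\tau^{I_1}$ to lie in $\{\chi,\chi^s\}$, where $\chi\neq \chi^s$ since $0<r<p-1$.

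I would then run through the four possibilities from the Barthel--Livn\'e/Breuil classification. If $\tau$ is a character $\eta'\circ\det$ or a twist of Steinberg $\Sp\otimes \eta'\circ\det$, then $\tau^{I_1}$ is one-dimensional and its $H$-character $\psi$ is $s$-invariant: for the character case this is because $\det$ is $s$-invariant on $H$, and for the Steinberg case one checks via the Bruhat decomposition $G=P\sqcup PsI$ that both basis vectors of $(\Ind_P^G\mathbf{1})^{I_1}$ are fixed by $H$, so the quotient $\Sp^{I_1}$ carries the trivial $H$-character. In either subcase $\psi=\psi^s$ is incompatible with $\psi\in\{\chi,\chi^s\}$. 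If $\tau$ is supersingular with central character $\zeta$, then $\tau^{I_1}$ has $H$-characters $\{\chi_\tau,\chi_\tau^s\}$ (each with multiplicity one); if this set meets $\{\chi,\chi^s\}$ it must equal it, whence $\tau^{I_1}\cong \pi^{I_1}$ as $H$-representations, and Lemma \ref{obv} gives $\tau\cong \pi$, a contradiction.

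The remaining possibility is $\tau$ an irreducible principal series, and here I would cite the vanishing $\Ext^1_G(\tau,\pi)=0$ of Colmez \cite[VII.5.4]{col1} and Emerton \cite[Prop.~4.2.8]{em}. Proposition \ref{centre} then yields $\Ext^1_{G,\zeta}(\tau,\pi)=0$, while Proposition \ref{chop00} shows that for $\tau\not\cong \pi$ the natural map $\Ext^1_{G,\zeta}(\tau,\pi)\to \Hom_\HH(\II(\tau),\RR^1\II(\pi))$ in the five-term sequence is surjective. Combining these forces $\Hom_\HH(\II(\tau),\RR^1\II(\pi))=0$, contradicting the existence of $\phi$ and completing the proof.

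The only non-routine input is the computation of the $H$-character on $\Sp^{I_1}$; everything else is case chasing assembled from Lemma \ref{obv}, Corollary \ref{dimH1}, Propositions \ref{centre} and \ref{chop00}, and the Colmez--Emerton vanishing. One could alternatively simplify the argument when $p\ge 5$ by invoking Proposition \ref{R1regp5} and reducing via Vign\'eras' bijection to the statement $\Hom_\HH(\II(\tau),\II(\pi))=0$ between non-isomorphic simple $\HH$-modules, but the case analysis above uniformly covers the more delicate prime $p=3$ as well.
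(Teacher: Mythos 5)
Your argument is correct in substance, but it takes a genuinely different route from the paper's. The paper splits by the prime: for $p\ge 5$ it simply quotes Proposition \ref{R1regp5} ($\RR^1\II(\pi)\cong \II(\pi)\oplus\II(\pi)$, so a nonzero map from the simple module $\II(\tau)$ forces $\II(\tau)\cong\II(\pi)$ and hence $\tau\cong\pi$); for $p=3$ it rules out characters, Steinberg twists and supersingular $\tau$ essentially as you do (Corollary \ref{dimH1} plus Lemma \ref{obv}), but it handles the remaining principal series case without Colmez--Emerton: since $\pi\cong\pi\otimes\eta$ for the three nontrivial $\eta\in\{\omega\circ\det,\ \mu_{-1}\circ\det,\ \omega\mu_{-1}\circ\det\}$ when $(p,r)=(3,1)$, one principal series $\tau$ with $\Hom_{\HH}(\II(\tau),\RR^1\II(\pi))\neq 0$ produces four pairwise non-isomorphic ones, forcing $\dim \RR^1\II(\pi)\ge 8$ and contradicting Corollary \ref{dimH1}. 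You replace this twisting trick by the external vanishing $\Ext^1_G(\tau,\pi)=0$ of Colmez and Emerton. That is logically legitimate (those results are independent of this paper, so no circularity), but it gives up the feature stressed in the remark after Theorem \ref{result}, namely that the regular case is proved purely representation-theoretically, with no input from Colmez's functor; the paper's $p=3$ argument is precisely what buys that self-containedness. Your computation of the $s$-invariant $H$-character on $\Sp^{I_1}$ is fine and is what underlies the paper's phrase ``since $\chi\neq\chi^s$ we get that $\tau$ is a principal series''.

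Two points should be tightened. First, Proposition \ref{chop00} is stated under the hypothesis $\Ext^1_G(\tau,\pi)\neq 0$, which is exactly what you are denying in the principal series case, so you cannot invoke its statement verbatim; you need to note that its proof uses that hypothesis only through Proposition \ref{centre} to give $\tau$ the central character $\zeta$, which is already part of your standing assumptions --- equivalently, rerun the construction there to turn a nonzero $\partial\in\Hom_{\HH}(\II(\tau),\RR^1\II(\pi))$ into a nonsplit extension of $\tau$ by $\pi$ inside $\Omega$, contradicting the cited vanishing. (The paper makes the same implicit reading when it deduces $\Ext^1_G(\tau,\pi)\neq 0$ from Corollary \ref{chop1} in its $p=3$ argument.) Second, your claim that \emph{every} $H$-character of $\tau^{I_1}$ lies in $\{\chi,\chi^s\}$ requires $\varphi$ to be injective, which holds because $\II(\tau)$ is a simple $\HH$-module by Vign\'eras' bijection; alternatively, ``some character'' already suffices for each step of your case analysis.
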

\begin{proof} Assume that $\Hom_{\HH}(\II(\tau), \RR^1\II(\pi))\neq 0$ if $p\ge 5$ then Proposition \ref{R1regp5} implies that $\II(\tau)\cong \II(\pi)$, 
and hence  $\tau\cong \pi$. Assume that $p=3$ then the assumption $0<r<p-1$ forces $r=1$. Corollary \ref{dimH1} implies that  
$\tau^{I_1}\cong \chi\oplus \chi^s$ as 
an $H$-representation, where $\chi$ is as in \eqref{explicitchi}. It follows from Lemma \ref{obv} that $\tau$ cannot be supersingular. Since $\chi\neq \chi^s$ 
we get that $\tau$ is a principal series representation. Corollary \ref{chop1} implies that $\Ext^1_G(\tau,\pi)\neq 0$. Let 
$\eta$ be one of the characters  $\omega\circ \det$, $\mu_{-1}\circ \det$, $\omega\mu_{-1}\circ \det$. Since $p=3$ and $r=1$, \eqref{intertwine} gives 
$\pi\cong \pi\otimes\eta$. Twisting by $\eta$ gives $\Ext^1_G(\tau\otimes \eta, \pi)\neq 0$, and hence $\Hom_{\HH}(\II(\tau\otimes \eta), \RR^1\II(\pi))\neq 0$.
Since $p>2$ \cite[Thm 34, Cor 36]{bl} imply that 
$\tau\not \cong \tau\otimes \eta$ and so $\II(\tau)\not \cong \II(\tau\otimes \eta)$ as $\HH$-modules. This implies that $\dim \RR^1\II(\pi)$ is at least 
$4\times 2=8$,
which contradicts Corollary \ref{dimH1}. 
\end{proof}

\begin{thm}\label{result} Assume that $p>2$ and let $\tau$ and $\pi$ be irreducible smooth representations of $G$ admitting a central character. 
Suppose that $\pi$ is supersingular and $\Ext^1_G(\tau, \pi)\neq 0$ then $\tau\cong \pi$.
\end{thm}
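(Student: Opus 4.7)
The plan is to combine the two main tools developed in the paper: the five-term exact sequence from the Hecke-algebra spectral sequence (Proposition \ref{5term}, Corollary \ref{chop1}), which identifies $\Ext^1_G(\tau,\pi)$ with $\Hom_{\HH}(\II(\tau), \RR^1\II(\pi))$ when $\tau \not\cong \pi$; and the structural information on $\RR^1\II(\pi)$ obtained from the upper bound on $H^1(I_1/Z_1, \pi)$ together with the Colmez--Kisin lower bound on $\Ext^1_{G,\zeta}(\pi,\pi)$.

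First I would reduce. By Proposition \ref{centre}, $\Ext^1_G(\tau,\pi)\neq 0$ forces $\tau$ to admit the central character $\zeta$ of $\pi$; assuming $\tau\not\cong \pi$ one also has $\Ext^1_{G,\zeta}(\tau,\pi)\cong \Ext^1_G(\tau,\pi)$, and Corollary \ref{chop1} identifies this with $\Hom_{\HH}(\II(\tau), \RR^1\II(\pi))$. After twisting we may assume $\pi=\pi(r,0,\eta)$ with $0\le r\le p-1$. If $0<r<p-1$ then Proposition \ref{R1regp3} already asserts the vanishing of this $\Hom$, giving the contradiction immediately.

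The main obstacle is the Iwahori case $r\in\{0,p-1\}$ (where $\chi=\chi^s$); here the plan is to show $\RR^1\II(\pi)\cong \II(\pi)^{\oplus 2}$ as an $\HH$-module. Applying the five-term sequence to $\tau=\pi$, and using Proposition \ref{chop00} (applicable because we are outside the regular range) or Lemma \ref{extH}(i)(b), the natural map $\Ext^1_{G,\zeta}(\pi,\pi)\rightarrow\Hom_{\HH}(\II(\pi),\RR^1\II(\pi))$ is surjective with kernel $\Ext^1_{\HH}(\II(\pi),\II(\pi))$, whose dimension is $1$ by Lemma \ref{extH}(i)(a). Combining this with the Colmez--Kisin lower bound $\dim\Ext^1_{G,\zeta}(\pi,\pi)\ge 3$ of Proposition \ref{CK} (whose hypothesis $(p,r)\neq (3,1)$ holds throughout the Iwahori range), one gets $\dim\Hom_{\HH}(\II(\pi),\RR^1\II(\pi))\ge 2$. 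Since $\II(\pi)$ is a simple two-dimensional $\HH$-module with $\End_{\HH}(\II(\pi))=\Fbar$ by Schur, any two linearly independent elements of this $\Hom$ are injective, and their images intersect trivially (otherwise Schur would produce a scalar relating them, contradicting linear independence). Combined with $\dim\RR^1\II(\pi)=4$ from Corollary \ref{dimH1}, these two images exhaust $\RR^1\II(\pi)$, proving $\RR^1\II(\pi)\cong \II(\pi)^{\oplus 2}$. Then for any irreducible $\tau\not\cong \pi$ with central character $\zeta$, $\II(\tau)$ is a simple $\HH$-module not isomorphic to $\II(\pi)$ by Vign\'eras, so any nonzero map $\II(\tau)\rightarrow \II(\pi)^{\oplus 2}$ would have simple image isomorphic to $\II(\pi)$, a contradiction.

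The hard part is precisely the Iwahori case: Corollary \ref{dimH1} only pins down $\RR^1\II(\pi)$ as an $H$-representation and gives its total dimension, and one genuinely needs the global input from Proposition \ref{CK}, via the mod-$p$ Langlands correspondence on the Galois side, to upgrade the $H$-character information to the $\HH$-module decomposition $\RR^1\II(\pi)\cong \II(\pi)^{\oplus 2}$ that kills off all candidates for $\tau$ other than $\pi$.
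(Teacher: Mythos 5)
Your proposal is correct and follows essentially the same route as the paper: the regular case via Corollary \ref{chop1} and Proposition \ref{R1regp3}, and the Iwahori case by combining Proposition \ref{CK}, the five-term sequence \eqref{5T}, Lemma \ref{extH} and Corollary \ref{dimH1} to force $\RR^1\II(\pi)\cong\II(\pi)\oplus\II(\pi)$, then concluding with Corollary \ref{chop1}. The only difference is that you spell out the Schur-lemma argument showing two linearly independent maps $\II(\pi)\rightarrow\RR^1\II(\pi)$ have direct-sum image, a step the paper leaves implicit.
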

\begin{proof} If $0< r<p-1$ the assertion follows from Corollary \ref{chop1} and Proposition \ref{R1regp3}. Suppose that $r\in \{0,p-1\}$.
Let $\mathfrak I$ be the image of $\Ext^1_{G,\zeta}(\pi,\pi)\rightarrow \Hom_{\HH}(\II(\pi), \RR^1\II(\pi))$.  
Then it follows from Propositions \ref{CK},  \ref{5term}  and Lemma \ref{extH} that $\dim \mathfrak I\ge 3-1=2$. Hence, 
$\II(\pi)\oplus \II(\pi)$ is a submodule of $\RR^1\II(\pi)$. By forgetting the action of $\HH$ we obtain an isomorphism  of vector spaces
$\RR^1\II(\pi)\cong H^1(I_1/Z_1, \pi)$. Corollary \ref{dimH1} implies that $\dim \RR^1\II(\pi)=4$. Since $\dim \II(\pi)=2$ we obtain 
\begin{equation}\label{derived0}
\RR^1\II(\pi)\cong \II(\pi)\oplus \II(\pi).
\end{equation} 
Corollary  \ref{chop1} implies the result. 
\end{proof}
\begin{remar} We note that the proof in the regular case $0<r<p-1$ is purely representation theoretic and makes no use of 
Colmez's functor. The Iwahori case $r\in\{0, p-1\}$ could also be done representation theoretically. One needs to work out
the action of $\HH$ on $H^1(I_1/Z_1, \pi)$. This can be done, but it is not so pleasant, in particular $p=3$ requires extra arguments.
\end{remar}

\begin{lem}\label{pupsi} Let $\pi\cong \pi(r,0, \eta)$ with $0<r<p-1$, then 
$$\dim \II(\Omega/\pi) \centerdot e_{\chi}T_{n_s}\ge 1, \quad \dim \II(\Omega/\pi) \centerdot e_{\chi^s}T_{n_s}\ge 1. $$
\end{lem}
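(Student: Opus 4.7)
The strategy is to analyze the long exact sequence
$$0 \to \II(\pi) \to \II(\Omega) \to \II(\Omega/\pi) \to \RR^1\II(\pi) \to 0$$
obtained from $0 \to \pi \to \Omega \to \Omega/\pi \to 0$ and the vanishing $\RR^1\II(\Omega) = 0$ coming from the injectivity of $\Omega|_K$. By Lemma \ref{Opi}, $\II(\Omega) \cong E_{\lambda_1,\lambda_2}$ with $\lambda_1\lambda_2 \neq 0$, and a direct inspection of the presentation of $E_{\lambda_1,\lambda_2}$ shows that the quotient $\II(\Omega)/\II(\pi)$ is spanned by the images $\bar w_\chi, \bar w_{\chi^s}$ and carries precisely the $\HH$-module structure of $\II(\pi)$ -- in particular $T_{n_s}$ acts as zero on it. By Proposition \ref{R1regp5}, the quotient $\RR^1\II(\pi) \cong \II(\pi) \oplus \II(\pi)$ also has $T_{n_s}$ acting as zero.

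Consequently $T_{n_s}$ annihilates both ends of the extension
$$0 \to \II(\Omega)/\II(\pi) \to \II(\Omega/\pi) \to \RR^1\II(\pi) \to 0,$$
so $\II(\Omega/\pi) T_{n_s} \subseteq \II(\Omega)/\II(\pi)$. Restricting to isotypic components gives
$\II(\Omega/\pi) e_\chi T_{n_s} \subseteq \Fbar \bar w_{\chi^s}$ and $\II(\Omega/\pi) e_{\chi^s} T_{n_s} \subseteq \Fbar \bar w_\chi$; the lemma thus amounts to showing that these one-dimensional inclusions are in fact equalities, equivalently that the well-defined connecting map $\RR^1\II(\pi) \to \II(\Omega)/\II(\pi)$ sending $y \mapsto \tilde y T_{n_s}$ (independent of the lift $\tilde y \in \II(\Omega/\pi)$, since the sub is $T_{n_s}$-torsion) is non-zero on each $H$-eigenspace.

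To prove the non-vanishing I would construct an explicit $v \in \Omega$ whose class $\bar v \in (\Omega/\pi)^{I_1}$ lies in the $\chi$-isotypic part but with $v T_{n_s} = \sum_{\lambda \in \Fp} \bigl(\begin{smallmatrix} 1 & [\lambda] \\ 0 & 1 \end{smallmatrix}\bigr) s v$ congruent to a nonzero multiple of $w_{\chi^s}$ modulo $\pi$. The vector $v$ would be extracted from the decomposition $\Omega|_K \cong J_\sigma \oplus J_{\tilde\sigma}$ into $K$-injective envelopes from \cite{coeff}: the next layer of the $K$-socle filtration of $J_\sigma/\sigma$ contains a specific irreducible $K$-type, controlled by the known non-split extensions of $\sigma$ in $\Rep_{K,\zeta}$, and an $I_1$-eigenvector for $\chi$ in this layer lifts to the sought $v \in \Omega$. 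Applying the Hecke formula together with the explicit polynomial calculations of Lemma \ref{calcsym} inside $\sigma$ then pins down the coefficient along $w_{\chi^s}$ and shows it is nonzero. The assertion on the $\chi^s$-isotype follows from the symmetry $\pi(r,0,\eta) \cong \pi(p-1-r,0,\eta\omega^r)$ (equivalently, conjugation by $T_\Pi$), which exchanges $\sigma$ and $\tilde\sigma$. The main obstacle is precisely that the abstract $\HH$-module information on the sub and the quotient does not alone determine whether the connecting map vanishes; a genuine input from the $K$-structure of the envelope $\Omega$ is needed to rule out the degenerate case.
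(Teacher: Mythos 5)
There is a genuine gap. Your reduction correctly observes that $T_{n_s}$ kills $\II(\Omega)/\II(\pi)$ and (for $p\ge 5$) kills $\RR^1\II(\pi)$, so that everything comes down to a non-vanishing statement for the map $y\mapsto \tilde y T_{n_s}$ on each $H$-isotypic piece; but that non-vanishing is precisely the content of the lemma, and you only describe how you \emph{would} produce the required vector $v\in\Omega$ (``an $I_1$-eigenvector in the next layer of the socle filtration of $J_{\sigma}/\sigma$, whose image under the Hecke formula is pinned down by Lemma \ref{calcsym}''). No specific $K$-type is identified, no lift is constructed, and no coefficient is computed, so the degenerate case you yourself flag is never ruled out. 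In addition, your framing invokes Proposition \ref{R1regp5}, which is only proved for $p\ge 5$, whereas the lemma is needed (and is used in Proposition \ref{gotchabound}) also when $p=3$, $r=1$, where the $\HH$-module structure of $\RR^1\II(\pi)$ is not determined in the paper; so even the ``upper bound'' half of your reduction is not available in that case.

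The paper avoids the connecting-map analysis altogether and gets the non-vanishing from the $K_1$-invariants: since $\Omega|_K\cong \Inj\sigma\oplus\Inj\tsigma$, one has $\Omega^{K_1}\cong \inj\sigma\oplus\inj\tsigma$, and the explicit description of $\pi^{K_1}$ from \cite{bp} identifies $\pi^{K_1}_{\sigma}$ with the kernel of $\inj\sigma\twoheadrightarrow \Indu{I}{K}{\chi}$. Hence $(\Omega/\pi)^{K_1}$ contains $\Indu{I}{K}{\chi}\oplus\Indu{I}{K}{\chi^s}$, whose space of $I_1$-invariants $V$ is $T_{n_s}$-stable with $\dim V e_{\chi}T_{n_s}=\dim V e_{\chi^s}T_{n_s}=1$ by the computation in \cite[3.1.11]{coeff}. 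This is exactly the ``genuine input from the $K$-structure of $\Omega$'' your sketch calls for; to complete your argument you would need to carry out such an identification (or an equivalent explicit computation), and to treat $p=3$ you should bypass Proposition \ref{R1regp5} entirely, as the paper's proof does.
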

\begin{proof} We have an exact sequence of $K$-representations:
\begin{equation}
0\rightarrow \pi^{K_1}\rightarrow \Omega^{K_1}\rightarrow (\Omega/\pi)^{K_1}
\end{equation}
Since $\Omega|_{K}\cong \Inj \sigma \oplus \Inj \tsigma$, we have $\Omega^{K_1}\cong \inj \sigma \oplus \inj \tsigma$, 
where $\inj$ denotes an injective envelope in the category $\Rep_{K/K_1}$, \cite[6.2.4]{coeff}. In \cite[20.1, \S16]{bp} \
we have determined the $K$-representation $\pi^{K_1}\cong \pi^{K_1}_{\sigma}\oplus \pi^{K_1}_{\tsigma}$. It follows from 
the description and \cite[3.4, 3.5]{bp} that $\pi^{K_1}_{\sigma}$ is isomorphic to the kernel 
of $\inj \sigma \twoheadrightarrow \Indu{I}{K}{\chi}$. Hence, $(\Omega/\pi)^{K_1}$ contains 
$\Indu{I}{K}{\chi}\oplus \Indu{I}{K}{\chi^s}$ as a subobject and so $(\Omega/\pi)^{I_1}$ contains 
$V:=(\Indu{I}{K}{\chi}\oplus \Indu{I}{K}{\chi^s})^{I_1}.$
Moreover, $V$ is stable under the action of  $T_{n_s}$, and  $\dim V e_{\chi} T_{n_s}=\dim V e_{\chi^s} T_{n_s}=1$, 
\cite[3.1.11]{coeff}. This yields the claim.
\end{proof} 

\begin{prop}\label{gotchabound}  Let $\pi\cong \pi(r,0, \eta)$ with $0<r<p-1$. If $p\ge 5$ then 
\begin{equation}
\dim \Ext^1_{K/Z_1}(\sigma, \pi)\le 2, \quad \dim \Ext^1_{K/Z_1}(\tsigma, \pi)\le 2.
\end{equation}
 If $p=3$ then 
\begin{equation}
\dim \Ext^1_{K/Z_1}(\sigma, \pi)\le 3, \quad \dim \Ext^1_{K/Z_1}(\tsigma, \pi)\le 3.
\end{equation}
\end{prop}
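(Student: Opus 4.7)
The plan is to reduce the computation to bounding certain $I_1$-invariants and then to apply Lemma \ref{pupsi} together with the dimension estimates for $H^1(I_1/Z_1,\pi)$ supplied by Theorem \ref{main} and Proposition \ref{R1regp5}. The argument for $\tsigma$ is obtained from the one for $\sigma$ by interchanging $\chi$ and $\chi^s$ throughout, so I treat only $\sigma$.

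First, since $\soc_K\pi = \soc_K\Omega = \sigma\oplus\tsigma$, the map $\Hom_K(\sigma,\pi)\to\Hom_K(\sigma,\Omega)$ is an isomorphism of one-dimensional spaces; combined with the injectivity of $\Omega|_K$ in $\Rep_{K,\zeta}$, the long exact sequence attached to $0\to\pi\to\Omega\to\Omega/\pi\to 0$ collapses to $\Ext^1_{K/Z_1}(\sigma,\pi)\cong\Hom_K(\sigma,\Omega/\pi)$. Next, I would use the exact sequence of $K/K_1=\GL_2(\Fp)$-representations $0\to\tsigma\to\Indu{I}{K}{\chi}\to\sigma\to 0$ (the principal series induced from the Borel $B(\Fp)=I/K_1$). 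Applying $\Hom_K(\centerdot,V)$, Frobenius reciprocity identifies $\Hom_K(\Indu{I}{K}{\psi},V)$ with $V^{I_1}e_\psi$ for $\psi\in\{\chi,\chi^s\}$, while $\Hom_K(\tsigma,V)\hookrightarrow V^{I_1}e_{\chi^s}$ via Frobenius applied to the surjection $\Indu{I}{K}{\chi^s}\twoheadrightarrow\tsigma$. This yields
$$\Hom_K(\sigma,V)=\ker\bigl(T_{n_s}:V^{I_1}e_\chi\to V^{I_1}e_{\chi^s}\bigr),$$
the connecting map being forced to equal $T_{n_s}$ up to a nonzero scalar because $\Hom_K(\Indu{I}{K}{\chi^s},\Indu{I}{K}{\chi})$ is one-dimensional and both $T_{n_s}$ and the zigzag $\Indu{I}{K}{\chi^s}\twoheadrightarrow\tsigma\hookrightarrow\Indu{I}{K}{\chi}$ produce nonzero elements of it.

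Combined with Lemma \ref{pupsi}, this gives $\dim\Ext^1_{K/Z_1}(\sigma,\pi)\le\dim\II(\Omega/\pi)e_\chi-1$. Taking $I_1$-invariants of $0\to\pi\to\Omega\to\Omega/\pi\to 0$ yields $\dim(\Omega/\pi)^{I_1}e_\chi\le\dim(\Omega^{I_1}/\pi^{I_1})e_\chi+\dim H^1(I_1/Z_1,\pi)e_\chi$, and by Lemma \ref{Opi} the first summand equals $1$. When $p\ge 5$, Proposition \ref{R1regp5} gives $H^1(I_1/Z_1,\pi)\cong\II(\pi)\oplus\II(\pi)$, whose $e_\chi$-part has dimension two, so $\dim\II(\Omega/\pi)e_\chi\le 3$ and hence $\dim\Ext^1_{K/Z_1}(\sigma,\pi)\le 2$. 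When $p=3$ (forcing $r=1$), one checks $\chi\alpha=\chi^s$, and applying Theorem \ref{main} to $\pi_\sigma$ and to $\pi_{\tsigma}$ (the latter via the symmetry by $\Pi$), the $\chi$-isotypic component of $H^1(I_1/Z_1,\pi)=H^1(I_1/Z_1,\pi_\sigma)\oplus H^1(I_1/Z_1,\pi_{\tsigma})$ has dimension at most $2+1=3$, giving $\dim\II(\Omega/\pi)e_\chi\le 4$ and $\dim\Ext^1_{K/Z_1}(\sigma,\pi)\le 3$.

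The main technical obstacle is the verification that the Frobenius connecting map coincides with $T_{n_s}$ up to a nonzero scalar; once this is in hand, the proof reduces to book-keeping of dimensions via the long exact sequence for $I_1$-invariants, together with the weight-space bounds from Theorem \ref{main} and Proposition \ref{R1regp5}.
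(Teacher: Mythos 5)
Your proof is correct and takes essentially the same route as the paper: reduce to $\Hom_{K/Z_1}(\sigma,\Omega/\pi)$ via the injectivity of $\Omega|_K$, identify this with the kernel of $T_{n_s}$ on $\II(\Omega/\pi)e_{\chi}$, subtract one dimension using Lemma \ref{pupsi}, and bound $\dim \II(\Omega/\pi)e_{\chi}$ by Lemma \ref{Opi} together with Theorem \ref{main} (equivalently Proposition \ref{R1regp5} when $p\ge 5$). The only deviation is that you rederive the identification $\Hom_{K/Z_1}(\sigma,\kappa)\cong \Ker(T_{n_s}\colon \II(\kappa)e_{\chi}\to \II(\kappa)e_{\chi^s})$ from the principal series sequence, whereas the paper simply cites it from \cite[4.1.5]{coeff}.
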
 
\begin{proof} We have $\Hom_{K/Z_1}(\sigma, \pi)\cong  \Hom_{K/Z_1}(\sigma, \Omega)$, since by construction $\soc_K \Omega \cong \soc_K \pi$.
Moreover, since $\Omega|_K$ is injective in $\Rep_{K, \zeta}$ we have $\Ext^1_{K/Z_1}(\sigma, \Omega)=0$. Hence, 
\begin{equation}\label{gotcha1}
 \Hom_{K/Z_1}(\sigma, \Omega/\pi)\cong \Ext^1_{K/Z_1}(\sigma, \pi).
\end{equation}
It follows from \cite[4.1.5]{coeff} that if  $\kappa$ is any smooth $K$-representation then one has 
\begin{equation}\label{gotcha2}
 \Hom_{K/Z_1}(\sigma, \kappa)\cong \Ker ( \II(\kappa)e_{\chi} \overset{T_{n_s}}{\longrightarrow}  \II(\kappa)e_{\chi^s}).
\end{equation}
Now Lemma \ref{pupsi}, \eqref{gotcha1} and \eqref{gotcha2} imply that 
\begin{equation}\label{gotcha3}
 \dim \Ext^1_{K/Z_1}(\sigma, \pi)\le \dim \II(\Omega/\pi)e_{\chi}- 1= \dim \RR^1\II(\pi)e_{\chi}.
\end{equation}
It follows from Theorem \ref{main} that if $p\ge 5$ then $\dim  \RR^1\II(\pi)e_{\chi}=2$ and if $p=3$ then 
$\dim  \RR^1\II(\pi)e_{\chi}\le 3$. The same proof also works for  $\tsigma$.
\end{proof}

\begin{prop}\label{apchibound}  Let $\pi\cong \pi(r,0, \eta)$ with $0<r<p-1$. If $p\ge 5$ then 
\begin{equation}
\dim \Ext^1_{G, \zeta}(\pi, \pi)\le 3.
\end{equation}
 If $p=3$ then 
\begin{equation}
\dim \Ext^1_{G, \zeta}(\pi, \pi)\le 4.
\end{equation}
\end{prop}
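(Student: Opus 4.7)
My plan is to exploit the canonical presentation of $\pi = \pi(r, 0, \eta)$ coming from its very definition, namely the short exact sequence
\begin{equation*}
0 \to \cIndu{KZ}{G}{\sigma} \xrightarrow{T_\sigma} \cIndu{KZ}{G}{\sigma} \to \pi \to 0
\end{equation*}
in $\Rep_{G, \zeta}$, in which $T_\sigma$ is injective. This is a length-two resolution of $\pi$ by compact inductions from the open subgroup $KZ$, which behave much better than $\pi$ itself with respect to restriction to $K$.

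I would then apply $\Hom_{G, \zeta}(-, \pi)$ and take the associated long exact sequence. Since $\cIndu{KZ}{G}{-}$ is exact (compact induction from an open subgroup) and its right adjoint $\Res$ is also exact, restriction preserves injectives, and standard Shapiro reciprocity provides an identification
\begin{equation*}
\Ext^i_{G, \zeta}(\cIndu{KZ}{G}{\sigma}, \pi) \cong \Ext^i_{K/Z_1}(\sigma, \pi)
\end{equation*}
for every $i \ge 0$. The long exact sequence then reads
\begin{equation*}
\cdots \to \Hom_K(\sigma, \pi) \xrightarrow{T_\sigma^*} \Hom_K(\sigma, \pi) \to \Ext^1_{G, \zeta}(\pi, \pi) \to \Ext^1_{K/Z_1}(\sigma, \pi) \to \cdots.
\end{equation*}

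The key input from the structure theory of Section \ref{supersingularrepresentations} is that $\pi^{I_1}$ is two-dimensional with $H$-isotypes $\chi$ and $\chi^s$; since $0 < r < p-1$ we have $\chi \ne \chi^s$, so $\Hom_K(\sigma, \pi) \cong \pi^{I_1}[\chi]$ is one-dimensional. Exactness of the sequence above then shows that the kernel of $\Ext^1_{G, \zeta}(\pi, \pi) \to \Ext^1_{K/Z_1}(\sigma, \pi)$ is a quotient of the second copy of $\Hom_K(\sigma, \pi)$, hence has dimension at most $1$. Combined with the bounds of Proposition \ref{gotchabound}, this gives
\begin{equation*}
\dim \Ext^1_{G, \zeta}(\pi, \pi) \le 1 + \dim \Ext^1_{K/Z_1}(\sigma, \pi) \le \begin{cases} 3 & \text{if } p \ge 5, \\ 4 & \text{if } p = 3, \end{cases}
\end{equation*}
which is the desired bound.

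The main point to verify carefully is the Shapiro identification of Ext-groups within the subcategory $\Rep_{G, \zeta}$ as opposed to $\Rep_G$, keeping track of the central character throughout; this is formal given exactness of both $\cIndu{KZ}{G}{-}$ and $\Res$, but deserves a moment's check. Notably, the argument does \emph{not} require any computation of $T_\sigma^*$ on $\Ext^1_{K/Z_1}(\sigma, \pi)$---whether it vanishes or not---since the dimension count follows purely from the one-dimensionality of $\Hom_K(\sigma, \pi)$. By the symmetry $\pi(r,0,\eta) \cong \pi(p-1-r, 0, \omega^r \eta)$ from \eqref{intertwine}, the same argument with $\tsigma$ in place of $\sigma$ yields an identical bound.
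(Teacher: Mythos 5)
Your argument is correct and is essentially the paper's own proof: both apply $\Hom_{G,\zeta}(-,\pi)$ to the presentation $0\to\cIndu{KZ}{G}{\sigma}\xrightarrow{T}\cIndu{KZ}{G}{\sigma}\to\pi\to 0$, identify $\Ext^1_{G,\zeta}(\cIndu{KZ}{G}{\sigma},\pi)\cong\Ext^1_{K/Z_1}(\sigma,\pi)$ via the adjunction (restriction being exact and preserving injectives), and conclude from Proposition \ref{gotchabound} together with $\dim\Hom_K(\sigma,\pi)=1$. The only cosmetic difference is that the paper records the connecting map $\Hom_G(\cIndu{KZ}{G}{\sigma},\pi)\hookrightarrow\Ext^1_{G,\zeta}(\pi,\pi)$ as injective (i.e.\ $T^*=0$ on the one-dimensional Hom space), whereas you observe the bound needs only that its image is at most one-dimensional.
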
 
\begin{proof} Recall that   we have an exact sequence:
\begin{equation}\label{apchi1}
0\rightarrow \cIndu{KZ}{G}{\sigma}\overset{T}{\rightarrow} \cIndu{KZ}{G}{\sigma}\rightarrow \pi\rightarrow 0.
\end{equation}
Applying $\Hom_{G}(\ast, \pi)$ to \eqref{apchi1} gives an exact sequence
\begin{equation}\label{apchi2}
\Hom_G(\cIndu{KZ}{G}{\sigma}, \pi)\hookrightarrow \Ext^1_{G, \zeta}(\pi, \pi)\rightarrow \Ext^1_{G, \zeta}(\cIndu{KZ}{G}{\sigma}, \pi).
\end{equation}
We may think of this exact sequence first as Yoneda Exts in $\Rep_{G, \zeta}$, but since  $\Rep_{G, \zeta}$ has enough 
injectives Yoneda's $\Ext^n$ is isomorphic to $\RR^n \Hom\cong \Ext^n_{G, \zeta}$. For any $A$ in $\Rep_{G, \zeta}$ 
we have 
$$\Hom_G(\cIndu{KZ}{G}{\sigma}, A)\cong \Hom_{K/Z_1}(\sigma, \mathrm F A),$$ 
where $\mathrm F: \Rep_{G,\zeta}\rightarrow \Rep_{K, \zeta}$ is  the restriction. The functor $\mathrm F$ is exact and 
maps injectives to injectives, hence   
\begin{equation}\label{apchi3}
 \Ext^1_{G, \zeta}(\cIndu{KZ}{G}{\sigma}, A)\cong  \Ext^1_{K/Z_1}(\sigma, \mathrm F A). 
\end{equation}
Now \eqref{apchi2}, \eqref{apchi3} and Proposition \ref{gotchabound} give the assertion.
\end{proof} 
The same proof gives:
\begin{cor}\label{wanted} Let $n\ge 1$ and $\tau=\frac{\cIndu{KZ}{G}{\sigma}}{(T^n)}$ or  $\tau=\frac{\cIndu{KZ}{G}{\tsigma}}{(T^n)}$.
If $p\ge 5$ then $\dim \Ext^1_{G, \zeta}(\tau, \pi)\le 3$; if $p=3$ then  $\dim \Ext^1_{G, \zeta}(\tau, \pi)\le 4$.
\end{cor}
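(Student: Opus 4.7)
The plan is to imitate the proof of Proposition \ref{apchibound} by replacing the Barthel--Livne presentation of $\pi$ with the analogous two-term resolution of $\tau$. Since $T$ is injective on $\cIndu{KZ}{G}{\sigma}$ (as recalled after Lemma \ref{T}), so is $T^n$, and we have a short exact sequence in $\Rep_{G,\zeta}$:
\begin{equation}\notag
0\rightarrow \cIndu{KZ}{G}{\sigma}\overset{T^n}{\rightarrow} \cIndu{KZ}{G}{\sigma}\rightarrow \tau\rightarrow 0.
\end{equation}
Applying $\Hom_{G,\zeta}(\ast,\pi)$ produces a long exact sequence whose relevant segment yields the bound
\begin{equation}\notag
\dim \Ext^1_{G,\zeta}(\tau,\pi)\le \dim\Hom_G(\cIndu{KZ}{G}{\sigma},\pi)+\dim\Ext^1_{G,\zeta}(\cIndu{KZ}{G}{\sigma},\pi),
\end{equation}
since $\Ext^1_{G,\zeta}(\tau,\pi)$ is sandwiched between a quotient of $\Hom_G(\cIndu{KZ}{G}{\sigma},\pi)$ and a subspace of $\Ext^1_{G,\zeta}(\cIndu{KZ}{G}{\sigma},\pi)$.

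Next I would estimate both terms on the right via Frobenius reciprocity and the restriction functor $\mathrm{F}:\Rep_{G,\zeta}\to \Rep_{K,\zeta}$ used in the proof of Proposition \ref{apchibound}. Since $\mathrm{F}$ is exact and preserves injectives, one obtains
\begin{equation}\notag
\Ext^i_{G,\zeta}(\cIndu{KZ}{G}{\sigma},\pi)\cong \Ext^i_{K/Z_1}(\sigma,\pi),\quad i=0,1.
\end{equation}
For $i=0$ this is the space $\Hom_K(\sigma,\pi)$, which is one-dimensional: any $K$-map $\sigma\to\pi$ is determined by the image of $x^r$, which must lie in the $\chi$-isotypic part of $\pi^{I_1}$, and since $0<r<p-1$ we have $\chi\neq\chi^s$, so this part is $\Fbar v_{\sigma}$. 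The unique (up to scalar) embedding is the inclusion $\langle K\centerdot v_\sigma\rangle\hookrightarrow\pi$. For $i=1$, Proposition \ref{gotchabound} gives $\dim\Ext^1_{K/Z_1}(\sigma,\pi)\le 2$ when $p\ge 5$ and $\le 3$ when $p=3$. Adding the contributions yields the asserted bounds $1+2=3$ and $1+3=4$. The case $\tau=\cIndu{KZ}{G}{\tsigma}/(T^n)$ is completely symmetric, using the corresponding bound for $\Ext^1_{K/Z_1}(\tsigma,\pi)$ from Proposition \ref{gotchabound}.

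There is no real obstacle; the argument is a direct reprise of Proposition \ref{apchibound}, with the point that injectivity of $T$ immediately propagates to $T^n$, so that the length of the resolution does not enter the bound at all. The only minor verification is that the functor $\Hom_G(\cIndu{KZ}{G}{\sigma},\centerdot)$ on $\Rep_{G,\zeta}$ agrees with $\Hom_{K/Z_1}(\sigma,\mathrm F(\centerdot))$ and is therefore exact in the first variable when computed against an object of $\Rep_{G,\zeta}$ admitting an injective resolution in $\Rep_{G,\zeta}$, which is exactly what Section \ref{extandcenter} ensures.
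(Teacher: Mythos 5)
Your argument is exactly the paper's: the paper proves this corollary by the phrase ``the same proof gives,'' i.e.\ by repeating the proof of Proposition \ref{apchibound} with the two-term resolution $0\rightarrow \cIndu{KZ}{G}{\sigma}\overset{T^n}{\rightarrow}\cIndu{KZ}{G}{\sigma}\rightarrow\tau\rightarrow 0$ (valid since $T$, hence $T^n$, is injective), the identification $\Ext^i_{G,\zeta}(\cIndu{KZ}{G}{\sigma},\pi)\cong\Ext^i_{K/Z_1}(\sigma,\pi)$ via the exact, injective-preserving restriction functor, and the bounds of Proposition \ref{gotchabound} together with $\dim\Hom_K(\sigma,\pi)=1$. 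Your write-up is correct and matches that route, so there is nothing to add.
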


\begin{thm}\label{equality} Assume $p>2$ and $\pi\cong \pi(r, 0, \eta)$ supersingular. If  $(p,r)\neq (3,1)$ then 
$\dim \Ext^1_{G, \zeta}(\pi, \pi)=3$. 
\end{thm}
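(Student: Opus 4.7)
The plan is to prove the equality by matching a lower and upper bound of $3$, split into two cases according to whether we are in the regular range $0<r<p-1$ (forcing $p\geq 5$ under the hypothesis $(p,r)\neq(3,1)$) or the Iwahori range $r\in\{0,p-1\}$. The lower bound $\dim\Ext^1_{G,\zeta}(\pi,\pi)\geq 3$ is uniform in both cases: it is exactly the content of Proposition \ref{CK}, which uses a variant of the Colmez--Kisin strategy. So the work is entirely on the upper bound.

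For the regular case $0<r<p-1$, $p\geq 5$, I would simply invoke Proposition \ref{apchibound}, which already gives $\dim\Ext^1_{G,\zeta}(\pi,\pi)\leq 3$. Combined with the lower bound, this closes the case with no further work.

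For the Iwahori case $r\in\{0,p-1\}$, the strategy is to feed the information we already have into the five-term exact sequence of Proposition \ref{5term}:
\begin{equation*}
0\to\Ext^1_{\HH}(\II(\pi),\II(\pi))\to\Ext^1_{G,\zeta}(\pi,\pi)\to\Hom_{\HH}(\II(\pi),\RR^1\II(\pi))\to\Ext^2_{\HH}(\II(\pi),\II(\pi)).
\end{equation*}
Lemma \ref{extH}(i) tells us that in this case $\dim\Ext^1_{\HH}(\II(\pi),\II(\pi))=1$ and $\Ext^2_{\HH}(\II(\pi),\II(\pi))=0$, so the sequence collapses to the short exact sequence $0\to\Fbar\to\Ext^1_{G,\zeta}(\pi,\pi)\to\Hom_{\HH}(\II(\pi),\RR^1\II(\pi))\to 0$. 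The proof of Theorem \ref{result} already established the key identification $\RR^1\II(\pi)\cong\II(\pi)\oplus\II(\pi)$ (this is \eqref{derived0}, obtained by combining the lower bound on $\dim\mathfrak{I}$ with the dimension count $\dim H^1(I_1/Z_1,\pi)=4$ from Corollary \ref{dimH1}). Since $\II(\pi)$ is a simple $\HH$-module over the algebraically closed field $\Fbar$, Schur's lemma gives $\End_{\HH}(\II(\pi))=\Fbar$, hence $\dim\Hom_{\HH}(\II(\pi),\II(\pi)\oplus\II(\pi))=2$. We conclude $\dim\Ext^1_{G,\zeta}(\pi,\pi)=1+2=3$.

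There is no real obstacle at this stage; the theorem is essentially an assembly statement. All the substantive work has already been done: the precise upper bound on $\dim H^1(I_1/Z_1,\pi)$ coming from the structural result about $\pi$ in Theorem \ref{exseq} together with the computations of Section 5, the Hecke-algebra Ext calculations imported from \cite{bp}, and the Colmez--Kisin type lower bound in Proposition \ref{CK}. What is perhaps worth emphasizing is that the argument in the regular case and the argument in the Iwahori case exploit different features: the upper bound in the regular case is forced through the presentation of $\pi$ as a quotient of $\cIndu{KZ}{G}{\sigma}$ combined with the $\Ext^1_K$ bound of Proposition \ref{gotchabound}, whereas the Iwahori case exploits the vanishing of $\Ext^2_{\HH}$ to turn the five-term sequence into an exact dimension computation.
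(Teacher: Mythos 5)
Your proof is correct and follows essentially the same route as the paper: the lower bound from Proposition \ref{CK}, the upper bound in the regular case from Proposition \ref{apchibound}, and in the Iwahori case the five-term sequence \eqref{5T} combined with Lemma \ref{extH} and the identification \eqref{derived0} of $\RR^1\II(\pi)$. The only difference is that you spell out the final dimension count (Schur's lemma giving $\dim\Hom_{\HH}(\II(\pi),\II(\pi)\oplus\II(\pi))=2$) which the paper leaves implicit.
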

\begin{proof} Proposition \ref{CK} or \S\ref{app} gives $\dim \Ext^1_{G, \zeta}(\pi, \pi)\ge 3$. If $0<r<p-1$ then 
equality follows from Proposition \ref{apchibound}. If $r=0$ or $r=p-1$ then $\Ext^2_{\HH}(\II(\pi), \II(\pi))=0$ 
 and $\Ext^1_{\HH}(\II(\pi),\II(\pi))$ is $1$-dimensional by Lemma \ref{extH}. Hence, \eqref{derived0} and \eqref{5T} give 
$\dim \Ext^1_{G, \zeta}(\pi, \pi)=3$. 
\end{proof}

For future use we record the following: 
\begin{prop} Assume $p>2$ and $\pi\cong \pi(r, 0, \eta)$ supersingular. Let 
$0\rightarrow \II(\pi)\rightarrow E \rightarrow \II(\pi)\rightarrow 0$ be a non-split 
extension of $\HH$-modules.  If  $(p,r)\neq (3,1)$ then 
$\dim \Ext^1_{G, \zeta}(\TT(E), \pi)\le 3$.
\end{prop}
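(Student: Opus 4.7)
The plan is to apply the five-term exact sequence of Proposition~\ref{5term} to $\tau:=\TT(E)$. Since $\II$ and $\TT$ are quasi-inverse equivalences between $\Mod_{\HH}$ and $\Rep^{I_1}_{G,\zeta}$ by Ollivier's theorem, one has $\II\TT(E)\cong E$ and $\TT(E)$ is generated over $G$ by its $I_1$-invariants. Equation \eqref{5T} then yields
\begin{equation*}
\dim\Ext^1_{G,\zeta}(\TT(E),\pi)\;\le\;\dim\Ext^1_{\HH}(E,\II(\pi))+\dim\Hom_{\HH}(E,\RR^1\II(\pi)),
\end{equation*}
so it is enough to bound the first summand by $1$ and the second by $2$.

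For the $\Hom$-term, the key input is the structure of $\RR^1\II(\pi)$ as an $\HH$-module. When $r\in\{0,p-1\}$ the isomorphism $\RR^1\II(\pi)\cong\II(\pi)\oplus\II(\pi)$ is \eqref{derived0} from the proof of Theorem~\ref{result}; when $0<r<p-1$ the hypothesis $(p,r)\neq(3,1)$ forces $p\ge 5$, and Proposition~\ref{R1regp5} supplies the same isomorphism. Applying $\Hom_{\HH}(\,\cdot\,,\II(\pi))$ to the defining non-split short exact sequence and using that $\II(\pi)$ is simple (so $\End_{\HH}(\II(\pi))$ is one-dimensional), the connecting map $\delta\colon \End_{\HH}(\II(\pi))\to \Ext^1_{\HH}(\II(\pi),\II(\pi))$ sends $\id$ to the non-zero class $[E]$ and is therefore injective. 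This forces $\Hom_{\HH}(E,\II(\pi))\cong\End_{\HH}(\II(\pi))$ to be one-dimensional, whence $\dim\Hom_{\HH}(E,\RR^1\II(\pi))=2$.

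For the $\Ext^1_{\HH}$-term I proceed by cases. In the Iwahori case $r\in\{0,p-1\}$, Lemma~\ref{extH}(i) provides $\dim\Ext^1_{\HH}(\II(\pi),\II(\pi))=1$ together with the vanishing of $\Ext^2_{\HH}(\II(\pi),\II(\pi))$. Continuing the long exact sequence past $\delta$, and noting that $\delta$ is now surjective onto the one-dimensional $\Ext^1_{\HH}(\II(\pi),\II(\pi))$, yields an isomorphism $\Ext^1_{\HH}(E,\II(\pi))\cong\Ext^1_{\HH}(\II(\pi),\II(\pi))$, of dimension $1$. In the regular case $0<r<p-1$ I write $E\cong E_{\lambda_1,\lambda_2}$ with $(\lambda_1,\lambda_2)\neq(0,0)$: if $\lambda_1\lambda_2\neq 0$, Lemma~\ref{kuku} gives the bound $\dim\Ext^1_{\HH}(E,\II(\pi))=1$ directly; if exactly one of the $\lambda_i$ vanishes, Lemma~\ref{lastdino} identifies $\TT(E)$ with either $\cIndu{KZ}{G}{\sigma}/(T^2)$ or $\cIndu{KZ}{G}{\tsigma}/(T^2)$, and Corollary~\ref{wanted} then yields $\dim\Ext^1_{G,\zeta}(\TT(E),\pi)\le 3$ outright, bypassing the five-term sequence entirely.

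The only mildly delicate point is the regular subcase with $\lambda_1\lambda_2=0$: the group $\Ext^2_{\HH}(\II(\pi),\II(\pi))$ is not computed in the paper in the regular case, so the long exact sequence argument that worked in the Iwahori case is not obviously self-contained; the detour through Lemma~\ref{lastdino} and Corollary~\ref{wanted} circumvents this. Every other step is a mechanical unwinding of the long exact sequence, given the identification $\RR^1\II(\pi)\cong\II(\pi)^{\oplus 2}$.
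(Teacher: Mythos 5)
Your argument is correct and follows essentially the same route as the paper: the five-term sequence \eqref{5T} applied to $\TT(E)$ together with $\RR^1\II(\pi)\cong\II(\pi)\oplus\II(\pi)$ gives the bound $1+2$, with the $\Ext^1_{\HH}$-term handled via Lemma \ref{extH} in the Iwahori case, Lemma \ref{kuku} when $\lambda_1\lambda_2\neq 0$, and the detour through Lemma \ref{lastdino} and Corollary \ref{wanted} when $\lambda_1\lambda_2=0$ — exactly as in the paper. Your explicit computation that $\Hom_{\HH}(E,\II(\pi))$ is one-dimensional merely spells out a step the paper leaves implicit.
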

\begin{proof} If $(p,r)\neq (3,1)$ then we have $\RR^1\II(\pi)\cong \II(\pi)\oplus \II(\pi)$ and 
so $\dim \Hom_{\HH}(E, \RR^1\II(\pi))=2$. So if $\dim \Ext^1_{\HH}(E, \II(\pi))\le 1$ then 
\eqref{5T} allows us to conclude. If $r=0$ or $r=p-1$ the latter may be deduced from Lemma \ref{extH}. If $0<r<p-1$  and 
$E\cong E_{\lambda_1, \lambda_2}$ with $\lambda_1\lambda_2\neq 0$ then this is given by Lemma \ref{kuku}. If 
$\lambda_1\lambda_2=0$ then $\TT(E)\cong \frac{\cIndu{KZ}{G}{\sigma}}{(T^2)}$ or  $\tau=\frac{\cIndu{KZ}{G}{\tsigma}}{(T^2)}$
and the assertion is given by Corollary \ref{wanted}.
\end{proof}

\section{Non-supersingular representations}\label{non} 
We  compute $\Ext^1_{G,\zeta}(\tau,\pi)$, 
when $\pi$ is the Steinberg representation of $G$ or a character and $\tau$ is an irreducible representation of $G$ under the assumption $p>2$. The 
results of this paper combined with  \cite{bp} give all the extensions between irreducible representations of $G$, when $p>2$. We record this below. A lot of 
cases have been worked out by different methods by Colmez \cite{col1} and Emerton \cite{em}. The new results of this section 
are determination of $\RR^1\II(\Sp)$, where $\Sp$ is the Steinberg representation, and showing that if $\eta: G\rightarrow \Fbar^{\times}$ 
is a smooth character of order $2$ then $\Ext^1_G(\eta, \Sp)=0$.

\begin{prop}\label{H1Sp} Assume $p>2$ and let $\psi:H\rightarrow \Fbar^{\times}$ be a character. 
Suppose that $\Ext^1_{I/Z_1}(\psi,\Sp)\neq 0$ then $\psi=\Eins$ the trivial 
character. Moreover, $\dim \Ext^1_{I/Z_1}(\Eins, \Sp)=2$.
\end{prop}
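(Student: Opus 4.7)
The plan is to adapt the strategy used for supersingular representations in Theorem \ref{main}. Let $v \in \Sp^{I_1}$ be a generator. Since $\Sp$ is a subquotient of the unramified principal series $\Indu{P}{G}{\Eins}$, the group $H$ acts on $v$ through the trivial character $\Eins$, and $\Pi$ acts on the one-dimensional space $\Sp^{I_1}$ as an involution, so $\Pi v = \pm v$ and in particular $v$ lies in both $M$ and $\Pi M$ for the subspace $M := \bigl\langle \bigl(\begin{smallmatrix} p^{\NN} & \Zp \\ 0 & 1\end{smallmatrix}\bigr) v \bigr\rangle \subseteq \Sp$ modeled on Definition \ref{M}. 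The proofs of Lemma \ref{Istable} and Proposition \ref{Minj} transfer essentially verbatim to show that $M$ is stable under $I$ and that $M|_{H(I_1\cap U)}$ is an injective envelope of $\Eins$ in $\Rep_{H(I_1\cap U)}$; by symmetry the analogous statement holds for $\Pi M|_{H(I_1\cap U^s)}$.

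I would then establish the analog of Theorem \ref{exseq}:
$$ 0 \to \Fbar v \to M \oplus \Pi M \to \Sp \to 0,\quad c\mapsto (cv,cv),\ (a,b)\mapsto a-b. $$
Surjectivity follows since $M + \Pi M$ is $G^+$-stable (the proof of Proposition \ref{G+}, which relied only on the matrix identity \eqref{trix} and the injective envelope property, goes through) and $\Pi$-stable, hence $G$-stable, and so equals $\Sp$ by irreducibility. For the intersection $M \cap \Pi M = \Fbar v$, I would repeat the $I_1$-cohomology computation of Theorem \ref{exseq}: Proposition \ref{fix} applied with $\chi = \Eins$ computes the image $\Delta$ of $(M/\Fbar v)^{I_1}$ in $H^1(I_1, \Fbar)$, and since $\Pi$ interchanges $\kappa^u$ and $\kappa^l$, the intersection $\Delta \cap \Pi\cdot\Delta$ in $H^1(I_1, \Fbar v)$ vanishes.

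Applying $\Hom_{I/Z_1}(\psi, -)$ to this short exact sequence and invoking Proposition \ref{extX} together with Corollary \ref{take}, the nonvanishing of $\Ext^1_{I/Z_1}(\psi, M \oplus \Pi M)$ forces $\psi \in \{\Eins, \alpha, \alpha^{-1}\}$. For $\psi = \alpha$ (and symmetrically $\psi = \alpha^{-1}$), Proposition \ref{extX} yields $\Ext^1_{I/Z_1}(\alpha, \Pi M) = 0$ (directly for $p > 3$, and via an argument analogous to Lemmas \ref{p3}, \ref{p3bis} for $p=3$), while the restriction map $\Ext^1_{I/Z_1}(\alpha, \Fbar) \to \Ext^1_{I/Z_1}(\alpha, M)$ is a bijection of one-dimensional spaces; a diagram chase mirroring the proof of Theorem \ref{main} then yields $\Ext^1_{I/Z_1}(\alpha, \Sp) = 0$.

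Finally, for $\psi = \Eins$, Corollary \ref{take} gives $\dim \Ext^1_{I/Z_1}(\Eins, M) = \dim \Ext^1_{I/Z_1}(\Eins, \Pi M) = 1$, contributing $2$ to $\dim \Ext^1_{I/Z_1}(\Eins, \Sp)$. The main obstacle is showing that the boundary map $\Ext^1_{I/Z_1}(\Eins, \Sp) \to \Ext^2_{I/Z_1}(\Eins, \Eins)$ vanishes, pinning down the dimension as exactly $2$. I would establish this by a diagram chase patterned on Corollary \ref{take}, combining the injectivity of $M|_{H(I_1\cap U)}$ as an $H(I_1\cap U)$-representation with the injectivity of the restriction map $\Ext^2_{I/Z_1}(\Eins, \Eins) \hookrightarrow \Ext^2_{(I\cap P)/Z_1}(\Eins, \Eins)$ from Corollary \ref{ext2}.
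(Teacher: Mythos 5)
Your overall strategy is sound, and your endgame (Proposition \ref{extX}, Corollary \ref{take}, Corollary \ref{ext2}, and the $p=3$ lemmas \ref{p3}, \ref{p3bis}, \ref{done}) is exactly what the paper uses: the paper's proof consists of producing the short exact sequence $0\rightarrow \Eins \rightarrow \Indu{I\cap P^s}{I}{\Eins}\oplus \Indu{I\cap P}{I}{\Eins}\rightarrow \Sp\rightarrow 0$ and then declaring that the proof of Theorem \ref{main} goes through unchanged (checking $M^{K_1}\cong \Indu{HK_1}{I}{\Eins}$ so that Lemma \ref{done} applies when $p=3$). The difference is in how the sequence is obtained. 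The paper gets it for free by restricting \eqref{zero} to $I$ and using $\Indu{P}{G}{\Eins}|_I\cong \Indu{I\cap P}{I}{\Eins}\oplus\Indu{I\cap P^s}{I}{\Eins}$; with $M=\Indu{I\cap P^s}{I}{\Eins}$ the injective-envelope property of $M|_{H(I_1\cap U)}$ is immediate from the Iwahori decomposition, and no analogue of Theorem \ref{exseq} has to be proved. Your $M$, defined internally as $\langle \bigl(\begin{smallmatrix} p^{\NN} & \Zp\\ 0&1\end{smallmatrix}\bigr)v\rangle\subseteq\Sp$, is in fact the image of this induced representation, so your sequence is the same one — but you propose to re-prove its exactness and the injectivity of $M$ by the supersingular route.

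That is where the gap lies: the claims that Proposition \ref{Minj} transfers ``essentially verbatim,'' that Proposition \ref{fix} can be ``applied with $\chi=\Eins$,'' and that the proof of Proposition \ref{G+} ``relied only on the matrix identity \eqref{trix} and the injective envelope property'' are not accurate. Those proofs rest on Lemma \ref{relations} (the relations \eqref{rel}, \eqref{trel}, \eqref{relX}), i.e. on the fact that $T$ acts by $0$ on $\cIndu{KZ}{G}{\Sym^r\Fbar^2}$; Proposition \ref{G+} additionally uses Lemmas \ref{tv_1} and \ref{sv}, which are consequences of those relations, and Proposition \ref{fix} uses the explicit element $w_\sigma$ of Lemma \ref{calc1}. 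None of this is available verbatim for $\Sp$, which is a quotient of $\cIndu{KZ}{G}{\Sym^{p-1}\Fbar^2}/(T-1)$, so the eigenvalue is $1$, not $0$. The analogous facts are true (for instance one finds $v=X^{p-1}tv$, $tM\subseteq M$, $sv\in M$, and the analogue of Lemma \ref{calc1} gives $\Delta=\Fbar\kappa^u$ with no exceptional behaviour at $p=3$), but they must be re-derived from $T\varphi=\varphi$ via Lemma \ref{T}, or — much more simply — read off from the identification of $M$ with the image of $\Indu{I\cap P^s}{I}{\Eins}$, which is the paper's shortcut and renders your reconstruction of Theorem \ref{exseq} unnecessary. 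One smaller point: to constrain $\psi$ you should argue as in Theorem \ref{main} with the sequences $0\rightarrow M\rightarrow \Sp\rightarrow \Pi\centerdot M/\Eins\rightarrow 0$ and its $\Pi$-twist, rather than from $\Ext^1_{I/Z_1}(\psi, M\oplus\Pi\centerdot M)$ alone, since a priori a class in $\Ext^1_{I/Z_1}(\psi,\Sp)$ could map nontrivially to $\Ext^2_{I/Z_1}(\psi,\Eins)$.
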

\begin{proof} It follows from \eqref{induced} that $\pi(p-1,1)\cong \Indu{P}{G}{\Eins}$. By restricting \eqref{zero} to $I$ we obtain 
an exact sequence of $I$-representations:
\begin{equation}\label{exseqSteinberg}
0\rightarrow \Eins \rightarrow \Indu{I\cap P^s}{I}{\Eins}\oplus \Indu{I\cap P}{I}{\Eins}\rightarrow \Sp\rightarrow 0. 
\end{equation}
If we set $M:= \Indu{I\cap P^s}{I}{\Eins}$ then $\Indu{I\cap P}{I}{\Eins}\cong M^{\Pi}$, and $M|_{H(I\cap U)}\cong \Indu{H}{H(I\cap U)}{\Eins}$ is an injective envelope 
of $\Eins$ in $\Rep_{H(I\cap U)}$. So \eqref{exseqSteinberg} is an analog of Theorem \ref{exseq}. The proof of Theorem \ref{main} goes through without any changes. For $p=3$ we 
note that $M^{K_1}\cong \Indu{HK_1}{I}{\Eins}$ and hence $M$ satisfies the assumptions of Lemma \ref{done}.
\end{proof}

Let $\omega:\Qp^{\times}\rightarrow \Fbar^{\times}$ be a character, such that $\omega(p)=1$ and $\omega|_{\Zp^{\times}}$ is the reduction map
composed with the canonical embedding.

\begin{prop}\label{Rst} Assume $p>2$ then $\RR^1\II(\Eins)\cong M(p-3,1, \omega)$ and $\RR^1\II(\Sp)\cong M(p-1,1)$,
\end{prop}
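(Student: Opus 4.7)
The plan is to extract both isomorphisms from the $G$-equivariant short exact sequence $0 \to \Eins \to \pi(p-1,1) \to \Sp \to 0$ of \eqref{zero} (using $\pi(p-1,1) \cong \Indu{P}{G}{\Eins}$ via \eqref{induced}) by applying $\II$ and its derived functors.

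First I would pin down the $H$-representation structure on both sides. Using Proposition \ref{H1Sp}, $\RR^1\II(\Sp) = H^1(I_1/Z_1,\Sp)$ is $2$-dimensional with $H$ acting trivially, matching the $H$-structure of $M(p-1,1)$ (whose characters are $\chi = \chi^s = \Eins$). Using Proposition \ref{homi}, $\RR^1\II(\Eins) = \Hom(I_1/Z_1,\Fbar) = \langle \kappa^u, \kappa^l\rangle$; a direct conjugation calculation gives $h \cdot \kappa^u = \alpha^{-1}(h)\kappa^u$ and $h \cdot \kappa^l = \alpha(h)\kappa^l$, so $\RR^1\II(\Eins) \cong \alpha \oplus \alpha^{-1}$ as $H$-representation, matching the $H$-structure of $M(p-3,1,\omega)$ (whose characters are $\chi = \alpha^{-1}$ and $\chi^s = \alpha$, as $\lambda^{p-3}\omega([\lambda\mu]) = \lambda^{-1}\mu$).

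Next, applying $\II$ to the short exact sequence yields a long exact sequence containing the boundary map $\II(\Sp) \to \RR^1\II(\Eins)$. This boundary vanishes because $\II(\Sp) \cong \Eins$ has trivial $H$-character whereas $\RR^1\II(\Eins)$ has no trivial $H$-isotypic component (here I use $p > 2$ so that $\alpha \neq \Eins$). A dimension count then gives a short exact sequence of $\HH$-modules
\[
0 \to \RR^1\II(\Eins) \to \RR^1\II(\pi(p-1,1)) \to \RR^1\II(\Sp) \to 0.
\]

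To identify the middle term, the Mackey decomposition over $I \backslash G / P = \{[1],[s]\}$ gives $\pi(p-1,1)|_I \cong \Indu{I \cap P}{I}{\Eins} \oplus \Indu{I \cap P^s}{I}{\Eins}$. Shapiro's lemma together with Lemma \ref{homip} shows that $\RR^1\II(\pi(p-1,1))$ is $4$-dimensional with $H$-decomposition $2\Eins \oplus \alpha \oplus \alpha^{-1}$ (the $\Eins$-summands coming from the classes of $\varepsilon$ on each side, and $\alpha^{\pm 1}$-summands coming from $\kappa^u$, $\kappa^l$). An explicit computation of the Hecke operators $T_{n_s}$ and $T_\Pi$ on the Shapiro/Mackey cohomology via formula \eqref{Tg} should then identify $\RR^1\II(\pi(p-1,1)) \cong M(p-1,1) \oplus M(p-3,1,\omega)$ as an $\HH$-module. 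Matching $H$-isotypic components in the short exact sequence gives $\RR^1\II(\Eins) \cong M(p-3,1,\omega)$ (the $\alpha \oplus \alpha^{-1}$ part) and $\RR^1\II(\Sp) \cong M(p-1,1)$ (the $2\Eins$ part).

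The main obstacle is the last step: explicitly computing the action of $T_{n_s}$ on $\RR^1\II(\pi(p-1,1))$. This requires tracking cocycle representatives through the double-coset decomposition and applying formula \eqref{Tg}, in particular to verify that the Hecke eigenvalue $\lambda = 1$ appears (and that the central character twist matches $\eta = \omega$ on the $\alpha \oplus \alpha^{-1}$ summand). All the $H$-equivariance is automatic, so only the single non-trivial Hecke relation $v_2 T_{n_s} = \eta(-p^{-1})\lambda v_1$ needs to be checked by direct calculation.
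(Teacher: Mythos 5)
Your overall skeleton is the same as the paper's: apply $\II$ and its derived functors to the sequence $0\rightarrow \Eins\rightarrow \pi(p-1,1)\rightarrow \Sp\rightarrow 0$, compute the $H$-isotypic decompositions of $\RR^1\II(\Eins)$ (via Proposition \ref{homi}) and of $\RR^1\II(\Sp)$ (via Proposition \ref{H1Sp}), and match components using $\alpha\neq\Eins$ for $p>2$. That part of your argument is correct and is exactly how the paper argues. The difference is in how the middle term is identified: the paper simply quotes \cite[Thm.7.16]{bp}, which says $\RR^1\II(\pi(p-1,1))\cong M(p-3,1,\omega)\oplus M(p-1,1)$ \emph{as an $\HH$-module}, and then reads off both isomorphisms from the $H$-characters; you propose instead to recover this by Mackey decomposition and Shapiro's lemma and then to compute the action of $T_{n_s}$ and $T_\Pi$ on the resulting cohomology ``by direct calculation'' --- a calculation you explicitly do not carry out.

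That uncomputed step is not a technicality; it is the entire content of the proposition, so as written there is a genuine gap. The $H$-character bookkeeping (dimension $4$, isotypic type $2\Eins\oplus\alpha\oplus\alpha^{-1}$) does not determine the $\HH$-module structure: it cannot distinguish $M(p-3,1,\omega)$ from $M(p-3,\lambda,\omega)$ for another $\lambda$, or from the module on which $T_{n_s}$ acts by zero, and likewise it cannot distinguish $M(p-1,1)$ from $M(p-1,\lambda)$, $M(0,\lambda)$ or $\II(\Eins)\oplus\II(\Sp)$ on the quotient. So the whole proof hinges on producing the eigenvalue $\lambda=1$ (and, on the $M(p-1,1)$ piece, the relation $v_1T_{n_s}=-v_1$, which your final sentence overlooks when you say only one Hecke relation needs checking). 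Moreover, formula \eqref{Tg} only defines the $\HH$-action on $I_1$-invariants; transporting it to $\RR^1\II$ through the Shapiro isomorphisms requires the derived-functor (restriction--conjugation--corestriction, or injective resolution) formalism, and this is precisely the heavy computation done in \cite[\S7]{bp}. Either carry out that cocycle computation in detail or, as the paper does, invoke \cite[Thm.7.16]{bp}; with the latter citation your argument closes and coincides with the paper's.
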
    
\begin{proof} Recall  \eqref{zero} gives an exact sequence
\begin{equation}\label{zerobis}
 0 \rightarrow \Eins\rightarrow \pi(p-1,1)\rightarrow \Sp\rightarrow 0. 
\end{equation}
Applying $\II$ to \eqref{zerobis} we get an exact sequence: 
$$0\rightarrow \RR^1\II(\Eins) \rightarrow \RR^1\II(\pi(p-1,1))\rightarrow \RR^1\II(\Sp).$$
Now \cite[Thm.7.16]{bp} asserts that $\RR^1\II(\pi(p-1,1))\cong M(p-3,1,\omega)\oplus M(p-1,1)$.
Now $H$ acts on $\RR^1\II(\Eins)$ and $\RR^1\II(\pi(p-1,1))$ via $h\mapsto T_{h^{-1}}$. It follows from 
Definition \ref{Mrlambdaeta}  that 
$$M(p-1,1)\cong \Eins\oplus \Eins, \quad M(p-3,1, \omega)\cong \alpha\oplus \alpha^{-1}$$
as $H$-representations.  Propositions \ref{homi}, \ref{extI} imply that  
$$\RR^1\II(\Eins)\cong H^1(I_1/Z_1,\Eins)\cong \Hom(I_1/Z_1,\Fbar)=\langle \kappa^u,\kappa^l\rangle\cong \alpha\oplus\alpha^{-1}$$
as $H$-representations. Since $p>2$ we get $\RR^1\II(\Eins)\cong M(p-3,1, \omega)$. Then $M(p-1,1)$ is 
a $2$-dimensional submodule of $\RR^1\II(\Sp)$. However, Proposition \ref{H1Sp} implies that $\RR^1\II(\Sp)$
is $2$-dimensional, so the injection is an isomorphism. 
\end{proof} 

\begin{lem}\label{extmod1Sp} Let $M$ be an irreducible $\HH$-module. If $\Ext^1_{\HH}(M,\II(\Eins))$ or $\Ext^1_{\HH}(M,\II(\Sp))$
is non-zero then $M\in \{\II(\Eins), \II(\Sp)\}$. Moreover, 
$$\dim \Ext^1_{\HH}(\II(\Eins),\II(\Sp))=\dim \Ext^1_{\HH}(\II(\Sp),\II(\Eins))=1.$$
If $p>2$ then $\Ext^1_{\HH}(\II(\Eins),\II(\Eins))$ and $\Ext^1_{\HH}(\II(\Sp),\II(\Sp))$ are zero, and if $p=2$
then  both spaces are $1$-dimensional.
\end{lem}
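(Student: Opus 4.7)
The plan is to do everything by hand from the presentation of $\HH$ recalled at the end of \S\ref{hecke}, using the fact that $\Eins^s = \Eins$ so that $e_\Eins$ commutes with both $T_\Pi$ and $T_{n_s}$.

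First I would pin down $\II(\Eins)$ and $\II(\Sp)$: both are one-dimensional and lie in the $e_\Eins$-isotypic component. The relation $T_\Pi^2 = \zeta(p)^{-1} = 1$ forces $T_\Pi \in \{\pm 1\}$ on any such module, and $e_\Eins T_{n_s}^2 = -e_\Eins T_{n_s}$ forces $T_{n_s} \in \{0, -1\}$; inspection of the two-dimensional module $M(p-1, 1)$ of Definition \ref{Mrlambdaeta} together with the short exact sequence \eqref{zero} identifies $(T_\Pi, T_{n_s}) = (+1, 0)$ on $\II(\Eins)$ and $(-1, -1)$ on $\II(\Sp)$.

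The key reduction is that, since $e_\Eins$ commutes with $T_\Pi, T_{n_s}$ and the $e_\chi$ are pairwise orthogonal idempotents, right multiplication by $e_\Eins$ is $\HH$-linear on every $\HH$-module $E$, giving a natural $\HH$-module decomposition $E = Ee_\Eins \oplus E(1 - e_\Eins)$. Applied to an extension $0 \to N \to E \to M \to 0$ with $N \in \{\II(\Eins), \II(\Sp)\}$ (so $N = Ne_\Eins$) and $M$ irreducible, this shows the extension splits whenever $Me_\Eins = 0$; hence $\Ext^1_\HH(M, N) \neq 0$ forces $M = Me_\Eins$. By Vign\'{e}ras's bijection \cite[Thm.5.4]{vig} the remaining candidates for $M$ are the four one-dimensional $e_\Eins$-isotypic characters of $\HH$, the supersingular $M(0, 0, \Eins) \cong M(p-1, 0, \Eins)$, and the irreducible principal series $M(0, \lambda, \Eins)$ with $\lambda \neq 0, \pm 1$.

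For each candidate the Ext becomes finite linear algebra: lift a basis of $M$ to $E$, introduce off-diagonal scalars $\alpha$ (for $T_\Pi$) and $\beta$ (for $T_{n_s}$) recording the failure of the splitting on the generators, impose $T_\Pi^2 = 1$ and $T_{n_s}^2 = -T_{n_s}$, and quotient by the change-of-splitting $w \mapsto w + \gamma v$. For the crossed pairs $(M, N) = (\II(\Sp), \II(\Eins))$ and $(\II(\Eins), \II(\Sp))$ the relations are vacuous and the equivalence kills exactly one of the two parameters, giving $\Ext^1 = 1$; the non-zero classes are realised by the Hecke modules of $\pi(p-1, 1)$ and $\pi(0, 1)$ via \eqref{zero} and \eqref{p-1}. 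For the self-extensions $(\II(\Eins), \II(\Eins))$ and $(\II(\Sp), \II(\Sp))$, the relation $T_\Pi^2 = 1$ yields $2\alpha = 0$ on the off-diagonal entry, killing the last surviving parameter when $p > 2$ but leaving a one-dimensional Ext when $p = 2$. For the remaining irreducibles (the two other one-dimensional characters, the supersingular module, and the irreducible principal series) the same linear analysis gives $\Ext^1 = 0$. The main obstacle is being systematic about this last family of cases, but each verification reduces to a routine computation once the relations $T_\Pi^2 = 1$ and $T_{n_s}^2 = -T_{n_s}$ have been imposed.
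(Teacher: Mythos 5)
Your proposal is correct, and it reaches the first assertion by a somewhat different route than the paper. The paper applies $\Hom_{\HH}(M,\centerdot)$ to the sequences obtained from \eqref{p-1} and \eqref{zero}, so that $\Ext^1_{\HH}(M,\II(\Sp))$ (resp. $\Ext^1_{\HH}(M,\II(\Eins))$) embeds into $\Ext^1_{\HH}(M,M(0,1))$ (resp. $\Ext^1_{\HH}(M,M(p-1,1))$) as soon as $M\not\cong\II(\Eins)$ (resp. $M\not\cong\II(\Sp)$), and then quotes \cite[Cor.6.5]{bp}. You instead observe that $e_{\Eins}$ is a central idempotent of $\HH$ (because $\Eins^s=\Eins$), so every extension decomposes as $E=Ee_{\Eins}\oplus E(1-e_{\Eins})$, which reduces the question to the irreducibles of the $e_{\Eins}$-block; you then eliminate the remaining candidates (the two other characters, the supersingular module, the principal series $M(0,\lambda)$ with $\lambda\neq 0,\pm1$) one by one. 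This avoids the citation to \cite{bp} at the price of a case check, and both arguments ultimately rest on Vign\'eras's classification of the simple $\HH$-modules. Your dimension counts are exactly the ``verify by hand using \eqref{relTH}'' computation the paper leaves to the reader, and your normalisations $(T_\Pi,T_{n_s})=(1,0)$ on $\II(\Eins)$ and $(-1,-1)$ on $\II(\Sp)$ are correct; in the principal-series case the change-of-splitting map is invertible precisely because $\lambda\neq\pm1$, so the hypothesis enters exactly where it should. One point worth making explicit: imposing the relations of \eqref{relTH} on the off-diagonal parameters only bounds $\Ext^1$ from above unless one knows those relations give a complete presentation of $\HH$; for the two crossed cases you already close this gap by realising the nonsplit classes through $M(p-1,1)$ and $M(0,1)$, and for the $p=2$ self-extensions you should either appeal to the completeness of the presentation of the pro-$p$ Iwahori Hecke algebra (which the paper implicitly does) or exhibit a nonsplit self-extension directly.
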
  
\begin{proof} Recall that \eqref{p-1} gives an exact sequence:
\begin{equation}\label{p-1bis}
 0 \rightarrow \Sp \rightarrow \pi(0,1)\rightarrow \Eins\rightarrow 0. 
\end{equation}
Applying $\II$ we obtain an exact sequence:
\begin{equation}\label{Sptriv}
0\rightarrow \II(\Sp)\rightarrow M(0,1)\rightarrow \II(\Eins)\rightarrow 0.
\end{equation}
If $\Ext^1_{\HH}(M,\II(\Sp))\neq 0$ and $M\not \cong \II(\Eins)$ then from \eqref{Sptriv} 
we obtain  that $\Ext^1_{\HH}(N,M(0,1))\neq 0$, and \cite[Cor.6.5]{bp}
implies that $M$ is either a subquotient of $M(0,1)$ or a subquotient of $M(p-1,1)$. Hence $M\cong \II(\Sp)$. Using \eqref{zerobis} one 
can deal in the same way with $\Ext^1_{\HH}(N,\II(\Eins))$. Since $\II(\Eins)$ and $\II(\Sp)$ are one dimensional, one can verify 
the rest by hand using the description of $\HH$ in terms of generators and relations given in  \eqref{relTH}.
\end{proof}

Let $\pi$ and $\tau$ be irreducible representations of $G$ admitting the same central character $\zeta$. Assume that $\pi$ is not supersingular.
When $p>2$ for given $\pi$ we are going to list all $\tau$ such that $\Ext^1_{G,\zeta}(\tau,\pi)\neq 0$. If one is interested in 
$\Ext^1_G(\tau,\pi)$ then this can be deduced from Proposition \ref{centre}. If $\eta:G\rightarrow \Fbar^{\times}$ is a smooth character then 
$\Ext^1_{G,\zeta}(\tau\otimes \eta, \pi\otimes\eta)\cong \Ext^1_{G,\zeta}(\tau, \pi)$. Hence, we may assume that 
$\pi$ is $\Eins$, $\Sp$ or $\pi(r,\lambda)$ with $\lambda\neq 0$ and $(r,\lambda)\neq(0,\pm 1)$, $(r,\lambda)\neq (p-1,\pm 1)$. Recall  
if $\lambda\neq 0$ and $(r,\lambda)\neq (0,\pm 1)$ then \cite[Thm.30]{bl} asserts that 
\begin{equation}\label{inducedbis}
 \pi(r,\lambda)\cong \Indu{P}{G}{\mu_{\lambda^{-1}}\otimes \mu_{\lambda}\omega^r}.
\end{equation}
It follows from \eqref{inducedbis} that if $\lambda\neq \pm 1$ then $\pi(0,\lambda)\cong \pi(p-1,\lambda)$. Hence, we may assume that $1\le r \le p-1$.  
Propositions \ref{5term} and  \ref{chop00} gives us an exact sequence:
\begin{equation}\label{nearly}
\Ext^1_{\HH}(\II(\tau), \II(\pi))\hookrightarrow\Ext^1_{G,\zeta}(\tau,\pi)\twoheadrightarrow \Hom_{\HH}(\II(\tau),\RR^1\II(\pi)).
\end{equation}

\begin{thm} Let $\pi$, $\tau$ and $\zeta$ be as above. Assume that $p>2$ and $\Ext^1_{G,\zeta}(\tau,\pi)\neq 0$. Let $d$ be the 
dimension of $\Ext^1_{G,\zeta}(\tau,\pi)$.
\begin{itemize} 
\item[(i)] if $\pi\cong \Eins$ then one of the following holds: 
             \begin{itemize} 
                \item[(a)] $\tau\cong \Sp$, and $d=1$;
                \item[(b)]  $p\ge 5$, $\tau\cong \pi(p-3,1,\omega)\cong \Indu{P}{G}{\omega\otimes \omega^{-1}}$ and $d=1$;
                \item[(c)] $p=3$, $\tau\cong \Sp\otimes \omega\circ \det$ and $d=1$;
              \end{itemize}  
\item[(ii)] if $\pi\cong \Sp$ then $\tau\cong \Eins$ and $d=2$;
\end{itemize}
\end{thm}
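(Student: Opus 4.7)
The plan is to apply the five-term exact sequence from Proposition \ref{5term} together with the surjectivity criterion of Proposition \ref{chop00}. Since neither $\Eins$ nor $\Sp$ is a regular supersingular representation of the form $\pi(r,0,\eta)$ with $0<r<p-1$, the sequence \eqref{nearly} extends to a short exact sequence
$$
0\to\Ext^1_{\HH}(\II(\tau),\II(\pi))\to\Ext^1_{G,\zeta}(\tau,\pi)\to\Hom_{\HH}(\II(\tau),\RR^1\II(\pi))\to 0,
$$
so $d$ decomposes as the sum of the two outer dimensions, which I will compute separately.

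For the kernel, Lemma \ref{extmod1Sp} asserts that $\Ext^1_{\HH}(\II(\tau),\II(\pi))$ vanishes for irreducible $\tau$ unless $\tau\in\{\Eins,\Sp\}$, and since $p>2$ the two diagonal self-extensions also vanish. Hence the kernel contributes $1$ exactly in the off-diagonal cases $(\tau,\pi)=(\Sp,\Eins)$ and $(\tau,\pi)=(\Eins,\Sp)$, which accounts for the appearance of $\tau=\Sp$ in case (i) and for a contribution of $1$ toward $d$ in case (ii).

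For the cokernel I invoke Proposition \ref{Rst}: $\RR^1\II(\Eins)\cong M(p-3,1,\omega)$ and $\RR^1\II(\Sp)\cong M(p-1,1)$. Since $\II(\tau)$ is irreducible, $\Hom_{\HH}(\II(\tau),\RR^1\II(\pi))$ is nonzero precisely when $\II(\tau)$ is isomorphic to the socle of $\RR^1\II(\pi)$, and is then one-dimensional; so the task reduces to identifying each socle. Taking $I_1$-invariants of the non-split sequence \eqref{zero} gives, by dimension count, a short exact sequence of Hecke modules $0\to\II(\Eins)\to M(p-1,1)\to\II(\Sp)\to 0$ that is again non-split (otherwise $\TT$ would split \eqref{zero}), so $\soc M(p-1,1)\cong\II(\Eins)$. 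For $M(p-3,1,\omega)$, when $p\ge 5$ the representation $\pi(p-3,1,\omega)\cong\Indu{P}{G}{\omega\otimes\omega^{-1}}$ is an irreducible principal series by \eqref{inducedbis}, so $M(p-3,1,\omega)$ is itself irreducible and the only $\tau$ contributing is $\pi(p-3,1,\omega)$. When $p=3$, twisting \eqref{p-1} by $\omega\circ\det$ and the analogous non-splitting argument yields $0\to\II(\Sp\otimes\omega\circ\det)\to M(0,1,\omega)\to\II(\omega\circ\det)\to 0$, so the socle is $\II(\Sp\otimes\omega\circ\det)$ and only $\tau\cong\Sp\otimes\omega\circ\det$ contributes.

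Assembling these two computations and checking that the relevant central characters match yields precisely the listed cases: for $\pi=\Sp$, only $\tau=\Eins$ gives nonzero Ext, with kernel and cokernel each contributing $1$, so $d=2$; for $\pi=\Eins$, the contributing $\tau$ are $\Sp$ (from the kernel, $d=1$) and either $\pi(p-3,1,\omega)$ for $p\ge 5$ or $\Sp\otimes\omega\circ\det$ for $p=3$ (from the cokernel, $d=1$). The main subtlety is the $p=3$ analysis, where $\RR^1\II(\Eins)$ is reducible and I must pin down its socle carefully via the non-splitting of \eqref{p-1}; ruling out other irreducible $\tau$ with matching central character is then a routine check using Vign\'eras's bijection.
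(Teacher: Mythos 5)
Your proposal is correct and follows essentially the same route as the paper: the exact sequence \eqref{nearly} (from Propositions \ref{5term} and \ref{chop00}), Lemma \ref{extmod1Sp} for the $\Ext^1_{\HH}$ term, and Proposition \ref{Rst} for the $\Hom_{\HH}(\II(\tau),\RR^1\II(\pi))$ term, with the $p=3$ case handled by identifying the unique irreducible submodule $\II(\Sp\otimes\omega\circ\det)$ of $M(0,1,\omega)$. Your extraction of the socles of $M(p-1,1)$ and $M(0,1,\omega)$ from the non-splitting of \eqref{zero} and \eqref{p-1} is a harmless variant of reading them off the explicit description of $M(r,\lambda,\eta)$ in \S\ref{hecke}.
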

\begin{proof} This follows from \eqref{nearly}, Lemma \ref{extmod1Sp} and Proposition \ref{Rst}. We note that if 
$p=3$ then   $\pi(p-3,1,\omega)$ is reducible, but has a unique irreducible subobject isomorphic to $\Sp\otimes \omega\circ \det$.
\end{proof}
For the sake of completeness we also deal with $\Ext^1_{G,\zeta}(\tau,\pi)$ when $\pi$ is irreducible principal series. We deduce the results 
from \cite[\S8]{bp}, but they are also contained in \cite{col1} and \cite{em}.

\begin{thm} Let $\pi$, $\tau$ and $\zeta$ be as above. Assume that $p>2$, $\pi\cong \pi(r,\lambda)$ with $1\le r \le p-1$, $\lambda\in \Fbar^{\times}$ and 
$(r,\lambda)\neq (p-1,\pm 1)$. Then 
$$\Ext^1_{G,\zeta}(\pi(r,\lambda),\pi(r,\lambda))\cong \Hom(\Qp^{\times}, \Fbar).$$
In particular, $\dim \Ext^1_{G,\zeta}(\pi(r,\lambda),\pi(r,\lambda))=2$. Moreover, suppose that $\tau\not \cong \pi$ and $\Ext^1_{G,\zeta}(\tau,\pi)\neq 0$. Let $d$ be the 
dimension of $\Ext^1_{G,\zeta}(\tau,\pi)$ then one of the following holds:
\begin{itemize} 
\item[(i)] if $(r,\lambda)=(p-2,\pm 1)$ then such $\tau$ does not exist;
\item[(ii)] if $(r,\lambda)=(p-3, \pm 1)$ (hence $p\ge 5$) then $\tau\cong \Sp\otimes \omega^{-1}\mu_{\pm 1}\circ \det$ and $d=1$; 
\item[(iii)] otherwise, $\tau\cong \pi(s,\lambda^{-1}, \omega^{r+1})$, where $0\le s\le p-2$ and $s\equiv p-3-r\pmod{p-1}$, and $d=1$.
\end{itemize}
\end{thm}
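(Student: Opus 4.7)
The plan is to apply the exact sequence \eqref{nearly} verbatim, feeding it with the explicit $\Ext^1_{\HH}$-computations and the description of $\RR^1\II(\pi(r,\lambda))$ carried out in \cite[\S8]{bp}. Since $\pi(r,\lambda)$ is not supersingular of the type $\pi(r,0,\eta)$ with $0<r<p-1$, Proposition \ref{chop00} makes \eqref{nearly} into a genuine short exact sequence for every irreducible $\tau$ with central character $\zeta$, and the problem reduces to computing $\Ext^1_{\HH}(\II(\tau),\II(\pi))$ together with $\Hom_{\HH}(\II(\tau), \RR^1\II(\pi))$.

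For the self-extension, \cite[\S8]{bp} gives $\Ext^1_{\HH}(\II(\pi(r,\lambda)),\II(\pi(r,\lambda)))\cong \Hom(\Qp^{\times},\Fbar)$, a $2$-dimensional space for $p>2$, together with the $\HH$-module structure of $\RR^1\II(\pi(r,\lambda))$; one then verifies that $\Hom_{\HH}(\II(\pi),\RR^1\II(\pi))=0$, so that \eqref{nearly} collapses to the desired isomorphism. For $\tau\not\cong \pi$, \cite[Cor.\ 6.5]{bp} forces $\Ext^1_{\HH}(\II(\tau),\II(\pi))=0$, and the question reduces to listing the irreducible $\HH$-submodules of $\RR^1\II(\pi(r,\lambda))$.

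The case analysis is then dictated by the behaviour of the ``companion'' principal series $\pi(s,\lambda^{-1},\omega^{r+1})$ with $s\equiv p-3-r\pmod{p-1}$, which is generically the unique additional irreducible summand appearing in $\soc_{\HH}\RR^1\II(\pi(r,\lambda))$. In the generic situation this companion is itself an irreducible principal series and one obtains case (iii) with $d=1$. For $(r,\lambda)=(p-2,\pm 1)$ the companion is $\pi(p-2,\pm 1,1)\cong \pi$, so no irreducible $\tau\not\cong \pi$ contributes, giving case (i). For $(r,\lambda)=(p-3,\pm 1)$ (which forces $p\ge 5$) the companion $\pi(0,\pm 1,\omega^{-1})$ is reducible via \eqref{p-1}, with irreducible submodule $\Sp\otimes\omega^{-1}\mu_{\pm 1}\circ\det$ and character quotient; only the submodule embeds into $\RR^1\II(\pi)$, yielding case (ii) with $d=1$. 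The ``obstacle'' is purely bookkeeping: confirming via \cite[\S8]{bp} that no other irreducible $\HH$-modules appear in $\soc_{\HH}\RR^1\II(\pi)$ and that the companion summand occurs with multiplicity exactly one, so that each $d$ is precisely $1$.
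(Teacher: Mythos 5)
The part of your argument dealing with $\tau\not\cong\pi$ is essentially the paper's proof: vanishing of $\Ext^1_{\HH}(\II(\tau),\II(\pi))$ by \cite[Cor.6.5]{bp}, the surjection in \eqref{nearly} coming from Proposition \ref{chop00}, and then reading off the $\HH$-socle of $\RR^1\II(\pi)$ from \cite[Thm.7.16]{bp}, with the degenerations $(r,\lambda)=(p-2,\pm1)$ and $(p-3,\pm1)$ handled exactly as in the paper. That portion is fine.

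The derivation of the first assertion, however, has a genuine error. You claim that $\Ext^1_{\HH}(\II(\pi),\II(\pi))\cong\Hom(\Qp^{\times},\Fbar)$ is $2$-dimensional and that $\Hom_{\HH}(\II(\pi),\RR^1\II(\pi))=0$, so that \eqref{nearly} ``collapses''. Both claims are false. From the presentation \eqref{relTH} one checks directly that for an irreducible principal series module $M(r,\lambda)$ with $\lambda\neq 0$ the self-extensions over $\HH$ are only $1$-dimensional: in a self-extension a lift $w_1$ of $v_1$ satisfies $w_1T_{n_s}\in\Fbar v_2$ and $w_1T_{n_s}^2=0$ (resp.\ $T_{n_s}^2=-T_{n_s}$ in the non-regular case), which forces $w_1T_{n_s}=0$ because $\lambda\neq 0$; only the deformation of the eigenvalue $\lambda$ survives. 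Correspondingly, $\RR^1\II(\pi(r,\lambda))$ does contain a copy of $\II(\pi)$ itself (this is part of \cite[Thm.7.16]{bp}; compare Proposition \ref{Rst}, where $M(p-1,1)$ occurs in $\RR^1\II(\pi(p-1,1))$), so the Hom term in \eqref{nearly} is nonzero: concretely, deforming the inducing character in a ramified direction (nontrivial on $\Zp^{\times}$, adjusted to fix the central character) produces a self-extension $E$ of $\pi$ with $\dim E^{I_1}=2$, whose class therefore has nonzero image in $\Hom_{\HH}(\II(\pi),\RR^1\II(\pi))$. The correct accounting is $2=1+1$, not $2+0$; note also that your claim $\Hom_{\HH}(\II(\pi),\RR^1\II(\pi))=0$ contradicts your own later description of the companion as the unique \emph{additional} irreducible constituent of $\soc_{\HH}\RR^1\II(\pi)$. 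The paper avoids this issue entirely by quoting \cite[Cor.8.2]{bp} for the isomorphism $\Ext^1_{G,\zeta}(\pi,\pi)\cong\Hom(\Qp^{\times},\Fbar)$ rather than trying to extract it from \eqref{nearly}; if you want to argue via \eqref{nearly} you must identify both graded pieces as above and exhibit the isomorphism with $\Hom(\Qp^{\times},\Fbar)$ by the parabolic-induction construction, which is genuinely more work than the bookkeeping you describe.
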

\begin{remar} Note that if $\pi=\pi(r, \lambda)$ is as in (iii) and we write $\pi\cong \Indu{P}{G}{\psi_1\otimes\psi_2\omega^{-1}}$, then it follows from \eqref{inducedbis}
that $\pi(s,\lambda^{-1}, \omega^{r+1})\cong\Indu{P}{G}{\psi_2\otimes\psi_1\omega^{-1}}$.
\end{remar}
\begin{proof} The first assertion follows from \cite[Cor.8.2]{bp}. Assume that $\tau\not \cong \pi$ then it follows from \cite[Cor.6.5, 6.6, 6.7]{bp} 
that $\Ext^1_{\HH}(\II(\tau),\II(\pi))=0$. Hence, \eqref{nearly} implies that $\Ext^1_{G,\zeta}(\tau,\pi)\cong \Hom_{\HH}(\II(\tau),\RR^1\II(\pi))$. 
The assertions (i),(ii) and (iii) follow from \cite[Thm.7.16]{bp}, where $\RR^1\II(\pi)$ is determined. The difference between (ii) and (iii) is 
accounted for by the fact that if $r=p-3$ then $s=0$ and if $\lambda=\pm 1$ then  $\pi(s,\lambda^{-1},\omega^{r+1})\cong 
\pi(0,\pm 1, \omega^{p-2})$, which is reducible, but has a unique irreducible submodule isomorphic to $\Sp\otimes \omega^{-1}\mu_{\pm 1}\circ \det$.
\end{proof}

\appendix
\section{Lower bound on $\dim \Ext^1_G(\pi, \pi)$}\label{app}
Let $\FF$ be a finite field of characteristic $p>2$ and $W(\FF)$ the ring of Witt vectors. 
Let $0\le r\le p-1$ be an integer and set 
$$\pi(r):= \frac{\cIndu{KZ}{G}{\Sym^r \FF^2}}{(T)}.$$
We note that the endomorphism $T$ is defined over $\FF$, see \cite[Prop 1]{bl}. In this section, we bound the dimension
of $\Ext^1_{G}(\pi(r), \pi(r))$ from below, using the ideas of Colmez and Kisin. Let $L$ be a finite extension of $W(\FF)[1/p]$ and 
$\OO$ the ring of integers in $L$. Let $\GG_{\Qp}$ be the absolute Galois group of $\Qp$. Let $\Rep_{\OO} G$ be the category 
of $\OO[G]$-modules of finite length, with the central character, and such that the action of $G$ is continuous for the discrete 
topology. Let $\Rep_{\OO}\GG_{\Qp}$ be the category of $\OO[\GG_{\Qp}]$-modules of finite length, such that the action of $\GG_{\Qp}$ 
is continuous for the discrete topology. Colmez in \cite{col1} has defined an exact functor 
$$\VV: \Rep_{\OO} G\rightarrow \Rep_{\OO} \GG_{\Qp}.$$   
Set $\rho(r):=\VV(\pi(r))$, then $\rho(r)$ is an absolutely irreducible $2$-di\-men\-sio\-nal $\FF$-representation of $\GG_{\Qp}$, uniquely 
determined by the following: $\det \rho =\omega^{r+1}$; the restriction of $\rho$ to inertia is isomorphic to $\omega_2^{r+1}\oplus \omega_2^{p(r+1)}$,
where $\omega_2$ is the fundamental character of Serre of niveau $2$. 
In the notation of \cite{b2}, $\rho(r)=\ind \omega_2^{r+1}$. We note that 
since, $\pi(r)$ and $\rho(r)$ are absolutely irreducible, the functor $\VV$ induces an isomorphism:
\begin{equation}\label{isohom}
\Hom_G(\pi(r),\pi(r))\cong \Hom_{\GG_{\Qp}}(\rho(r), \rho(r))\cong \FF.
\end{equation}
Let $\eta: \GG_{\Qp}\rightarrow \OO^{\times}$ be a crystalline character lifting $\zeta:=\omega^r$ the central character of $\pi(r)$. We consider
$\eta$ as a character of the centre of $G$, $Z(G)\cong \Qp^{\times}$ via the class field theory. To simplify the notation we set $\pi:=\pi(r)$
and $\rho:=\rho(r)$. Let $\Rep_{\OO}^{\pi, \eta} G$ be the full subcategory of $\Rep_{\OO} G$, such that $\tau$ is an object in $\Rep_{\OO}^{\pi, \eta} G$
if and only if the central character of $\tau$ is equal  to (the image of) $\eta$, and the irreducible subquotients of $\tau$ are isomorphic to 
$\pi$. We note that $\Rep_{\OO}^{\pi, \eta} G$ is abelian.

For $\tau$ and $\kappa$ in $\Rep_{\OO}^{\pi, \eta} G$ we let $\Ext^1_G(\kappa, \tau)$ be the Yoneda $\Ext^1$ in 
$\Rep_{\OO}^{\pi, \eta} G$, so an element of 
$\Ext^1_{G}(\kappa, \tau)$ can be viewed as an equivalence class of an  exact sequence 
\begin{equation}\label{explain}
0\rightarrow \tau\rightarrow E\rightarrow \kappa\rightarrow 0,
\end{equation}
where $E$ lies in $\Rep_{\OO}^{\pi, \eta} G$. Applying $\VV$ to \eqref{explain} 
we get an exact sequence $0\rightarrow \VV(\tau)\rightarrow \VV(E)\rightarrow \VV(\kappa) \rightarrow 0$. Hence, a map 
\begin{equation}\label{VExt}
\Ext^1_{G}(\kappa, \tau)\rightarrow \Ext^1_{\GG_{\Qp}}(\VV(\kappa), \VV(\tau)).
\end{equation}
A theorem of Colmez \cite[VII.5.3]{col1} asserts that \eqref{VExt} is injective, when $\tau=\kappa=\pi$.  

\begin{lem}\label{Ext1inj} Let $\tau$ and $\kappa$ be in $\Rep_{\OO}^{\pi, \eta} G$ then $\VV$ induces an isomorphism, and an injection respectively: 
$$\Hom_{G}(\kappa, \tau)\cong \Hom_{\GG_{\Qp}}(\VV(\kappa), \VV(\tau)),$$
$$\Ext^1_G(\kappa, \tau)\hookrightarrow \Ext^1_{\GG_{\Qp}}(\VV(\kappa), \VV(\tau)).$$
\end{lem}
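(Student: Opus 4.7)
The plan is to reduce everything to the single case $\tau=\kappa=\pi$, where the $\Hom$ statement is \eqref{isohom} and the $\Ext^1$ injection is Colmez's theorem \cite[VII.5.3]{col1} quoted just above. More precisely, I will do induction on $n:=\ell(\tau)+\ell(\kappa)$, simultaneously establishing both the $\Hom$-isomorphism and the $\Ext^1$-injection at each stage, since the five-term Hom/$\Ext^1$ exact sequence links the two statements.

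For the inductive step, suppose $\ell(\tau)\ge 2$ (the case $\ell(\kappa)\ge 2$ is symmetric, via a short exact sequence in the first variable and the contravariant $\Hom_G(-,\tau)$). Since $\Rep_{\OO}^{\pi,\eta}G$ is abelian with every irreducible subquotient isomorphic to $\pi$, we may pick a subobject $\tau'\subset \tau$ with $\tau/\tau'\cong\pi$, giving a short exact sequence $0\to\tau'\to\tau\to\pi\to 0$ in the category. Applying the Yoneda long exact sequence of $\Hom_G(\kappa,-)$ on the $G$-side and, using exactness of $\VV$, of $\Hom_{\GG_{\Qp}}(\VV(\kappa),-)$ on the Galois side, together with the obvious natural transformation induced by $\VV$, gives a commutative diagram with exact rows
\[
\begin{array}{ccccccccc}
0&\to&\Hom_G(\kappa,\tau')&\to&\Hom_G(\kappa,\tau)&\to&\Hom_G(\kappa,\pi)&\to&\Ext^1_G(\kappa,\tau')\to\Ext^1_G(\kappa,\tau)\to\Ext^1_G(\kappa,\pi)\\
&&\downarrow&&\downarrow&&\downarrow&&\downarrow\hphantom{\Ext^1_G(\kappa,\tau')\to\Ext^1_G(\kappa,\tau)\to\Ext^1_G(\kappa,\pi)}
\end{array}
\]
with analogous Galois row below. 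By the inductive hypothesis applied to the pair $(\kappa,\tau')$, the first and fourth vertical arrows are an isomorphism and an injection, respectively, while the third is an isomorphism by induction (or by the base case) applied to $(\kappa,\pi)$. A standard four-lemma chase then forces the second vertical arrow to be an isomorphism, proving the $\Hom$ statement for $(\kappa,\tau)$.

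For the $\Ext^1$-injectivity, continue the diagram one step to the right: by induction $\Hom_G(\kappa,\pi)\to\Hom_{\GG_{\Qp}}(\VV\kappa,\rho)$ is surjective, $\Ext^1_G(\kappa,\tau')\hookrightarrow\Ext^1_{\GG_{\Qp}}(\VV\kappa,\VV\tau')$ is injective by induction, and $\Ext^1_G(\kappa,\pi)\hookrightarrow\Ext^1_{\GG_{\Qp}}(\VV\kappa,\rho)$ is injective by the base case (Colmez). Another diagram chase yields the injection $\Ext^1_G(\kappa,\tau)\hookrightarrow\Ext^1_{\GG_{\Qp}}(\VV\kappa,\VV\tau)$.

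The real content is all concentrated in the base case, namely Colmez's injectivity statement for $\Ext^1_G(\pi,\pi)$; the inductive machine is purely formal. The only points requiring care are verifying that $\tau'$ and $\pi$ (hence the short exact sequence) actually lie in $\Rep_{\OO}^{\pi,\eta}G$ — for which it suffices to note that a subobject or quotient inherits the central character $\eta$ and the Jordan--Hölder constituents — and that the Yoneda $\Ext^1$ in the subcategory $\Rep_{\OO}^{\pi,\eta}G$ is genuinely what enters the long exact sequence, which is automatic since every short exact sequence between objects of $\Rep_{\OO}^{\pi,\eta}G$ has middle term again with central character $\eta$ and constituents isomorphic to $\pi$.
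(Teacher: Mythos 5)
Your proposal is correct and follows essentially the same route as the paper: induction on $\ell(\tau)+\ell(\kappa)$ with base case $\tau\cong\kappa\cong\pi$ (via \eqref{isohom} and Colmez's theorem), a short exact sequence $0\to\tau'\to\tau\to\pi\to 0$, and a four-lemma chase on the two six-term sequences linked by $\VV$, with the symmetric contravariant argument when $\ell(\kappa)>1$. The only cosmetic slip is attributing the injection for the pair $(\kappa,\pi)$ to the ``base case'' -- when $\ell(\kappa)>1$ it is supplied by the inductive hypothesis -- but this does not affect the argument.
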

\begin{proof} We may assume that $\tau\neq 0$ and $\kappa\neq 0$. We argue by induction on 
$\ell(\tau)+\ell(\kappa)$, where $\ell$ is the 
length as an $\OO[G]$-module. If $\ell(\tau)+\ell(\kappa)=2$ then $\tau\cong \kappa\cong \pi$ and
 the assertion about $\Ext^1$ is a Theorem of Colmez cited above, the assertion about $\Hom$ follows from \eqref{isohom}.
Assume that
$\ell(\tau)>1$ then we have an exact sequence: 
\begin{equation}\label{exactseq1}
0\rightarrow \tau'\rightarrow \tau\rightarrow \pi\rightarrow 0.
\end{equation} 
Since $\VV$ is exact we get an exact sequence:
\begin{equation}\label{exactseq2}
0\rightarrow \VV(\tau')\rightarrow \VV(\tau)\rightarrow \VV(\pi)\rightarrow 0.
\end{equation}
Applying $\Hom_G(\kappa, \centerdot)$ to \eqref{exactseq1} 
and $\Hom_{\GG_{\Qp}}(\VV(\kappa), \centerdot)$ to \eqref{exactseq2} we obtain two long exact sequences, and a map between them
induced by $\VV$. With the obvious notation we get a commutative diagram: 
\begin{displaymath}
\xymatrix{0\ar[r]& A^0\ar[r]\ar[d]^{\cong}& B^0\ar[r]\ar[d]& C^0\ar[r]\ar[d]^{\cong}& A^1\ar[r]\ar@{^(->}[d]& B^1\ar[r]\ar[d]& 
C^1\ar@{^(->}[d]\\
0\ar[r]& \mathcal{A}^0\ar[r]& \mathcal{B}^0\ar[r]& \mathcal{C}^0\ar[r]& \mathcal{A}^1\ar[r]& \mathcal{B}^1\ar[r]& \mathcal{C}^1.}
\end{displaymath}
The first and third vertical arrows are isomorphisms,  fourth and sixth injections by induction hypothesis. This implies that 
the second arrow is an isomorphism, and the fifth is an injection. Hence,
$$\Hom_{G}(\kappa, \tau)\cong \Hom_{\GG_{\Qp}}(\VV(\kappa), \VV(\tau)),$$ 
$$\Ext^1_G(\kappa, \tau)\hookrightarrow \Ext^1_{\GG_{\Qp}}(\VV(\kappa), \VV(\tau)).$$

If $\ell(\tau)=1$ and $\ell(\kappa)>1$ then one may argue similarly with 
$\Hom_G( \centerdot, \tau)$ and  $\Hom_{\GG_{\Qp}}(\centerdot, \VV(\tau))$. 
\end{proof}

From now on we assume that $(p,r)\neq (3,1)$.  Let $\Rep_{\OO}^{\pi,\eta}\GG_{\Qp}$ be the full 
subcategory of $\Rep_{\OO}\GG_{\Qp}$, with objects $\rho'$, such that there exists $\pi'$ in $\Rep_{\OO}^{\pi,\eta} G$ with 
$\rho'\cong \VV(\pi')$. Lemma \ref{Ext1inj} implies that $\VV$ induces an equivalence of categories between $\Rep_{\OO}^{\pi, \eta} G$ and 
$\Rep_{\OO}^{\pi,\eta} \GG_{\Qp}$. In particular, $\Rep_{\OO}^{\pi,\eta} \GG_{\Qp}$ is abelian. We define three deformation problems for $\rho$, closely
following Mazur \cite{mazur}. Let $D^{u}$ be the universal deformation problem; $D^{\omega\eta}$ the deformation problem with the
determinant condition, so that we consider the deformations with  determinant equal to $\omega \eta$, \cite[\S24]{mazur}; $D^{\pi,\eta}$ 
 a deformation problem with the categorical condition, so that we consider those deformations, which as representations 
of $\OO[\GG_{\Qp}]$ lie in $\Rep_{\OO}^{\pi,\eta} \GG_{\Qp}$, \cite[\S25]{mazur}, \cite{rama}. 
Since $\rho$ is absolutely irreducible, the functors
$D^{u}$, $D^{\omega\eta}$, $D^{\pi, \eta}$ are (pro-)representable by  complete local noetherian $\OO$-algebras $R^{u}$, $R^{\omega\eta}$, 
$R^{\pi, \eta}$ respectively. By the universality of $R^u$ we have surjections 
$R^u\twoheadrightarrow R^{\omega\eta}$ and $R^u\twoheadrightarrow R^{\pi, \eta}$. 

For $\rho'$ in $\Rep_{\FF} \gal$ we set $h^i(\rho'):=\dim_{\FF} H^i(\GG_{\Qp}, \rho')$. Let $V$ be 
the underlying vector space of $\rho$, the $\GG_{\Qp}$ acts by conjugation on $\End_{\FF} V$. We denote this representation by 
$\mathrm{Ad}(\rho)$, in particular $\mathrm{Ad}(\rho)\cong \rho\otimes\rho^*$. Local Tate duality gives 
$$h^2(\rho\otimes\rho^*)=h^0(\rho\otimes\rho^* \otimes \omega)=\dim \Hom_{\GG_{\Qp}}(\rho,\rho\otimes \omega).$$ 
Now \cite[Lem. 4.2.2]{b1} implies that  $\rho\cong \rho\otimes \omega$ if and only if $p=2$ or $(p,r)=(3,1)$. Since both 
cases are excluded here, we have $h^2(\mathrm{Ad}(\rho))=0$. Since $\rho$ is absolutely irreducible 
$h^0(\rho\otimes\rho^*)=1$. The local Euler characteristic gives:
 $$4=\dim \rho\otimes\rho^*=-h^0(\rho\otimes\rho^*)+h^1( \rho\otimes\rho^*)-h^2(\rho\otimes\rho^*)$$
and so $h^1(\mathrm{Ad}(\rho))=5$. Since $p>2$ the exact sequence of $\GG_{\Qp}$-representations:
$$0\rightarrow \mathrm{Ad}^0(\rho)\rightarrow \mathrm{Ad}(\rho)\overset{trace}{\rightarrow} \FF\rightarrow 0$$
splits. Hence $h^1(\mathrm{Ad}^0(\rho))=3$ and $h^2(\mathrm{Ad}^0(\rho))=0$. It follows from \cite{mazur2} that 
$R^u\cong \OO[[t_1,\ldots, t_5]]$ and $R^{\omega\eta}\cong \OO[[t_1,t_2, t_3]]$.

Inverting $p$ we get surjections $R^u[1/p]\twoheadrightarrow R^{\omega\eta}[1/p]$ and 
$R^u[1/p]\twoheadrightarrow R^{\pi, \eta}[1/p]$, 
and hence closed embeddings 
$$\Spec R^{\omega\eta}[1/p]\hookrightarrow \Spec R^u[1/p],\quad \Spec R^{\pi,\eta}[1/p]\hookrightarrow \Spec R^u[1/p].$$
Let $x\in \Spec R^{\omega\eta}[1/p]$ be a closed point with residue field $E$. Specializing at $x$ we 
obtain a continuous $2$-dimensional 
$E$-representation $V_x$ of $\GG_{\Qp}$. Suppose that $V_x$ is crystalline, and if $\lambda_1, \lambda_2$ are eigenvalues 
of $\varphi$ on $D_{crys} (V_x^*)$ then $\lambda_1\neq \lambda_2$ and $\lambda_1\neq \lambda_2 p^{\pm 1}$ then Berger-Breuil in  
\cite{bergerbreuil} associate to $V_x$  
a unitary $E$-Banach space representation $B_x$ of $G$. Choose a $G$-invariant norm $\|\centerdot\|$ on $B_x$ defining the topology and such that
$\|B_x\|\subseteq |E|$ and let $B^0_x$ be the unit ball with respect to $\|\centerdot\|$. Berger has shown in \cite{berger} that 
$B^0_x\otimes_{\oE} \FF\cong \pi$ as $G$-representations.  The constructions in \cite{bergerbreuil} and 
\cite{col1} are mutually inverse to one another. This means 
$$V_x\cong E\otimes_{\oE}\underset{\leftarrow}{\lim}\ \mathbf V(B_x^0/\varpi_E^n B_x^0).$$
Hence, every such $x$ also lies in $\Spec R^{\pi,\eta}[1/p]$. A Theorem of Kisin \cite[1.3.4]{kis1} asserts that the set
of crystalline  points, satisfying the conditions  above, is Zariski dense in $\Spec R^{\omega\eta}[1/p]$. 
Since $\Spec R^{\omega\eta}[1/p]$
and $\Spec R^{\pi, \eta}[1/p]$ are closed subsets of $\Spec R^u[1/p]$, we get that $\Spec R^{\omega\eta}[1/p]$ is 
contained in $\Spec R^{\pi, \eta}[1/p]$. Since $R^{\omega\eta}[1/p]$ is reduced we get a surjective homomorphism
$R^{\pi, \eta}[1/p]\twoheadrightarrow R^{\omega\eta}[1/p]$. Let $I$ be the kernel of $R^u\twoheadrightarrow R^{\pi,\eta}$ and let 
$a\in I$. The image of $a$ in $R^{\pi,\eta}[1/p]$ is zero, hence $a$ maps to $0$ in $R^{\omega\eta}[1/p]$. Since 
$R^{\omega\eta}$ is $p$-torsion free, the map $R^{\omega\eta}\rightarrow R^{\omega\eta}[1/p]$ is injective, and hence the image 
of $a$ in $R^{\omega\eta}$ is zero. So the surjection $R^u\twoheadrightarrow R^{\omega\eta}$ factors through 
$R^{\pi,\eta}\twoheadrightarrow R^{\omega\eta}$. Let $\MM_{\pi,\eta}$ and $\MM_{\omega\eta}$ be the maximal ideals in  $R^{\pi,\eta}$ 
and $R^{\omega\eta}$ respectively. Then we obtain a surjection:
\begin{equation}\label{surjectionD}
D^{\pi, \eta}(\FF[\varepsilon])^*\cong \frac{\MM_{\pi, \eta}}{\varpi_L R^{\pi,\eta}+\MM^2_{\pi,\eta}}\twoheadrightarrow 
\frac{\MM_{\omega\eta}}{\varpi_L R^{\omega\eta}+\MM^2_{\omega\eta}}\cong D^{\omega\eta}(\FF[\varepsilon])^*,
\end{equation}
where $\FF[\varepsilon]$ is the dual numbers, $\varepsilon^2=0$, and star denotes $\FF$-linear dual. It follows from
\eqref{surjectionD} that $\dim_{\FF}D^{\pi, \eta}(\FF[\varepsilon])\ge \dim_{\FF}D^{\omega\eta}(\FF[\varepsilon])=3$. 
Now $D^u(\FF[\varepsilon])\cong \Ext^1_{\FF[\GG_{\Qp}]}(\rho, \rho)$, \cite[\S22]{mazur} and so 
$D^{\pi, \eta}(\FF[\varepsilon])$ is isomorphic 
to the image of $\Ext^1_{G, \zeta}(\pi, \pi)$ in $\Ext^1_{\FF[\GG_{\Qp}]}(\rho, \rho)$ via \eqref{VExt}, where 
$\Ext^1_{G,\zeta}(\pi,\pi)$ is Yoneda $\Ext$ in the category of smooth $\FF$-representations of $G$ with central 
character $\zeta$.
Now, \cite[VII.5.3]{col1} implies that the map $\Ext^1_{G, \zeta}(\pi, \pi)\rightarrow \Ext^1_{\FF[\GG_{\Qp}]}(\rho, \rho)$ is an 
injection. We obtain:

\begin{thm} Let $\pi$ be as above and assume that $(p,r)\neq (3,1)$ then $\dim_{\FF}\Ext^1_{G,\zeta}(\pi,\pi)\ge 3$.
\end{thm}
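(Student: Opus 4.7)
The plan is to prove the lower bound via deformation theory of the associated Galois representation $\rho = \VV(\pi)$, following the Colmez--Kisin strategy outlined just above the theorem. First I would set up three deformation functors of $\rho$: the universal functor $D^u$, the functor $D^{\omega\eta}$ of deformations with fixed determinant $\omega\eta$ (where $\eta$ is a chosen crystalline lift of $\zeta$), and the ``categorical'' functor $D^{\pi,\eta}$ whose deformations, viewed as $\OO[\GG_{\Qp}]$-modules, lie in the image of $\Rep_{\OO}^{\pi,\eta} G$ under $\VV$. Since $\rho$ is absolutely irreducible, each of these is (pro-)representable by a complete local Noetherian $\OO$-algebra $R^u$, $R^{\omega\eta}$, $R^{\pi,\eta}$, and the universal property yields surjections $R^u \twoheadrightarrow R^{\omega\eta}$ and $R^u \twoheadrightarrow R^{\pi,\eta}$.

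Next I would compute Galois cohomology. Because $(p,r)\ne(3,1)$ and $p>2$, one checks that $\rho\not\cong \rho\otimes\omega$, so local Tate duality gives $h^2(\mathrm{Ad}(\rho))=0$; with $h^0(\mathrm{Ad}(\rho))=1$ the local Euler characteristic yields $h^1(\mathrm{Ad}(\rho))=5$, and splitting off the trace gives $h^1(\mathrm{Ad}^0(\rho))=3$, $h^2(\mathrm{Ad}^0(\rho))=0$. By Mazur's criterion $R^{\omega\eta}\cong \OO[[t_1,t_2,t_3]]$, so $\dim_{\FF} D^{\omega\eta}(\FF[\varepsilon])=3$.

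The key geometric step is to show $\Spec R^{\omega\eta}[1/p] \subseteq \Spec R^{\pi,\eta}[1/p]$ inside $\Spec R^u[1/p]$. For a closed crystalline point $x\in\Spec R^{\omega\eta}[1/p]$ with non-critical, distinct crystalline Frobenius slopes, Berger--Breuil attach a unitary Banach representation $B_x$ of $G$; Berger's theorem identifies the mod-$p$ reduction of a unit ball with $\pi$, so $x$ lies in $\Spec R^{\pi,\eta}[1/p]$ as well. Kisin's density result guarantees these crystalline points are Zariski-dense in the reduced scheme $\Spec R^{\omega\eta}[1/p]$, hence the closed subscheme $\Spec R^{\pi,\eta}[1/p]$ of $\Spec R^u[1/p]$ contains it entirely. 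Since $R^{\omega\eta}$ is $p$-torsion free, the surjection $R^u \twoheadrightarrow R^{\omega\eta}$ then factors as $R^u \twoheadrightarrow R^{\pi,\eta} \twoheadrightarrow R^{\omega\eta}$. The main obstacle here will be verifying the Berger--Breuil hypotheses hold on a Zariski-dense subset and carefully invoking \cite{kis1}; this is precisely the point where the appendix is needed in lieu of citing the unwritten reference.

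Finally I would read off the conclusion from tangent spaces. The factorization gives a surjection $D^{\pi,\eta}(\FF[\varepsilon]) \twoheadrightarrow D^{\omega\eta}(\FF[\varepsilon])$, so $\dim_{\FF} D^{\pi,\eta}(\FF[\varepsilon]) \ge 3$. By Lemma \ref{Ext1inj} (with coefficients $\FF[\varepsilon]$), $D^{\pi,\eta}(\FF[\varepsilon])$ is identified with the image of $\Ext^1_{G,\zeta}(\pi,\pi)$ in $\Ext^1_{\GG_{\Qp}}(\rho,\rho)$ under $\VV$; Colmez's injectivity result \cite[VII.5.3]{col1} identifies this image with $\Ext^1_{G,\zeta}(\pi,\pi)$ itself. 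Therefore $\dim_{\FF}\Ext^1_{G,\zeta}(\pi,\pi)\ge 3$, as required.
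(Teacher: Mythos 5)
Your proposal is correct and follows essentially the same route as the paper's appendix: the same three deformation functors, the same Galois cohomology computation giving $R^{\omega\eta}\cong \OO[[t_1,t_2,t_3]]$, the same Kisin-density/Berger--Breuil/Berger argument to factor $R^u\twoheadrightarrow R^{\pi,\eta}\twoheadrightarrow R^{\omega\eta}$, and the same passage to tangent spaces combined with Colmez's injectivity of $\Ext^1_{G,\zeta}(\pi,\pi)\rightarrow\Ext^1_{\GG_{\Qp}}(\rho,\rho)$. No gaps beyond those the paper itself defers to the cited references.
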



\begin{thebibliography}{19}
\bibitem{bl}\textsc{L. Barthel and R. Livn\'e}, `Irreducible modular representations of $\GL_2$ of a local field', \textit{Duke Math. J.} 75 (1994)
261-292.
\bibitem{bergerbreuil}\textsc{L. Berger and C. Breuil}, 
'Sur quelques repr\'esentations potentiellement cristallines de $\GL_2(\Qp)$', 
to appear in  \textit{Ast\'erisque}.
\bibitem{berger}\textsc{L. Berger}, 'Repr\'esentations modulaires de $\GL_2(\Qp)$ et rep\-r\'e\-sen\-tations 
galoisiennes de dimension 2', to 
appear in \textit{Ast\'erisque}. 
\bibitem{b1}\textsc{C. Breuil},  `Sur quelques repr\'esentations modulaires et $p$-adiques de $\GL_2(\Qp)$ I', \textit{Compositio Mathematica}
138 (2) (2003), 165-188.
\bibitem{b2}\textsc{C. Breuil}, 'Sur quelques repr\'esentations modulaires et $p$-adiques de $\GL_2(\Qp)$ II', \textit{J. Inst. Math. Jussieu} 2, (2003), 
1-36.
\bibitem{bp} \textsc{C. Breuil and V. Pa\v{s}k\={u}nas}, `Towards mod $p$ Langlands correspondence for $\GL_2$', to appear 
in \textit{Memoirs of Amer. Math. Soc.}
\bibitem{col1} \textsc{P. Colmez}, `Repr\'esentations de $\GL_2(\Qp)$ et  $(\phi,\Gamma)$-modules', to appear in \textit{Ast\'erisque}.
\bibitem{em}\textsc{M. Emerton}, `Ordinary parts of admissible representations of $p$-adic reductive groups II: derived functors', 
 to appear in \textit{Ast\'erisque}.
\bibitem{kis}\textsc{M. Kisin}, `The Fontaine-Mazur conjecture for $\GL_2$', \textit{J.A.M.S.} 22(3) (2009) 641-690.
\bibitem{kis1}\textsc{M. Kisin}, 'Deformations of $G_{\Qp}$ and $\GL_2(\Qp)$ representations', to appear in \textit{Ast\'erisque}.
\bibitem{mazur2}\textsc{B. Mazur}, `Deforming Galois representations', \textit{Galois groups over $\mathbb Q$} (Berkley, CA, 1987), 
385-437, Springer, New York, 1989, Eds. Y. Ihara, K. Ribet, J.-P. Serre.
\bibitem{mazur}\textsc{B. Mazur}, 'An introduction to the deformation theory of Galois representations', 
\textit{Modular forms and Fermat's last theorem} (Boston, MA, 1995),  243--311, Springer, New York, 1997, Eds. G. Cornell, J.H. Silverman, G. Stevens. 
\bibitem{o2} \textsc{R. Ollivier}, `Le foncteur des invariants sous l'action du pro-$p$-Iwahori de $\GL_2(F)$', 
\textit{J. reine angew. Math.} 635 (2009) 149-185. 
\bibitem{coeff} \textsc{V. Pa\v{s}k\={u}nas}, `Coefficient systems and supersingular representations of $\GL_2(F)$', \textit{M\'emoires de la SMF} 99 (2004).
\bibitem{rama}\textsc{R. Ramakrishna}, 'On a variation of Mazur's deformation functor', \textit{Compositio} 87 269-286, 1993.
\bibitem{serre} \textsc{J.-P. Serre}, \textit{Corps Locaux}, Act.Sci.Ind. 1296, Paris 1962.
\bibitem{cohgal}\textsc{J.-P. Serre}, \textit{Cohomologie Galoisienne}, Lecture Notes in Mathematics 5, Springer 1964, Eds A.Dold and B.Eckmann.
\bibitem{vig} \textsc{M.-F. Vign\'eras}, `Representations modulo $p$ of the $p$-adic group $\GL(2,F)$', \textit{Compositio Math.} 140 (2004) 333-358.
\end{thebibliography}
\end{document}